\numberwithin{equation}{section}
\theoremstyle{plain}
\newtheorem{theorem}{Theorem}[subsection]
\newtheorem{prop}[theorem]{Proposition}
\newtheorem{lemma}[theorem]{Lemma}
\newtheorem{corollary}[theorem]{Corollary}
\newtheorem{conjecture}[theorem]{Conjecture}
\newtheorem{question}[theorem]{Question}
\newtheorem{problem}[theorem]{Problem}
\theoremstyle{definition}
\newtheorem{definition}[theorem]{Definition}
\theoremstyle{remark}
\newtheorem{remark}[theorem]{Remark}
\newtheorem{example}[theorem]{Example}
\numberwithin{equation}{section}
\newcommand{\sheaf}[1]{\mathscr{#1}}
\newcommand{\cat}[1]{\mathsf{#1}}
\newcommand{\isom}{\cong}
\newcommand{\equi}{\simeq}
\newcommand{\im}{\operatorname{im}}
\newcommand{\OO}{\sheaf{O}}
\newcommand{\D}{\cat{D}}
\newcommand{\Db}{\D^{\mathrm{b}}}
\newcommand{\Br}{\mathrm{Br}}
\newcommand{\NS}{\mathrm{NS}}
\newcommand{\Pic}{\mathrm{Pic}}
\newcommand{\rk}{{\rm rk}}
\newcommand{\End}{\mathrm{End}}
\newcommand{\Hom}{\mathrm{Hom}}
\newcommand{\Spec}{\mathrm{Spec}}
\newcommand{\Hilb}{{\rm Hilb}}
\renewcommand{\dim}{{\rm dim}\,}
\newcommand{\tensor}{\otimes}
\newcommand{\Alb}{\mathrm{Alb}}
\newcommand{\alb}{\mathrm{alb}}
\newcommand{\homological}{\mathrm{hom}}
\newcommand{\CH}{\mathrm{CH}}
\newcommand{\Gm}{\mathbb{G}_\mathrm{m}}
\newcommand{\mapto}[1]{\xrightarrow{#1}}
\newcommand{\sod}[1]{\langle #1 \rangle}
\DeclareMathOperator{\Rep}{\mathrm{rdim}}
\DeclareMathOperator{\Repcat}{\mathrm{rdim}}
\DeclareMathOperator{\coRepcat}{\mathrm{rcodim}}
\newcommand{\Ext}{{\rm Ext}}
\newcommand{\ur}{\mathrm{ur}}
\newcommand{\dR}{\mathrm{dR}}
\newcommand{\Hur}{H_{\ur}}
\newcommand{\Het}{H_{\et}}
\newcommand{\HdR}{H_{\dR}}
\newcommand{\HB}{H_{\mathrm{B}}}
\newcommand{\Local}{\mathsf{Local}}
\newcommand{\Ab}{\mathsf{Ab}}
\newcommand{\Sch}{\mathsf{Sch}}
\newcommand{\Frac}{\mathrm{Frac}}
\newcommand{\pr}{0}
\newcommand{\cal}{\mathscr}
\newcommand{\kc}{{\cal C}}
\newcommand{\ko}{{\cal O}}
\newcommand{\kt}{{\cal T}}
\newcommand{\NN}{\mathbb{N}}
\newcommand{\ZZ}{\mathbb{Z}}
\newcommand{\QQ}{\mathbb{Q}}
\newcommand{\CC}{\mathbb{C}}
\newcommand{\FF}{\mathbb{F}}
\newcommand{\LL}{\mathbb{L}}
\newcommand{\PP}{\mathbb{P}}
\renewcommand{\AA}{\mathbb{A}}
\newcommand{\RHom}{\mathbf{R}\Hom}
\newcommand{\cC}{\mathcal{C}}
\newcommand{\et}{\mathrm{\acute{e}t}}
\newcommand{\linedef}[1]{\textit{#1}}
\begin{document}

\title[Cycles, derived categories, and rationality]{Cycles, derived categories, and rationality}

\begin{abstract} 
Our main goal is to give a sense of recent developments in the
(stable) rationality problem from the point of view of unramified
cohomology and $0$-cycles as well as derived categories and
semiorthogonal decompositions, and how these perspectives intertwine
and reflect each other.  In particular, in the case of algebraic
surfaces, we explain the relationship between Bloch's conjecture,
Chow-theoretic decompositions of the diagonal, categorical
representability, and the existence of phantom subcategories of the
derived category.
\end{abstract}

\author{Asher Auel}
\address{Department of Mathematics\\ %
Yale University \\ %
10 Hillhouse Avenue \\ %
New Haven, CT 06511}
\email{asher.auel@yale.edu}

\author{Marcello Bernardara}
\address{Institut de Math\'ematiques de Toulouse \\ %
Universit\'e Paul Sabatier \\ %
118 route de Narbonne \\ %
31062 Toulouse Cedex 9\\ %
France}
\email{marcello.bernardara@math.univ-toulouse.fr}

\maketitle

\setcounter{tocdepth}{1}
\tableofcontents

In this text, we explore two potential measures of rationality.  The
first is the universal triviality of the Chow group of $0$-cycles,
which is related to the Chow-theoretic decomposition of the diagonal
of Bloch and Srinivas.  A powerful degeneration method for obstructing
the universal triviality of the Chow group of $0$-cycles, initiated by
Voisin, and developed by Colliot-Th\'el\`ene and Pirutka, combines
techniques from singularity theory and unramified cohomology and has
led to a recent breakthrough in the stable rationality problem.

The second is categorical representability, which is defined by the
existence of semiorthogonal decompositions of the derived category
into components whose dimensions can be bounded. We will give a
precise definition of this notion and present many examples, as well
as motivate why one should expect categorical representability in
codimension $2$ for rational varieties.

Furthermore, we would like to explore how the Chow-theoretic and
derived categorical measures of rationality can contrast and reflect
each other.  One of the motivating topics in this circle of ideas is
the relationship, for complex surfaces, between Bloch's conjecture,
the universal triviality of the Chow group of $0$-cycles, and the
existence of phantoms in the derived categories.  Another motivating
topic is the rationality problem for cubic fourfolds and its
connections between derived categories, Hodge theory, as well as
Voisin's recent results on the universal triviality of the Chow group
of $0$-cycles for certain loci of special cubic fourfolds.

{\small
\subsection*{Acknowledgements.}
This text started as notes that were collected together in preparation
for lectures given at the Spring 2016 FRG mini-workshop
\textit{Unramified cohomology, derived categories, and rationality} at
the University of Utah in Salt Lake City, February 26--28, 2015.  We
are very grateful to the organizers for the invitation to lecture
there and for their great hospitality. These notes were greatly
improved on the basis of preliminary talks (which represent the first
half of the document) given by M.~Musta\c{t}a, I.~Shipman, T.~de~Fernex, and Ch.~Hacon.  Discussing with them and other participants
during the meeting made for a very fruitful experience.  We thank
J.-L.~Colliot-Th\'el\`ene and the anonymous referees for numerous helpful comments on the text.
}

\section{Preliminaries on Chow groups}

Let $k$ be an arbitrary field.  By scheme, we will mean a separated
$k$-scheme of finite type.
In this section, we give a quick introduction to the Chow group of
algebraic cycles on a scheme up to rational equivalence. See Fulton's book
\cite{fulton:intersection_theory} for more details.

Denote by $Z_i(X)$ the free $\ZZ$-module generated by all
$i$-dimensional closed integral subschemes of $X$. The elements of
$Z_i$ are called \linedef{algebraic $i$-cycles}. We will
also employ the codimension notation $Z^i(X)=Z_{n-i}(X)$ when $X$ is smooth of pure dimension $n$.  The
\linedef{support} of an $i$-cycle $\sum_n a_n [V_n]$ is the union of
the closed subschemes $V_i$ in $X$; it is \linedef{effective} if $a_n
>0$ for all $n$.

Given an $(i+1)$-dimensional closed integral subscheme $W$ of $X$, and
a closed integral subscheme $V \subset W$ of codimension 1, we denote
by $\ko_{W,V}$ the local ring of $W$ at the generic point of $V$; it
is a local domain of dimension 1 whose field of fractions is the
function field $k(W)$.  For a nonzero function $f \in \OO_{W,V}$, we
define the \linedef{order} of vanishing $\mathrm{ord}_V(f)$ of $f$
along $V$ to be the length of the $\OO_{W,V}$-module $\OO_{W,V}/(f)$.
The order extends uniquely to a homomorphism $\mathrm{ord}_V :
k(W)^\times \to \ZZ$.  If $W$ is normal, when $\mathrm{ord}_V$
coincides with the usual discrete valuation on $k(W)$ associated to
$V$.  We also define the \linedef{divisor} of a rational function $f
\in k(W)^\times$ as an $i$-cycle on $X$ given by
$$
[\mathrm{div}(f)] = \sum_{V \subset W} \mathrm{ord}_V(f) [V],
$$
where the sum is taken over all closed integral subscheme $V \subset
W$ of codimension 1.  An $i$-cycle $z$ on $X$ is \linedef{rationally
equivalent} to 0 if there exists a finite number of closed integral
$(i+1)$-dimensional subschemes $W_j \subset X$ and rational functions
$f_j \in K(W_j)^\times$ such that $z = \sum_j [\mathrm{div}(f_j)]$ in
$Z_i(X)$. Note that the set of $i$-cycles rationally equivalent to $0$
forms a subgroup of $Z_i(X)$ since
$[\mathrm{div}(f^{-1})]=-[\mathrm{div}(f)]$ for any rational function
$f \in K(W)^\times$.  Denote the associated equivalence relation on
$Z_i(X)$ by $\sim_{\text{rat}}$.  The \linedef{Chow group} of
$i$-cycles on $X$ is the quotient group $\CH_i(X) =
Z_i(X)/\!\sim_{\text{rat}}$ of algebraic $i$-cycles modulo rational
equivalence.

\subsection{Morphisms}

Let $f: X \to Y$ be a proper morphism of schemes. Define a
\linedef{push-forward} map on cycles $f_* : Z_i (X) \to Z_i(Y)$
additively as follows.  For a closed integral subscheme $V \subset X$
define
$$
f_*([V]) = \left\lbrace \begin{array}{ll}
                    0 & \text{ if } \mathrm{dim}(f(V)) < \mathrm{dim}(V) \\
                    \deg(V / f(V))\, [f(V)] & \text{ if } \mathrm{dim}(f(V)) = \mathrm{dim}(V)
                   \end{array}\right.
$$
where $\deg(V/f(V))$ is the degree of the finite extension of function
fields $k(V)/k(f(V))$ determined by $f$.  This map respects rational
equivalence and hence induces a push-forward map on Chow groups  $f_* : \CH_i(X)
\to \CH_i(Y)$.

Let $f: X \to Y$ be a flat morphism of relative dimension $r$. Define
a \linedef{pull-back} map $f^*: Z_i(Y) \to Z_{i+r}(X)$ additively as
follows.  For a closed integral subscheme $V \subset Y$ define
$$
f^* ([V]) = [f^{-1}(V)].
$$
This map respects rational equivalence and hence induces a
pull-back map on Chow groups $f^* : \CH_i(Y) \to \CH_{i+r}(X)$.

A special case of proper push forward is given by considering a closed
immersion $\iota: Z \to X$.  Letting $j: U \to X$ be the open
complement of $Z$, we note that $j$ is flat. There is an exact
excision sequence
$$
\CH_i(Z) \mapto{\,\iota_*\,} \CH_i(X)
\mapto{\,j^*\,} \CH_i(U) \longrightarrow 0
$$
which comes from an analogous exact sequence on the level of cycles.

Moreover, we have the following compatibility between the proper
push-forward and flat pull-back.  Given a cartesian diagram
  $$\xymatrix{
 X' \ar[r]^{g'} \ar[d]_{f'} & X \ar[d]^f \\
 Y' \ar[r]^{g} & Y,}$$
where $g$ is flat of relative dimension $r$ and $f$ is proper,
then $g^* f_* = f'_* g'^*$ as maps $\CH_i(X) \to \CH_{i+r}(Y')$.
 
A third natural map between Chow groups is the \linedef{Gysin
map}. Given a regular closed embedding $\iota: X \to Y$ of codimension
$r$, one can define a map $\iota^! : \CH_i(Y) \to \CH_{i-r}(X)$.  The
precise definition is much more involved, see~\cite[\S 5.2;
6.2]{fulton:intersection_theory}, and in particular, is not induced
from a map on cycles.  In particular, this map factors through
$\CH_i(N)$, where $s: N \to Y$ is the normal cone of $X$ in $Y$, and
can be described by the composition of a Gysin map $s^!$ for vector
bundles and the inverse of the pull back $f^*$ by $f: N \to X$.

The Gysin map allows one to write the \linedef{excess intersection
formula} for a regular closed embedding $\iota: X \to Y$ of
codimension $r$
$$
\iota^! \iota_* (\alpha) = c_r(h^* N_{X/Y}) \cap \alpha,
$$
for any cycle $\alpha$ in $Z_*(Y)$, where $h: N_{X/Y} \to X$ is the
normal bundle of $X$ in $Y$ and $c_r$ denotes the $r$-th Chern class.
In particular, this shows that $\iota^! \iota_* = 0$ whenever
$N_{X/Y}$ is trivial.

We now define the Gysin map for any local complete intersection (lci)
morphism $f: X \to Y$. Consider the factorization of $f$ as
$$
X \mapto{~\iota~} P \mapto{~h~} Y,
$$
where $\iota$ is a regular embedding of codimension $r$ and $h$ is
smooth of relative dimension $m$.  Then we can define the Gysin map
$$
f^! = \iota^! \circ h^* : \CH_i(Y) \mapto{~h^*\,} \CH_{i-m}(P) \mapto{~\iota^!\,} \CH_{i-m-r}(X).
$$
Such a map is independent on the chosen factorization of $f$ and
coincides with the flat pull-back $f^*$ whenever $f$ is flat.  As a
relevant example, any morphism $f: X \to Y$ between smooth $k$-schemes
is lci (indeed, $f$ factors $X \to X \times Y \to Y$ into the regular
graph embedding followed by the smooth projection morphism), hence
induces a Gysin map $f^! : \CH^i(Y) \to \CH^i(X)$.  Finally, even when
$f$ is not flat, we often denote $f^*=f^!$, so that $f^*$ is defined
for any lci morphism.

\subsection{Intersections}
\label{subsec:intersections}

Let $X$ be a smooth $k$-scheme of pure dimension $n$.  Then the Chow
group admits an \linedef{intersection product} as follows.  For closed
integral subschemes $V \subset X$ and $W \subset X$ of codimension $i$
and $j$, respectively, define
$$
[V].[W] = \Delta^![V \times W] \in \CH^{i+j}(X)
$$
where $\Delta : X \to X \times X$ is the diagonal morphism, a regular
embedding of codimension $n$.  This induces a bilinear map $\CH^i(X)
\times \CH^j(X) \to \CH^{i+j}(X)$, which makes $\CH(X) =
\oplus_{i\geq 0} \CH^i(X)$ into a commutative graded ring with
identity $[X] \in \CH^0(X)$.  Gysin maps between smooth $k$-schemes
are then ring homomorphisms for the intersection product.

One can understand the intersection product in terms of literal
intersections of subschemes, via moving lemmas, see
\cite[\S11.4]{fulton:intersection_theory} for a discussion of the
technicalities involved.  Assuming that $X$ is smooth and
quasi-projective, given closed integral subschemes $V \subset X$ and
$W \subset X$ of codimension $i$ and $j$, respectively, the moving
lemma says that one can replace $V$ by a rationally equivalent cycle
$V' = \sum_l a_l [V_l]$ so that $V'$ and $W$ meet properly, i.e., $V_l
\cap W$ has codimension $i+j$ for all $l$.  Then one can define
$[V].[W] = \sum_l a_l[V_l \cap W]$.  A more refined moving lemma is
then required to show that the rational equivalence class of this
product is independent of the choice of cycle $V'$.

One easy moving lemma that we will need to use often is the moving
lemma for 0-cycles: given a smooth quasi-projective $k$-scheme $X$, an
open dense subscheme $U \subset X$, and a 0-cycles $z$ on $X$, there
exists a 0-cycle $z'$ on $X$ rationally equivalent to $z$ such that
the support of $z'$ is contained in $U$.  See, e.g.,
\cite[\S2.3]{fulton:singular}, \cite[p.~599]{colliot:finitude} for a
reference to the classical moving lemma, which implies this.

\subsection{Correspondences}
\label{subsec:correspondences}

Let $X$ and $Y$ be smooth $k$-schemes of pure dimension $n$ and $m$
respectively.  We recall some notions from
\cite[\S16.1]{fulton:intersection_theory}.

\begin{definition}
A \linedef{correspondence} $\alpha$ from $X$ to $Y$ is an element
$\alpha \in \CH(X\times Y)$. The same element $\alpha$, seen in $\CH(Y
\times X)$ is called the transpose correspondence $\alpha'$ from $Y$
to $X$.
\end{definition}

Now assume that $Y$ is proper over $k$ and let $Z$ be a smooth
equidimensional $k$-scheme.  If $\alpha \in \CH(X \times Y)$ and
$\beta \in \CH(Y \times Z)$ are correspondences, we define the
composed correspondence
$$
\beta \circ \alpha = p_{X \times Z *} (p_{X \times Y}^*(\alpha) . p_{Y
\times Z}^* (\beta))  \in \CH(X \times Z),
$$
where $p_\bullet$ denotes the projection from $X \times Y \times Z$ to
$\bullet,$ and where we use the intersection product on $\CH(X \times
Y \times Z)$.  Taking $X=Y=Z$ smooth and proper, the operation of
composition of correspondences makes $\CH(X\times X)$ into an
associative ring with unit $[\Delta_X]$.

Correspondences between $X$ and $Y$ naturally give rise to morphisms
between their Chow groups as follows. If $\alpha \in \CH^{m+i}(X \times
Y)$ is a correspondence from $X$ to $Y$, then we define
\begin{align*}
\alpha_* : \CH_j(X) {}& \longrightarrow \CH_{j-i}(Y) 
\qquad &
\alpha^* : \CH^j(Y) {}& \longrightarrow \CH^{j+i}(X)\\
z {}& \longmapsto q_*(p^*(z).\alpha)
\qquad &
z {}& \longmapsto p_*(q^*(z).\alpha)
  \end{align*}
where $p$ and $q$ denote the projections from $X \times Y$ to $X$ and
$Y$, respectively.  If $\beta$ is a correspondence from $Y$ to $Z$,
then we have  $(\beta \circ \alpha)_* = \beta_* \circ \alpha_*$ and $(\beta
\circ \alpha)^* = \alpha^* \circ \beta^*$.
An important special case are correspondences $\alpha \in \CH^n(X
\times X)$, which define maps $\alpha_* : \CH_i(X) \to \CH_i(X)$ and
$\alpha^* : \CH^i(X) \to \CH^i(X)$.  In particular, the map
$$
\CH^n(X \times X) \to \End_\ZZ(\CH_i(X))
$$
is a ring homomorphism
(see~\cite[Cor.~16.1.2]{fulton:intersection_theory}).
  
As an example, letting $f: X \to Y$ be a morphism with graph $\Gamma_f
\subset X \times Y$, we can consider $\alpha = [\Gamma_f] \in
\CH^m(X\times Y)$ as a correspondence from $X$ to $Y$, and then
$\alpha_*=f_*$ and $\alpha^* = f^*$.

\subsection{Specialization}
\label{subsec:specialization}

Most of the previous intersection theoretic considerations carry over
to a more general relative setting, replacing the base field $k$ with
a regular base scheme $S$.

Let $X$ be a scheme that is separated and finite type over $S$.  For a
closed integral subscheme $V \subset X$, we define the
\linedef{relative dimension} $\dim_{\!S}(V) =
\mathrm{tr.deg}(K(V)/K(W)) - \mathrm{codim}_S(W)$, where $W$ is the
closure of the image of $V$ in $S$. A relative $i$-cycle on $X/S$ is
an integer linear combination of integral subschemes of $X$ of
relative dimension $i$.  The notion of rational equivalance of
relative $i$-cycle is as before and we denote by $\CH_i(X/S)$ the
group of relative $i$-cycles on $X/S$ up to rational equivalence.  As
before, there are push-forwards for proper $S$-morphisms, pull-backs
for flat $S$-morphisms, and Gysin maps for lci $S$-morphisms.

Now suppose that $\iota:\overline{S} \to S$ is a regular embedding of
codimension $r$, and let $j:S^0 \to S$ be the complement of
$\overline{S}$. Consider the following diagram of cartesian squares:
$$
\xymatrix{
\overline{X} \ar[r]\ar[d] & X \ar[d] & X^0 \ar[l] \ar[d]\\
\overline{S} \ar[r]^{\iota} & S & S^0 \ar[l]_{j}
}
$$
Noting that $\CH_i(\overline{X}/\overline{S}) =
\CH_{i-r}(\overline{X}/S)$ and $\CH_i(X^0/S^0)=\CH_i(X^0/S)$, then the
Gysin map $\iota^!: \CH_i(X/S) \to
\CH_{i-r}(\overline{X}/S)=\CH_{i}(\overline{X}/\overline{S})$ gives
rise to a diagram
$$
\xymatrix{
\CH_{i}(\overline{X}/{S}) \ar[r]^{\iota_*} \ar@{=}[d] & \CH_i(X/S)
\ar[d]^(.41){\iota^!} \ar[r]^{j^*} & \CH_i(X^0/S) \ar@{=}[d] \ar[r] & 0\\
\CH_{i+r}(\overline{X}/\overline{S}) &
\CH_{i}(\overline{X}/\overline{S}) & \CH_i(X^0/S^0) \ar@{-->}[l]_\sigma& 
}
$$
where the top row is the relative short exact excision sequence.  We
see that the obstruction to defining a well-defined map
\linedef{specialization map} $\sigma : \CH_i(X^0/S^0) \to
\CH_i(\overline{X}/\overline{S})$ fitting into the diagram is
precisely the image of $\iota^! \iota_*$.  By the excess intersection
formula, if $N_{\overline{S}/S}$ is trivial, then $\iota^!  \iota_*
=0$, in which case we arrive at a well-defined specialization map.
When it exists, the specialization map is compatible with push-forwards
and pull-backs.  

An important special case is when $S=\Spec(R)$ for a discrete
valuation ring $R$, so that $S^0=\Spec(K)$ and
$\overline{S}=\Spec(k)$, where $k$ and $K$ denote the residue and the
fraction field of $R$, respectively. Given a separated $R$-scheme $X$
of finite type, the $k$-scheme $\overline{X}=X_k$ is the special
fiber, the $K$-scheme $X^0=X_K$ is the generic fiber, and we arrive at
specialization maps $\sigma : \CH_i(X_K) \to \CH_i(X_k)$. For more
details, we refer to~\cite[\S20.3]{fulton:intersection_theory}.

\section{Preliminaries on semiorthogonal decompositions}\label{sect:prel-on-dercat}

Let $k$ be an arbitrary field. We present here the basic notions of semiorthogonal decompositions
and exceptional objects for $k$-linear triangulated categories, bearing in mind
our main application, the derived categories of a projective $k$-variety.
We refer to~\cite[Ch. 1, 2, 3]{Huybook} for an introduction to derived
categories aimed at algebraic geometers. In particular, we will assume the
reader to be familiar with the notions of triangulated and derived categories, 
and basic homological algebra as well as complexes of coherent sheaves on
schemes.

However, a disclaimer here is necessary. The appropriate structure to consider
to study derived categories of smooth projective varieties is the structure
of $k$-linear differential graded (dg) category; that is, a category enriched over
dg complexes of $k$-vector spaces (see~\cite{keller:ICM} for definitions and
main properties). In this perspective, morphisms between two objects in the triangulated structure
can be seen as the zeroth cohomology of the complex of morphisms between the
same objects in the dg structure. 
Considering the dg structure is natural under many point of views: above all, all categories we will consider
can be endowed with a canonical dg structure (see~\cite{Lunts-Orlov}),
in such a way that dg functors will correspond
to Fourier--Mukai functors (see~\cite{toen:derived-morita}). Moreover,
the dg structure allows to define noncommutative motives, which give a motivic framework to
semiorthogonal decompositions. Even if related to some of our considerations,
we will not treat noncommutative motives in this report. The interested reader can consult~\cite{tabuada-book}.

\subsection{Semiorthogonal decompositions and their mutations}
Let $\cat{T}$ be a $k$-linear triangulated category. 
A full triangulated subcategory
$\cat{A}$ of $\cat{T}$ is called \it admissible \rm if the embedding
functor admits a left and a right adjoint.

\begin{definition}[\cite{bondal_kapranov:reconstructions}]
\label{def-semiortho}
A {\em semiorthogonal decomposition} of $\cat{T}$ is a sequence of
admissible subcategories $\cat{A}_1, \ldots, \cat{A}_n$ of $\cat{T}$
such that

\begin{itemize}

\item $\Hom_{\cat{T}}(A_i,A_j) = 0$ for all $i>j$ and any $A_i$ in $\cat{A}_i$
and $A_j$ in $\cat{A}_j$; 

\item for every object $T$ of $\cat{T}$, there is a chain
of morphisms $0=T_n \to T_{n-1} \to \ldots \to T_1 \to T_0 = T$ such that the cone
of $T_k \to T_{k-1}$ is an object of
$\cat{A}_k$ for all $k=1,\ldots,n$. 

\end{itemize}

Such a decomposition will be written
$$\cat{T} = \langle \cat{A}_1, \ldots, \cat{A}_n \rangle.$$
\end{definition}

If $\cat{A} \subset \cat{T}$ is admissible, we have
two semiorthogonal decompositions
$$
\cat{T}=\langle \cat{A}^{\perp}, \cat{A} \rangle = \langle \cat{A},
^{\perp}\cat{A} \rangle,
$$
where $\cat{A}^{\perp}$ and $^\perp\cat{A}$ are, respectively, the
left and right orthogonal of $\cat{A}$ in $\cat{T}$ (see~\cite[\S
3]{bondal_kapranov:reconstructions}).

Given a semiorthogonal decomposition $\cat{T} = \langle \cat{A},
\cat{B} \rangle$, Bondal~\cite[\S3]{bondal:representations} defines
left and right mutations $L_{\cat{A}}(\cat{B})$ and
$R_{\cat{B}}(\cat{A})$ of this pair. In particular, there are
equivalences $L_{\cat{A}}(\cat{B}) \equi \cat{B}$ and
$R_{\cat{B}}(\cat{A}) \equi \cat{A}$, and semiorthogonal
decompositions
$$
\begin{array}{ccc}
\cat{T}=\langle L_{\cat{A}}(\cat{B}), \cat{A} \rangle, & \,\,\,\,\, &  
\cat{T} = \langle \cat{B}, R_{\cat{B}}(\cat{A}) \rangle.
\end{array}
$$
We refrain from giving an explicit definition for the mutation
functors in general, which can be found in~\cite[\S
3]{bondal:representations}.  In \S\ref{subs:exc-objects} we will give
an explicit formula in the case where $\cat{A}$ and $\cat{B}$ are
generated by exceptional objects.

\subsection{Exceptional objects}
\label{subs:exc-objects}

Very special examples of admissible subcategories, semiorthogonal
decompositions, and their mutations are provided by the theory of
exceptional objects and collections. The theory of
exceptional objects and semiorthogonal decompositions in the case
where $k$ is algebraically closed and of characteristic zero was
studied in the Rudakov seminar at the end of the 80s, and developed by
Rudakov, Gorodentsev, Bondal, Kapranov, Kuleshov, and Orlov among others, see
\cite{gorodentsev-rudakov}, \cite{bondal:representations},
\cite{bondal_kapranov:reconstructions},
\cite{bondal_orlov:semiorthogonal}, and~\cite{helices-book}. As noted
in~\cite{auel-berna-bolo}, most fundamental properties persist over
any base field $k$.

Let $\cat{T}$ be a $k$-linear triangulated category. The triangulated
category $\sod{\{E_i\}_{i\in I}}$ \linedef{generated} by a class of
objects $\{E_i\}_{i\in I}$ of $\cat{T}$ is the smallest thick (that
is, closed under direct summands) full triangulated subcategory of
$\cat{T}$ containing the class.  We will write $\Ext_\cat{T}^r(E,F) = \Hom_\cat{T}(E,F[r])$.

\begin{definition}
\label{def-except}
Let $A$ be a division (not necessarily central) $k$-algebra (e.g., $A$
could be a field extension of $k$).  An object $E$ of $\cat{T}$ is
called \linedef{$A$-exceptional} if
$$
\Hom_{\cat{T}}(E,E) = A \quad \text{and} \quad
\Ext^r_{\cat{T}}(E,E)=0 \quad \text{for} \quad r \neq 0.
$$
An exceptional object in the classical sense
\cite[Def.~3.2]{gorodentsev-moving} of the term is a $k$-exceptional
object.  By \linedef{exceptional} object, we mean $A$-exceptional for
some division $k$-algebra $A$.

A totally ordered set $\{E_1,\ldots,E_n\}$ of exceptional objects is
called an \linedef{exceptional collection} if
$\Ext^r_{\cat{T}}(E_j,E_i)=0$ for all integers $r$ whenever $j>i$.  An
exceptional collection is \linedef{full} if it generates $\cat{T}$,
equivalently, if for an object $W$ of $\cat{T}$, the vanishing
$\Ext^r_{\cat{T}}(E_i,W)=0$ for all $i=1,\ldots,n$ and all integers $r$
implies $W=0$.  An exceptional collection is \linedef{strong} if
$\Ext^r_{\cat{T}}(E_i,E_j)=0$ whenever $r\neq 0$.
\end{definition}

Exceptional collections provide examples of semiorthogonal
decompositions when $\cat{T}$ is the bounded derived category of a
smooth projective scheme.

\begin{prop}[{\cite[Thm.~3.2]{bondal:representations}}]
Let $\{ E_1, \ldots, E_n \}$ be an exceptional collection on the
bounded derived category $\Db(X)$ of a smooth projective $k$-scheme
$X$.  Then there is a semiorthogonal decomposition
$$
\Db(X) = \langle \cat{A}, E_1, \ldots, E_n \rangle,
$$
where $\cat{A}=\sod{E_1,\ldots,E_n}^\perp$ is the full subcategory of
objects $W$ such that $\Ext^r_{\cat{T}}(E_i,W)=0$ for all $i=1,\ldots,
n$ and all integers $r$.  In particular, the sequence if full if and
only if $\cat{A}=0$.
\end{prop}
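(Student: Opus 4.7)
The plan is to proceed by induction on $n$, reducing to a single admissibility statement for the subcategory generated by one exceptional object. Throughout, the smoothness and properness of $X$ enter in an essential way, ensuring that $\RHom$-complexes are perfect and that $\Db(X)$ admits a Serre functor $S_X = -\otimes \omega_X[\dim X]$.

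First I would handle the base case $n=1$. For an $A$-exceptional object $E$ with $A = \End(E)$, a right adjoint to the inclusion $\sod{E} \hookrightarrow \Db(X)$ is obtained by sending $T$ to $\RHom_{\Db(X)}(E,T) \otimes_A E$ together with the evaluation morphism
\[
\RHom_{\Db(X)}(E,T) \otimes_A E \mor T,
\]
whose cone $T'$ lies in $\sod{E}^\perp$; exceptionality yields $\RHom(E, T') = 0$, and smoothness/properness of $X$ make the derived tensor product meaningful via perfection of the $A$-complex $\RHom(E,T)$. The Serre functor furnishes a left adjoint by the dual construction applied to $S_X\inv(E)$, so $\sod{E}$ is admissible and $\Db(X) = \sod{\sod{E}^\perp, \sod{E}}$.

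For the filtration, given $T \in \Db(X)$, I project iteratively from right to left. Setting $S_0 = T$, at step $k = 1, \ldots, n$, project $S_{k-1}$ onto $\sod{E_{n-k+1}}$ via the distinguished triangle
\[
\RHom(E_{n-k+1}, S_{k-1}) \otimes_{A_{n-k+1}} E_{n-k+1} \mor S_{k-1} \mor S_k,
\]
where $A_i = \End(E_i)$, so that $S_k \in \sod{E_{n-k+1}}^\perp$. An inductive argument shows $S_k \in \sod{E_{n-k+1}, \ldots, E_n}^\perp$: applying $\Hom(E_j, -)$ to the triangle for $j > n-k+1$, the middle term vanishes by induction and the outer terms vanish by the exceptional collection condition $\Ext^r(E_j, E_i) = 0$ for $j > i$. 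After step $n$, one has $S_n \in \sod{E_1, \ldots, E_n}^\perp = \cat{A}$, and the triangles assemble, after reindexing, into the filtration required by Definition~\ref{def-semiortho}.

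The main obstacle is the admissibility of $\sod{E}$ in the base case, where both adjoints need to be constructed and each exploits a distinguished feature of $\Db(X)$: $A$-perfection of $\RHom$ for the right adjoint, and the Serre functor for the left. Given this, semiorthogonality is automatic: $\Hom(\sod{E_j}, \sod{E_i}) = 0$ for $j > i$ is exactly the exceptional collection hypothesis, and $\Hom(\sod{E_i}, \cat{A}) = 0$ holds by the very definition $\cat{A} = \sod{E_1, \ldots, E_n}^\perp$. The final clause is then immediate: the collection is full (generates $\Db(X)$) if and only if $\cat{A} = 0$, which by definition means that the vanishing $\Ext^r(E_i, W) = 0$ for all $i$ and $r$ forces $W = 0$.
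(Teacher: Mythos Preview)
The paper does not supply its own proof of this proposition; it is quoted from Bondal \cite[Thm.~3.2]{bondal:representations} without argument. Your approach is essentially the standard one from that source: show each $\sod{E_i}$ is admissible via the evaluation triangle (right adjoint) and the Serre functor (left adjoint), then build the filtration of an arbitrary object by iterated projection.

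One small point worth making explicit: the paper's Definition~\ref{def-semiortho} requires \emph{all} components, including $\cat{A}$, to be admissible. You verify admissibility for each $\sod{E_i}$ but not directly for $\cat{A}$. This follows from the general fact (due to Bondal--Kapranov \cite{bondal_kapranov:reconstructions}, and invoked just before the proposition in the paper) that in a triangulated category with a Serre functor, the left orthogonal of an admissible subcategory is again admissible; since $\sod{E_1,\ldots,E_n}$ is admissible (by iterating your base case), so is $\cat{A} = \sod{E_1,\ldots,E_n}^\perp$. Adding a sentence to this effect would close the argument cleanly against the paper's stated definition.
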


Given an exceptional pair $\{ E_1, E_2 \}$ with $E_i$ being
$A_i$-exceptional, consider the admissible subcategories $\langle E_i
\rangle$, forming a semiorthogonal pair. We can hence perform right
and left mutations, which provide equivalent admissible
subcategories. 

Recall that mutations provide equivalent admissible subcategories and
flip the semiorthogonality condition.
It easily follows that the object $R_{E_2}(E_1)$ is $A_1$-exceptional, the
object $L_{E_1}(E_2)$ is $A_2$-exceptional, and the pairs $\{
L_{E_1}(E_2), E_1 \}$ and $\{E_2, R_{E_2}(E_1)\}$ are exceptional.
We call $R_{E_2}(E_1)$ the \linedef{right mutation} of $E_1$ through $E_2$ and
$L_{E_1}(E_2)$ the \linedef{left mutation} of $E_2$ through $E_1$.

In the case of $k$-exceptional objects, mutations can be explicitly
computed.

\begin{definition}[{\cite[\S3.4]{gorodentsev-moving}}]
Given a $k$-exceptional pair $\{ E_1, E_2 \}$ in $\cat{T}$, the
\emph{left mutation} of $E_2$ with respect to $E_1$ is the object
$L_{E_1}(E_2)$ defined by the distinguished triangle:
\begin{equation}
\label{eq:left-mutation}
\Hom_\cat{T}(E_1,E_2) \otimes E_1 \mapto{\,ev\,} E_2
\longrightarrow L_{E_1}(E_2),
\end{equation}
where $ev$ is the canonical evaluation morphism. The \linedef{right
mutation} of $E_1$ with respect to $E_2$ is the object $R_{E_2}(E_1)$
defined by the distinguished triangle:
$$
R_{E_2}(E_1) \longrightarrow E_1 \mapto{coev}
\Hom_\cat{T}(E_1,E_2) \otimes E_2,
$$
where $coev$ is the canonical coevaluation morphism.
\end{definition}

Given an exceptional collection $\{ E_1, \ldots, E_n \}$, one
can consider any exceptional pair $\{ E_i, E_{i+1} \}$ and perform
either right or left mutation to get a new exceptional collection.

Exceptional collections provide an algebraic description of admissible
subcategories of $\cat{T}$. Indeed, if $E$ is an $A$-exceptional
object in $\cat{T}$, the triangulated subcategory $\langle E \rangle
\subset \cat{T}$ is equivalent to $\Db(k,A)$.  The equivalence
$\Db(k,A) \to \langle E \rangle$ is obtained by sending the complex
$A$ concentrated in degree $0$ to $E$. The right adjoint functor
is the morphism functor $\RHom(-,E)$.

We conclude this section by considering a weaker notion of exceptionality, which depends
only on the numerical class and on the bilinear form $\chi$.

\begin{definition}
Let $X$ be a smooth projective variety. A \linedef{numerically exceptional collection} is
a collection $E_1,\ldots,E_n$ of exceptional objects in the derived category $\Db(X)$ such that 
$\chi(E_i,E_j) = 0$ for $i > j$ and $\chi(E_i,E_i)=1$ for all $i=1,\ldots,n$.
\end{definition}

\begin{remark}
It is clear that any exceptional collection is a numerically exceptional
collection, while the converse need not to be true.
\end{remark}

\subsection{How to construct semiorthogonal decompositions? Examples and subtleties}
Given a variety $X$ is quite difficult to describe semiorthogonal decompositions
of $X$. Moreover, the geometry of $X$ plays a very important r\^ole in understanding
whether the category $\Db(X)$ has semiorthogonal decompositions and in describing
semiorthogonal sets of admissible subcategories. In general, the most difficult task is to show that 
such sets form a generating system for the whole category\footnote{Notice that $\Db(X)$ admits a fully orthogonal decomposition if and only if 
$X$ is not connected. We will only consider connected varieties.}.

The main motivation for the study of birational geometry via semiorthogonal decompositions
is the following famous theorem by Orlov~\cite{orlovprojbund}.

\begin{theorem}[Orlov]\label{thm:blow-ups}
Let $X$ be a smooth projective variety, $Z \subset X$ a smooth subvariety of codimension
$c \geq 2$, and $\sigma:Y \to X$ the blow-up of $Z$. Then the functor $L\sigma^*: \Db(X) \to \Db(Y)$
is fully faithful, and, for $i=1, \ldots, c-1$ there are fully faithful functors $\Phi_i : \Db(Z) \to \Db(Y)$,
and a semiorthogonal decomposition
$$\Db(Y)=\sod{L\sigma^* \Db(X), \Phi_1 \Db(Z), \ldots, \Phi_{c-1} \Db(Z)}$$
\end{theorem}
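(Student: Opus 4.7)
The plan is to analyze the blow-up geometrically, construct the functors $\Phi_i$ explicitly from the exceptional divisor, verify full faithfulness and semiorthogonality via adjunction computations on the projective bundle $\pi: E \to Z$, and finally tackle generation, which is the main obstacle.

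Let $\iota: Z \mono X$ denote the inclusion of the center, $j: E \mono Y$ the inclusion of the exceptional divisor, and $\pi := \sigma|_E : E \to Z$. Then $E \iso \PP(N_{Z/X})$ is a $\PP^{c-1}$-bundle over $Z$, and with $\OO_E(1)$ the tautological twist (so that $N_{E/Y} \iso \OO_Y(E)|_E \iso \OO_E(-1)$), we have $\sigma \circ j = \iota \circ \pi$. Two background inputs are needed: $R\sigma_* \OO_Y \iso \OO_X$, standard for blow-ups of smooth centers in smooth varieties (together with the base change $R\sigma_* j_* \iso \iota_* R\pi_*$); and Orlov's projective bundle formula for $\pi$, which provides $R\pi_*\OO_E \iso \OO_Z$, the vanishings $R\pi_*\OO_E(m) = 0$ for $-c+1 \leq m \leq -1$, and the decomposition
\[
\Db(E) = \sod{\pi^*\Db(Z), \pi^*\Db(Z) \otimes \OO_E(1), \ldots, \pi^*\Db(Z) \otimes \OO_E(c-1)}.
\]

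Define $\Phi_i(F) := j_*(\pi^* F \otimes \OO_E(i-1))$ for $i = 1, \ldots, c-1$ (the twists ranging over $0, 1, \ldots, c-2$). Full faithfulness of $L\sigma^*$ is immediate from the projection formula and $R\sigma_*\OO_Y = \OO_X$. For $\Phi_i$, combine the adjunction $(j_*, j^!)$ with $j^!(-) \iso Lj^*(-) \otimes N_{E/Y}[-1]$ and the self-intersection splitting $j^! j_* K \iso K \oplus K \otimes \OO_E(-1)[-1]$ (from the Koszul resolution of $j_*\OO_E$, whose connecting map on $E$ vanishes); inserting $K = \pi^* F \otimes \OO_E(i-1)$ and using the projection formula together with $R\pi_*\OO_E = \OO_Z$ and $R\pi_*\OO_E(-1) = 0$, the computation collapses to $\RHom_Y(\Phi_i F, \Phi_i G) \iso \RHom_Z(F,G)$. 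Semiorthogonality follows by analogous calculations: for $i > j$, $\RHom_Y(\Phi_i F, \Phi_j G)$ yields terms involving $R\pi_*\OO_E(j-i)$ and $R\pi_*\OO_E(j-i-1)$, both in the vanishing range; for $\RHom_Y(\Phi_i F, L\sigma^* G)$, the identity $Lj^* L\sigma^* = L\pi^* L\iota^*$ reduces the problem to $R\pi_*\OO_E(-i)$, which vanishes for $1 \leq i \leq c-1$.

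The main obstacle is generation. Set $\cat{D}' := \sod{L\sigma^*\Db(X), \Phi_1\Db(Z), \ldots, \Phi_{c-1}\Db(Z)}$. First, one shows by induction on $0 \leq m \leq c-1$ that $L\sigma^* G \otimes \OO_Y(-mE) \in \cat{D}'$ for every $G \in \Db(X)$: the base $m = 0$ is clear, and the inductive step uses the short exact sequence
\[
0 \to \OO_Y(-(m+1)E) \to \OO_Y(-mE) \to j_*\OO_E(m) \to 0
\]
(obtained by twisting the Koszul resolution $0 \to \OO_Y(-E) \to \OO_Y \to j_*\OO_E \to 0$ by $\OO_Y(-mE)$ and using $\OO_Y(-mE)|_E \iso \OO_E(m)$) tensored with $L\sigma^* G$, since for $0 \leq m \leq c-2$ the third term identifies with $\Phi_{m+1}(L\iota^* G) \in \cat{D}'$. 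Second, since $\OO_Y(-E)$ is $\sigma$-relatively ample with fibers $\PP^{c-1}$, the set $\{L\sigma^* G \otimes \OO_Y(-mE) : G \in \Db(X), 0 \leq m \leq c-1\}$ generates $\Db(Y)$: any $N$ in its right orthogonal satisfies $R\sigma_*(N \otimes \OO_Y(mE)) = 0$ for $m = 0, \ldots, c-1$, and the restrictions $\OO_Y(mE)|_{\PP^{c-1}} = \OO_{\PP^{c-1}}(-m)$ form a full exceptional collection on each fiber, forcing $N = 0$. This establishes $\cat{D}' = \Db(Y)$.
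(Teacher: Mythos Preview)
The paper does not actually prove this theorem: it is stated as Orlov's result and cited to \cite{orlovprojbund}, with only the full faithfulness of $L\sigma^*$ singled out afterwards as a special case of Lemma~\ref{lem:birat-map-vs-ffemb}. Your proof of that piece---projection formula plus $R\sigma_*\OO_Y=\OO_X$---is exactly the argument the paper records in that lemma.

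For the rest, there is nothing in the paper to compare against, but your sketch is correct and follows the standard route from Orlov's original proof: define $\Phi_i$ via $j_*(\pi^*(-)\otimes\OO_E(i-1))$, use the divisor self-intersection triangle together with the projective-bundle vanishings $R\pi_*\OO_E(m)=0$ for $-c+1\le m\le -1$ to get full faithfulness and semiorthogonality, and handle generation by twisting the Koszul sequence for $E$. Two small points of precision: the splitting of $Lj^*j_*K \simeq K \oplus K(1)[1]$ is not strictly needed---the distinguished triangle alone suffices for the $\RHom$ computations---and your final ``full exceptional collection on each fiber forces $N=0$'' step is really the observation that $R\sigma_*N=0$ first gives $N\simeq j_*M$ (as $\sigma$ is an isomorphism off $E$), after which the remaining vanishings $R\pi_*(M(-m))=0$ for $0\le m\le c-1$ kill $M$ via the projective-bundle decomposition of $\Db(E)$. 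With those clarifications the argument is complete.
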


Notice that Orlov's argument of the fully faithfulness of $L\sigma^*$
extends to the cases of a surjective morphism with rationally connected
fibers between smooth and projective varieties, though the
description of the orthogonal complement is in general unknown. The
fact that $L\sigma^*$ is fully faithful in Theorem~\ref{thm:blow-ups}
can be seen as a special case of the following Lemma, since a blow up
gives a surjective map with the required properties.

\begin{lemma}\label{lem:birat-map-vs-ffemb}
Let $X$ and $Y$ be smooth and projective $k$-schemes and $\sigma: Y
\to X$ a surjective morphism such that $\sigma_* \ko_Y = \ko_X$ and
$R^i \sigma_* \ko_Y=0$ for $i\neq 0$.  Then $L\sigma^*: \Db(X) \to
\Db(Y)$ is fully faithful.
\end{lemma}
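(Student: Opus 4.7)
My plan is to prove fully faithfulness by reducing the question, via adjunction, to showing that the unit of adjunction $\mathrm{id} \to R\sigma_* L\sigma^*$ is an isomorphism on $\Db(X)$, and then identifying $R\sigma_* L\sigma^*$ with the identity functor using the projection formula together with the two hypotheses on $R\sigma_*\OO_Y$.

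More precisely, the first step is to recall that since $\sigma$ is a morphism of smooth projective varieties, the derived pullback $L\sigma^*: \Db(X) \to \Db(Y)$ preserves boundedness and admits $R\sigma_*$ as a right adjoint at the level of bounded derived categories (the smoothness and properness ensuring the relevant finiteness). Fully faithfulness of $L\sigma^*$ is then equivalent to the statement that for every $B \in \Db(X)$, the unit morphism $\eta_B : B \to R\sigma_* L\sigma^* B$ is a quasi-isomorphism, since $\Hom_{\Db(Y)}(L\sigma^* A, L\sigma^* B) \cong \Hom_{\Db(X)}(A, R\sigma_* L\sigma^* B)$ naturally in $A$.

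The second step is the main computation: apply the projection formula
$$R\sigma_*\bigl(L\sigma^* B \otimes^L \OO_Y\bigr) \;\cong\; B \otimes^L R\sigma_* \OO_Y$$
to identify $R\sigma_* L\sigma^* B \cong B \otimes^L R\sigma_* \OO_Y$. Now the two hypotheses $\sigma_* \OO_Y = \OO_X$ and $R^i\sigma_*\OO_Y = 0$ for $i \neq 0$ together say exactly that the canonical map $\OO_X \to R\sigma_* \OO_Y$ is an isomorphism in $\Db(X)$. Substituting, we obtain $R\sigma_* L\sigma^* B \cong B \otimes^L \OO_X \cong B$, and one checks that this chain of isomorphisms is induced by $\eta_B$ (the unit map and the projection formula isomorphism are compatible via the natural transformation $\OO_X \to R\sigma_*\OO_Y$), so $\eta_B$ itself is an isomorphism.

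I do not expect any real obstacle here: the only mildly delicate point is justifying that the composition of canonical morphisms really is $\eta_B$, which is a standard naturality check for the projection formula. Everything else is formal from adjunction and the hypothesis that $R\sigma_*\OO_Y \simeq \OO_X$.
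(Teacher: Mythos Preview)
Your proof is correct and follows essentially the same approach as the paper: adjunction, the projection formula, and the hypothesis $R\sigma_*\OO_Y \simeq \OO_X$ combine to give $\Hom_Y(L\sigma^*A,L\sigma^*B) \cong \Hom_X(A,B)$. The paper presents this as a single chain of isomorphisms of Hom-sets rather than phrasing it via the unit of adjunction, but the content is identical.
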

\begin{proof}
For any $A$ and $B$ objects in $\Db(X)$, we have
$$\Hom_{Y}(L\sigma^*A,L\sigma^*B)=\Hom_X(A, R\sigma_* L\sigma^* B) = \Hom_X(A,B\otimes R\sigma_* \ko_Y) = \Hom_X(A,B),$$
by adjunction, projection formula and by our assumption respectively.
\end{proof}

The canonical bundle and its associated invariants, like the geometric genus and the irregularity, play a central r\^ole
in this theory. First of all it is easy to remark, using Serre duality, that if $X$ has a trivial canonical bundle,
then there is no non-trivial semiorthogonal decomposition of $\Db(X)$. The results obtained
by Okawa~\cite{okawa-curves} and Kawatani--Okawa~\cite{kawatani-okawa}
for low dimensional varieties are also strongly related to the canonical bundle.

\begin{theorem}[Okawa]\label{thm:okawacurves}
Let $C$ be a smooth projective $k$-curve of positive genus. Then $\Db(C)$ has no non-trivial semiorthogonal decompositions.
\end{theorem}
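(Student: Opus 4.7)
My plan is to argue by contradiction. Assume $\Db(C) = \sod{\cat{A}, \cat{B}}$ is a nontrivial semiorthogonal decomposition, so both admissible subcategories $\cat{A}$ and $\cat{B}$ are nonzero. The Serre functor of $\Db(C)$ is $S = - \otimes \omega_C[1]$, and my strategy is to exploit its interaction with the semiorthogonality, handling the cases $g = 1$ and $g \geq 2$ separately.

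The case $g = 1$ is handled cleanly by Serre duality. Here $\omega_C \iso \OO_C$, so $S$ is simply the shift $[1]$, which stabilizes every triangulated subcategory. For any $A \in \cat{A}$ and $B \in \cat{B}$ we have $A[1] \in \cat{A}$, hence $\Hom(B, A[1]) = 0$ by semiorthogonality. Serre duality then gives
$$
\Hom(A, B) \iso \Hom(B, S(A))^{*} = \Hom(B, A[1])^{*} = 0,
$$
and by shifting $B$ this extends to $\Ext^{i}(\cat{A}, \cat{B}) = 0$ for all $i$. Combined with the original vanishing $\Ext^{i}(\cat{B}, \cat{A}) = 0$, this produces a fully orthogonal decomposition of $\Db(C)$, which by the footnote in this subsection forces $C$ to be disconnected: a contradiction.

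The case $g \geq 2$ is subtler since $\omega_C$ is nontrivial and $S$ need not preserve $\cat{A}$ or $\cat{B}$. My plan is to combine a Hirzebruch--Riemann--Roch computation with a case analysis on the torsion content of the two pieces. Semiorthogonality gives $\chi(B, A) = 0$ for all $A \in \cat{A}$ and $B \in \cat{B}$, which on a curve of genus $g$ reads
$$
r_A\, r_B\, (1 - g) + r_B\, d_A - r_A\, d_B = 0,
$$
with $r_\bullet$ and $d_\bullet$ denoting rank and degree in $K_0(C)$. If $\cat{A}$ contains a torsion coherent sheaf, then every $B \in \cat{B}$ must have virtual rank $r_B = 0$, and vice versa; if both sides contain torsion, then all objects of $\Db(C) = \sod{\cat{A}, \cat{B}}$ would have virtual rank zero, contradicting $r_{\OO_C} = 1$. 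Otherwise, neither side contains torsion, and the relation concentrates the nonzero-rank objects of $\cat{A}$ on a single slope $\mu_A$ and those of $\cat{B}$ on $\mu_B = \mu_A - (g-1)$.

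The main obstacle is ruling out this last fixed-slope configuration: one must show that no two admissible subcategories of $\Db(C)$ concentrated on different slopes can jointly generate $\Db(C)$. The cleanest route, essentially Okawa's original approach, is to appeal to the Bondal--Orlov-style indecomposability principle for smooth projective varieties with ample canonical bundle, which applies directly to $C$ when $g \geq 2$; this is in line with the Kawatani--Okawa refinement mentioned immediately afterward in the text.
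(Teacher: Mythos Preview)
The paper does not give a proof of this theorem; it is quoted from Okawa \cite{okawa-curves}. So the comparison is with Okawa's actual argument.

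Your $g = 1$ argument is correct and standard: triviality of $\omega_C$ makes the Serre functor a shift, which upgrades any semiorthogonal decomposition to a fully orthogonal one, contradicting connectedness of $C$.

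For $g \geq 2$, however, your argument has a genuine gap at the decisive step. After the Riemann--Roch computation you arrive at a ``fixed-slope configuration'' and correctly identify it as the main obstacle --- and then dispose of it by invoking a ``Bondal--Orlov-style indecomposability principle for smooth projective varieties with ample canonical bundle''. There is no such principle available independently of the theorem you are proving. The Bondal--Orlov theorem \cite{bondorlovreconstruct} is a reconstruction and autoequivalence result; it says nothing about nonexistence of semiorthogonal decompositions. The general statement that ample $\omega_X$ obstructs nontrivial semiorthogonal decompositions is exactly what Okawa establishes for curves and what Kawatani--Okawa (Theorem~\ref{thm:okawasurfaces}) later address for surfaces, so appealing to it here is circular. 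There is also a smaller gap in your case split: the negation of ``both sides contain a torsion sheaf'' is not ``neither side contains a torsion sheaf'', and in the remaining case your slope constraint only pins down the \emph{nonzero-rank} objects, leaving rank-zero complexes unconstrained.

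Okawa's actual argument proceeds along different lines from your Euler-characteristic approach: one shows that every skyscraper sheaf $\OO_p$ must already lie in one of the two components of the decomposition, and then --- using positivity of $\omega_C$ --- that all of them lie in the \emph{same} component; since the $\OO_p$ span $\Db(C)$, the other component is trivial. Your slope considerations do not substitute for this step.
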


\begin{theorem}[Kawatani--Okawa]\label{thm:okawasurfaces}
Let $k$ be algebraically closed and $S$ a smooth connected projective minimal surface. Suppose that
\begin{itemize}
 \item either $\kappa(S)=0$ and $S$ is not a classical Enriques surface, or
 \item $\kappa(S)=1$ and $p_g(S) >0$, or
 \item $\kappa(S)=2$, that $\dim H^1(S,\omega_S) > 1$, and for any one-dimensional connected component
 of the base locus of $\omega_S$, its intersection matrix is negative definite.
\end{itemize}
Then there is no nontrivial semiorthogonal decomposition of $\Db(S)$. 
\end{theorem}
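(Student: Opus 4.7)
The plan is to argue by contradiction: suppose $\Db(S) = \sod{\cat{A}, \cat{B}}$ is a nontrivial semiorthogonal decomposition, then use the hypotheses on $\omega_S$ to force either $\cat{A} = 0$ or $\cat{B} = 0$. The guiding principle, following the approach of Kawatani and Okawa, is to control the behavior of the skyscraper sheaves $k(x)$ with respect to the decomposition and show that the hypotheses provide enough ``connectivity'' between closed points of $S$ to prevent them from being split between the two factors.

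First I would establish that each skyscraper sheaf lies entirely in one of the components. For a closed point $x \in S$, the object $k(x)$ is indecomposable with $\End(k(x))= k$ and $\Ext^i(k(x), k(x)) = 0$ for $i < 0$. The projection triangle associated to $\sod{\cat{A}, \cat{B}}$ yields an extension $B \to k(x) \to A$ with $A \in \cat{A}$ and $B \in \cat{B}$; using $\Hom(A, B[1]) = 0$ together with the vanishing of negative self-Exts of $k(x)$, one deduces that the triangle splits, so indecomposability forces $k(x) \in \cat{A}$ or $k(x) \in \cat{B}$. This partitions the closed points as $S(k) = S_{\cat{A}} \sqcup S_{\cat{B}}$, and nontriviality of the decomposition implies that both sets must be nonempty (otherwise every skyscraper lies in one factor and the other component is annihilated by the support-detecting functors $\RHom(-, k(x))$, hence vanishes).

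The heart of the argument, and the reason for the specific hypotheses, is a connectedness lemma: if there exists a connected curve $C \subset S$ through two points $x, y$ such that $\omega_S|_C$ carries a nonzero section with suitable vanishing, then a Serre-duality computation produces a nonzero class in $\Ext^1(k(y), k(x) \otimes \omega_S)$ or equivalently a nonzero $\Ext^2(k(y), k(x))$ in the ``wrong'' direction; combined with the semiorthogonality between $\cat{A}$ and $\cat{B}$, this forces $x$ and $y$ to lie in the same part of the partition. Thus the task reduces to verifying, under each of the stated hypotheses, that any two closed points of $S$ are connected by such a chain, contradicting that both $S_{\cat{A}}$ and $S_{\cat{B}}$ are nonempty. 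In the $\kappa(S) = 0$ case (excluding classical Enriques) $\omega_S$ is torsion of small order and the Serre functor is close to a shift, which lets one directly produce the required Ext classes between any two points; in the $\kappa(S) = 1$ case, positivity of $p_g$ provides enough pluricanonical sections to connect points through fibers of the Iitaka fibration; in the $\kappa(S) = 2$ case, the assumption $\dim H^1(S, \omega_S) > 1$ provides the nonvanishing extensions, while the negative-definiteness of the intersection matrix on one-dimensional components of the base locus ensures that points inside the base locus do not obstruct the connectedness argument.

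The main obstacle is Step~3, the production of nontrivial Ext classes connecting arbitrary points of $S$. The delicate issue is that $\omega_S$ may have a nontrivial base locus, and one needs the hypothesis on the base locus precisely to control the rank of the natural map $H^0(S, \omega_S) \to H^0(S, \omega_S|_Z)$ for subschemes $Z$ supported at pairs of points, ensuring enough sections survive to produce the needed extensions. Once this geometric input is packaged into a lemma connecting arbitrary pairs of points, the categorical conclusion follows formally from the splitting principle of Step~1 together with semiorthogonality.
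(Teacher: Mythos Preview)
The paper does not prove this result; it is cited from \cite{kawatani-okawa} and stated without proof. So there is no in-paper argument to compare against, only the original source.

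That said, your Step~1 has a genuine error. For the projection triangle $B \to k(x) \to A \to B[1]$ with $A \in \cat{A}$ and $B \in \cat{B}$, the semiorthogonality in $\sod{\cat{A},\cat{B}}$ says $\Hom(\cat{B},\cat{A}[*])=0$, \emph{not} $\Hom(\cat{A},\cat{B}[*])=0$. The connecting morphism lives in $\Hom(A,B[1])$, which is the non-vanishing direction, so there is no reason for the triangle to split. This is not a convention issue: whichever orthogonality holds, the connecting map of the filtration triangle always lies in the direction that semiorthogonality does \emph{not} kill. Concretely, on $\PP^2$ with $\cat{A}=\sod{\OO}$ and $\cat{B}=\sod{\OO(1),\OO(2)}$, no skyscraper $k(x)$ belongs to either factor, so your conclusion fails there.

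What you are missing is that the canonical bundle enters already at this first step, not only in the later connectedness argument. In Kawatani--Okawa the statement ``$k(x)\in\cat{A}$ or $k(x)\in\cat{B}$'' is established only for points $x$ outside the base locus $\mathrm{Bs}\,|\omega_S|$: a section of $\omega_S$ nonvanishing at $x$ is what allows Serre duality to convert the needed vanishing $\Hom(A,B[*])=0$ into the available one $\Hom(B,A[*])=0$. The listed hypotheses (on $p_g$, on $\dim H^1(S,\omega_S)$, and on the intersection form of the one-dimensional base-locus components) then govern both how large the set of such good points is and how to handle the remaining ones. Your outline pushes all of the $\omega_S$-dependence into Step~3, but without it Step~1 does not get off the ground.
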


Roughly speaking, one could say that varieties admitting semiorthogonal decompositions
should have cohomological properties which are very close to the ones of a Fano (relatively over some base) variety.

If $X$ is a Fano variety, that is if the canonical bundle $\omega_X$
is antiample, any line bundle $L$ on $X$ is a $k$-exceptional object
in $\Db(X)$, and hence gives a semiorthogonal decomposition $\Db(X) =
\sod{\cat{A},L}$, where $\cat{A}$ consists of objects right orthogonal
to $L$.  In the simpler case where $X$ has index Picard rank $1$ and
index $i$ (that is, $\omega_X=\ko(-i)$), and $\mathrm{char}(k)=0$, one
can use Kodaira vanishing theorems to construct a natural
$k$-exceptional sequence, as remarked by Kuznetsov~\cite[Corollary
3.5]{kuznet:fano3folds}.  

\begin{prop}[Kuznetsov]\label{prop:decoFano}
Let $X$ be a smooth Fano variety of Picard rank 1 with ample generator $\ko(1)$, and index $i$. Then there is a semiorthogonal
decomposition
$$\Db(X) = \sod{\cat{A}_X, \ko_X, \ldots \ko_X(i-1)},$$
where $\cat{A}_X=\sod{\ko_X, \ldots \ko_X(i-1)}^\perp$ is the category
of objects $W$ such that $\Ext^r(\ko(j),W)=0$ for all $0 \leq j < i$
and for all integers $r$.
\end{prop}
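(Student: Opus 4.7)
The plan is to verify that $\{\ko_X, \ko_X(1), \ldots, \ko_X(i-1)\}$ is a $k$-exceptional collection in $\Db(X)$ and then invoke the proposition stated earlier in Section~\ref{subs:exc-objects} which turns any exceptional collection into a semiorthogonal decomposition with the prescribed orthogonal complement $\cat{A}_X$.

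\medskip

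First I would check that each line bundle $\ko_X(j)$ is $k$-exceptional. Since tensoring with $\ko_X(-j)$ is an autoequivalence, it suffices to prove this for $\ko_X$. We have $\Ext^r_X(\ko_X,\ko_X) = H^r(X,\ko_X)$. The identification $\ko_X = \omega_X \tensor \omega_X^{-1} = \omega_X \tensor \ko_X(i)$ together with Kodaira vanishing, applicable because $\ko_X(i)$ is ample and $\mathrm{char}(k)=0$, gives $H^r(X,\ko_X)=0$ for $r>0$. For $r=0$, since $X$ is a smooth projective connected variety over $k$ of Picard rank $1$ (so in particular $H^0(X,\ko_X)$ is a finite field extension of $k$ with trivial action on $\mathrm{Pic}$), one checks $H^0(X,\ko_X)=k$.

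\medskip

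Next I would establish the semiorthogonality. For integers $0 \leq l < j \leq i-1$, setting $m = j-l \in \{1,\ldots, i-1\}$, we have
\[
\Ext^r_X(\ko_X(j), \ko_X(l)) = H^r(X, \ko_X(-m)),
\]
and I must show this vanishes for all $r$. For $0 \leq r < n := \dim X$, the Kodaira--Akizuki--Nakano vanishing theorem applied to the ample line bundle $\ko_X(m)$ gives $H^r(X, \ko_X(-m)) = 0$. For $r = n$, Serre duality yields
\[
H^n(X, \ko_X(-m))^\vee \isom H^0(X, \omega_X \tensor \ko_X(m)) = H^0(X, \ko_X(m - i)),
\]
which vanishes because $m - i < 0$ makes $\ko_X(m-i)$ the dual of an ample bundle on a projective variety, hence with no global sections. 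For $r > n$ the cohomology vanishes trivially by dimension.

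\medskip

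With the exceptional collection $\{\ko_X, \ko_X(1), \ldots, \ko_X(i-1)\}$ in hand, Bondal's proposition quoted just above the statement produces the semiorthogonal decomposition
\[
\Db(X) = \sod{\cat{A}_X, \ko_X, \ldots, \ko_X(i-1)},
\]
with $\cat{A}_X$ characterized as the right-orthogonal $\sod{\ko_X, \ldots, \ko_X(i-1)}^\perp$, i.e., the subcategory of objects $W$ with $\Ext^r_X(\ko_X(j), W) = 0$ for all $0 \leq j < i$ and all $r$. The only real subtlety is the characteristic zero hypothesis, which is needed precisely for the Kodaira vanishing input; everything else is formal once the cohomology of line bundles is under control.
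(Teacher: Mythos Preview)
Your proof is correct and follows exactly the approach the paper indicates: Kodaira vanishing to show the line bundles form an exceptional collection, then Bondal's proposition to get the semiorthogonal decomposition. The only quibble is that your justification of $H^0(X,\ko_X)=k$ via Picard rank is off-target; the right reason is simply that a smooth Fano variety is geometrically integral and projective, hence has $H^0(X,\ko_X)=k$.
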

The previous result is easily generalized to the relative case of Mori fiber spaces as in~\cite[Proposition 2.2.2]{auel-berna-bolo}.

\begin{prop}\label{prop:decoMFS}
Let $\pi:X \to Y$ be a flat surjective fibration between smooth
varieties, such that $\Pic(X/Y) \simeq \ZZ$ with ample generator
$\ko_{X/Y}(1)$ and $\omega_{X/Y}= \ko_{X/Y}(-i)$. Set
$\Db(Y)(j):=\pi^* \Db(Y) \otimes \ko_{X/Y}(j)$.  For any $j$, this
gives a fully faithful embedding of $\Db(Y)$ into $\Db(X)$.  Moreover,
over a field $k$ of characteristic $0$, there is a semiorthogonal
decomposition
$$\Db(X) = \sod{\cat{A}_{X/Y}, \Db(Y)(0), \ldots, \Db(Y)(i-1)},$$
where
$$
\cat{A}_{X/Y} =\sod{\Db(Y)(0), \ldots, \Db(Y)(i-1)}^\perp
$$
is the category of objects $W$ such that $\Ext^r(\pi^* A \otimes
\ko(j),W)=0$ for all $0 \leq j < i$, for all integers $r$, and for all
objects $A$ in $\Db(Y)$.
\end{prop}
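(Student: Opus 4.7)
The plan is to (i) establish the relative cohomological vanishing $\R\pi_*\ko_{X/Y}(m)=0$ for $-(i-1)\le m\le -1$ and $\R\pi_*\ko_{X/Y}=\ko_Y$, (ii) use the second to deduce fully faithfulness of the $\Phi_j:=\pi^*(-)\otimes\ko_{X/Y}(j)$ via Lemma~\ref{lem:birat-map-vs-ffemb}, (iii) use the first to deduce the semiorthogonality of $\Db(Y)(0),\ldots,\Db(Y)(i-1)$ by adjunction and projection formula, and (iv) set $\cat{A}_{X/Y}$ to be the right orthogonal complement and conclude the semiorthogonal decomposition.

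For (i): since $\pi$ is a flat surjection between smooth $k$-varieties, each geometric fiber $X_y$ is a smooth projective variety whose dualizing sheaf is $\omega_{X_y}=\ko_{X_y}(-i)$, with $\ko_{X_y}(1)$ ample generating $\Pic(X_y)\simeq\ZZ$. In particular, the fibers are Fano of index $i$. For $m\ge -(i-1)$, the line bundle $\ko_{X_y}(m)\otimes\omega_{X_y}^{-1}=\ko_{X_y}(m+i)$ is ample, so Kodaira vanishing (available in characteristic $0$) gives $H^q(X_y,\ko_{X_y}(m))=0$ for all $q>0$. For $-1\ge m\ge-(i-1)$, ampleness of $\ko_{X_y}(1)$ on a connected projective variety also forces $H^0(X_y,\ko_{X_y}(m))=0$. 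Cohomology and base change for the flat morphism $\pi$ then upgrades these fiberwise vanishings to $\R\pi_*\ko_{X/Y}(m)=0$, and similarly $\R\pi_*\ko_{X/Y}=\ko_Y$ since $H^0(X_y,\ko_{X_y})=k(y)$ (fibers are geometrically connected as $\pi_*\ko_X=\ko_Y$ is to be checked, which again follows from base change and connectedness of the Fano fibers) and $H^q(X_y,\ko_{X_y})=0$ for $q>0$.

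For (ii) and (iii), note that $\Phi_j$ differs from $\pi^*$ only by tensoring with a line bundle, which is an autoequivalence of $\Db(X)$; so fully faithfulness of each $\Phi_j$ reduces to that of $\pi^*$, which follows from $\R\pi_*\ko_X=\ko_Y$ by Lemma~\ref{lem:birat-map-vs-ffemb}. For semiorthogonality, for $A,B\in\Db(Y)$ and $j>l$ with $0\le j,l\le i-1$, the projection formula and adjunction give
$$
\R\Hom_X(\pi^*A\otimes\ko_{X/Y}(j),\pi^*B\otimes\ko_{X/Y}(l))\;\cong\;\R\Hom_Y(A,B\otimes\R\pi_*\ko_{X/Y}(l-j)),
$$
which vanishes because $-(i-1)\le l-j\le -1$. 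Admissibility of $\Phi_j\Db(Y)\subset\Db(X)$ follows from fully faithfulness together with the existence of a right adjoint $B\mapsto\R\pi_*(B\otimes\ko_{X/Y}(-j))$ and a left adjoint (which exists for any fully faithful exact functor between bounded derived categories of smooth projective varieties, by Bondal--Van den Bergh representability).

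Given (ii), (iii), and admissibility, the sequence $\Db(Y)(0),\ldots,\Db(Y)(i-1)$ forms a semiorthogonal family of admissible subcategories of $\Db(X)$; defining $\cat{A}_{X/Y}$ as the right orthogonal of the subcategory they generate produces the asserted decomposition, and by construction $\cat{A}_{X/Y}$ is exactly the full subcategory of $W$ with $\Ext^r(\pi^*A\otimes\ko(j),W)=0$ for all $A\in\Db(Y)$, $0\le j<i$, $r\in\ZZ$. The main subtlety is the vanishing $\R\pi_*\ko_{X/Y}(m)=0$ in the prescribed range: establishing it requires both the Fano hypothesis on fibers (guaranteed by $\omega_{X/Y}=\ko_{X/Y}(-i)$ with $\ko_{X/Y}(1)$ relatively ample) and characteristic $0$ for Kodaira vanishing, which is why the hypothesis $\mathrm{char}(k)=0$ is essential.
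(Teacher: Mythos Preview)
Your proof is correct and follows exactly the same approach as the paper's, which is quite terse: it cites Lemma~\ref{lem:birat-map-vs-ffemb} for admissibility, invokes ``relative Kodaira vanishing'' for semiorthogonality, and defines $\cat{A}_{X/Y}$ as the complement. You have simply unpacked these three sentences, verifying the hypothesis $\R\pi_*\ko_X=\ko_Y$ of the lemma via base change and fiberwise Kodaira vanishing, computing the $\Hom$'s explicitly via adjunction and the projection formula, and addressing admissibility (existence of both adjoints) more carefully than the paper does.
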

\begin{proof}
Notice that $\Db(Y)(j)$ being admissible in $\Db(X)$ is a consequence
of Lemma~\ref{lem:birat-map-vs-ffemb}. The semiorthogonality is given
by a relative Kodaira vanishing. Finally, define $\cat{A}_{X/Y}$ to be
the complement.
\end{proof}

\begin{remark}
The assumption on $k$ having characteristic zero is needed to ensure
that the Kodaira vanishing theorem holds on $X$, but can be
weakened. Indeed, Kodaira vanishing theorems hold in characteristic
$p$ for varieties that lift to a smooth variety in characteristic 0,
see Deligne--Illusie \cite{deligne-illusie}. For example, we could
consider any complete intersection in projective space of Fano type
over a field of characteristic $p$.
\end{remark}

One should consider the decompositions above as the most related to
the geometric structure of $X$, and the category $\cat{A}_{X/Y}$ as
the best witness of the birational behavior of $X$\footnote{Notice,
that one can replace the sequence $\Db(Y)(0),\ldots,\Db(Y)(i-1)$ by
$\Db(Y)(j),\ldots,\Db(Y)(j+i-1)$ for any integer $j$. However, this
would give orthogonal complements which are not only equivalent as
triangulated categories, but also as dg categories.}. This idea is
supported by the following results of Beilinson~\cite{beilinson} (for
the case of $\PP^n$) and Orlov~\cite{orlovprojbund}.

\begin{prop}[Beilinson, Orlov]\label{prop:deco-projective}
Let $\pi:X \to Y$ be a projective bundle of relative dimension $r$, that is, $X = \PP_Y(E)$ for 
some rank $r+1$ vector bundle $E$ on $Y$. Then 
$$
\Db(X) = \sod{\Db(Y)(0),\ldots,\Db(Y)(r)}.
$$
In other words, $\cat{A}_{X/Y}=0$.
\end{prop}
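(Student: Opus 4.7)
The plan is to combine Proposition~\ref{prop:decoMFS} with a relative version of Beilinson's classical resolution of the diagonal on $\PP^r$. First, since $\pi$ is flat, $\Pic(X/Y) \isom \ZZ$ is generated by $\ko_{X/Y}(1)$, and $\omega_{X/Y} \isom \ko_{X/Y}(-r-1)$ up to twist by a line bundle pulled back from $Y$, Proposition~\ref{prop:decoMFS} applies (the relative Kodaira vanishing it uses reduces to the standard computation of $R\pi_* \ko_X(\ell)$ on a projective bundle, valid over any base in any characteristic) and yields
$$\Db(X) = \sod{\cat{A}_{X/Y}, \Db(Y)(0), \ldots, \Db(Y)(r)}.$$
It then remains to show $\cat{A}_{X/Y} = 0$.

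Let $W \in \cat{A}_{X/Y}$. By adjunction,
$$\RHom_Y(A, R\pi_*(W \otimes \ko_X(-j))) = \RHom_X(\pi^* A \otimes \ko_X(j), W) = 0$$
for every $A \in \Db(Y)$ and every $0 \leq j \leq r$, forcing $R\pi_*(W \otimes \ko_X(-j)) = 0$ throughout this range. To promote this vanishing to $W=0$, I would invoke the relative Beilinson resolution of the diagonal on $X \times_Y X$. Denote by $p, q: X \times_Y X \to X$ the two projections. The relative Euler sequence
$$0 \to \Omega^1_{X/Y}(1) \to \pi^* E \to \ko_X(1) \to 0$$
on $X$ determines, by a standard construction, a tautological section of a rank-$r$ bundle on $X \times_Y X$ whose zero locus is exactly $\Delta_X$, and the associated Koszul complex yields a locally free resolution
$$0 \to p^* \ko_X(-r) \otimes q^*\Omega^r_{X/Y}(r) \to \cdots \to p^*\ko_X(-1) \otimes q^*\Omega^1_{X/Y}(1) \to \ko_{X \times_Y X} \to \Delta_* \ko_X \to 0.$$

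Since the Fourier--Mukai transform with kernel $\Delta_* \ko_X$ is the identity endofunctor of $\Db(X)$, we have $W \isom Rq_*(Lp^* W \Lotimes \Delta_* \ko_X)$. Substituting the resolution and applying the projection formula together with flat base change $Rq_* \circ Lp^* = L\pi^* \circ R\pi_*$ (for the cartesian square defining $X \times_Y X$), one obtains a finite filtration on $W$ whose associated graded pieces are
$$L\pi^* R\pi_*(W \otimes \ko_X(-j)) \otimes \Omega^j_{X/Y}(j), \qquad 0 \leq j \leq r.$$
All of these vanish by the preceding paragraph, hence $W = 0$.

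The main technical input is the construction and exactness of the relative Beilinson resolution: the fact that $\Delta_X \subset X \times_Y X$ is cut out transversely by a natural section of a rank-$r$ bundle built from the Euler sequence, so that its Koszul complex resolves $\Delta_*\ko_X$. This is a local question on $Y$: after trivializing $E$, it reduces to the classical Beilinson resolution on $\PP^r \times \PP^r$, and since the Koszul construction is functorial in the base, the local resolutions glue to a global one on $X \times_Y X$. Once this geometric input is secured, the remainder is a formal manipulation of Fourier--Mukai kernels.
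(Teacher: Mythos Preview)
Your proof is correct and follows essentially the approach the paper sketches: the paper does not give a detailed proof but remarks that the hard part is generation, accomplished via Beilinson's resolution of the diagonal, and you have carried this out carefully in the relative setting. Your initial appeal to Proposition~\ref{prop:decoMFS} is not quite literal (since $\omega_{X/Y}$ differs from $\ko_{X/Y}(-r-1)$ by a pullback from $Y$), but you correctly note that the required semiorthogonality is just the elementary computation of $R\pi_*\ko_X(\ell)$ on a projective bundle, so no harm is done.
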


The most difficult task in proving the Proposition~\ref{prop:deco-projective}, already for projective spaces, is to
show that a given sequence of categories generates the whole category. This is done in~\cite{beilinson}
using a complex resolving the structure sheaf of the diagonal of $\PP^n \times \PP^n$. Let us list other known
descriptions of $\cat{A}_{X/Y}$.

\begin{example}\label{ex:semiortho-for-MFS}
Let $Y$ be a smooth projective $k$-variety and $\pi: X \to Y$ as in Proposition~\ref{prop:decoMFS}. Then $\cat{A}_{X/Y}$
is known in the following cases:

\smallskip

\noindent{\bf Projective bundles.} If $\pi: X \to Y$ is a projective bundle, then $\cat{A}_{X/Y}=0$,~\cite{orlovprojbund}.

\smallskip

\noindent{\bf Projective fibrations.} Let $\pi: X \to Y$ be a relative Brauer--Severi variety (that is, the geometric fibers of $\pi$
 are projective spaces, but $X$ is not isomorphic to $\PP(E)$ for any vector bundle $E$ on $Y$), and $\alpha$ in $\Br(Y)$ the class
 of $X$ and $r$ the relative dimension. If $\omega_{X/Y}$ generates $\Pic(X/Y)$, then 
 $$\cat{A}_{X/Y}= \sod{\Db(Y,\alpha), \ldots, \Db(Y,\alpha^r)}.$$
If $\omega_{X/Y}$ is not primitive, a similar description is possible~\cite{bernardara:brauer_severi}.

\smallskip

\noindent{\bf Quadric fibrations.} Let $\pi: X \to Y$ be a quadric fibration of relative dimension $r$ and let $\kc_0$ be the
sheaf of even Clifford algebras associated to the quadratic form defining $X$. Then $\cat{A}_{X/Y}=\Db(Y,\kc_0)$,~\cite{kuznetquadrics}.

\smallskip

\noindent{\bf Fibrations in intersections of quadrics.} Let $\{Q_i \to Y\}_{i=0}^s$ be quadric fibrations of relative dimension $r$
and $\pi: X \to Y$ be their intersection (see~\cite{auel-berna-bolo} for details), and suppose that $\omega_{X/Y}$ is relatively
antiample (that is, $r < 2s$). Then there is a $\PP^s$-bundle $Z \to Y$ and a sheaf of Clifford algebras $\kc_0$ on $Z$, and
$\cat{A}_{X/Y} = \Db(Z,\kc_0)$,~\cite{auel-berna-bolo}.

\smallskip

\noindent This list is far from being exhaustive, since many specific cases are also known (see, e.g., Table~\ref{table:fano-sods}
for 3 and 4 dimensional cases).
\end{example}

In the case where $k$ is not algebraically closed, then one can look for semiorthogonal decompositions
of $\Db(X_{\overline{k}})$ and understand whether they can give informations on $\cat{A}_{X/Y}$ or, more in
general on $\Db(X)$. This rather challenging problem can be tackled in the simplest case, where $\Db(X_{\overline{k}})$ is
generated by vector bundles, using Galois descent of such vector bundles (see~\cite{auel-berna-surf}).
With this in mind one can describe $\cat{A}_{X/Y}$ when $X$ is a minimal del Pezzo, $Y$ is a point, and $k$
is any field~\cite{auel-berna-surf}. Other cases of (generalized) Brauer--Severi varieties~\cite{bernardara:brauer_severi,blunk-gen-bs}
can be treated this way.

On the other hand, even if a geometric description of $\cat{A}_{X/Y}$ is not possible, one can calculate
its Serre functor.

\begin{definition}
Let $\cat{A}$ be a triangulated $k$-linear category with finite dimensional morphism spaces. A functor
$S: \cat{A}\to \cat{A}$ is a \linedef{Serre functor}
if it is a $k$-linear equivalence inducing a functorial isomorphism
$$\Hom_{\cat{A}}(X,Y) \simeq \Hom_{\cat{A}}(Y,S(X))^\vee$$
of $k$-vector spaces, for any object $X$ and $Y$ of $\cat{A}$.

A category $\cat{A}$ with a Serre functor $S_{\cat{A}}$ is a {\it Calabi--Yau
category} (or a {\it non commutative Calabi--Yau}) of dimension $n$ if $S_{\cat{A}}=[n]$.
It is a {\it fractional Calabi--Yau category} of dimension $n/c$ if $c$ is the
smallest integer such that the iterate Serre functor is a shift functor and $S_{\cat{A}}^c = [n]$.
Note that the fractional dimension
of $\cat{A}$ is not a rational number, but a pair of two integer numbers.
\end{definition}

The Serre functor generalizes the notion of Serre duality to a more general setting. Indeed, if $X$ is
a smooth projective $k$-scheme, then $S_{\Db(X)}(-) = - \otimes \omega_X [\dim(X)]$ by Serre duality.

As a consequence of the work of Bondal and Kapranov~\cite{bondal_kapranov:reconstructions}, if $X$ is a smooth and
projective $k$-scheme and $\cat{A}$ is an admissible subcategory of $\Db(X)$, then $\cat{A}$ has a Serre functor
which can be explicitly calculated from the Serre functor of $X$ using adjunctions to the embedding $\cat{A} \to \Db(X)$.
Kuznetsov performed explicitly these calculations for Fano hypersurfaces in projective spaces, see~\cite[Cor. 4.3]{kuznetsov:v14}.

\begin{prop}[Kuznetsov]
Let $X \subset \PP^{n+1}$ be a hypersurface of degree $d < n+2$, and set $c$ the greatest common
divisor of $d$ and $n+2$. Then $\cat{A}_X$ is a (fractional) Calabi--Yau category, that is
$S^{d/c}_{\cat{A}_X} = [\frac{(d-2)(n+2)}{c}]$.
\end{prop}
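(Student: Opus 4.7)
Since $d < n+2$, the hypersurface $X$ is a smooth Fano variety of Picard rank $1$ (by the Lefschetz hyperplane theorem) with ample generator $\ko_X(1)$ and Fano index $i = n+2-d \geq 1$. Proposition~\ref{prop:decoFano} provides the semiorthogonal decomposition $\Db(X) = \sod{\cat{A}_X, \ko_X, \ldots, \ko_X(i-1)}$; write $\kb := \sod{\ko_X, \ldots, \ko_X(i-1)}$ for the exceptional block, so that $\cat{A}_X = \kb^\perp$. The Serre functor of $\Db(X)$ sends $F$ to $F(-i)[n]$, and a standard consequence of Bondal--Kapranov's theorem identifies the Serre functor of the admissible subcategory $\cat{A}_X$ as the composition of the ambient Serre functor with the right adjoint of the inclusion (namely, the left mutation through $\kb$):
$$
S_{\cat{A}_X}(A) \simeq L_\kb(A(-i))[n].
$$

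To extract a fractional power of $S_{\cat{A}_X}$ that reduces to a shift, the plan is to encode the action of twisting by $\ko_X(1)$ as an honest autoequivalence of $\cat{A}_X$. Define the \emph{rotation} functor $O \colon \cat{A}_X \to \cat{A}_X$ by $O(A) := L_\kb(A(1))$. Two identities then drive the proof: first, $S_{\cat{A}_X} \simeq O^{-i} \circ [n]$, obtained by iterating the formula above and exploiting the compatibility of left mutations with line bundle twists; second, the crucial relation $O^d \simeq [2]$, which encodes the hypersurface equation $f = 0$ of degree $d$.

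Granting both, the conclusion is a direct calculation. Set $c := \gcd(d, i)$; since $i = n+2-d$ one has $c = \gcd(d, n+2)$, and the smallest $k \geq 1$ with $d \mid ki$ is precisely $k = d/c$. Using the second identity,
$$
O^{-(d/c) i} = (O^d)^{-(n+2-d)/c} \simeq \left[-\tfrac{2(n+2-d)}{c}\right],
$$
and combining with the first identity and the shift $[(d/c) n]$ gives
$$
S^{d/c}_{\cat{A}_X} \simeq \left[\tfrac{nd - 2(n+2-d)}{c}\right] = \left[\tfrac{(d-2)(n+2)}{c}\right],
$$
as desired.

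The main obstacle is the second identity $O^d \simeq [2]$, which captures the hypersurface nature of $X = V(f) \subset \PP^{n+1}$. It is most transparently derived via Orlov's equivalence $\cat{A}_X \simeq \Db(\text{gr-MF}(f))$ with graded matrix factorizations of $f$: under this equivalence $O$ corresponds to the internal grading shift $(1)$, and the identification $(d) \simeq [2]$ reflects the fact that the matrix-factorization differential squares to multiplication by $f$, so shifting the grading by $d$ trades for a homological shift by $2$. Alternatively, one can derive this relation intrinsically by computing $i^*i_*$ on $\cat{A}_X$ using the Koszul resolution $0 \to \ko_\PP(-d) \to \ko_\PP \to i_*\ko_X \to 0$, producing a distinguished triangle that links $A(d)$ to $A[2]$ modulo line bundles in $\kb$.
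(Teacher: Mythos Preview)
The paper does not supply its own proof of this proposition; it simply attributes the result to Kuznetsov and cites \cite[Cor.~4.3]{kuznetsov:v14}. Your argument is precisely Kuznetsov's: express $S_{\cat{A}_X}$ in terms of the rotation functor $O = L_{\kb}\bigl((-)\otimes\ko_X(1)\bigr)$, establish the two identities $S_{\cat{A}_X} \simeq O^{-i}[n]$ and $O^d \simeq [2]$, and then read off the fractional Calabi--Yau dimension by the gcd computation you carry out. Both justifications you sketch for $O^d \simeq [2]$---Orlov's graded matrix factorization equivalence, where it becomes the tautology $(d)\simeq[2]$, and the Koszul computation of $i^*i_*$ on $\cat{A}_X$---are standard and correct. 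Your remark that $d/c$ is the least $k$ with $d \mid ki$ also handles the minimality clause in the definition of fractional Calabi--Yau, provided one notes (as is clear from the matrix factorization model) that $O^m$ is a shift only when $d \mid m$.
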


\begin{remark}
Notice that both $d/c$ and $\frac{(d-2)(n+2)}{c}$ are
integers. However, the fractional dimension of $\cat{A}_X$ is not a
simplification of the fraction $\frac{(d-2)(n+2)}{c}$ unless
$c=1$. For example, for a quartic fourfold we obtain $6/2$. However,
in the case where $d$ divides $n+2$, $\cat{A}_X$ is a Calabi--Yau
category.
\end{remark}
\begin{corollary}\label{cor:cubic-4folds=nck3}
If $X \subset \PP^5$ is a smooth cubic fourfold, then $\cat{A}_X$ is a
2-Calabi--Yau category (or a noncommutative K3 surface).
\end{corollary}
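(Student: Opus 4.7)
The plan is to deduce this directly by plugging the numerical invariants of a cubic fourfold into the preceding Kuznetsov proposition on hypersurfaces in projective space, and then checking that the resulting relation on the Serre functor matches the definition of a $2$-Calabi--Yau category.

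Concretely, I would set $n = 4$ and $d = 3$, so that $X \subset \PP^{n+1} = \PP^5$ is a smooth cubic fourfold, and the Fano condition $d < n+2$ becomes $3 < 6$, which holds. Then I would compute the gcd, $c = \gcd(d, n+2) = \gcd(3,6) = 3$, and the two integer exponents appearing in the proposition:
\[
\frac{d}{c} = \frac{3}{3} = 1, \qquad \frac{(d-2)(n+2)}{c} = \frac{1 \cdot 6}{3} = 2.
\]
By Kuznetsov's proposition applied to these values, the Serre functor of $\cat{A}_X$ satisfies $S_{\cat{A}_X}^{1} = [2]$, that is, $S_{\cat{A}_X} = [2]$. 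By the definition of a Calabi--Yau category of dimension $n$ as one whose Serre functor is the shift $[n]$, this says precisely that $\cat{A}_X$ is a Calabi--Yau category of dimension $2$, i.e., a noncommutative K3 surface.

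Since the preceding proposition is taken as given, there is essentially nothing else to verify: the entire content of the corollary is the arithmetic check that $d/c = 1$ (so the dimension is genuinely an integer, not a proper fraction) and that the shift exponent equals $2$. The only mild subtlety I would mention explicitly is that, because $c$ does not divide $d$ strictly smaller than itself here — in fact $c = d$ — the ``fractional'' Calabi--Yau dimension collapses to the honest integer dimension $2$, in contrast to, for instance, the quartic fourfold case flagged in the remark just above. No additional geometric input (such as Hodge-theoretic realization as a noncommutative K3) is needed for the statement as phrased; that interpretation is a separate result and I would leave it as a parenthetical remark rather than part of the proof.
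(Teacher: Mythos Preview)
Your proposal is correct and is exactly the intended argument: the paper states this as an immediate corollary of Kuznetsov's proposition on Fano hypersurfaces, and the proof is precisely the substitution $n=4$, $d=3$, $c=\gcd(3,6)=3$ yielding $S_{\cat{A}_X}^{1}=[2]$. There is nothing more to add.
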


The categories $\cat{A}_{X/Y}$ also admit algebraic descriptions, that is, one can find an equivalence
with a triangulated category which arises from purely algebraic constructions. The main examples are
Orlov's description via matrix factorizations for Fano complete intersections in projective spaces
(see~\cite{orlov-matrix}) and a rather complicated description based on Homological Projective
Duality for fibrations in complete intersections of type $(d,\ldots,d)$ (see~\cite{BDFIK-higher-veronese}).

To tackle geometrical problems, we would like a description of $\cat{A}_{X/Y}$ by explicit geometric constructions.
A first case, which include a lot of Fano varieties, is the case of homogeneous varieties. These
are conjectured to always carry a full exceptional sequence, and one can construct a candidate sequence
using vanishing theorems and representation of parabolic subgroups, see~\cite{kuznet-polishchuk}. The hardest
part is to prove that such a sequence is full, for which spectral sequences are needed.

The most powerful tool to construct semiorthogonal decompositions is by far Kuznetsov's Homological Projective Duality (HPD). We refrain here to give any
definition, for which we refer to the very dense Kuznetsov's original paper~\cite{kuznetsov:hpd}. 
In practice, HPD allows to compare semiorthogonal decompositions of dual linear sections of fixed projectively
dual varieties. It is in general a hard task to show that two given varieties are HP-dual, and one of the
most challenging steps is to deal with singular varieties and their noncommutative resolutions. However,
HPD allows one to describe a great amount of semiorthogonal
decompositions for Fano varieties or Mori fiber spaces,
see~\cite{kuznetsov:hpd},
\cite{kuz:4fold},~\cite{kuznetsov:hyp-sections}, \cite{auel-berna-bolo} just to name a few.

\smallskip

On the other hand, (relatively) Fano varieties are not the only class of varieties whose derived category admits
a semiorthogonal decompositions. The first natural examples one should consider are surfaces with $p_g=q=0$, in 
which case any line bundle is a $k$-exceptional object. Hence the derived category of such surfaces always admits
nontrivial semiorthogonal decompositions. On the other hand, one can argue that, if $S$ is a such a surface,
then there is no fully faithful functor $\Db(C) \to \Db(S)$ for $C$ a curve of positive genus. Indeed,
such a functor would give a nontrivial Albanese variety (or, equivalently, a nontrivial $\Pic^0$), see~\cite{berna-tabuada-jacobians}.
Another way to present this argument is by noticing that $H^{p,q}(S)=0$ if $p-q \neq 0$. It follows that
the Hochschild homology $HH_i(S)=0$ for $i \neq 0$. This last fact obstructs the existence of the functor,
since $HH_{\pm 1}(C) \neq 0$ for a positive genus curve $C$.

It is then natural to look for semiorthogonal decompositions of the form:
$$\Db(S) = \sod{ \cat{A}_S, E_1, \ldots, E_n},$$
with $E_i$ $k$-exceptional objects and wonder about the maximal possible value of $n$ and the structure of $\cat{A}_S$.
Describing $\cat{A}_S$ is a very challenging question and we will
treat examples and their conjectural relation with rationality
questions in \S\ref{sect:Db-surfaces}. 

Let us conclude by remarking that, studying such surfaces, B\"ohning, Graf von Bothmer, and Sosna have been able to show 
that semiorthogonal decompositions do not enjoy, in general, a Jordan--H\"older type property~\cite{boeh-graf-sos-JH}. Notice that 
a further example is explained by Kuznetsov~\cite{kuznet:JH}. 

\begin{prop}[B\"ohning--Graf von Bothmer--Sosna]\label{prop:no-JH}
Let $X$ be the classical Godeaux complex surface. The bounded derived category $\Db(X)$ has two maximal exceptional sequences of different lengths:
one of length 11 and one of length 9 which cannot be extended further.
\end{prop}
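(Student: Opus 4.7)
The plan is to exhibit both exceptional sequences explicitly by line bundles and then obstruct any extension of the shorter one via a numerical argument in $K_0^{\mathrm{num}}(X)$.

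Recall that the classical Godeaux surface $X$ is the free quotient of a smooth $\ZZ/5$-invariant quintic $\widetilde X \subset \PP^3$ by a generator of $\ZZ/5$, hence a minimal surface of general type with $p_g(X) = q(X) = 0$, $K_X^2 = 1$, and $\pi_1(X) = \ZZ/5 = \mathrm{Tors}(\Pic X)$; Noether's formula then yields topological Euler number $e(X) = 11$. Since $H^i(X,\OO_X) = 0$ for $i>0$, every line bundle $L$ satisfies $\Ext^{>0}(L,L) = 0$ and is thus $\CC$-exceptional. Moreover, the numerical Grothendieck group $K_0^{\mathrm{num}}(X)$ is a free abelian group whose rank equals $e(X) = 11$, since all cohomology of $X$ is algebraic (a consequence of $p_g = q = 0$).

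For the length-$11$ collection I would choose line bundles of the form $\OO_X(aH + b\tau)$, where $H$ is the pullback of $\OO_{\PP^3}(1)$ and $\tau$ is a generator of $\mathrm{Tors}(\Pic X)$, and tune the integer parameters $(a,b)$ so that the required vanishings $H^\bullet(X, L_i^\vee \otimes L_j) = 0$ for $j < i$ follow from the Kodaira vanishing theorem on $\widetilde X$ (after averaging over the $\ZZ/5$-action) combined with Serre duality. This produces a strong exceptional sequence $L_1,\dots,L_{11}$. A direct rank count then shows that the classes $[L_i]$ span $K_0^{\mathrm{num}}(X)$, so the semiorthogonal complement $\cat{A} := \sod{L_1,\dots,L_{11}}^\perp \subset \Db(X)$ has vanishing numerical Grothendieck group. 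Any exceptional object $E$ in $\cat{A}$ would satisfy $\chi([E],[E]) = 1$, which is incompatible with $[E] = 0$; hence $\cat{A}$ contains no exceptional object and the sequence is maximal.

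For the length-$9$ collection $\{M_1, \dots, M_9\}$ one chooses a different pattern of line bundles (again of the form $\OO_X(aH + b\tau)$) whose classes in $K_0^{\mathrm{num}}(X)$ span a sublattice $\Lambda$ of rank $9$. Writing the Gram matrix of the Euler pairing on $\Lambda$ from Riemann--Roch, one checks $\chi(M_j,M_i) = 0$ for $j>i$ by direct computation of the relevant cohomology groups. The orthogonal $\cat{B} := \sod{M_1,\dots,M_9}^\perp$ then has numerical Grothendieck group of rank $2$, with Euler form obtained by projecting the pairing on $K_0^{\mathrm{num}}(X)$ onto $\Lambda^\perp$. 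The key claim is that $\cat{B}$ admits no exceptional object: any such $E$ would give a class $v = [E] \in \Lambda^\perp$ with $\chi(v,v)=1$ whose Chern character $(r, c_1, \mathrm{ch}_2)$ satisfies, in addition, the nine semiorthogonality relations $\chi([M_i],v) = 0$. One reduces this to a finite Diophantine system on $(r, c_1, \mathrm{ch}_2) \in \ZZ \oplus \Pic(X) \oplus \tfrac{1}{2}\ZZ$, and rules out all integral solutions using the Bogomolov inequality $c_1(F)^2 \leq 2\,\rank(F)\,\mathrm{ch}_2(F)$ applied to the torsion-free quotients of the cohomology sheaves of $E$, together with the $\ZZ/5$-torsion structure of $\Pic(X)$.

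The main obstacle is precisely this last numerical elimination. Because $\Lambda^\perp$ has rank only two, the equation $\chi(v,v) = 1$ allows infinitely many candidate $v$ a priori if the restriction of $\chi$ turns out to be indefinite; one needs a genuine geometric input, namely Bogomolov's inequality plus the constraint that $v$ be represented by an actual complex of coherent sheaves, to cut the candidate list down to a finite one and then exclude each of the finitely many survivors by an explicit check. This is the step where the specific arithmetic of the Godeaux surface (its $K^2$, $e$, and $\ZZ/5$-torsion) enters decisively, and in practice it is carried out with computer assistance.
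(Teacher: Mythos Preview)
The paper does not give its own proof of this proposition; it is stated as a result of B\"ohning--Graf von Bothmer--Sosna and simply cited to~\cite{boeh-graf-sos-JH} (with the length-$11$ collection already appearing in~\cite{boeh-graf-sos-Godeaux}). So there is no ``paper's own proof'' to compare against, only the original reference.

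That said, your sketch deserves comment. The length-$11$ part is essentially the correct mechanism: once the eleven line-bundle classes span $K_0^{\mathrm{num}}(X)\otimes\QQ$ (hence all of the torsion-free group $K_0^{\mathrm{num}}(X)$), any object in the complement has numerically trivial class and cannot be exceptional. This is indeed how maximality is established in~\cite{boeh-graf-sos-Godeaux}; the complement is a quasi-phantom with $K_0$ equal to the torsion $\ZZ/5$.

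Your length-$9$ argument, however, has a genuine gap. The Bogomolov inequality constrains \emph{$\mu$-semistable torsion-free sheaves}, not arbitrary objects of $\Db(X)$. An exceptional object $E$ may be a genuine complex with several nonzero cohomology sheaves, none of them semistable, and there is no mechanism in your outline for passing from the numerical class $v=[E]$ to a sheaf to which Bogomolov applies. The phrase ``applied to the torsion-free quotients of the cohomology sheaves of $E$'' does not close this: those quotients need not be semistable, and their Chern characters do not individually satisfy the semiorthogonality constraints you impose on $v$. In the actual paper~\cite{boeh-graf-sos-JH} the non-extendability of the length-$9$ collection is obtained by a different route, exploiting in particular that $K_0$ of the complement is not free (cf.\ Example~\ref{ex:catrep2} in the present text) together with an explicit analysis of which numerical classes can be realised; the argument is not a pure Bogomolov-type elimination. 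If you want to pursue a numerical strategy, you would need an additional input---for instance a spectral-sequence or filtration argument reducing an arbitrary exceptional complex to a sheaf with controlled stability---and that step is missing here.
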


\section{Unramified cohomology and decomposition of the diagonal}

Unramified cohomology has emerged in the last four decades as a
powerful tool for obstructing (stable) rationality in algebraic
geometry.  Much of its utility comes from the fact that the theory
rests on a combination of tools from scheme theory, birational
geometry, and algebraic $K$-theory.  Used notably in the context of
Noether's problem in the work of Saltman and Bogomolov, unramified
cohomology can be computed purely at the level of the function field,
without reference to a specific good model.

\subsection{Flavors of rationality}

A variety $X$ over a field $k$ is \linedef{rational} over $k$ if $X$
is $k$-birational to projective space $\PP^n$, it is
\linedef{unirational} over $k$ if there is a dominant rational $\PP^N
\dashrightarrow X$ for some $N$, it is \linedef{retract rational} over
$k$ if there is a dominant rational $\PP^N \dashrightarrow X$ with a
rational section, it is \linedef{stably rational} over $k$ if $X
\times \PP^N$ is rational for some $N$.  The notion of retract
rationality was introduced by Saltman in the context of Noether's
problem.

We have the following implications:
$$
\text{rational} \Rightarrow \text{stably rational} \Rightarrow
\text{retract rational} \Rightarrow \text{unirational} \Rightarrow
\text{rationally connected}.
$$
Several important motivating problems in the study of rationality in
algebraic geometry can be summarized as asking whether these implications
are strict.

\begin{problem}[L\"uroth problem]\label{luroth-problem}
Determine whether a given unirational variety $X$ is rational.
\end{problem}

\begin{problem}[Birational Zariski problem]\label{zariski-problem}
Determine whether a given stably rational variety $X$ is rational.
\end{problem}

\begin{problem}
Does there exist a rationally connected variety $X$ with $X(k) \neq
\varnothing$ that is not unirational?
\end{problem}

\begin{problem}
Does there exist a retract rational variety $X$ that is not stably rational?
\end{problem}

The L\"uroth question has a positive answer in dimension 1 (proved by
L\"uroth) over an arbitrary field and in dimension 2 over an
algebraically closed field of characteristic zero (proved by
Castelnuovo~\cite{castelnuovo}).  There exist counterexamples, i.e.,
unirational but nonrational surfaces, over the real numbers (as
remarked by Segre~\cite{segre:real}) and over an algebraically closed
field of characteristic $p>0$ (discovered by Zariski
\cite{zariski:surface}).  The first known counterexamples over $\CC$
were in dimension 3, discovered independently by
Clemens--Griffiths~\cite{clemens_griffiths}, Iskovskih--Manin
\cite{iskovskih_manin}, and Artin--Mumford~\cite{artin_mumford}.  We
point out that the example of Artin--Mumford also provided the first
example of a unirational variety that is not stably rational over
$\CC$.  The method of the intermediate Jacobian due to
Clemens--Griffiths and the method of birational rigidity due to
Iskovskih--Manin do not obstruct stable rationality. We will treat the
former in more details in \S\ref{sect:cubics}.

The first known counterexamples to the birational Zariski problem were
discovered by
Beauville--Colliot-Th\'el\`ene--Sansuc--Swinnerton-Dyer~\cite{bctssd}
in dimension 2 over nonalgebraically closed fields and in dimension 3
over $\CC$ using the method of the intermediate Jacobian, which we
will recall in \S\ref{sect:intjacobians}.

There exist retract rational tori that are not stably rational over
$\QQ$ discovered in the context of work by Swan and Voskresenski\v\i\ on
Noether's problem, see \cite[\S8.B,~p.~223]{colliot-thelene_sansuc:tores}. 

The last two questions are still open over an algebraically closed field!

\subsection{Unramified elements}
\label{subsec:unramified}

Various notions of unramified cohomology emerged in the late 1970s and
1980s \cite{colliot:quadratiques78}, \cite{colliot:quadratiques80},
\cite{colliot-thelene_sansuc:type_multiplicatif},
\cite{colliot-thelene_sansuc:reeles},
\cite{colliot-thelene_ojanguren}, mostly motivated by earlier
investigations of the Brauer group~\cite{auslander_goldman}, \cite{Br}
and the Gersten conjecture~\cite{bloch_ogus} in algebraic $K$-theory.
The general notion of ``unramified element'' of a functor is developed
in \cite[\S2]{colliot:santa_barbara}.  Rost~\cite[Rem.\
5.2]{rost:cycle_modules} gives a different perspective in terms of
cycle modules, also see Morel~\cite[\S2]{morel:proof_milnor}. Let $k$
be a field and denote by $\Local_k$ the category of local $k$-algebras
together with local $k$-algebra homomorphisms.  Let $\Ab$ be the
category of abelian groups and let $M : \Local_k \to \Ab$ be a
functor.  For any field $K/k$ the group of \linedef{unramified
elements} of $M$ in $K/k$ is the intersection
$$ 
M_{\ur}(K/k) = \bigcap_{k \subset \OO \subset K} \im \bigl( M(\OO) \to
M(K) \bigr)
$$
over all rank 1 discrete valuations rings $k \subset \OO \subset K$
with $\Frac(\OO)=K$.

There is a natural map $M(k) \to M_{\ur}(K/k)$ and we say that the
group of unramified elements $M_{\ur}(K/k)$ is \linedef{trivial} if
this map is surjective.

For $X$ an integral scheme of finite type over a field $k$, write
$M_{\ur}(X/k) = M_{\ur}(k(X)/k)$.  By definition, the group
$M_\ur(X/k)$ is a $k$-birational invariant of integral schemes of
finite type over $k$.  

We will be mostly concerned with the functor $M=\Het^i(-,\mu)$ with
coefficients $\mu$ either $\mu_n^{\tensor (i-1)}$ (under the
assumption ${\rm char}(k) \neq n$) or
$$
\QQ/\ZZ(i-1) = \varinjlim \mu_n^{\tensor (i-1)},
$$ 
the direct limit being taken over all integers $n$ coprime to the
characteristic of $k$.  In this case, $M_{\ur}(X/k)$ is called the
\linedef{unramified cohomology} group $\Hur^i(X,\mu)$ of $X$ with
coefficients in $\mu$.  

The reason why we only consider cohomology of degree $i$ with
coefficients that are twisted to degree $(i-1)$ is the following
well-known consequence of the norm residue isomorphism theorem proved
by Voevodsky, Rost, and Weibel (previously known as the Bloch--Kato
conjecture).

\begin{theorem}
Let $K$ be a field and $n$ a nonnegative integer prime to the
characteristic.  Then the natural map
$$
H^i(K,\mu_n^{\tensor (i-1)}) \to H^i(K,\QQ/\ZZ(i-1))
$$
is injective and the natural map $\varinjlim H^i(K,\mu_n^{\tensor
(i-1)}) \to H^i(K,\QQ/\ZZ(i-1))$ is an isomorphism, where the limit is
taken over all $n$ prime to the characteristic.
\end{theorem}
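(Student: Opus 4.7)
\emph{Strategy.} The colimit isomorphism is formal, while the injectivity is the substantive part; my plan is to deduce the latter from the norm residue isomorphism theorem (formerly the Bloch--Kato conjecture, proved by Voevodsky, Rost, and Weibel) by converting it into a divisibility statement for cohomology in degree $i-1$.

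\emph{The isomorphism.} By definition $\QQ/\ZZ(i-1)=\varinjlim_n \mu_n^{\otimes(i-1)}$, where the colimit runs over positive integers $n$ prime to $\mathrm{char}(k)$, ordered by divisibility. Galois cohomology of a field commutes with filtered colimits of discrete Galois modules, so I obtain immediately
$$
\varinjlim_n H^i(K,\mu_n^{\otimes(i-1)}) \;\cong\; H^i(K,\QQ/\ZZ(i-1)).
$$

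\emph{Reducing injectivity to divisibility.} For each $n$ prime to $\mathrm{char}(K)$, multiplication by $n$ is surjective on the Galois module $\QQ/\ZZ(i-1)$ with kernel the $n$-torsion submodule $\mu_n^{\otimes(i-1)}$, giving a Kummer-type short exact sequence
$$
0 \to \mu_n^{\otimes(i-1)} \to \QQ/\ZZ(i-1) \xrightarrow{\,n\,} \QQ/\ZZ(i-1) \to 0.
$$
The associated long exact sequence of Galois cohomology contains
$$
H^{i-1}(K,\QQ/\ZZ(i-1)) \xrightarrow{\,n\,} H^{i-1}(K,\QQ/\ZZ(i-1)) \xrightarrow{\,\delta\,} H^i(K,\mu_n^{\otimes(i-1)}) \to H^i(K,\QQ/\ZZ(i-1)),
$$
so the injectivity asserted in the theorem is equivalent to the surjectivity of multiplication by $n$ on $H^{i-1}(K,\QQ/\ZZ(i-1))$, i.e.\ to the $n$-divisibility of that group.

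\emph{Divisibility via Bloch--Kato.} The norm residue theorem asserts that for every $N$ prime to $\mathrm{char}(K)$ the Galois symbol $K_{i-1}^M(K)/N \to H^{i-1}(K,\mu_N^{\otimes(i-1)})$ is an isomorphism. Taking the filtered colimit over $N$ and invoking the isomorphism from the first step produces a natural identification
$$
K_{i-1}^M(K)\otimes_{\ZZ} \QQ/\ZZ \;\cong\; H^{i-1}(K,\QQ/\ZZ(i-1)).
$$
Since $\QQ/\ZZ$ is a divisible abelian group, the left-hand side is divisible, hence so is the right; in particular it is $n$-divisible for every $n$ prime to the characteristic, which delivers the desired injectivity. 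The main obstacle is of course the depth of the Bloch--Kato/norm residue theorem itself, which I would simply cite; granted that input, the remainder of the argument is formal bookkeeping with a Kummer-type exact sequence, a long exact sequence in Galois cohomology, and a filtered colimit.
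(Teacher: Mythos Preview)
Your argument is correct and matches the paper's perspective: the paper does not give a proof, merely stating that the result is a ``well-known consequence of the norm residue isomorphism theorem proved by Voevodsky, Rost, and Weibel.'' Your Kummer-sequence reduction of injectivity to $n$-divisibility of $H^{i-1}(K,\QQ/\ZZ(i-1))$, followed by the identification $H^{i-1}(K,\QQ/\ZZ(i-1))\cong K_{i-1}^M(K)\otimes\QQ/\ZZ$ via the norm residue isomorphism, is exactly the standard way to unpack that citation.
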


\begin{remark}
If $k$ is an algebraically closed field whose
characteristic is invertible in $\mu$, then $\Hur^i(X,\mu)=0$ for all
$i > \dim(X)$, since in this case the function field $k(X)$ has
cohomological dimension $\dim(X)$.
\end{remark}

Another important functor is the Milnor $K$-theory functor $M=K_i^M(-)$.

Let $\mathcal{H}_{\et}^i(\mu)$ be the Zariski sheaf on the category of
$k$-schemes $\Sch_k$ associated to the functor $\Het^{i}(-,\mu)$.  The
Gersten conjecture, proved by Bloch and Ogus~\cite{bloch_ogus}, allows
for the calculation of the cohomology groups of the sheaves
$\mathcal{H}_{\et}^i(\mu)$ on a smooth proper variety $X$ as the
cohomology groups of the Gersten complex (also known as the
``arithmetic resolution'') for \'etale cohomology:
$$
\xymatrix@C=22pt{
0 \ar[r] & H^{i}(F(X)) \ar[r]
& \smash{\underset{{x \in X^{(1)}}}{\textstyle\bigoplus}} H^{i-1}(F(x))
  \ar[r]
& \smash{\underset{{y \in X^{(2)}}}{\textstyle\bigoplus}}
H^{i-2}(F(y)) \ar[r] & \dotsm 
}
$$
where $H^i(-)$ denotes the Galois cohomology group in degree $i$ with
coefficients $\mu$ either $\mu_n^{\tensor (i-1)}$ or $\QQ/\ZZ(i-1)$,
where $X^{(i)}$ is the set of codimension $i$ points $x$ of $X$ with
residue field $F(x)$, and where the ``residue'' morphisms are Gysin
boundary maps induced from the spectral sequence associated to the
coniveau filtration, see Bloch--Ogus \cite[Thm.\ 4.2, Ex.\ 2.1, Rem.\
4.7]{bloch_ogus}.  In particular, we have that
$$
H^0(X,\mathcal{H}_{\et}^{i}(\mu)) = \Hur^{i}(X,\mu).
$$
This circle of ideas is generally called ``Bloch--Ogus theory.''

Over $\CC$, this leads to the following ``geometric interpretation'' of
unramified cohomology, as the direct limit over all Zariski open
coverings $\mathscr{U} = \{U_i\}$ of the set
$$
\bigl\{ \{\alpha_i\} \in H_B^i(\mathscr{U},\mu) \; : \; \alpha_i|_{U_{ij}}
= \alpha_j|_{U_{ij}} \bigr\}
$$
where $H_B^i(\mathscr{U},\mu) = \prod_i H_B^i(U_i,\mu)$ is Betti
cohomology, using the comparison with \'etale cohomology.

\subsection{Purity in low degree}
\label{subsec:purity}

There is a canonical map $\Het^i(X,\mu) \to \Hur^i(X,\mu)$.  If this
map is injective, surjective, or bijective we say that the
\linedef{injectivity}, \linedef{weak purity}, or \linedef{purity}
property hold for \'etale cohomology in degree $i$, respectively, see
Colliot-Th\'el\`ene \cite[\S2.2]{colliot:santa_barbara}.

For $X$ smooth over a field $k$, a general cohomological purity
theorem for \'etale cohomology is established by Artin in
\cite[XVI~3.9,~XIX~3.2]{SGA4}.

\begin{theorem}
Let $X$ be a smooth variety over a field $k$ and $V \subset X$ a
closed subvariety of pure codimension $\geq c$.  Then the restriction
maps
$$
\Het^i(X,\mu_n^{\tensor j}) \to \Het^i(X \smallsetminus V,
\mu_n^{\tensor j})
$$
are injective for $i < 2c$ and are isomorphisms for $i < 2c-1$.
\end{theorem}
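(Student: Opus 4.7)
The plan is to reduce both assertions to the single vanishing
$$
H^i_V(X, \mu_n^{\tensor j}) \,=\, 0 \quad \text{for all } i < 2c,
$$
where $H^i_V$ denotes \'etale cohomology of $X$ with supports in $V$. Indeed, this vanishing enters the standard long exact sequence with supports
$$
\cdots \to H^i_V(X, \mu_n^{\tensor j}) \to \Het^i(X, \mu_n^{\tensor j}) \to \Het^i(X \smallsetminus V, \mu_n^{\tensor j}) \to H^{i+1}_V(X, \mu_n^{\tensor j}) \to \cdots
$$
and immediately yields injectivity for $i < 2c$ (vanishing of the image from $H^i_V$) and surjectivity for those $i$ with $i+1 < 2c$, hence an isomorphism for $i < 2c-1$.

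To establish the vanishing, I would proceed by noetherian induction on the dimension of $V$ to reduce to the case where $V$ is smooth over $k$. Let $W \subset V$ be the non-smooth locus; it is a proper closed subscheme of $V$, and each irreducible component of $W$ has codimension $\geq c+1$ in $X$. The excision triangle for cohomology with supports produces a long exact sequence
$$
\cdots \to H^i_W(X, \mu_n^{\tensor j}) \to H^i_V(X, \mu_n^{\tensor j}) \to H^i_{V \smallsetminus W}(X \smallsetminus W, \mu_n^{\tensor j}) \to \cdots
$$
in which the outer terms vanish for $i < 2c$: the first by the inductive hypothesis applied to $W$ (which satisfies the stronger bound $i < 2(c+1)$), and the third by the smooth case applied to $(V \smallsetminus W, X \smallsetminus W)$.

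For the smooth case, where $V \hookrightarrow X$ is a regular closed immersion of pure codimension $c$ between smooth $k$-schemes, I would invoke the absolute cohomological purity theorem for \'etale cohomology, the main result of SGA4 XVI \S 3 and XIX \S 3, which applies since $n$ is invertible in the characteristic. It produces a canonical Gysin isomorphism
$$
H^i_V(X, \mu_n^{\tensor j}) \;\cong\; \Het^{i-2c}(V, \mu_n^{\tensor (j-c)})
$$
which vanishes identically for $i < 2c$, completing the argument.

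The main obstacle is, of course, the absolute purity theorem itself: one must show that $R i^! \mu_n^{\tensor j} \simeq \mu_n^{\tensor (j-c)}[-2c]$ for the closed immersion $i \colon V \hookrightarrow X$. This is done in SGA4 by a Zariski-local reduction to the case of a smooth divisor (where the Gysin class is constructed directly from the first Chern class of the normal line bundle), iterated by induction on the codimension, together with a delicate argument patching the local isomorphisms into a canonical global one. Granting this input, the reduction sketched above is essentially formal.
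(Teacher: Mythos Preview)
Your argument is correct and follows the standard route: reduce to the vanishing of cohomology with supports via the long exact sequence, strip off the singular locus of $V$ by excision and induction on dimension, and invoke the Gysin isomorphism for a smooth pair of smooth $k$-schemes. One terminological quibble: what you need here is the cohomological purity theorem for smooth pairs over a field, which is indeed in SGA4 XVI and XIX; the name ``absolute purity'' is usually reserved for the (much harder) statement over arbitrary regular schemes, proved only later by Gabber. In this setting the distinction is harmless, and your citation is the right one.

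As for comparison with the paper: there is nothing to compare. The paper does not prove this theorem; it simply records it as a result of Artin and cites SGA4 XVI~3.9 and XIX~3.2. Your sketch is essentially an outline of how that proof goes.
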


An immediate consequence (taking $c=1$) is that purity holds for
\'etale cohomology in degree $\leq 1$, i.e.\
$$
\Het^0(X,\mu) = \Hur^0(X,\mu) = \mu, \quad \text{and} \quad
\Het^1(X,\mu) = \Hur^1(X,\mu).
$$ 
See also Colliot-Th\'el\`ene--Sansuc
\cite[Cor.~3.2,~Prop.~4.1]{colliot-thelene_sansuc:type_multiplicatif}
for an extension to any geometrically locally factorial and integral scheme. 

Combining this (for $c=2$) with a cohomological purity result for
discrete valuation rings and a Mayer--Vietoris sequence, one can
deduce that for $X$ smooth over a field, weak purity holds for \'etale
cohomology in degree 2.  Moreover, there's a canonical identification
$\Br(X)' = \Hur^2(X,\QQ/\ZZ(1))$ by Bloch--Ogus~\cite{bloch_ogus} such
that the canonical map $\Het^2(X,\QQ/\ZZ(1)) \to \Hur^2(X,\QQ/\ZZ(1))
= \Br(X)'$ arises from the Kummer exact sequence.  Here, $\Br(X)'$
denotes the prime-to-characteristic torsion subgroup of the
(cohomological) Brauer group $\Br(X) = \Het^2(X,\Gm)$.  For $X$ a
smooth variety over $\CC$ (or in fact $X$ any complex analytic space),
there is a split exact sequence
$$
0 \to \bigl(H^2(X,\ZZ)/\im(\Pic(X) \to H^2(X,\ZZ)\bigr)\tensor \QQ/\ZZ \to
\Br(X) \to H^3(X,\ZZ)_\text{tors} \to 0
$$
arising from the exponential sequence.  In particular, there is a
(noncanonical) isomorphism $\Br(X) \isom (\QQ/\ZZ)^{b_2-\rho}\oplus
H^3(X,\ZZ)_{\text{tors}}$, where $b_2$ is the second Betti number and
$\rho$ the Picard rank of $X$.  If $X$ satisfies $H^2(X,\ko_X)=0$
(e.g., $X$ is rationally connected), then $\Pic(X) \to H^2(X,\ZZ)$ is
an isomorphism, hence $\Br(X) = H^3(X,\ZZ)_{\text{tors}}$. 

There is a beautiful interpretation of unramified cohomology in degree
3 in terms of cycles of codimension 2, going back to
Barbieri-Viale~\cite{barbieri-viale:CH2}.  Let $X$ be a smooth projective
variety over $k$.  We say that $\CH_0(X)$ is supported in dimension
$r$ if there exists a smooth projective variety $Y$ over $k$ of
dimension $r$ and a morphism $f : Y \to X$ such that the pushforward
$f_* : \CH_0(Y) \to \CH_0(X)$ is surjective.  For example, if
$\CH_0(X)=\ZZ$ (e.g., $X$ is rationally connected) then $X$ is
supported in dimension $0$.

\begin{theorem}[{Colliot-Th\'el\`ene--Voisin~\cite[Thm.~1.1]{colliot_voisin:unramified_cohomology}}]
Let $X$ be a smooth projective variety over $\CC$.  Assume that
$\CH_0(X)$ is supported in dimension 2.  Then there is an isomorphism
$$
\Hur^3(X,\QQ/\ZZ(2)) \isom \frac{H^{2,2}(X)\cap
H^4(X,\ZZ)}{\im\bigl(\CH^2(X) \to H^4(X,\ZZ)\bigr)}.
$$
Equivalently, the unramified cohomology of $X$ in degree $3$ is the
obstruction to the validity of the integral Hodge conjecture for
cycles of codimension $2$.
\end{theorem}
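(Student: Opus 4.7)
The plan is to combine the Bloch--Ogus spectral sequence with a Bloch--Srinivas decomposition of the diagonal coming from the hypothesis on $\CH_0(X)$, and to read off the sought identification from the $E_2^{2,2}$-term using Bloch's formula and the norm residue isomorphism.

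First, I invoke Bloch--Srinivas: since $\CH_0(X)$ is supported in dimension $2$, after spreading out there is a rational equivalence
$$
N \cdot [\Delta_X] \;=\; [Z_1] + [Z_2] \quad \text{in } \CH^n(X \times X),
$$
where $n = \dim X$, the cycle $Z_1$ is supported on $D \times X$ for some divisor $D \subset X$, and $Z_2$ is supported on $X \times W$ with $\dim W \leq 2$. Letting both sides act as correspondences on Betti and \'etale cohomology and on the Zariski cohomology of the sheaves $\kh^q_{\et}(\QQ/\ZZ(j))$, the two terms on the right factor through lower-dimensional pieces, while the left-hand side acts as multiplication by $N$. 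Standard consequences are that $H^3(X,\ZZ)$ is torsion-free, that rational Hodge classes in $H^4(X,\QQ)$ are algebraic, and that the Zariski sheaves $\kh^q_\et(\QQ/\ZZ(j))$ have vanishing higher Zariski cohomology in the precise range needed below.

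Next, I feed this into the Bloch--Ogus spectral sequence
$$
E_2^{p,q} = H^p_{\mathrm{Zar}}(X, \kh^q_\et(\QQ/\ZZ(2))) \;\Longrightarrow\; \Het^{p+q}(X, \QQ/\ZZ(2)),
$$
together with the identification $\Hur^3(X, \QQ/\ZZ(2)) = E_2^{0,3}$ coming from the Gersten resolution recalled in \S\ref{subsec:unramified}. By Bloch's formula and the norm residue isomorphism of Voevodsky--Rost, the term $E_2^{2,2}$ is canonically identified with $\CH^2(X) \otimes \QQ/\ZZ$ modulo contributions killed by the vanishings of the previous step, and the composition of the differential $d_2: E_2^{0,3} \to E_2^{2,2}$ with the edge map to $\Het^4(X, \QQ/\ZZ(2))$ is induced by the cycle class map. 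Translating the resulting four-term exact sequence
$$
\Het^3(X, \QQ/\ZZ(2)) \to \Hur^3(X, \QQ/\ZZ(2)) \to \CH^2(X) \otimes \QQ/\ZZ \to \Het^4(X, \QQ/\ZZ(2))
$$
into the Betti picture via the Artin comparison and the coefficient sequence $0 \to \ZZ(2) \to \QQ(2) \to \QQ/\ZZ(2) \to 0$, the torsion-freeness of $H^3(X,\ZZ)$ forces the first map to vanish; a Bockstein diagram chase then identifies the kernel on the right with the cokernel of $\CH^2(X) \to H^{2,2}(X) \cap H^4(X, \ZZ)$, since non-torsion rational Hodge classes in $H^4$ are already algebraic by Bloch--Srinivas.

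The main obstacle is the middle step: one must simultaneously show that $E_2^{2,2} \cong \CH^2(X) \otimes \QQ/\ZZ$ and that the other $E_2^{p,q}$ with $p+q = 3$ and $p \geq 1$ do not contribute, so that $\Hur^3$ is read off cleanly from the cycle class sequence. Both are ensured precisely by the hypothesis that $\CH_0(X)$ is supported in dimension $2$: the decomposition of the diagonal kills the troublesome higher Zariski cohomology of the relevant $\kh^q_\et$ sheaves, after which Bloch's formula and the norm residue isomorphism identify what remains with exactly the integral Hodge-theoretic cokernel in the statement.
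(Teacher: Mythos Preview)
The paper does not actually give a proof of this theorem: it is stated with attribution to Colliot-Th\'el\`ene--Voisin \cite[Thm.~1.1]{colliot_voisin:unramified_cohomology} and no argument is supplied in the text, so there is no ``paper's own proof'' to compare against. Your outline is essentially the argument of the cited source: Bloch--Srinivas decomposition of the diagonal from the support hypothesis on $\CH_0$, plugged into the Bloch--Ogus coniveau spectral sequence, together with the identification $H^2_{\mathrm{Zar}}(X,\kh^2(\QQ/\ZZ(2)))\cong \CH^2(X)\otimes\QQ/\ZZ$ coming from the norm residue isomorphism and Bloch's formula, and then a Bockstein/comparison argument to pass to the integral Hodge cokernel.

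Two small remarks on your sketch. First, for the identification of $E_2^{2,2}$ you only need Merkurjev--Suslin (degree~$2$), not the full norm residue theorem; invoking Voevodsky--Rost here is overkill. Second, the phrase ``the decomposition of the diagonal kills the troublesome higher Zariski cohomology'' hides the genuine work: one must check precisely which $H^p_{\mathrm{Zar}}(X,\kh^q)$ vanish (notably $H^1(X,\kh^3)$ and the torsion in $H^3(X,\ZZ)$) and trace why the correspondence supported on $X\times W$ with $\dim W\le 2$ annihilates them while the piece on $D\times X$ does not contribute. Your last paragraph gestures at this but does not carry it out; in a full proof this is where most of the care goes.
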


More generally, without the assumption that $\CH_0(X)$ is supported in
dimension 2, the torsion subgroup of $(H^{2,2}(X)\cap
H^4(X,\ZZ))/\im\bigl(\CH^2(X) \to H^4(X,\ZZ)\bigr)$ is a quotient of
$\Hur^3(X,\QQ/\ZZ(2))$ by a divisible subgroup.

There is also a version of this result valid over more
general fields, in particular over finite fields, due to
Colliot-Th\'el\`ene and Kahn~\cite{colliot-kahn:H3unramified} and extended
to higher codimension cycles by Pirutka~\cite{pirutka:CH2}.  Finally,
there is a description of $\Hur^4(X,\QQ/\ZZ(3))$ in terms of torsion in
$\CH^3(X)$, due to Voisin~\cite{voisin:unramified_cohomology_degree_4}.

\subsection{Triviality}

If $F/k$ is a field extension, we write $X_F = X \times_k F$.  If $X$
is geometrically integral over $k$, we say that $M_{\ur}(X/k)$ is
\linedef{universally trivial} if $M_{\ur}(X_{F}/F)$ is trivial for
every field extension $F/k$. Let $N$ be a positive integer. We say
that $M_\ur(X/k)$ is \linedef{universally $N$-torsion} if the cokernel
of the natural map $M(F) \to M_\ur(X_{F}/F)$ is killed by $N$ for
every field extension $F/k$.

\begin{prop}[{\cite[\S2~and~Thm.~4.1.5]{colliot:santa_barbara}}]
\label{prop:Pn}
Let $M : \Local_k \to \Ab$ be a functor satisfying the following
conditions:
\begin{itemize}
\item If $\OO$ is a discrete valuation ring containing $k$, with
fraction field $K$ and residue field $\kappa$, then $\ker\bigl(M(\OO)
\to M(K)\bigr) \subset \ker\bigl(M(\OO) \to M(\kappa)\bigr)$.

\item If $A$ is a regular local ring of dimension 2 containing $k$,
with fraction field $K$, then $\im\bigl(M(A) \to M(K)\bigr) = 
\bigcap_{\mathrm{ht}(\mathfrak{p})=1} \im\bigl(M(A_{\mathfrak{p}}) \to M(K)\bigr)$.

\item The group $M_\ur(\AA^1_{k}/k)$ is universally trivial.
\end{itemize}
Then $M_\ur(\PP^n_{k}/k)$ is universally trivial.  In particular, if
$X$ is a rational variety over $k$, then $M_\ur(X/k)$ is universally trivial.
\end{prop}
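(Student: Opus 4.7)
The plan is to prove $M_\ur(\PP^n_F/F) = \im(M(F) \to M(F(\PP^n)))$ for every field extension $F/k$ by induction on $n$, with the base case $n=1$ supplied directly by hypothesis $(iii)$. The rational variety clause follows immediately afterwards: by construction $M_\ur(X/k) := M_\ur(k(X)/k)$ depends only on the function field, hence is a $k$-birational invariant, and since $k$-rationality is preserved under base change to any $F/k$, we get $M_\ur(X_F/F) \cong M_\ur(\PP^n_F/F)$, which is trivial by the main claim.

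For the inductive step, fix $F/k$ and write $K = F(t_1,\dots,t_{n-1}) = F(\AA^{n-1}_F)$, so that $F(\PP^n_F) = K(t)$ with $t = t_n$. Given $\alpha \in M_\ur(\PP^n_F/F)$, I first verify $\alpha \in M_\ur(\AA^1_K/K)$: every DVR of $K(t)$ containing $K$ is the local ring of $\PP^n_F$ at the generic point of some codimension-one subvariety that dominates $\AA^{n-1}_F$ under the projection, so unramifiedness on $\PP^n_F$ transfers to unramifiedness on $\AA^1_K$. Hypothesis $(iii)$ applied over $K$ then yields $\beta \in M(K)$ with $i(\beta) = \alpha$ in $M(K(t))$.

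The main step is to show $\beta \in M_\ur(\AA^{n-1}_F/F)$, after which the inductive hypothesis over $F$ produces $\gamma \in M(F)$ with $i(\gamma) = \beta$, whence $\alpha = i(\gamma)$. Fix a DVR $(\OO,\pi) \subset K$ over $F$, corresponding to a codimension-one subvariety $W \subset \AA^{n-1}_F$, and form the two-dimensional regular local ring $A = \OO[t]_{(\pi,t)}$ with $\Frac(A) = K(t)$. Its two height-one localizations are $A_{(\pi)} = \OO[t]_{(\pi)}$, which is the local ring of $\AA^n_F$ along $W \times \AA^1$, and $A_{(t)} = K[t]_{(t)}$, which is the local ring of $\AA^1_K$ at the origin. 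Unramifiedness of $\alpha$ on $\AA^n_F$ places $\alpha \in \im(M(A_{(\pi)}) \to M(K(t)))$, while $\alpha = i(\beta)$ with $\beta \in M(K) \subset M(A_{(t)})$ places it in $\im(M(A_{(t)}) \to M(K(t)))$. Hypothesis $(ii)$ then lifts $\alpha$ to some $\delta \in M(A)$. The two ring maps $A \to K$ factoring through $A/(t) = \OO$ or through $A_{(t)}/(t) = K$ coincide, so the image of $\delta$ in $M(K)$ via $M(\OO)$ equals the $t=0$ specialization of $\alpha$ provided by hypothesis $(i)$; but $i : M(K) \to M(K(t))$ is split by that specialization, so the value is $\beta$. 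Hence $\beta \in \im(M(\OO) \to M(K))$, as required.

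The main obstacle I foresee is the final compatibility step: hypothesis $(i)$ only provides a well-defined specialization on $\im(M(A_{(t)}) \to M(K(t)))$, so one must verify that $\alpha$ lies in this image, that the specialization of $i(\beta)$ equals $\beta$ (which reduces to the observation that $K \hookrightarrow K[t]_{(t)} \twoheadrightarrow K$ via $t \mapsto 0$ is the identity on $K$), and that the two factorizations of $A \to K$ produce the same element of $M(K)$ when applied to $\delta$. Hypothesis $(ii)$ is essential precisely to glue the two codimension-one pieces of data into the single two-dimensional lift $\delta \in M(A)$; without it, the two separate unramifiedness statements at $A_{(\pi)}$ and $A_{(t)}$ would not combine.
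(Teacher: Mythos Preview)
The paper does not give its own proof of this proposition; it simply cites Colliot-Th\'el\`ene's Santa Barbara lectures \cite[\S2 and Thm.~4.1.5]{colliot:santa_barbara}. Your inductive argument---fiber $\AA^n$ over $\AA^{n-1}$ by a line, use hypothesis~(iii) to descend an unramified class to the base field $K$ of the fiber, then use hypothesis~(ii) at a well-chosen two-dimensional regular local ring to push the resulting class into the image of each DVR of $K/F$---is exactly the approach in that reference, and the overall structure is correct.

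There is one genuine slip you should fix. You assert that $A=\OO[t]_{(\pi,t)}$ has exactly two height-one primes, $(\pi)$ and $(t)$. This is false: $\OO[t]$ is a UFD, and any primitive irreducible $f\in\OO[t]$ with $f(0)\in\pi\OO$ generates a height-one prime contained in $(\pi,t)$, so there are infinitely many. Fortunately this does not break the argument. For \emph{every} height-one prime $\mathfrak{p}\subset A$, the localization $A_{\mathfrak{p}}$ is a DVR of $K(t)$ containing $F$ (since $A\supset\OO\supset F$), so $\alpha\in M_\ur(K(t)/F)$ already lies in $\im\bigl(M(A_{\mathfrak{p}})\to M(K(t))\bigr)$ for all of them; hypothesis~(ii) then yields your lift $\delta\in M(A)$ as before. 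You should state it this way rather than singling out $(\pi)$ and $(t)$.

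A smaller point: you describe the DVR $\OO\subset K$ as ``corresponding to a codimension-one subvariety $W\subset\AA^{n-1}_F$'', and interpret $A_{(\pi)}$ as the local ring of $\AA^n_F$ along $W\times\AA^1$. Since the definition of $M_\ur$ ranges over \emph{all} DVRs of $K/F$, not only geometric ones, $\OO$ need not arise this way. The argument is unaffected---$A_{(\pi)}$ is still a DVR of $K(t)$ over $F$, which is all you use---but the geometric language should be dropped or qualified. With these corrections your proof is complete and matches the cited source.
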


The functor $\Het^i(-,\mu)$ satisfies the conditions of
Proposition~\ref{prop:Pn} (cf.\
\cite[Thm.~4.1.5]{colliot:santa_barbara}), hence if $X$ is a
$k$-rational variety, then $\Hur^i(X,\mu)$ is universally trivial.
More generally, $\Hur^i(X,\mu)$ is universally trivial if $X$ is
stably rational, see~\cite[Prop.~1.2]{colliot-thelene_ojanguren}, or
even retract $k$-rational, which can be proved using
\cite[Cor.~RC.12--13]{KMApp}, see
\cite[Prop.~2.15]{merkurjev:unramified_cycle_modules}.

\subsection{Applications: Noether's problem and Artin--Mumford}

Here we describe two important examples where unramified cohomology
has been used in the rationality problem. 

\begin{example}
Let $G$ be a finite group, $V$ a finite dimensional linear
representation over $k$, and $k(V)$ the field of rational functions on
the affine space associated to $V$.  Then Noether's question asks if
the field of invariants $k(V)^G$ is purely transcendental over $k$,
equivalently, if the variety $V/G$ is rational.  This question was
posed by Emmy Noether in 1913, and has endured as one of the most
challenging rationality problems in algebraic geometry.  

Over the rational numbers, the problem takes on a very arithmetic
flavor.  Indeed, Noether's original motivation was the inverse
Galois problem, see \cite{swan:survey} for a survey in this
direction.  So we will focus on the case when $k$ is algebraically
closed of characteristic zero.

In this case, the question has a positive answer when $G$ is any
abelian group but is still open for $G=A_n$ for $n\geq 6$.
Saltman~\cite{saltman:noethers_problem} gave the first examples of
$p$-groups having a negative answer to Noether's question when $k$ is
algebraically closed.  While $V/G$ often has terrible singularities
and its smooth projective models are not easy to compute, nor feasible
to work with, the insight of Saltman was that one could still compute
unramified cohomology, in particular, $\Hur^2(k(V)^G/k,\QQ/\ZZ(1))$.
By the above purity results, if $X$ were a smooth proper model of
$k(V)^G$, then $\Hur^2(k(V)^G/k,\QQ/\ZZ(1)) = \Br(X)$.
Bogomolov~\cite{bogomolov:noethers_problem} gave a simple group
theoretic formula to compute $\Hur^2(k(V)^G/k,\QQ/\ZZ(1))$ purely in
terms of $G$, when $k$ is algebraically closed of characteristic not
dividing the order of $G$.  Because of this, this group is often
called the ``Bogomolov multiplier'' in the literature.

We point out that examples of groups $G$ where
$\Hur^i(k(V)^G/k,\QQ/\ZZ(1))$ is trivial for $i=2$ yet nontrivial for
$i \geq 3$ were first constructed by
Peyre~\cite{peyre:noethers_problem}.
\end{example}  

\begin{example}\label{ex:artin_mumford}
Artin and Mumford~\cite{artin_mumford} constructed a unirational
threefold $X$ over an algebraically closed field of characteristic
$\neq 2$ having nontrivial 2-torsion in $\Br(X)
=H^3(X,\ZZ)_{\mathrm{tors}}$, which by purity (see
\S\ref{subsec:purity}) coincides with $\Hur^2(X,\QQ/\ZZ(1))$, hence
such $X$ not retract rational (hence not stably rational).  The
``Artin--Mumford solid'' $X$ is constructed as the desingularization
of a double cover of $\PP^3$ branched over a certain quartic
hypersurface with 10 nodes, and is unirational by construction.  The
solid $X$ can also be presented as a conic bundle $X \to S$ over a
rational surface $S$.  The unramified cohomology perspective on the
examples of Artin and Mumford was further investigated by
Colliot-Th\'el\`ene and Ojanguren~\cite{colliot-thelene_ojanguren}.
\end{example}

Denote by $\Ab^{\bullet}$ the category of graded abelian groups.  An
important class of functors $M : \Local_k \to \Ab^{\bullet}$ arise
from the theory of \linedef{cycle modules} due to
Rost~\cite[Rem.~5.2]{rost:cycle_modules}.  In particular, unramified
cohomology arises from the \'etale cohomology cycle module, and to
some extent, the theory of cycle modules is a generalization of the
theory of unramified cohomology.  Rost's key observation is that
classical Chow groups appear as the unramified elements of the Milnor
$K$-theory cycle module.  The definition of cohomology groups arising
from cycle modules is very parallel to the definition of homology of a
CW complex from the singular chain complex.  
A cycle module $M$ comes
equipped with residue maps of graded degree $-1$
$$
M^i(k(X)) \mapto{\partial} \bigoplus_{x \in X^{(1)}} M^{i-1}(k(x)) 
$$
for any integral $k$-variety $X$.  If $X$ is smooth and proper, then
the group of unramified elements $M^i_{\ur}(X/k)$ is defined to be the kernel.

\subsection{Decomposition of the diagonal}

We say that a smooth proper variety $X$ of dimension $n$ over a field
$k$ has an \linedef{(integral) decomposition of the diagonal} if we
can write
\begin{equation}
\label{eq:decomp_diagonal}
\Delta_X = P \times X + Z
\end{equation}
in $\CH^n(X \times X)$, where $P$ is a $0$-cycle of degree $1$ and
$Z$ is a cycle with support in $X \times V$ for some closed
subvariety $V \subsetneq X$.  We say that $X$ has a \linedef{rational
decomposition of the diagonal} if there exists $N \geq 1$ such that
\begin{equation}
\label{eq:rat_decomp_diagonal}
N \Delta_X = P \times X + Z
\end{equation}
in $\CH^n(X\times X)$, where $P$ is a $0$-cycle of degree $N$ and
$Z$ is as before.  This notion was studied by Bloch and
Srinivas~\cite{bloch_srinivas}, where the idea goes back to Bloch
\cite[Lecture~1,~Appendix]{bloch:lectures}, in his proof of Mumford's
result on 2-forms on surfaces.

\begin{example}
The class of the diagonal $\Delta_{\PP^n} \in \CH^n(\PP^n \times
\PP^n)$ can be expressed in terms of the pull backs $\alpha,
\beta \in \CH^1(\PP^n\times \PP^n)$ of hyperplane classes from the two
projections.  The Chow ring is generated in terms of these
$\CH(\PP^n\times\PP^n) =
\ZZ[\alpha,\beta]/(\alpha^{n+1},\beta^{n+1})$ and one can compute that
$$
\Delta_{\PP^n} = \alpha^n + \alpha^{n-1}\beta + \dotsm + \alpha
\beta^{n-1} + \beta^n
$$
in $\CH^n(\PP^n \times \PP^n)$, see
\cite[Ex.~8.4.2]{fulton:intersection_theory}.  The class $\alpha^n$ is
the same as the class $P \times \PP^n$, for $P \in \PP^n$ a rational point,
while the classes $\alpha^i \beta^{n-i}$ for $i>0$ all have support on
$\PP^n \times H$, where $H \subset \PP^n$ is the hyperplane defining
$\beta$.  So any projective space has an integral decomposition of the
diagonal.
\end{example}

\begin{example}
Let $f : Y \to X$ be a proper surjective generically finite morphism
of degree $N$ between smooth quasi-projective varieties.
Then $(f\times f)_* \Delta_Y = N \Delta_X$.  Assume that $Y$ has a
decomposition of the diagonal $\Delta_Y = P \times Y + Z$, where $P$
is a $0$-cycle of degree 1 on $Y$ and $Z$ is a cycle with support on
$Y \times V$.  Then
$$
N\Delta_X = (f\times f)_* \Delta_Y = (f\times f)_* (P \times Y + Z) =
f_*(P)\times X + Z'
$$
where $f_*(P)$ is a $0$-cycle of degree $N$ and $Z'$ is a cycle on $X$
with support on $X \times f(V)$.
Hence $X$ has a rational decomposition of the diagonal.

Let $f : Y \to X$ be a surjective birational morphism between smooth
quasi-projective varieties.  Given a decomposition of the diagonal
$\Delta_X = P \times X + Z$ on $X$, by the moving lemma for
$0$-cycles, we can move $P$, up to rational equivalence, outside of
the image of the exceptional locus of $f$.  Then $(f\times
f)^*\Delta_X - \Delta_Y$ is a sum of cycles whose projections to $Y$
are contained in the exceptional locus of $f$.  But $(f\times
f)^*\Delta_X = (f \times f)^*(P \times X + Z) = f^{-1}(P) \times Y +
(f\times f)^*(Z)$, and $(f \times f)^*(Z)$ is a cycle with support on
$Y \times f^*(V)$.  In total, $Y$ has a decomposition of the diagonal.

We can use this to show that if $\PP^n \dashrightarrow X$ is a
unirational parameterization of degree $N$ over a field of
characteristic zero, then $X$ has a rational decomposition of the
diagonal $N \Delta_X = P \times X + Z$.  Indeed, by resolution of
singularities, we can resolve the rational map to a proper surjective
generically finite morphism $Y \to X$ of degree $N$, where $Y \to
\PP^n$ is a sequence of blow up maps along smooth centers.  By the
above considerations, the decomposition of the diagonal on $\PP^n$
induces one on $Y$, which in turn induces the desired rational
decomposition of the diagonal on $X$.

We remark that one can argue without the use of resolution of
singularities, but this is slightly more delicate.
\end{example}

\subsection{Decomposition of the diagonal acting on cohomology}
\label{subsec:decomp_acting}

A rational decomposition of the diagonal puts strong restrictions on
the variety $X$.  For example, the following result is well known.

\begin{prop}
\label{prop:bloch}
Let $X$ be a smooth proper geometrically irreducible variety over a
field $k$ of characteristic zero.  If $X$ has a rational decomposition
of the diagonal then $H^0(X,\Omega^{i}_{X})=0$ and $H^i(X,\OO_X)=0$
for all $i > 0$.
\end{prop}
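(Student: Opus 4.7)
The plan is to have the decomposition $N \Delta_{X} = P \times X + Z$ act as a correspondence on Hodge cohomology. Write $H^{p,q}(X) := H^{q}(X, \Omega^{p}_{X})$; any $\Gamma \in \CH^{n}(X \times X)$ induces a bigraded endomorphism $\Gamma_{*}$ of $\bigoplus_{p,q} H^{p, q}(X)$ (extending the construction of \S\ref{subsec:correspondences} via the cycle class in Hodge cohomology), and $(\Delta_{X})_{*} = \mathrm{id}$. Hence for any $\alpha \in H^{0, i}(X) = H^{i}(X, \OO_{X})$ or $\alpha \in H^{i, 0}(X) = H^{0}(X, \Omega^{i}_{X})$ with $i > 0$ we have
\begin{equation*}
N \alpha = (P \times X)_{*} \alpha + Z_{*} \alpha.
\end{equation*}
Since $N$ is invertible in $k$, it suffices to show that both summands on the right vanish.

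For the first summand, let $p_{1}, p_{2}$ denote the two projections $X \times X \to X$. Then $[P \times X] = p_{1}^{*}[P]$, so by the projection formula
\begin{equation*}
(P \times X)_{*} \alpha = (p_{2})_{*}\bigl(p_{1}^{*} \alpha \cdot p_{1}^{*}[P]\bigr) = (p_{2})_{*} p_{1}^{*}(\alpha \cdot [P]).
\end{equation*}
The class $[P]$ has top cohomological degree $2n$ while $\alpha$ has positive degree $i$, so $\alpha \cdot [P]$ lies in $H^{2n + i}(X) = 0$. Hence $(P \times X)_{*} \alpha = 0$.

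For the second summand, let $V \subsetneq X$ be a closed subvariety containing the support of $Z$. By Hironaka---the only place $\mathrm{char}(k) = 0$ is used---take a resolution $\pi \colon \tilde V \to V \hookrightarrow X$ with $\tilde V$ smooth projective of dimension $d < n$. Lift $Z$ to a cycle $\tilde Z \in \CH^{d}(X \times \tilde V)$ with $(\mathrm{id}_{X} \times \pi)_{*} \tilde Z = Z$; the action then factors as $Z_{*} = \pi_{*} \circ \tilde Z_{*}$ via $H^{*,*}(\tilde V)$. Now $[\tilde Z]$ has pure Hodge type $(d, d)$, and the projection $X \times \tilde V \to \tilde V$ has $X$-fibers of dimension $n$, so tracking Hodge bidegrees shows that $\tilde Z_{*}$ sends $H^{p, q}(X)$ into $H^{p + d - n,\, q + d - n}(\tilde V)$. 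As $d - n < 0$, one of the two indices is negative in both cases $(p, q) = (0, i)$ and $(p, q) = (i, 0)$; hence the target group vanishes (either $\Omega^{d - n}_{\tilde V} = 0$ or the cohomological index is negative), so $\tilde Z_{*} \alpha = 0$ and consequently $Z_{*} \alpha = 0$.

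Combining the two vanishings gives $N \alpha = 0$, whence $\alpha = 0$. The main obstacle is the Hodge-bidegree bookkeeping in the third paragraph: it rests on the standard facts that algebraic cycle classes on smooth projective varieties have pure type $(c, c)$ in codimension $c$, and that proper pushforward along a smooth morphism shifts Hodge bidegree by its relative dimension. Over $\CC$ this is classical Hodge theory; for arbitrary $k$ of characteristic zero, one works directly with the algebraic Hodge cohomology $H^{q}(X, \Omega^{p}_{X})$ and the algebraic cycle class map to it, in which the required compatibilities hold by construction.
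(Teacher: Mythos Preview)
Your proof is correct. The overall strategy---letting the decomposition act on Hodge cohomology and showing each piece kills $H^{p,q}$ when $p=0$ or $q=0$---is the same as the paper's, and your treatment of the $P\times X$ term is essentially identical (a degree argument).

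The genuine difference lies in how you kill $Z_*\alpha$. The paper does not resolve $V$; instead it observes that $[Z]$ restricts to zero on $X\times(X\smallsetminus V)$, so $Z_*$ lands in $\ker\bigl(H^0(X,\Omega^i_X)\to H^0(X\smallsetminus V,\Omega^i_{X\smallsetminus V})\bigr)$, and this kernel vanishes because global sections of a locally free sheaf on a smooth variety restrict injectively to any dense open. This is more elementary: it avoids Hironaka entirely and works for $H^0(X,\Omega^i_X)$ in any characteristic (the paper then gets $H^i(X,\OO_X)=0$ via Hodge symmetry, which does use characteristic zero). Your route through a resolution $\tilde V\to V$ and Hodge-bidegree bookkeeping is heavier in that it invokes resolution of singularities, but it has the advantage of treating $H^{0,i}$ and $H^{i,0}$ uniformly without appealing to Hodge symmetry. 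One small point you elide: to guarantee that $Z$ lifts to $X\times\tilde V$, you should take $V$ (component by component) to be the image of the support of $Z$ under the second projection, so each component of $Z$ dominates its image and hence has a well-defined proper transform; this is routine but worth a sentence.
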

Over a complex surface, this result goes back to Bloch's
proof~\cite[App.~Lec.~1]{bloch:lectures} of
Mumford's~\cite{mumform:2-forms} result on 2-forms on surfaces,
exploiting a decomposition of the diagonal and the action of cycles on
various cohomology theories (de Rham and \'etale). This argument was
further developed in~\cite{bloch_srinivas}. A proof over the complex
numbers can be found in~\cite[Cor.~10.18,~\S10.2.2]{voisin:Hodge}. A
variant of the argument for rigid cohomology in characteristic $p$ is
developed by Esnault~\cite[p.~187]{esnault:finite_field_trivial_Chow},
in her proof that rationally connected varieties over a finite field
have a rational point.  A variant of the argument using logarithmic de
Rham cohomology over any field is developed by Totaro
\cite[Lem.~2.2]{totaro:hypersurfaces} using the cycle class map of Gros.

Let $H^i(-)$ be a cohomology theory with a cycle class map
$$
\CH^i(X) \to H^{2i}(X)
$$
and a theory of correspondences (basically a Weil cohomology theory),
so that for any $\alpha \in H^{2n}(X \times X)$, where $n=\dim(X)$,
there is a map
$$
\alpha_* = q_*(\alpha . p^*) : H^i(X) \to H^i(X).
$$
for any $i \geq 0$.  Here $p$ and $q$ are the left and right
projections $X \times X \to X$, respectively.  When $\alpha=
[\Delta_X] \in H^{2n}(X\times X)$, then $\alpha_*$ is the identity
map.  When $\alpha = [P \times X] \in H^{2n}(X\times X)$, then
$\alpha_*$ factors through $H^i(P)$, i.e., there is a commutative
diagram
$$
\xymatrix{
H^i(X) \ar[d] \ar[r]^{\alpha_*} & H^i(X) \ar@{=}[d] \\
H^i(P)        \ar[r]^{\alpha_*} & H^i(X)
}
$$
where the left hand vertical map is the pullback by the inclusion of
the zero-dimensional subscheme $P \subset X$ (we have in mind $N$
times a point).  Assuming that $H^i(P)=0$ for $i > 0$, we get that
$N[\Delta_X]_* = [Z]_*$ on $H^i(X)$, assuming a rational decomposition
of the diagonal as in \eqref{eq:rat_decomp_diagonal}.

On the other hand, since $Z$ is a cycle supported on $X \times V$ for
$V \subset X$ a proper closed subvariety, the restriction of $[Z]$ to
$H^{2n}(X \times X \smallsetminus V)$ is zero.  Consider $\alpha = [Z]
\in H^{2n}(X\times X)$ and the commutative diagram
$$
\xymatrix{
H^i(X) \ar@{=}[d] \ar[r]^{\alpha_*} & H^i(X) \ar[d] \\
H^i(X)            \ar[r]^(.45){0_*} & H^i(X \smallsetminus V)
}
$$
where the right hand vertical arrow is the pullback by the inclusion
$X \smallsetminus V \subset X$ and the bottom horizontal arrow is the
pushforward associated to the restriction of $[Z]$ to $X \times (X
\smallsetminus V)$, which is zero.  Hence we have that $\alpha_*H^i(X)
\subset \ker(H^i(X) \to H^i(X \smallsetminus V))$.  If we additionally
assume that the cohomology theory has a localization sequence
$$
\dotsm \to H^i_V(X) \to H^i(X) \to H^i(X \smallsetminus V) \to \dotsm
$$
involving cohomology with supports, then we can also conclude that
$\alpha_*H^i(X) \subset \im(H^i_V(X) \to H^i(X))$.

Applying this to (algebraic) de Rham cohomology $\HdR^i(-)$, we have
that (by the degeneration of the Hodge-to-de Rham spectral sequence)
any cycle class $\alpha \in \HdR^{2n}(X \times X)$ lands in
$$
H^n(X\times X, \Omega_{X\times X}^n) =
H^n(X \times X, \bigoplus_{j} \Omega_X^j \boxtimes \Omega_X^{n-j}) =
\bigoplus_{i,j} H^i(X,\Omega_X^j) \otimes H^{n-i}(X, \Omega_X^{n-j})
$$ 
so $\alpha$ has a component in $H^0(X,\Omega_X^i)\otimes H^n(X,
\Omega_X^{n-i})$, which is isomorphic (by Serre duality) to
$\End(H^0(X,\Omega_X^i))$.  Thus this component of the pushforward
$\alpha_*$ defines a map $H^0(X,\Omega_X^i) \to H^0(X,\Omega_X^i)$,
whose image lands in $\ker(H^0(X,\Omega_X^i) \to H^0(X\smallsetminus
V,\Omega_{X\smallsetminus V}^i))$, which is trivial since the
restriction map to a Zariski open is injective on global differential
forms.  We conclude that ($N$ times) the identity on
$H^0(X,\Omega_X^i)$ coincides with the zero map, hence
$H^0(X,\Omega_X^i)=0$ for all $i>0$.  Instead using the cycle class
map of Gros in logarithmic de Rham cohomology, Totaro shows that even
in characteristic $p$, if $X$ has an (integral) decomposition of the
diagonal, then $H^0(X,\Omega_X^i)=0$ for all $i > 0$.

Applying this to the transcendental part of the cohomology
$$
H^2(X,\QQ_\ell)/\im(\NS(S)\tensor\QQ_\ell \to \Het^2(X,\QQ_\ell))
$$ 
of a surface $X$, Bloch shows that a rational decomposition of the
diagonal implies the vanishing of the transcendental part of the
cohomology.  This gave a new proof, via the Hodge-theoretic fact that
$p_g(X)=0$ is equivalent to $b_2(X)-\rho(X)=0$, of Mumford's theorem
that if
$p_g(X) > 0$ then the kernel of the degree map 
$\deg : \CH_0(X) \to \ZZ$ is not representable.

Applying this to Berthelot's theory of rigid cohomology, Esnault shows
that if $X$ is defined over a field of characteristic $p$ and has a
rational decomposition of the diagonal, then the Frobenius slope
$[0,1)$ part of the rigid cohomology of $H^i(X)$ is trivial for all $i
>0$.  If $X$ is defined over a finite field $\FF_q$, the Lefschetz
trace formula implies that $\# X(\FF_q) \equiv 1 \pmod q$, in
particular, $X(\FF_q) \neq \varnothing$.

Over $\CC$, an integral decomposition of the diagonal does not imply
$H^0(X, \omega_X^{\tensor n})=0$ for all $n > 1$.  Otherwise, a smooth
projective surface $X$ over $\CC$ with integral decomposition of the
diagonal would, aside from satisfying $p_g(X) = h^0(X,\omega_X)=0$ and
$q=h^1(X,\OO_X)=h^0(X,\Omega^1_X)=0$, would additionally satisfy
$P_2(X) = h^0(X,\omega_X^{\tensor 2})=0$, hence would be rational by
Castelnovo's criterion.  However, there do exist (nonrational) complex
surfaces $X$ of general type (e.g., Barlow surfaces) admitting an
integral decomposition of the diagonal that emerge in the context of
Bloch's conjecture on 0-cycles on surfaces, see
\S\ref{subsec:surfaces} for a more detailed discussion.

\section{Cubic threefolds and special cubic fourfolds}\label{sect:cubics}

Cubic hypersurfaces of dimension 3 and 4 are some of the most
important motivating objects in birational geometry since the last
half of the 20th century.  An irreducible cubic hypersurface is
rational as soon as it has a rational singular point, unless possibly
when it is a cone over a cubic of lower dimension, see~\cite[Chapter
1, Section 5, Example 1.28]{corti-kollar-smith}.  Working over an
algebraically closed field of characteristic not 3, we recall that in
dimension 1, a cubic hypersurface is not rational if and only if it is
smooth, in which case it is a curve of genus 1. In dimension 2, smooth
cubic hypersurfaces are rational, and they are realized
geometrically as the blow-up of six points in general position on
$\PP^2$.  In dimension 3, the fact that every smooth cubic
hypersurface is not rational is a celebrated theorem of Clemens and
Griffith~\cite{clemens_griffiths}. In dimension 4, some families of
smooth cubics hypersurfaces are known to be rational, while the very
general one is expected to be nonrational, even though not a single
one is currently provably nonrational.

Cubic hypersurfaces seem to occupy a space in the birational
classification of varieties that is ``very close'' to rational
varieties, in that their familiar cohomological and birational
invariants are similar to those of projective space.  Proving their
nonrationality seems to require the development of much finer
techniques.  The nonrationality of the cubic threefold was indeed one
of the first counterexamples to the L\"uroth problem (see Problem~\ref{luroth-problem}) in characteristic zero, and the proof of its
nonrationality required a deep study of algebraic cycles and the
intermediate Jacobian.  The study of the (non)rationality of cubic
fourfolds has already attracted Hodge and moduli-theoretic techniques,
and is undoubtedly one of the most famous open question in algebraic
geometry.

The aim of this section is to introduce ``classical'' constructions
arising in the study of cubic hypersurfaces. Intermediate Jacobians
will be presented in the first part.  In the second part, we recall
the Hodge theoretic approach to moduli spaces of cubic fourfolds,
and the known examples.  We work here exclusively over $\CC$.

\subsection{Intermediate Jacobians and cubic threefolds}\label{sect:intjacobians}
Let us recall from~\cite[Ch. 12]{voisin:Hodge} the definition of the intermediate Jacobians of a smooth
complex variety $X$ of dimension $n$. Consider the Betti cohomology group $H^i(X,\CC)$ together with the Hodge filtration
$F^p H^i(X,\CC)$. If $i=2j-1$ is odd, the $j$-th filtered module yields:
$$F^j H^{2j-1}(X,\CC) = \bigoplus_{p+q=2j-1,\,\, p \geq j} H^{p,q}(X).$$
The Hodge structure on Betti cohomology then gives that $H^{2j-1}(X,\CC)$ is the sum of $F^j H^{2j-1}(X,\CC)$ and its conjugate, so that
$$H^{2j-1}(X,\ZZ)/{\mathrm{Tors}} \longrightarrow H^{2j-1}(X,\CC)/ F^j H^{2j-1}(X,\CC)$$
is an injective map (via de Rham cohomology). We define the ($2j-1$)-st intermediate Jacobian $J^{2j-1}(X)$ as the quotient of the $\CC$-vector space
$F^j H^{2j-1}(X,\CC)$ by this lattice. The Jacobian is in general a complex torus, and not an Abelian
variety.

If $X$ is a threefold with $H^1(X,\CC)=0$, then the only nontrivial Jacobian is $J^3(X)$.
Indeed, by Poincar\'e duality $H^1(X,\CC)=H^5(X,\CC)=0$, so that $J^1(X)=J^5(X)=0$. We denote then $J(X):=J^3(X)$.
Moreover, assume $X$ is a Fano, or in general a threefold with $H^1(X,\CC)=H^{3,0}(X)=0$.
The key idea of Clemens and Griffiths~\cite{clemens_griffiths} is to show that in this case 
the complex torus $J(X)$ is an abelian variety endowed with a canonical principal polarization. Let us briefly sketch a proof of that fact,
loosely following the presentation in~\cite[3.1]{voisin:exposition}.
The cup product gives a unimodular intersection pairing $\sod{-,-}$ on $H^3(X,\ZZ)/\mathrm{Tors}$.
Moreover, consider any nontrivial $(2,1)$-cohomology classes $\alpha,\beta \in H^{2,1}(X)$. Recall we assume that $H^{1,0}(X)=0$, so that 
\begin{equation}
\sod{\alpha,\beta} = 0, \,\,\,\,\,\,\,\,\,\,\,\,\, -\sqrt{-1}\sod{\alpha,\overline{\alpha}} > 0,
\end{equation}
since the cup product is Hermitian and skew symmetric and respects the Hodge decomposition (see~\cite[7.2.1]{voisin:Hodge} for more details).
It follows that $\sod{-,-}$ can be identified with the first Chern class of an ample line bundle $L$ on $J(X)$, via the identification
$\bigwedge^2 H^1(J(X),\ZZ) \simeq H^2(J(X),\ZZ)$ (see~\cite[Ch. I, 3]{mumford-abelian} for more details), and the line
bundle $L$ is well-defined up to translation. In particular $J(X)$ is an Abelian variety.
Moreover, since the cup product is unimodular, $H^0(J(X),L)$ is one dimensional. Hence $L$ is a Theta divisor
for $J(X)$, which is then principally polarized. A famous result proved by Clemens and Griffiths~\cite{clemens_griffiths}
shows that one can extract a birational invariant from this Abelian variety.

\begin{theorem}[Clemens--Griffiths \cite{clemens_griffiths}]
\label{thm:J-is-bir-inv}
If a complex threefold $X$ is rational, then there exist smooth
projective curves $\{C_i\}_{i=1}^r$ and an isomorphism of principally
polarized Abelian varieties
$$J(X) = \bigoplus_{i=1}^r J(C_i).$$
Moreover, if $X$ is a complex threefold with
$H^{3,0}(X)=H^{1,0}(X)=0$, there is a well-defined principally
polarized Abelian subvariety $A_X \subset J(X)$ which is a birational
invariant: if $X' \dashrightarrow X$ is a birational map, then $A_{X'}
\simeq A_X$ as principally polarized Abelian varieties.
\end{theorem}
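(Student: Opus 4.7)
The plan is to reduce the statement to understanding how the intermediate Jacobian transforms under blow-ups of smooth centers, and then to apply uniqueness of direct sum decompositions of principally polarized abelian varieties (PPAVs). The strategy works uniformly in both parts: a birational invariant ``non-curve piece'' $A_X$ emerges from the Krull--Schmidt decomposition of $J(X)$, and in the rational case one checks that $J(\PP^3)=0$ forces the non-curve piece to vanish.

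First I would establish a blow-up formula. Let $\sigma : \widetilde{Y} \to Y$ be the blow-up of a smooth projective threefold $Y$ along a smooth closed subvariety $Z$. If $Z$ is a point, $\sigma^*$ induces an isomorphism of PPAVs $J(\widetilde{Y}) \isom J(Y)$. If $Z$ is a smooth projective curve, the claim is that $J(\widetilde{Y}) \isom J(Y) \oplus J(Z)$ as PPAVs, with $J(Z)$ carrying its canonical principal polarization. The decomposition on underlying abelian varieties comes from the blow-up formula $H^3(\widetilde{Y},\ZZ) = \sigma^* H^3(Y,\ZZ) \oplus H^1(Z,\ZZ)$ combined with the compatibility of Hodge structures. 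The compatibility with polarizations requires the most care: via the projection formula and explicit computation on the exceptional $\PP^1$-bundle $E=\PP(N_{Z/Y})$, one verifies that the cup product pairing splits orthogonally between the two summands and that its restriction to the second summand recovers the principal polarization of $J(Z)$.

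For the rational part, I would resolve the indeterminacy of a birational map $X \dashrightarrow \PP^3$ using Hironaka, producing a smooth projective threefold $\widetilde{X}$ dominating both $X$ and $\PP^3$, each domination being a sequence of blow-ups along smooth centers. Iterating the blow-up formula and using $J(\PP^3)=0$ yields an isomorphism of PPAVs
\[
J(X) \oplus \bigoplus_i J(C_i) \isom J(\widetilde{X}) \isom \bigoplus_j J(D_j).
\]
By the Krull--Schmidt theorem for PPAVs (unique decomposition into indecomposable PPAVs with splitting polarization), together with the classical fact that an indecomposable PPAV summand of the Jacobian of a smooth projective curve is itself the Jacobian of such a curve, we conclude $J(X) \isom \bigoplus_k J(C'_k)$ as PPAVs.

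For the birational invariant $A_X$ in general, I would apply the same Krull--Schmidt decomposition to write $J(X) = A_X \oplus \bigoplus_i J(C_i)$, where $A_X$ is the direct sum of those indecomposable PPAV factors \emph{not} isomorphic to a Jacobian of a smooth projective curve. Given a birational map $X' \dashrightarrow X$ between smooth projective threefolds satisfying $H^{3,0}=H^{1,0}=0$, weak factorization (Abramovich--Karu--Matsuki--W\l{}odarczyk) presents it as a chain of blow-ups and blow-downs along smooth centers; by the blow-up formula each such step modifies $J$ only by adding or removing a curve Jacobian summand, so $A_X$ is preserved. The hardest technical points are the polarization compatibility in the blow-up formula (requiring explicit intersection computations on $\widetilde{Y}$ using the structure of $E$) and the PPAV Krull--Schmidt statement together with the recognition of curve Jacobians among indecomposable PPAV summands; the latter is a classical result ultimately drawing on the uniqueness of Theta divisors.
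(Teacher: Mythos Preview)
Your overall strategy is sound, but there is a gap in the first part. You claim that Hironaka produces a common resolution $\widetilde{X}$ with \emph{both} maps $\widetilde{X}\to X$ and $\widetilde{X}\to\PP^3$ being sequences of blow-ups along smooth centers. Hironaka's resolution of indeterminacy only guarantees that \emph{one} of the two maps is such a sequence; the other is merely a birational morphism. What you are invoking is strong factorization, which is not known in dimension $\geq 3$. Consequently the displayed identity $J(\widetilde{X})\isom\bigoplus_j J(D_j)$ is unjustified as written.

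There are two easy repairs. One is to use weak factorization already here, exactly as you do in the second part: the birational map $X\dashrightarrow\PP^3$ becomes a zigzag of smooth blow-ups and blow-downs, and each step only adds or removes a curve-Jacobian summand, so Krull--Schmidt finishes the job. The other is the paper's route, which avoids weak factorization entirely: take \emph{two} Hironaka roofs $X_1$ and $X_2$, one with $\pi_1:X_1\to X$ a sequence of blow-ups and $\rho_1:X_1\to X'$ an arbitrary birational morphism, the other with the roles reversed. The key extra input is that for \emph{any} birational morphism $\rho:Y\to Z$ of smooth projective threefolds the map $\rho^*:J(Z)\to J(Y)$ is an injective morphism of PPAVs, hence split by semisimplicity. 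This yields the pair of embeddings $J(X)\hookrightarrow J(X')\oplus\bigoplus J(D_j)$ and $J(X')\hookrightarrow J(X)\oplus\bigoplus J(C_i)$, from which $A_X\simeq A_{X'}$ follows. Your approach via weak factorization is cleaner once that theorem is available, but is anachronistic relative to Clemens--Griffiths and uses a substantially heavier result; the paper's two-roof argument is more elementary and closer to the original.
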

\begin{proof}[Sketch of proof]
It is enough to define $A_X$ and prove the second statement, which is stronger. Indeed, it is easy to see that 
$J(\PP^3)=0$, so that $A_{\PP^3}=0$. The splitting of the intermediate Jacobian of a rational threefold
will then be evident by the definition of $A_X$.

Clemens and Griffiths show that the category of principally polarized Abelian varieties is semisimple, that is
any injective morphism is split (see \cite[\ 3]{clemens_griffiths}). We work exclusively in this category.
Hence we define $A_X$ as follows: any injective map $J(C) \to J(X)$ for
$C$ a smooth curve gives a splitting $J(X)=A\oplus J(C)$. There is hence a finite number of
curves $\{C_i\}_{i=1}^r$ with $J(C_i) \neq 0$ (i.e., $g(C_i) >0$) and a splitting $J(X)=A_X \oplus \bigoplus_{i=1}^r J(C_i)$,
such that there is no nontrivial morphism $J(C) \to A_X$ for any smooth projective curve $C$.
By semisimplicity of the category of principally polarized abelian varieties, we get that $A_X$ is
well defined.

If we consider $\rho: Y \to X$ a birational morphism, we can show that $\rho^*: J(X) \to J(Y)$ is an
injective map. Then $J(Y)=J(X) \oplus A$ for some
Abelian variety $A$. If moreover $\rho$ is the blow-up along of a
point, then $J(Y) \simeq J(X)$. If $\rho$ is the blow-up along a smooth curve $C$, then $J(Y) = J(X) \oplus J(C)$.

Consider the birational map $X' \dashrightarrow X$. By Hironaka's resolution of singularities, there is a smooth projective
$X_1$ with birational morphisms $\rho_1: X_1 \to X'$ and $\pi_1: X_1 \to X$, such that $\pi_1$ is a composition of
a finite number of smooth blow-ups. We denote $\{C_i\}_{i=1}^s$ the curves blown-up by $\pi_1$.
Similarly, there are $\rho_2: X_2 \to X$ and $\pi_2: X_2 \to X'$ birational maps with
$\pi_2$ a composition of a finite number of blow-ups. We denote $\{D_i\}_{i=1}^t$ the curves blown-up by $\pi_2$.
It follows that looking at the decompositions of $J(X_1)$ and $J(X_2)$ respectively, we have:
$$\begin{array}{c}
J(X) \subset J(X') \oplus J(D_1) \oplus \ldots \oplus J(D_t) \\
J(X') \subset J(X) \oplus J(C_1) \oplus \ldots \oplus J(C_s)
\end{array}$$
and we conclude, by semisimplicity of the category of principally polarized abelian varieties, that we must have $A_X = A_{X'}$.
Indeed the first equation gives $A_X \subset A_{X'}$, and the second one gives $A_{X'} \subset A_X$.
\end{proof}

The first statement of Theorem~\ref{thm:J-is-bir-inv} provides the
\linedef{Clemens--Griffiths nonrationality criterion}, namely that if
the intermediate Jacobian of a smooth projective threefold $X$ with
$h^1 = h^{3,0} = 0$ does not factor (in the category of principally
polarized abelian varieties) into a product of Jacobians of curves,
then $X$ is not rational.
The first application of this criterion is the proof of the nonrationality of
a smooth cubic threefold~\cite{clemens_griffiths}.

\begin{theorem}[Clemens--Griffiths]\label{thm:cubic3fd-notrat}
Let $X$ be a smooth cubic threefold.  The principally polarized abelian
variety $J(X)$ is not split by Jacobians of curves. In particular,
$X$ is not rational.
\end{theorem}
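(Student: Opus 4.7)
The plan is this. By Theorem~\ref{thm:J-is-bir-inv}, it suffices to show that the $5$-dimensional principally polarized abelian variety $J(X)$ does not split as $\bigoplus_{i=1}^r J(C_i)$ for any finite collection of smooth projective curves $C_i$ of positive genus. I would rule this out by exhibiting a geometric feature of the theta divisor $\Theta \subset J(X)$ that no theta divisor of a nontrivial product of Jacobians can share, and that also cannot arise on the Jacobian of a single smooth curve of genus $5$.

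The core step is to realize $\Theta$ geometrically via the Fano surface of lines $F = F(X)$, which for a smooth cubic threefold is a smooth irreducible projective surface. Fixing a line $\ell_0 \subset X$, one considers the Abel--Jacobi difference map
$$
\phi : F \times F \longrightarrow J(X), \qquad (\ell, \ell') \longmapsto [\ell] - [\ell'],
$$
whose image is, up to translation, a copy of $\Theta$. Using the incidence correspondence of pairs of intersecting lines on $X$ together with a careful analysis of the differential of $\phi$, the objective is to prove that $\Theta$ is singular at exactly one point $0 \in J(X)$, that this point is a triple point of $\Theta$, and that the projectivized tangent cone to $\Theta$ at $0$, sitting inside $\PP(T_0 J(X)) \cong \PP^4$, is projectively isomorphic to the cubic hypersurface $X \subset \PP^4$ itself. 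This identification of the tangent cone with $X$ is the technical heart of the proof and the main obstacle; it is where the fine geometry of lines on the cubic threefold enters in an essential way.

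Granting this, the conclusion follows by contradiction. Suppose first that $r \geq 2$ and write $J(X) = A \oplus B$ as a nontrivial decomposition of principally polarized abelian varieties. The product polarization forces $\Theta = \Theta_A \times B \cup A \times \Theta_B$, whose singular locus contains $\Theta_A \times \Theta_B$, of dimension $\dim J(X) - 2 = 3$. This contradicts the fact that $\Theta$ has an isolated singular point. Suppose instead that $r=1$, so that $J(X) \cong J(C)$ for a smooth curve $C$ of genus $5$. By Riemann's singularity theorem, in the form refined by Kempf, the projectivized tangent cone to the theta divisor of $J(C)$ at any of its singular points is a determinantal variety defined by the $2\times 2$ minors of a matrix of linear forms, and is in particular cut out by \emph{quadrics}. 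Such a variety cannot be projectively isomorphic to the irreducible smooth cubic hypersurface $X$, yielding the final contradiction and showing that $X$ is not rational.
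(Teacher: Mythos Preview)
The paper does not actually give a proof of this theorem; it only remarks that the argument ``relies on the careful study of singularities of the Theta-divisor of $J(X)$''. Your outline follows exactly this classical Clemens--Griffiths/Beauville route, and the overall architecture is correct: one shows that $\Theta \subset J(X)$ has a unique singular point, that this point has multiplicity $3$, and that its projectivized tangent cone in $\PP(T_0 J(X))\cong \PP^4$ is the smooth cubic $X$ itself; one then contrasts this with what can happen for (products of) Jacobians. The product case is handled exactly as you say.

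There is, however, a genuine slip in your final step. You claim that by Riemann--Kempf the projectivized tangent cone to $\Theta_{J(C)}$ at a singular point is ``defined by the $2\times 2$ minors of a matrix of linear forms'' and hence ``cut out by quadrics''. This is not what Kempf's theorem says. At a point $L$ with $h^0(L)=m$, the tangent cone is the degree-$m$ hypersurface defined by the \emph{determinant} of the $m\times m$ matrix of linear forms coming from the multiplication $H^0(L)\otimes H^0(K_C\otimes L^{-1})\to H^0(K_C)$. For $m=2$ this is a single quadric, but for $m=3$ it is a \emph{cubic} hypersurface, so degree alone gives no contradiction with $X$.

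The repair is easy, and there are two standard ways to finish. First, from Kempf's description one sees that such a determinantal cubic is always \emph{singular} (its singular locus contains the rank $\leq 1$ locus of the matrix, which is nonempty in $\PP^4$), contradicting the smoothness of $X$. Second, and more elementary: a triple point on $\Theta_{J(C)}$ for a genus $5$ curve means $h^0(L)=3$ with $\deg L=4$, which is the equality case of Clifford's inequality and forces $C$ to be hyperelliptic; but for a hyperelliptic curve of genus $5$ the singular locus of $\Theta$ is positive-dimensional, contradicting the fact that $\Theta_{J(X)}$ has an isolated singularity. Either route closes the gap.
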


We will not give here a proof of Theorem~\ref{thm:cubic3fd-notrat}, but
just mention that it relies on the careful study of singularities the Theta-divisor of $J(X)$,
which is a five-dimensional Abelian variety. Just to mention the huge amount of
interesting mathematics appearing in this context, we notice that this question is also related to
the \linedef{Schottky problem}, that is the study of the moduli of Jacobians inside the
moduli space of principally polarized Abelian varieties.

Clemens--Griffiths nonrationality criterion applies to any threefold
with $H^{1,0}(X)=H^{3,0}(X)=0$, and has, for example, allowed
Beauville~\cite{beauvilleprym} and Shokurov~\cite{shokuprym} to
completely classify rational conic bundles over minimal surfaces.
We recall that a conic bundle is \linedef{standard} if the fiber over
any irreducible curve is an irreducible surface (this is equivalent
to relative minimality).

\begin{theorem}[Beauville, Shokurov]\label{thm:Jac-of-cbs}
Let $X \to S$ be a relatively minimal conic bundle, with $X$ smooth,
over a smooth minimal rational surface $S$ with discriminant divisor
$C \subset S$ having at most isolated nodal singularities. Then $X$ is
rational if and only if $J(X)$ is split by Jacobians of curves, and
this happens only in five cases (besides projective bundles):
\begin{itemize}
 \item $S$ is a plane, and $C$ is a cubic, or a quartic, or
 a quintic and the discriminant double cover $\widetilde{C} \to C$ is given by an even theta-characteristic in the latter case.
 \item $S$ is a Hirzebruch surface and the fibration $S \to \PP^1 $ induces either
 a hyperelliptic or a trigonal structure $C \to \PP^1$ on the discriminant divisor.
\end{itemize}
\end{theorem}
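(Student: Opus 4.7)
The plan is to use the Clemens--Griffiths criterion (Theorem~\ref{thm:J-is-bir-inv}) to reduce rationality of $X$ to a question about principally polarized abelian varieties, then translate the splitting condition on $J(X)$ into a classification question about Prym varieties of the discriminant double cover. Since $X\to S$ is a standard conic bundle over a rational surface, $X$ satisfies $H^{3,0}(X)=H^{1,0}(X)=0$, and the Clemens--Griffiths criterion immediately gives the implication: if $X$ is rational, then $J(X)$ must be isomorphic, as a principally polarized abelian variety, to a product of Jacobians of smooth curves.

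The first key step is to compute $J(X)$ in terms of the discriminant data. For a standard conic bundle over a rational surface, one has a canonical isomorphism of principally polarized abelian varieties $J(X)\isom P(\widetilde C/C)$, where $P(\widetilde C/C)$ is the Prym variety associated to the (admissible) \'etale-plus-nodal double cover $\widetilde C\to C$. This is proved by analyzing the Leray spectral sequence of the conic bundle, using that the transcendental part of $H^3(X,\ZZ)$ is identified with the $(-1)$-eigenspace of the involution on $H^1(\widetilde C,\ZZ)$, and that the cup-product polarization matches (up to the usual factor) the principal polarization of the Prym. Given this identification, the first bullet of the theorem is restated purely in Prym-theoretic language: we must classify those admissible covers $\widetilde C\to C$ arising as discriminants of conic bundles on $\PP^2$ or on Hirzebruch surfaces whose Prym splits as a product of curve Jacobians.

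The second, and hardest, step is precisely this Prym classification, which is the heart of the Beauville--Shokurov work. One invokes the criteria that characterize when an irreducible Prym $P(\widetilde C/C)$ is a Jacobian or sum of Jacobians: the Wirtinger case (hyperelliptic $C$), the trigonal construction of Recillas (trigonal $C$), and the exceptional plane-quintic case (where $\widetilde C\to C$ is determined by an even theta characteristic and the Prym is a Jacobian of a genus $5$ curve). For low-genus $C$, one lists by hand (plane cubics and quartics give Pryms of dimension $\leq 3$, hence automatically Jacobians of curves or products thereof); for higher-degree $C$ one must show that all further splittings are excluded. Combining this with the classification of discriminants: on $\PP^2$ the discriminant is a plane curve whose degree is controlled by $-K_S\cdot C$ and the triviality of $\Pic(\PP^2)$, on a Hirzebruch surface one gets a relative statement over $\PP^1$ and uses that the fibration structure on $S$ induces the hyperelliptic/trigonal structure on $C$. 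The main obstacle is ruling out all other cases, which relies on Beauville's singularity analysis of the Prym theta divisor and Shokurov's refinement to conic bundles, in the spirit of the original Clemens--Griffiths argument for the cubic threefold.

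Finally, one must produce, in each of the five listed cases, an explicit birational map $X\dashrightarrow \PP^3$ to verify the converse. For cubic and quartic discriminants on $\PP^2$, this is classical (e.g., via a pencil of rational curves in $S$ meeting $C$ in a small number of points, inducing a rational map to a quadric surface bundle over $\PP^1$ that can be rationalized). For the plane quintic with even theta characteristic, one uses the resulting rational section/structure coming from the theta characteristic to reduce to a rational fibration. For conic bundles on Hirzebruch surfaces with a hyperelliptic or trigonal discriminant over $\PP^1$, the composed fibration $X\to S\to \PP^1$ is a quadric surface bundle (via the associated $2:1$ or $3:1$ map) that acquires a section after base change, hence is rational by Tsen's theorem applied fiberwise. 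These explicit constructions, while case-by-case, are standard once the Prym classification has isolated the relevant geometries.
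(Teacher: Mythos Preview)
Your proposal is correct and follows essentially the same approach that the paper indicates. The paper does not give a detailed proof of this theorem; it only records that the argument ``relies on the isomorphism $J(X)\simeq\mathrm{Prym}(\widetilde C/C)$ as principally polarized Abelian varieties and on the study of Prym varieties,'' citing Beauville and Shokurov directly. Your sketch expands exactly this outline: Clemens--Griffiths reduces rationality to splitting of $J(X)$, the Prym identification translates this into a question about $\mathrm{Prym}(\widetilde C/C)$, and the Beauville--Shokurov classification of Pryms that are products of Jacobians (via the low-dimensional, hyperelliptic, trigonal, and plane-quintic cases) yields the list, with the converse handled by explicit rationality constructions in each case.
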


The proof of Theorem~\ref{thm:Jac-of-cbs} relies on the isomorphism $J(X) \simeq \mathrm{Prym}(\widetilde{C}/C)$
as principally polarized Abelian varieties~\cite{beauvilleprym} and on the study of Prym varieties.
Notice that Theorem~\ref{thm:Jac-of-cbs} recovers Theorem~\ref{thm:cubic3fd-notrat} since the
blow-up of a smooth cubic threefold $X$ along any line $l \subset X$ gives a relatively
minimal conic bundle $\widetilde{X} \to \PP^2$ whose discriminant divisor $C$ is a smooth
quintic and $\widetilde{C} \to C$ is given by an odd theta-characteristic.
As recalled $J(X) \simeq \mathrm{Prym}(\widetilde{C}/C)$, so one can fairly say then that cubic
threefolds are (birationally) the non-rational conic bundles with the smallest intermediate Jacobian.

\subsection{Intermediate Jacobians and the Zariski problem}
\label{subsec:BCTSSD}

Another important problem where the method of the intermediate
Jacobian has been successful is in constructing the first
counterexamples to Problem~\ref{zariski-problem}, posed by Zariski in 1949, see
\cite{segre:Zariski_problem}.
Indeed, using Prym variety considerations, Beauville,
Colliot-Th\'el\`ene, Sansuc, and Swinnerton-Dyer~\cite{bctssd} used
the intermediate Jacobian to construct the first example of a
nonrational but stably rational variety, a fibration in Ch\^atelet
surfaces $V \to \PP^1$ with affine model
$$
y^2-\delta(t)z^2 = P(x,t)
$$
where $P(x,t) = x^3 + p(t)x + q(t)$ is an irreducible polynomial in
$\CC[x,t]$ such that its discriminant $\delta(t) = 4p(t)^3+27q(t)^2$
has degree $\geq 5$.  They proved, using the intermediate Jacobian,
that $V$ is not rational, yet that $V \times \PP^3$ is rational.
Shepherd-Barron~\cite{shepherd-barron:stably_rational} used a slightly
different construction to prove that $V \times \PP^2$ is rational.  It
is unknown whether $V\times \PP^1$ is rational.

The key point is that the Clemens--Griffiths criterion for
irrationality of a threefold using the intermediate Jacobian is not a
stable birational invariant. Indeed, it strictly applies to threefolds.

\subsection{(Special) cubic fourfolds, Hodge theory and Fano schemes of lines}\label{subs:cubic-4folds}

We turn our attention to smooth cubic fourfolds and the Hodge
structure on their middle cohomology.  The ideas we present in this
section go back to Beauville--Donagi~\cite{beauville-donagi} and
Hassett~\cite{hassett:special-cubics,hassett:rational_cubic}.  Let $X
\subset \PP^5$ be a smooth cubic fourfold. We denote by $h \in
H^2(X,\ZZ)$ the Betti cycle class of a hyperplane section of $X$. In
particular, $h^4=3$ and
$$H^2(X,\ZZ) = \ZZ[h], \qquad H^6(X,\ZZ)= \ZZ[h^3/3],$$
by the Lefschetz hyperplane theorem and Poincar\'e duality. One can,
moreover, calculate the Hodge numbers: the (upper half) Hodge diamond of
$X$ has the following shape:
$$\begin{array}{lccccccccc}
H^8(X,\CC)\phantom{(X,\CC)} & & & & & 1 & & & & \\
H^7(X,\CC) & & & & 0 & & 0 & & & \\
H^6(X,\CC) & & & 0 & & 1 & & 0 & &\\
H^5(X,\CC) & & 0 & & 0 & & 0 & & 0 & \\
H^4(X,\CC) & 0 & & 1 & & 21 & & 1 & & 0.
\end{array}$$
We then focus on the cohomology lattice $H^4(X,\ZZ)$, endowed with the intersection pairing $\sod{-,-}$,
and we denote by $H^4_{\pr}(X,\ZZ)$ the primitive cohomology sublattice. In particular, we have that
$h^2 \in H^4(X,\ZZ)$ and that $H^4_{\pr}(X,\ZZ)=\sod{h^2}^\perp$.

Rational, algebraic, and homological equivalence all coincide for
cycles of codimension 2 on any smooth projective rationally connected
variety $X$ over $\CC$ satisfying $H^3(X,\ZZ/l\ZZ)=0$ for some prime
$l$, cf.\ \cite[Prop.~5.1]{colliot:descenteCH2}.  Hence for a smooth
cubic fourfold $X$, the Betti cycle class map $ \CH^2(X) \to
H^4(X,\ZZ) $ is injective.  The image of the cycle class map is
contained in the subgroup of Hodge classes $H^{2,2}(X) \cap
H^4(X,\ZZ)$.  In particular, $\CH^2(X)$, with its intersection
product, is a sublattice of $H^{2,2}(X) \cap H^4(X,\ZZ)$, which is
positive definite by the Riemann bilinear relations.  In fact, the
cycle class map induces an isomorphism $\CH^2(X) = H^{2,2}(X) \cap
H^4(X,\ZZ)$ by the integral Hodge conjecture for cycles of codimension
2 on smooth cubic fourfolds proved by
Voisin~\cite[Thm.~18]{voisin:aspects}, building on \cite{murre} and
\cite{zucker}.

To study the cohomology lattice, we consider the Fano variety of lines
$F(X)$, defined to be the subvariety $F(X) \subset \mathrm{Gr}(2,6)$
parameterizing the lines contained in $X$.  Then $F(X)$ is a smooth
fourfold. Despite its name, $F(X)$ is an irreducible holomorphic
symplectic (IHS) variety, as shown by Beauville and
Donagi~\cite[Prop. 2]{beauville-donagi}.

The cohomology of $F(X)$ and $X$ are related by an Abel--Jacobi map,
as follows. Denote by $Z \subset X \times F(X)$ the universal line
over $X$, and consider the diagram:
$$\xymatrix{
& Z \ar[dr]^q \ar[dl]_p & \\
X & & F(X), }$$
where $p$ and $q$ denote the restrictions to $Z$ of the natural
projections from $X \times F(X)$ to $X$ and $F(X)$ respectively.  The
Abel--Jacobi map $\alpha: H^4(X,\ZZ) \to H^2(F(X),\ZZ)$ is defined as
$\alpha= q_* p^*$.  Since $F(X)$ is an IHS variety, $H^2(X,\ZZ)$ is
endowed with a bilinear form, which we will denote by
$\sod{-,-}_{BB}$, the Beauville--Bogomolov form. Moreover, since $F(X)
\subset \mathrm{Gr}(2,6)$, we can restrict the class on
$\mathrm{Gr}(2,6)$ defining the Pl\"ucker embedding to a class $g \in
H^2(F(X),\ZZ)$.  Define $H^2_0(F(X),\ZZ) = \sod{g}^\perp \subset
H^2(F(X),\ZZ)$ to be the orthogonal complement of $g$ with respect to
the Beauville--Bogomolov form.  One checks that
$\alpha(h^2)=g$. Moreover, Beauville and Donagi establish an
isomorphism of Hodge structures~\cite{beauville-donagi}.

\begin{theorem}[Beauville--Donagi~\cite{beauville-donagi}]
The Abel--Jacobi map $\alpha: H^4_{\pr}(X,\ZZ) \to H^2_0(F(X),\ZZ)$ satisfies: 
$$\sod{\alpha(x),\alpha(y)}_{BB}=-\sod{x,y}.$$
In other words, $\alpha$ induces an isomorphism of Hodge structures:
$$H^4_{\pr}(X,\CC) \simeq H^2_0(F(X),\CC)(-1).$$
\end{theorem}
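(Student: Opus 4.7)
The plan is to proceed in three stages: first identify the Hodge-theoretic type of $\alpha$, then carry out the intersection-theoretic computation that intertwines the two bilinear forms, and finally conclude with a dimension count. To get the Hodge structure statement, I would use that $q\colon Z \to F(X)$ is a $\PP^1$-bundle (so proper and smooth of relative dimension one) and that $p\colon Z \to X$ is proper (with generic fiber the curve of lines of $X$ through a general point, since $\dim Z - \dim X = 1$). Pullback $p^*\colon H^4(X,\CC) \to H^4(Z,\CC)$ is a morphism of pure Hodge structures, and proper pushforward $q_*\colon H^4(Z,\CC) \to H^2(F(X),\CC)$ shifts Hodge bidegree by $(-1,-1)$. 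Composing, $\alpha = q_*p^*$ will be a morphism of weight-$4$ Hodge structures $H^4(X,\CC) \to H^2(F(X),\CC)(-1)$.

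The compatibility $\alpha(h^2) = g$ should follow from a direct Schubert-type calculation: $p^*h^2$ is represented by the incidence of lines meeting a codimension-two linear section of $X$, and its $q$-pushforward is exactly the restriction of the Pl\"ucker class to $F(X) \subset \mathrm{Gr}(2,6)$. Granted the quadratic-form identity below, this automatically forces $\alpha$ to send $\sod{h^2}^\perp = H^4_{\pr}(X,\ZZ)$ into $\sod{g}^\perp = H^2_0(F(X),\ZZ)$, using that $\sod{\alpha(x), \alpha(h^2)}_{BB} = -\sod{x,h^2}$.

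The main obstacle is the quadratic identity itself. My approach would exploit the Fujiki relation on the IHS fourfold $F(X)$: for every $\eta \in H^2(F(X),\CC)$ one has $\int_{F(X)} \eta^4 = 3\,\sod{\eta,\eta}_{BB}^2$, where the Fujiki constant $3$ is determined by $F(X)$ being of $\mathrm{K3}^{[2]}$-deformation type. Polarizing this reduces the claim to computing
\begin{equation*}
\int_{F(X)} \alpha(x_1)\cup\alpha(x_2)\cup\alpha(x_3)\cup\alpha(x_4)
\end{equation*}
for primitive classes $x_i \in H^4_{\pr}(X,\CC)$. By the projection formula this equals the intersection number on the four-fold fiber product $Z \times_{F(X)} Z \times_{F(X)} Z \times_{F(X)} Z$ of the four pullbacks $\tilde{p}_i^* x_i$ along the natural projections to $X$. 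The hard part will be to describe the class of the image of this fibered product inside $X^4$ using the geometry of quadruples of lines on $X$ meeting at a common point, and thereby reduce the integral to a polynomial expression in the pairings $\sod{x_i,x_j}$. Matching coefficients with the polarized Fujiki formula will yield the identity $\sod{\alpha(x),\alpha(y)}_{BB} = -\sod{x,y}$, the sign being forced by the orientation convention built into the Beauville--Bogomolov form.

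Once this identity is in hand, $\alpha$ must be injective on $H^4_{\pr}(X,\CC)$, because the intersection form is nondegenerate on primitive cohomology. A dimension count then finishes the proof: since $F(X)$ is deformation-equivalent to $\mathrm{Hilb}^2$ of a K3 surface, we have $b_2(F(X)) = 23$, so $\dim H^2_0(F(X),\CC) = 22 = \dim H^4_{\pr}(X,\CC)$, and an injective linear map between complex vector spaces of equal finite dimension is an isomorphism. Combined with the Hodge-type computation of the first step, this gives the asserted isomorphism of Hodge structures.
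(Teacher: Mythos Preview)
The paper itself gives no proof of this statement; it is simply quoted from Beauville--Donagi. In that note the result is obtained together with Theorem~\ref{thm:FX-deform} by specialization to the Pfaffian locus (Theorem~\ref{thm:FX-pfaffian}), where $F(X)\cong S^{[2]}$ for an explicit K3 surface $S$ and the incidence correspondence $Z$ can be analyzed concretely; the statement for an arbitrary smooth cubic then follows because $\alpha$, the lattices, and the Hodge filtrations all vary in locally constant families over the moduli space.

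Your approach is more intrinsic but has two weaknesses. First, it is circular in the paper's (and the original source's) logical order: the Fujiki constant $3$ and the equality $b_2(F(X))=23$ are both consequences of $F(X)$ being of $\mathrm{K3}^{[2]}$ type, i.e.\ of Theorem~\ref{thm:FX-deform}, which is stated \emph{after} the result you are proving and is itself established via the same Pfaffian degeneration. You would have to supply independent computations of these invariants; this is feasible for $b_2$ by Schubert calculus on $\mathrm{Gr}(2,6)$, but it is genuine work for the Fujiki constant. Second, and more seriously, the step you yourself label ``the hard part'' --- computing the class of $Z\times_{F(X)}Z\times_{F(X)}Z\times_{F(X)}Z$ inside $H^*(X^4)$ and showing that its pairing with $x_1\otimes\cdots\otimes x_4$ collapses to a multiple of $\sum\sod{x_i,x_j}\sod{x_k,x_l}$ --- is the entire content of the quadratic-form identity, and you have given no mechanism for carrying it out. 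As written, invoking the polarized Fujiki relation only restates what must be proved.
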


For a smooth projective surface $S$ and a positive integer $n$, we
write $S^{[n]} = \Hilb^{n}(S)$ for the Hilbert scheme of length $n$
subscheme on $S$, which is a smooth projective variety. Beauville and
Donagi describe the deformation class of $F(X)$.

\begin{theorem}[Beauville--Donagi]\label{thm:FX-deform}
The Fano variety of lines $F(X)$ is an irreducible holomorphic
symplectic variety deformation equivalent to $S^{[2]}$, where $S$ is a
degree 14 K3 surface.
\end{theorem}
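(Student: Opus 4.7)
The plan is to combine a direct model computation with a deformation-theoretic spread-out.

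First I would verify that $F(X)$ is a smooth projective fourfold. A line $\ell \subset X$ has normal bundle $N_{\ell/X}$ of rank $3$ and degree $1$, so $\chi(N_{\ell/X})=4$; on a smooth cubic fourfold one checks that $H^1(\ell, N_{\ell/X})=0$ for every line, so $F(X) \subset \mathrm{Gr}(2,6)$ is smooth of dimension $4$ by standard deformation theory.

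Next I would produce a holomorphic symplectic form. Since the Abel--Jacobi map $\alpha = q_* p^*$ is a morphism of Hodge structures of type $(-1,-1)$ (as in the previous theorem), the one-dimensional piece $H^{3,1}(X)$ maps into $H^{2,0}(F(X))$, yielding a $2$-form $\sigma$. Nondegeneracy of $\sigma$ can be checked at the tangent level by identifying $T_{[\ell]} F(X)$ with $H^0(\ell, N_{\ell/X})$ and computing the pairing $\sigma$ explicitly as a residue against the defining cubic. Combined with $h^{1,0}(F(X))=0$ and the simple-connectedness of $F(X)$ (which follows from a Lefschetz-type argument inside $\mathrm{Gr}(2,6)$), this shows $F(X)$ is an irreducible holomorphic symplectic fourfold.

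The crux is to pin down the deformation class via an explicit Pfaffian model. Let $V$ be a $6$-dimensional $\CC$-vector space and $\mathrm{Pf} \subset \PP(\wedge^2 V^*)$ the cubic hypersurface of skew forms of rank $\le 4$. For a generic $6$-dimensional subspace $L \subset \wedge^2 V^*$ set
$$
X_0 = \PP(L) \cap \mathrm{Pf}, \qquad S_0 = \PP(L^\perp) \cap \mathrm{Gr}(2,V) \subset \PP(\wedge^2 V),
$$
where $L^\perp \subset \wedge^2 V$ is the $9$-dimensional annihilator. Then $X_0$ is a smooth Pfaffian cubic fourfold and $S_0$ is a smooth K3 surface of degree $14$, being a transverse codimension-$6$ linear section of the Pl\"ucker-embedded Grassmannian, which itself has degree $14$. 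I would then construct a morphism $F(X_0) \to S_0^{[2]}$ by sending a line $\ell \subset X_0$ to the length-$2$ subscheme of $\mathrm{Gr}(2,V)$ cut out by the kernels of the rank-$4$ forms in the pencil parametrized by $\ell$; the orthogonality $L \perp L^\perp$ forces these kernels to lie in $S_0$. Verifying that this assignment is a scheme-theoretic isomorphism, and in particular handling lines whose associated pencil meets the rank-$2$ stratum, is the main technical obstacle.

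Finally I would close by deformation theory. The parameter space of smooth cubic fourfolds in $\PP^5$ is irreducible, and the universal Fano scheme of lines is smooth and proper of relative dimension $4$ over it, so the deformation type of $F(X)$ is constant in the family. Since $F(X_0) \cong S_0^{[2]}$ and since $S^{[2]}$ deforms smoothly with $S$ among polarized K3 surfaces of degree $14$, we conclude that $F(X)$ is deformation equivalent to $S^{[2]}$ for such $S$, as required.
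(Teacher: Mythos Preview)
Your overall strategy matches the paper's exactly: establish the Pfaffian model $X_0$ with its associated degree $14$ K3 surface $S_0$, prove $F(X_0)\cong S_0^{[2]}$ directly, and then use irreducibility of the moduli of smooth cubic fourfolds together with smoothness and properness of the universal Fano scheme to propagate the deformation class. The smoothness of $F(X)$ and the construction of the symplectic form via the Abel--Jacobi map are also the standard ingredients you invoke.

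However, your description of the explicit map $F(X_0)\to S_0^{[2]}$ is not correct, and this is the heart of the matter. You write that a line $\ell\subset X_0$ is sent to ``the length-$2$ subscheme cut out by the kernels of the rank-$4$ forms in the pencil'' and that ``the orthogonality $L\perp L^\perp$ forces these kernels to lie in $S_0$''. Neither claim holds. First, the pencil $\ell$ is a $\PP^1$ of rank-$4$ forms, and the assignment $\omega\mapsto\ker\omega$ traces out a curve in $\mathrm{Gr}(2,V)$, not a length-$2$ scheme. Second, a $2$-plane $K\subset V$ gives a point of $S_0=\PP(L^\perp)\cap\mathrm{Gr}(2,V)$ exactly when $K$ is isotropic for \emph{every} form in $L$; the kernel of a single $\omega\in L$ is isotropic for $\omega$ but has no reason to be isotropic for the other forms in $L$, so the orthogonality $L\perp L^\perp$ does not do what you say.

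The actual Beauville--Donagi construction runs in the opposite direction. Given a generic pair $\{K_1,K_2\}\in S_0^{[2]}$, both $K_i$ are isotropic for all of $L$; setting $W=K_1+K_2$ (generically $4$-dimensional), the forms $\omega\in L$ for which $W$ is isotropic satisfy four linear conditions (vanishing on $K_1\otimes K_2$), hence form a pencil $P\subset L$. Any nonzero $\omega\in P$ has a $4$-dimensional isotropic subspace, forcing $\operatorname{rk}\omega\le 4$, so $\PP(P)$ is a line in $X_0$. This defines $S_0^{[2]}\to F(X_0)$, and one then checks it is an isomorphism. You correctly flag the verification as the main technical obstacle, but you should replace your candidate map by this one before attempting it.
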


One possible interpretation of the results of Beauville and Donagi is
that the variety $F(X)$ acts as a Hodge-theoretic analogue for the
intermediate Jacobian of a cubic threefold.

The proof of Theorem~\ref{thm:FX-deform} proceeds via a deformation
argument to the case where $X$ is a \linedef{Pfaffian cubic fourfold},
as follows.  Let $V$ be a $6$-dimensional complex vector space and
consider $\mathrm{Gr}(2,V) \subset \PP(\bigwedge^2 V)$ via the
Pl\"ucker embedding.  The variety $\mathrm{Pf}(4,\bigwedge^2 V^*)
\subset \PP(\bigwedge^2 V^*)$ is defined as (the projectivization of)
the set of degenerate skew-symmetric forms on $V$, which is isomorphic
to the set of skew symmetric $6\times 6$ matrices with rank bounded
above by 4. It is a (nonsmooth) cubic hypersurface of $\PP(\bigwedge^2
V^*)$ defined by the vanishing of the Pfaffian. Let $L \subset
\PP(\bigwedge^2 V)$ be a linear subspace of dimension 8, and denote by
$L^* \subset \PP(\bigwedge^2 V)$ its orthogonal subspace, which has
dimension $5$. If we take $L$ general enough, then $X = L^* \cap
\mathrm{Pf}(4,\bigwedge^2 V^*)$ is a smooth cubic fourfold in
$L^*=\PP^5$ and $S = L \cap \mathrm{Gr}(2,V)$ is a smooth K3 surface
in $L=\PP^8$ with a degree 14 polarization $l$. Cubic fourfolds
arising from this construction are called \linedef{Pfaffian} with
associated K3 surface $S$.  Then Beauville and Donagi prove
Theorem~\ref{thm:FX-deform} directly for pfaffian cubic
fourfolds\footnote{The fact that any Pfaffian cubic not containing a
plane has the properties required by Beauville and Donagi's proof was
proved recently by Bolognesi and Russo
\cite{bolognesi_russo_stagliano}.}.

\begin{theorem}[Beauville--Donagi]\label{thm:FX-pfaffian}
Let $X$ be a Pfaffian cubic fourfold with associated K3 surface $S$,
not containing a plane. Then $X$ is rational and $F(X)$ is isomorphic
to $S^{[2]}$.
\end{theorem}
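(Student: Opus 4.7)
I would prove the two assertions of the theorem in parallel, exploiting the bilinear duality at the heart of the Pfaffian construction. Recall that a point of $X \subset \PP(L^*)$ is a skew form $\omega$ on $V$ of rank $\leq 4$ with kernel $K_\omega \subset V$ of dimension two, while a point of $S \subset \mathrm{Gr}(2,V)$ is a 2-plane $K \subset V$ with $\bigwedge^2 K \in L$. The whole argument revolves around the interaction between the kernels of forms in $L^*$ and the K3 surface $S$.

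For the isomorphism $F(X) \cong S^{[2]}$, I would build a morphism $\phi : F(X) \to S^{[2]}$ by analyzing pencils of rank-$\leq 4$ skew forms: a line $\ell \subset X$ is a two-dimensional pencil in $L^*$ of such forms, and as $\omega$ varies on $\ell$ the kernel $K_\omega$ traces a rational curve in $\mathrm{Gr}(2,V)$. I would associate to $\ell$ the length-$2$ subscheme of $S$ cut out by this kernel curve on $S$, the length count being established by a Bezout-type argument on $\mathrm{Gr}(2,V)$ combined with the specifics of the Pfaffian pencil (in particular, the rank-drop locus of a generic pencil of $6 \times 6$ skew matrices of rank $\leq 4$). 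For the inverse, a pair $\{K_1, K_2\} \in S^{[2]}$ determines a distinguished pencil of skew forms in $L^*$ via the dual construction, giving a line on $X$. Since both $F(X)$ and $S^{[2]}$ are smooth projective irreducible holomorphic symplectic fourfolds (the first by general results on cubic fourfolds and the second classically), once the constructed maps are well-defined morphisms and bijective on points, the isomorphism follows.

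For rationality, I would use the classical approach going back to Morin: exhibit an explicit congruence of rational curves or surfaces on $X$ parameterized by a rational base. The Pfaffian setup provides natural such congruences, for instance via the family of linear subspaces $\PP(L^* \cap \bigwedge^2(V/K)^*) \subset \PP^5$ associated to $K \in S$, or via quintic del Pezzo surfaces in $X$ constructed from linear sections of the pair $(S,\mathrm{Gr}(2,V))$. The key enumerative claim is that through a general point $x \in X$ passes a unique surface (resp.\ line) of the congruence, so the incidence variety dominates $X$ birationally onto a total space that is rational. Combining this with the previous correspondence realizes $X$ birationally as a natural fibration over (a rational model of) $S$ or $S^{[2]}$, whence $X$ is rational.

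The \emph{main obstacle} is the fine incidence geometry underlying both constructions: verifying the length-$2$ count for the correspondence between lines on $X$ and length-$2$ subschemes of $S$, and verifying the uniqueness count (exactly one member of the congruence through a general $x \in X$) needed for the birational map to a rational variety. The hypothesis that $X$ contains no plane is essential throughout to prevent degenerations of the congruence; as noted in the text, the recent work of Bolognesi, Russo and Stagliano~\cite{bolognesi_russo_stagliano} formalizes precisely the genericity required for the Beauville--Donagi argument to apply to all smooth Pfaffian cubics not containing a plane.
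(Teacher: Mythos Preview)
The paper does not give a proof of this theorem. It states the result, attributes it to Beauville--Donagi~\cite{beauville-donagi}, and adds only the remark that ``the proofs of the two facts stated in Theorem~\ref{thm:FX-pfaffian} both rely on the explicit geometric construction of $X$ and $S$, and do not, on the face of it, seem to be related,'' together with the footnote that Bolognesi--Russo~\cite{bolognesi_russo_stagliano} later verified the genericity hypotheses for every smooth Pfaffian cubic not containing a plane. So there is no paper-internal argument to compare against; your proposal is essentially a sketch of the original Beauville--Donagi proof, and at that level it is in the right spirit: both assertions are indeed established by explicit incidence correspondences arising from the Pfaffian construction.

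Two remarks on your sketch. First, you justify the passage from ``bijective morphism'' to ``isomorphism'' by invoking that $F(X)$ is irreducible holomorphic symplectic ``by general results on cubic fourfolds.'' In the logic of the paper (and of \cite{beauville-donagi}), the IHS property of $F(X)$ for an arbitrary smooth cubic is \emph{deduced} from the Pfaffian case by deformation---Theorem~\ref{thm:FX-deform} is obtained from Theorem~\ref{thm:FX-pfaffian}---so this appeal is circular. You do not need it: smoothness of $F(X)$ follows directly from smoothness of $X$, and a bijective morphism between smooth projective varieties over $\CC$ is an isomorphism by Zariski's Main Theorem. Second, Beauville--Donagi actually construct the map in the opposite direction, $S^{[2]} \to F(X)$: to a length-$2$ subscheme of $S$ (generically a pair of $2$-planes $K_1, K_2 \subset V$) they attach directly a line on $X$, and the inverse is recovered afterwards. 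Your ``kernel-curve'' description of the map $F(X) \to S^{[2]}$ is plausible but would need a careful argument that the curve traced in $\mathrm{Gr}(2,V)$ by the kernels of a pencil meets $S$ in a length-$2$ scheme; the original direction is cleaner. For rationality, the paper singles out (following Fano and Beauville--Donagi) the two-dimensional family of rational normal quartic scrolls on $X$ parameterized by $S$ as the relevant congruence, which fits your proposed strategy.
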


Theorem~\ref{thm:FX-deform} is then obtained by a deformation argument
from Theorem~\ref{thm:FX-pfaffian}. The proofs of the two facts
stated in Theorem~\ref{thm:FX-pfaffian} both rely on the explicit
geometric construction of $X$ and $S$, and do not, on the face of it,
seem to be related. However, this result hints at a deep relationship
between the Fano variety of lines, K3 surfaces, and the birational
geometry of cubic fourfolds.

Hassett's work \cite{hassett:special-cubics} is based on the study
of the Hodge structure and the integral cohomology lattice of a smooth
cubic fourfold $X$. A key observation of Beauville and Donagi is that
being Pfaffian implies the existence of a rational normal quartic
scroll inside $X$, in fact a two dimensional family of such scrolls
parameterized by $S$.  In fact, cubic fourfolds containing rational
normal quartic scrolls, and their rationality, were already considered
by Fano~\cite{fano}.  Hassett's key idea is to consider the class of
such a ruled surface in $H^4(X,\ZZ)$ and the lattice-theoretic
properties that one can deduce from its existence.

Consider the integral cohomology lattice $H^4(X,\ZZ)$ and its
sublattice $H^4_{\pr}(X,\ZZ)$. Recall that $F(X)$ is a deformation of
$S^{[2]}$ and that $H^2(S^{[2]},\ZZ)= H^2(S,\ZZ) \oplus \ZZ[\delta]$,
with $\sod{\delta,\delta}_{BB}=-2$, is an orthogonal decomposition.
In particular,
$$
H^2(F(X),\ZZ) \simeq U^{\oplus 3} \oplus E_8^{\oplus 2} \oplus (-2),
$$
where $U$ is the hyperbolic lattice, $E_8$ is the lattice associated
to the Dynkin diagram of type $E_8$, and $(-2)$ is the rank one
primitive sublattice generated by $\delta$.  This allows one to
calculate the lattice $H^4_{\pr}(X,\ZZ)$ via the Abel--Jacobi
map.

\begin{prop}[Hassett~\cite{hassett:special-cubics}]\label{prop:prim-lattice-cubic4}
The integral primitive cohomology lattice of a cubic fourfold is
$$
H^4_{\pr}(X,\ZZ) \simeq B \oplus U^{\oplus 2} \oplus E_8^{\oplus 2},
$$
where $B$ is a rank 2 lattice with intersection matrix:
$$\left(\begin{array}{cc}
         2 & 1 \\
         1 & 2
        \end{array}\right).$$
In particular, $H^4_{\pr}(X,\ZZ)$ has signature $(20,2)$.        
\end{prop}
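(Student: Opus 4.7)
The strategy is to translate everything into a lattice computation via the Abel--Jacobi isometry of Beauville--Donagi. Since that map satisfies $\sod{\alpha(x),\alpha(y)}_{BB}=-\sod{x,y}$, identifying $H^4_\pr(X,\ZZ)$ is equivalent to identifying the Beauville--Bogomolov orthogonal complement $g^\perp = H^2_0(F(X),\ZZ)$, equipped with the \emph{negative} of the Beauville--Bogomolov form. By the deformation equivalence of $F(X)$ with $S^{[2]}$ (Theorem~\ref{thm:FX-deform}),
$$H^2(F(X),\ZZ) \simeq U^{\oplus 3} \oplus E_8^{\oplus 2} \oplus \ZZ\delta,\qquad \sod{\delta,\delta}_{BB}=-2,$$
where $E_8$ here denotes the negative definite root lattice, so that the signature is $(3,20)$.

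The first step is to pin down the Pl\"ucker class $g$ inside this abstract lattice. By Eichler's criterion, two invariants suffice: $\sod{g,g}_{BB}$ and the divisibility of $g$. Both are deformation invariants, so I would specialize to the Pfaffian case of Theorem~\ref{thm:FX-pfaffian}, where $F(X) \isom S^{[2]}$ for a degree $14$ K3 surface $S$ with polarization $l\in H^2(S,\ZZ)$, $l^2=14$. A direct computation using the incidence correspondence and the universal line identifies $g = 2l - 5\delta$. Then $\sod{g,g}_{BB}= 4\cdot 14 - 25\cdot 2 = 6$, and primitivity follows from $\gcd(2,5)=1$.

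With $g$ in hand, the orthogonal complement is computed explicitly. Split the K3 lattice as $H^2(S,\ZZ)=U\oplus U^{\oplus 2} \oplus E_8^{\oplus 2}$ and place $l=e+7f$ inside the first summand, with hyperbolic basis $e,f$. The summand $U^{\oplus 2} \oplus E_8^{\oplus 2}$ lies automatically in $g^\perp$, and the remaining rank-$2$ piece of $g^\perp$ inside $U\oplus \ZZ\delta$ is spanned by
$$u_1 = e-7f,\qquad u_2 = -5f+\delta,$$
with Gram matrix $\bigl(\begin{smallmatrix} -14 & -5 \\ -5 & -2 \end{smallmatrix}\bigr)$. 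The unimodular change of basis $w_1 = u_1 - 2u_2 = e + 3f - 2\delta$, $w_2=u_2$ converts this to $\bigl(\begin{smallmatrix} -2 & -1 \\ -1 & -2 \end{smallmatrix}\bigr)$. Negating the form (per the Abel--Jacobi sign) then yields $H^4_\pr(X,\ZZ)\simeq B\oplus U^{\oplus 2} \oplus E_8^{\oplus 2}$, with $U$ unchanged and $E_8$ now positive definite; the signature $(20,2)=(2,0)+(2,2)+(16,0)$ follows immediately.

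The one delicate point is the identification of $g$ in the Pfaffian specialization; everything else is routine lattice bookkeeping. An alternative route, which avoids the explicit Pfaffian model, is to compute $\sod{g,g}_{BB}=6$ from $\int_{F(X)} g^4 = 108$ using the Fujiki constant for manifolds of $\mathrm{K3}^{[2]}$-type, and then invoke Eichler's criterion directly to argue that $g^\perp$ is determined by the numerical invariants of $g$ alone.
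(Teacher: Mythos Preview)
Your approach is exactly the one the paper indicates: the paper does not give a proof but simply remarks, just before the statement, that the Abel--Jacobi isometry ``allows one to calculate the lattice $H^4_{\pr}(X,\ZZ)$,'' citing Hassett for the details. You have carried out precisely this computation, and the lattice bookkeeping is correct (in particular one checks that $\langle u_1,u_2\rangle$ is the \emph{full} orthogonal complement of $g$ inside $U\oplus\ZZ\delta$, since $6\cdot 3 = 2\cdot 3^2$ matches the index formula).

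One small remark: you invoke Eichler's criterion and say ``two invariants suffice: $\sod{g,g}_{BB}$ and the divisibility of $g$,'' but then only verify primitivity, not divisibility (which here is $2$, not $1$). This does not affect your main argument, since you go on to compute $g^\perp$ explicitly from the formula $g=2l-5\delta$ rather than via Eichler. But in your alternative route at the end, where you propose to bypass the Pfaffian identification using only $\sod{g,g}_{BB}=6$ and Eichler, you would genuinely need to determine the divisibility of $g$ independently; the Fujiki relation alone does not give it.
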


It follows from Proposition~\ref{prop:prim-lattice-cubic4} that,
though $H^4_{\pr}(X,\ZZ)$ has the same rank as a (Tate twist of a) K3
lattice, their signatures differ, since the latter has signature
$(19,3)$. However, one should be tempted to wonder whether, or under
which conditions, it is possible to find a K3 surface $S$ and
isomorphic sublattices of signature $(19,2)$ of $H^4_{\pr}(X,\ZZ)$ and
of $H^2(S,\ZZ)$.  On the surface side, there is a very natural (and
geometrically relevant) candidate: if $l$ is a polarization on $S$,
then the primitive cohomology $H^2_{\pr}(S,\ZZ) =\sod{l}^\perp$ could
be a candidate to consider.

For example, let $X$ be a Pfaffian cubic fourfold and and $S$ an
associated K3 surface with its polarization $l$ of degree 14.  As
recalled, $X$ contains a homology class of rational normal quartic
scrolls parameterized by $S$. Let $T \in H^4(X,\ZZ)$ be the cohomology
class of this $2$-cycle. In particular, $T$ is not homologous to
$h^2$, hence we have a rank 2 primitive sublattice $K$, generated by
$T$ and $h^2$, of $H^4(X,\ZZ)$.  As $T.T = c_2(N_{T/X})=10$, we have
that the intersection matrix of $K$ is
$$
\begin{pmatrix}
          3 & 4 \\
          4 & 10
\end{pmatrix},
$$
hence has determinant $14$, equal to the degree of the polarized K3
surface $S$ associated to the Pfaffian construction of $X$.  The key
remark of Hassett is that $K^\perp \subset H^4(X,\ZZ)$ and
$l^{\perp}=H^2_{\pr}(X,\ZZ) \subset H^2(X,\ZZ)$ are isomorphic
lattices (up to a Tate twist) in this case. This motivates the
following definition.

\begin{definition}[Hassett]
A cubic fourfold $X$ is \linedef{special} if it contains an algebraic
$2$-cycle $T$, not homologous to $h^2$, i.e., if the rank of
$\CH^2(X)$ is at least 2.

Given an abstract rank 2 positive definite lattice $K$ with a
distinguished element $h^2$ of self-intersection 3, a
\linedef{labeling} of a special cubic fourfold is the choice of a
primitive embedding $K \hookrightarrow \CH^2(X)$ identifying the
distinguished element with the double hyperplane section $h^2$. The
\linedef{discriminant} of a labeled special cubic fourfold $(X,K)$ is
defined to be the determinant of the intersection matrix of $K$; it is
a positive integer.  Note that a cubic fourfold could have labelings
of different discriminants.

Let $(X,K)$ be a labeled special cubic fourfold. A polarized K3
surface $(S,l)$ is \linedef{associated} to $(X,K)$ if there is an
isomorphism of lattices $K^\perp \simeq H^2_{\pr}(S,\ZZ)(-1)$.
\end{definition}

\begin{example}[Hassett]\label{ex:c14}
If $X$ is a Pfaffian cubic fourfold with associated polarized K3
surface $(S,l)$ of degree 14, then $X$ is special, has a labeling $K$
of discriminant $14$ defined by the class of the rational normal
quartic scrolls parameterized by $S$, and $(S,l)$ is associated to
$(X,K)$.

On the other hand, Bolognesi and Russo
\cite[Thm. 0.2]{bolognesi_russo_stagliano} have shown that any special
cubic fourfold of discriminant $14$ not containing a plane is
Pfaffian.  It is also known that any special cubic fourfold of
discriminant 14 that is not Pfaffian must contain two disjoint planes,
see \cite{auel:pfaffian}.
\end{example}

\begin{example}[Hassett~\cite{hassett:special-cubics},~\cite{hassett:rational_cubic}]\label{ex:cubicwithplane}
Let $X$ be a smooth cubic fourfold containing a plane $P \subset
\PP^5$.  Such $X$ is special, as $P$ is not homologous to $h^2$.
Since $P.P= c_2(N_{P/X})=3$, we have that the sublattice $K$ generated
by $h^2$ and $P$ has intersection matrix
$$
\begin{pmatrix}
3 & 1 \\
1 & 3 
\end{pmatrix},
$$
so defines a labeling of discriminant $8$. In general, there is no K3
surface associated to this labeled cubic fourfold $(X,K)$ (see
Theorem~\ref{thm:hassett-all}).

Consider the projection $\PP^5 \dashrightarrow \PP^2$ from the plane
$P$. Restricting this projection to $X$ gives rise to a rational map
$X \dashrightarrow \PP^2$ which can be resolved, by blowing up $P$,
into a quadric surface bundle $\pi:\widetilde{X} \to \PP^2$,
degenerating along a (generically smooth) sextic curve $C \subset
\PP^2$.  The double cover $S \to \PP^2$ branched along $C$ is a K3
surface with a polarization of degree 2, which plays a r\^ole in the
Hodge theory of $X$, but is not associated to $(X,K)$.
\end{example}

Using the period map and the Torelli Theorem (see~\cite{voisin}) for
cubic fourfolds, one can construct a 20-dimensional (coarse) algebraic
moduli space $\cC$ of smooth cubic fourfolds, as explained in
\cite[2.2]{hassett:special-cubics}. Using this algebraic structure,
Hassett shows that the very general cubic fourfold is not special, and
that the locus of special cubic fourfolds of fixed discriminant is a
divisor of $\cC$, which might be empty depending on the value of the
discriminant.  Hassett also finds further restrictions on the
discriminant for special cubic fourfolds having associated K3 surfaces
$S$ and for which $F(X)$ is isomorphic to $S^{[2]}$.

\begin{theorem}[Hassett~\cite{hassett:special-cubics}]\label{thm:hassett-all}
\label{thm:hassett:special}
Special cubic fourfolds of discriminant $d$ form a nonempty
irreducible divisor $\cC_d \subset \cC$ if and only if $d > 0$ and $d
\equiv 0,2 \bmod 6$.

Special cubic fourfolds of discriminant $d > 6$ have associated K3
surfaces if and only if $d$ is not divisible by $4$, $9$, or any odd
prime $p \equiv 2$ modulo $3$.

Assume that $d = 2(n^2 + n + 1)$ where $n \geq 2$ is an integer, and
let $X$ be a generic special cubic fourfold of discriminant $d$, in
which case $X$ has an associated K3 surface $S$.  Then there is an
isomorphism $F(X) \simeq S^{[2]}$.
\end{theorem}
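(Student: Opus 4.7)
The proof would proceed via the period map for cubic fourfolds combined with Nikulin's theory of discriminant forms and primitive embeddings of lattices. By Voisin's global Torelli theorem, the period map realizes $\cC$ as an open subset of an arithmetic quotient of a $20$-dimensional type-IV domain attached to the lattice $\Lambda = H^4_{\pr}(X,\ZZ) \simeq B \oplus U^{\oplus 2} \oplus E_8^{\oplus 2}$ of Proposition~\ref{prop:prim-lattice-cubic4}. Under this embedding, a labeled special cubic of discriminant $d$ corresponds to a Hodge structure whose algebraic part $H^{2,2}(X)\cap H^4(X,\ZZ)$ contains a primitive rank-$2$ sublattice of discriminant $d$ containing $h^2$. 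Thus each $\cC_d$ is a Noether--Lefschetz divisor and all three assertions become lattice-theoretic questions in the odd unimodular lattice $H^4(X,\ZZ)$ of signature $(21,2)$.

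For part (1), write the Gram matrix of $K$ in the basis $\{h^2, T\}$ as $\left(\begin{smallmatrix} 3 & a \\ a & b \end{smallmatrix}\right)$, so $d = 3b-a^2$; reducing modulo $3$ gives $d \equiv 0$ or $2 \pmod{3}$. Setting $T' = T - (a/3)h^2$, one has $3T' \in H^4_{\pr}(X,\ZZ)$ with $(3T')^2 = 3d$; since $H^4_{\pr}(X,\ZZ)$ is an even lattice, $3d$ is even, hence so is $d$. Combining yields $d \equiv 0, 2 \pmod{6}$. For the converse, every such $d$ is realized by exhibiting a primitive embedding of the relevant rank-$2$ lattice into $H^4(X,\ZZ)$ using the explicit description of $\Lambda$, and irreducibility follows from Eichler's criterion, which ensures that the monodromy group acts transitively on the isometry orbit of such primitive embeddings.

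For part (2), the existence of an associated polarized K3 surface $(S,l)$ of degree $d$ is equivalent to an isomorphism $K^\perp \simeq H^2_{\pr}(S,\ZZ)(-1)$. Both sides are even lattices of signature $(19,2)$ with the same discriminant, so by Nikulin's uniqueness theorem the isomorphism exists precisely when their discriminant forms agree. A $p$-local analysis of these quadratic forms shows that agreement at $p=2$ fails precisely when $4 \mid d$; agreement at $p=3$ fails precisely when $9 \mid d$, an obstruction rooted in the self-intersection $(h^2)^2 = 3$; and agreement at an odd prime $p \neq 3$ dividing $d$ fails precisely when $p \equiv 2 \pmod{3}$, as a consequence of the quadratic character of the binary form $x^2+xy+y^2$ encoded in the lattice $B$. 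Assembling these cases yields the stated numerical criterion; this is the most technical and most delicate portion of the proof.

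For part (3), suppose $d = 2(n^2+n+1)$ with $n \geq 2$ and let $(S,l)$ be the associated K3 surface guaranteed by (2). The Fano variety $F(X)$ is of $K3^{[2]}$-type by Theorem~\ref{thm:FX-deform}, and the Abel--Jacobi map gives an isomorphism $H^4_{\pr}(X,\ZZ)(-1) \simeq H^2_0(F(X),\ZZ)$ of polarized Hodge structures. Working on $H^2(S^{[2]},\ZZ) = H^2(S,\ZZ) \oplus \ZZ\delta$ with $\delta^2 = -2$, a direct lattice computation produces, for these particular discriminants, an explicit class of the form $v_n = nl - (n+1)\delta$ whose Beauville--Bogomolov square and orthogonal complement match those of the Pl\"ucker class $g$ on $F(X)$, thereby upgrading the Abel--Jacobi map to an isomorphism of polarized integral Hodge structures $(F(X),g) \simeq (S^{[2]}, v_n)$. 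Applying the Verbitsky--Markman global Torelli theorem for IHS manifolds of $K3^{[2]}$-type then yields $F(X) \simeq S^{[2]}$.
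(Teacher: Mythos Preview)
The paper does not supply its own proof of this theorem; it is quoted from Hassett's paper with a citation and no argument beyond the surrounding discussion. So there is no ``paper's proof'' to compare against. That said, your sketch of parts (1) and (2) is an accurate outline of Hassett's original arguments: the congruence conditions in (1) really do come from the parity of $H^4_{\pr}$ and the value $(h^2)^2=3$, and irreducibility via Eichler plus the period map is exactly how it goes; the numerical criterion in (2) is indeed a Nikulin discriminant-form comparison, and your identification of which prime contributes which obstruction is correct.

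Part (3), however, has two problems. First, the approach is anachronistic: Hassett's paper predates the Verbitsky--Markman Torelli theorem by about a decade, so his argument proceeds instead by analyzing the period map for $K3^{[2]}$-type varieties directly and using surjectivity of the K3 period map to produce $S$, then matching periods. Second, and more substantively, your argument as written is incomplete even on its own terms. The Verbitsky Torelli theorem only yields a \emph{birational} map from a Hodge isometry; upgrading to an isomorphism requires checking that the isometry is a parallel transport operator preserving a K\"ahler class (Markman's monodromy condition), which you have not addressed. Moreover, your proposed class $v_n = nl-(n+1)\delta$ does not have the correct Beauville--Bogomolov square: with $l^2=d=2(n^2+n+1)$ and $\delta^2=-2$ one computes $v_n^2 = 2n^4+2n^3-4n-2$, which is not $6=g^2$ for any $n\geq 2$. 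The class one actually wants has the form $al+b\delta$ with $a^2d-2b^2=6$, whose solvability is precisely the condition $d=2(n^2+n+1)/a^2$ appearing in Addington's Theorem~\ref{thm:add-two-cond}; Hassett's genericity hypothesis and the specific form of $d$ are what allow one to bypass the birational-versus-isomorphism issue.
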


\begin{remark}
The condition on $d>6$ ensures that $X$ is smooth. For completeness,
the very low-discriminant cases are known: a cubic fourfold of
discriminant $2$ is determinantal (and hence is singular along a
Veronese surface), see~\cite[4.4]{hassett:special-cubics}; a cubic
fourfold of discriminant $6$ has a single ordinary double point,
see~\cite[4.2]{hassett:special-cubics}. The loci $\cC_6$ and $\cC_2$
do not lie in the moduli space $\cC$, but rather in its boundary (see
\cite{laza:period_cubic_fourfold} and
\cite{looijenga:period_cubic_fourfold}).
\end{remark}

The last statement in Theorem~\ref{thm:hassett:special} can be made
more precise, once one weakens it by asking that $F(X)$ is not
isomorphic but just birational to $S^{[2]}$. The numerical necessary
and sufficient condition for this was established by
Addington~\cite{addington-twoconj}.

\begin{theorem}[Addington]\label{thm:add-two-cond}
Let $X$ be a special cubic fourfold of discriminant $d$, with associated K3 surface $S$. Then
$F(X)$ is birational to $S^{[2]}$ is and only if $d$ is of the form:
$$d = \frac{2n^2+2n+2}{a^2},$$
for some $n$ and $a$ in $\ZZ$.
\end{theorem}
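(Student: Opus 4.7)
My plan is to use the Verbitsky--Markman--Huybrechts Torelli theorem for hyperk\"ahler manifolds of $K3^{[2]}$-type together with the Markman--Mukai lattice formalism. By Torelli, $F(X)$ is birational to $S^{[2]}$ if and only if there exists a Hodge isometry $H^2(F(X),\ZZ)\simeq H^2(S^{[2]},\ZZ)$ that is a parallel-transport operator; for $K3^{[2]}$-type, Markman has determined the monodromy group explicitly, so it suffices to exhibit an abstract Hodge isometry of integral $H^2$-lattices.

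To test for such an isometry I would use the two concrete descriptions. Beauville--Donagi's Abel--Jacobi map identifies $H^2_0(F(X),\ZZ)=g^{\perp}$ with $H^4_{\pr}(X,\ZZ)(-1)$, where $g$ is the Pl\"ucker class of Beauville--Bogomolov square $6$; and $H^2(S^{[2]},\ZZ)=H^2(S,\ZZ)\oplus\ZZ\delta$ with $\delta^2=-2$. The ``associated K3'' condition $K^{\perp}\simeq H^2_{\pr}(S,\ZZ)(-1)$ already provides a Hodge isometry between the orthogonal complements of two rank-$2$ algebraic sublattices: $M_X\subset H^2(F(X),\ZZ)$, spanned by $g$ and the Abel--Jacobi image $\alpha(T)$, and $M_S=\langle l,\delta\rangle\subset H^2(S^{[2]},\ZZ)$. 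Writing the labeling $K$ with intersection matrix $\bigl(\begin{smallmatrix}3 & a \\ a & b\end{smallmatrix}\bigr)$ of determinant $d$, a direct calculation using $\alpha(h^2)=g$ and the anti-isometry property of $\alpha$ on primitive classes gives $M_X$ the intersection matrix $\bigl(\begin{smallmatrix}6 & 2a \\ 2a & a^2-b\end{smallmatrix}\bigr)$ of determinant $-2d=\det M_S$.

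By Nikulin's theory of primitive embeddings and discriminant forms, the Hodge isometry extension problem reduces to the existence of an isometry $M_X\simeq M_S$ inducing an isomorphism of the common discriminant group compatible with the gluing to the transcendental lattice. Translated, this asks for a class $v\in M_X$ with $v^2=-2$ and divisibility $2$ in $M_X^*$ matching the role of $\delta\in M_S$ (the class of the Hilbert--Chow exceptional divisor up to a factor of $2$). Writing $v=xg+y\alpha(T)$, setting $u=3x+ay$, and using $3b=a^2+d$, the condition $v^2=-2$ becomes the Pell-type equation $dy^2-2u^2=6$; the divisibility and gluing compatibility force the parities $u=2n+1$ and $y=2a'$, and substituting yields $d(a')^2=2(n^2+n+1)$, i.e., $d=2(n^2+n+1)/(a')^2$.

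The principal obstacle will be the analysis of the discriminant-form compatibility in the extension step, since abstract isomorphism $M_X\simeq M_S$ alone does not imply a Hodge isometry of the full rank-$23$ lattices; one can already see this because examples exist (e.g.\ for $d$ satisfying the genus constraints but not Addington's equation) where $M_X$ represents $-2$, yet no Hodge isometry of the full lattices exists. The parity constraints produced by the gluing are precisely what distinguish the stated Diophantine equation from the weaker condition of $-2$-representability in $M_X$, and the Markman--Mukai extended lattice framework provides the cleanest systematic way to keep track of this data.
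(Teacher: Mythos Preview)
The paper does not contain a proof of this theorem: it is stated as a result of Addington with a citation to \cite{addington-twoconj}, and no argument is given. So there is nothing in the paper to compare your proposal against directly.

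That said, your outline is close in spirit to Addington's actual argument, but it contains an imprecision that would cause trouble if you tried to fill in the details. You write that ``for $K3^{[2]}$-type, Markman has determined the monodromy group explicitly, so it suffices to exhibit an abstract Hodge isometry of integral $H^2$-lattices.'' This is not correct as stated: for deformation type $K3^{[2]}$ the monodromy group is a proper subgroup of $O^+(H^2(M,\ZZ))$ (it is the subgroup extending to orientation-preserving isometries of the Markman--Mukai lattice $\widetilde\Lambda$, equivalently acting by $\pm1$ on the discriminant group). Hence a bare Hodge isometry of $H^2$ does \emph{not} suffice to conclude birationality via Torelli; one must check that the isometry extends to $\widetilde\Lambda$. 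You seem to rediscover this constraint in your last paragraph (the ``gluing compatibility'' and the appeal to the extended lattice), but the opening reduction is stated wrongly and the two parts of your argument are in tension with each other.

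In Addington's treatment this is handled by working throughout in the rank-$24$ Mukai lattice: one embeds $H^2(F(X),\ZZ)$ primitively in $\widetilde\Lambda$ via Markman's construction, and the birationality criterion becomes the existence of a hyperbolic plane $U$ in the algebraic part of $\widetilde\Lambda$ (equivalently, a class $v$ with $v^2=0$ and $(v,\widetilde\Lambda_{\mathrm{alg}})=\ZZ$). The Diophantine condition $d=2(n^2+n+1)/a^2$ then falls out of representing $0$ in the relevant rank-$3$ algebraic lattice, which is a cleaner computation than the $(-2)$-class-with-divisibility formulation you propose. Your Pell-type reduction is plausible and may be made to work, but as written the divisibility and parity constraints are asserted rather than derived, and that is exactly where the proof lives.
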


As noticed by Addington~\cite{addington-twoconj}, having an associated
K3 surface does not necessarily imply that $F(X)$ is birational to
$S^{[2]}$. The numerical condition from the second statement of
Theorem~\ref{thm:hassett-all} is indeed strictly stronger than the
numerical condition from Theorem~\ref{thm:add-two-cond}. The smallest
value of $d$ for which a special cubic of discriminant $d$ has an
associated K3 surface $S$ but $F(X)$ is not birational to $S^{[2]}$ is $74$.

Let us recall the known examples of rational cubic fourfolds, in order
to consider a Hodge-theoretic expectation about rationality.

\begin{example}\label{ex:known-rat-cub}
Let $X$ be a cubic fourfold. If either
\begin{itemize}
 \item[2,6)] $X$ is singular, e.g. $X \in \cC_6$ has a single node or $X \in \cC_2$ is determinantal; or
 \item[8)] $X$ contains a plane $P$, so that $X \in \cC_8$, and the associated quadric surface fibration $\widetilde{X} \to \PP^2$
 (see Example~\ref{ex:cubicwithplane}) admits a multisection of odd degree~\cite{hassett:rational_cubic}; or
 \item[14)] $X$ is Pfaffian, so that $X \in \cC_{14}$~\cite{beauville-donagi};
\end{itemize}
then $X$ is rational.\footnote{A new class of rational cubic fourfolds
$X$ has very recently been constructed by Addington, Hassett,
Tschinkel, and V\'arilly-Alvarado~\cite{AHTVA:dP6}, these are in
$\cC_{18}$ and are birational to a fibration $\widetilde{X} \to \PP^2$
in sextic del Pezzo surfaces admitting a multisection of degree prime
to $3$.}

In particular, all cubics in $\cC_2$ or $\cC_6$, and the general cubic
in $\cC_{14}$ are rational\footnote{In fact, every cubic fourfold in
$\cC_{14}$ is rational, this was proved independently by
\cite{bolognesi_russo_stagliano} and \cite{auel:pfaffian}.}. The
cubics in $\cC_8$ satisfying condition 8) form a countable union of divisors in $\cC_8$~\cite{hassett:rational_cubic}.
\end{example}

Let $X$ be a cubic containing a plane, and $\widetilde{X} \to \PP^2$
the associated quadric fibration. Having an odd section for
$\widetilde{X}$ is a sufficient, but not necessary condition for
rationality. Indeed, there exist Pfaffian cubics in $\cC_8$ such that
$\widetilde{X} \to \PP^2$ doesn't have any odd section. Such cubics
are then rational, they lie in the intersection $\cC_8 \cap \cC_{14}$
and were constructed in~\cite{ABBV}.

\smallskip

If one imagines that $H^4_{\pr}(X,\ZZ)$, with its Hodge structure,
plays the r\^ole that the intermediate Jacobian plays for cubic
threefolds, then one would naturally expect that having no associated
K3 surface should be an obstruction to rationality.  For more on this
perspective, see the recent survey~\cite[\S
3]{hassett-cubicsurvey}\footnote{A sample result showing the interplay
between Hodge theory and rationality is provided by Kulikov, who has
shown that Hodge-indecomposability of the transcendental cohomology
would be a sufficient condition of nonrationality for $X$,
see~\cite{kulikov}. However, such indecomposability was recently shown
not to hold in~\cite{auel-bohning-bothmer}.}.  On the other hand,
there is no known example of a nonrational cubic fourfold, and few
general families of rational ones. We should then be very cautious to
wonder whether having an associated K3 surface is a sufficient
criterion of rationality. Hassett has recently asked about the
existence of other examples of rational cubic fourfolds~\cite[Question
16]{hassett-cubicsurvey}.

On the other hand, as we will see in Section~\ref{sect:high-dim-Db},
Kuznetsov's conjecture~\cite{kuz:4fold} is equivalent, at least for a
generic cubic fourfold, to the statement that the rationality of $X$
is equivalent to the existence of an associated K3, as shown by
Addington and Thomas \cite{at12}.  As we will see later,
decompositions of the derived category of a cubic fourfold increase
the amount of evidence motivating the expectation that having an
associated K3 surface should be a necessary condition for
rationality. Then one should read Kuznetsov conjecture and Hassett's
question~\cite[Question 16]{hassett-cubicsurvey} as the two most
``rational'' or ``nonrational'' expectations for cubic fourfolds.

Let us end this section by recalling Galkin--Shinder's
construction~\cite{galkin-shinder-cubic}, relying on motivic measures,
which aims to describe a criterion of nonrationality. This
construction would have given indeed a nonrationality criterion under
the hypothesis that the class of the affine line $\mathbb{L}$ in
$K_0(\mathrm{Var}(\CC))$ is not a zero-divisor (see
Chapter~\ref{sect:high-dim-Db} for details on this Grothendieck
group). Unfortunately, after Galkin--Shinder's paper appeared,
Borisov~\cite{borisov-zerodivisor} proved that $\mathbb{L}$ is indeed
a zero-divisor. However, we recall Galkin--Shinder's statement:

\medskip

Assume that the class of the affine line $\mathbb{L}$ is not a zero
divisor in the Grothendieck ring $K_0(\mathrm{Var}(\CC))$. If a cubic fourfold $X$ is rational, then $F(X)$ is
birational to $S^{[2]}$, where $S$ is a K3 surface.

\medskip

Though based on a false assumption\footnote{In
fact, less is required, only that $\mathbb{L}$ does not annihilate any
sum of varieties of dimension at most 2, a condition which is still
unknown.}, the previous statement, together
with Theorem \ref{thm:add-two-cond}, would say that having an
associated K3 is not a sufficient condition to rationality, the first
examples being cubic with discriminant $74$ or $78$
(see~\cite{addington-twoconj}).  As a conclusion, we must admit that
we are probably facing one of the most intriguing problems of
birational geometry: not only proving that the general cubic is not
rational, but also classifying the rational ones seems to need much
more work and finer invariants.

\section{Rationality and $0$-cycles}

One of the fundamental ingredients in the recent breakthrough in the
stable rationality problem was to explicitly tie together the
decomposition of the diagonal and the universal triviality of $\CH_0$.
Such a link was certainly established in the work of Bloch and
Srinivas.  In this section, we want to explain this relationship and
show how it is useful.

\subsection{Diagonals and $0$-cycles}

We begin with the fact that $\CH_0$ is a birational invariant of
smooth proper irreducible varieties, proved by Colliot-Th\'el\`ene and
Coray~\cite[Prop.~6.3]{colliot-thelene_coray} using resolution of
singularities and in general by
Fulton~\cite[Ex.~16.1.11]{fulton:intersection_theory} using the theory
of correspondences.

\begin{lemma}
\label{lem:CH_0-birat}
Let $X$ and $Y$ be smooth proper varieties over a field $k$.  If $X$
and $Y$ are $k$-birationally equivalent then $\CH_0(X) \isom
\CH_0(Y)$.
\end{lemma}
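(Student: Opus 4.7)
The plan is to use the closure of the graph of $f$ as a Chow correspondence. Let $f : X \dashrightarrow Y$ be a $k$-birational map with inverse $g = f^{-1}$, and let $\Gamma_f \subset X \times Y$ and $\Gamma_g \subset Y \times X$ be the closures of their respective graphs. Since $X$ and $Y$ are proper, these are closed integral subschemes of dimension $n = \dim X = \dim Y$, and $\Gamma_g$ is the transpose of $\Gamma_f$. Let $U \subset X$ denote the maximal open subset on which $f$ restricts to an isomorphism onto $V = f(U) \subset Y$; the closed complements $X \smallsetminus U$ and $Y \smallsetminus V$ have codimension $\geq 1$.

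Viewing $[\Gamma_f] \in \CH^n(X \times Y)$ and $[\Gamma_g] \in \CH^n(Y \times X)$ as correspondences via the formalism of Section~\ref{subsec:correspondences}, I would define induced maps $(\Gamma_f)_* : \CH_0(X) \to \CH_0(Y)$ and $(\Gamma_g)_* : \CH_0(Y) \to \CH_0(X)$, and then prove $(\Gamma_g)_* \circ (\Gamma_f)_* = \id_{\CH_0(X)}$; the other composition follows by symmetry.

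The central tool for verifying the identity is the moving lemma for $0$-cycles (Section~\ref{subsec:intersections}): given a class $[z] \in \CH_0(X)$, pick a representative $z = \sum_i n_i [P_i]$ supported entirely in $U$. Over $U \times Y$, the cycle $\Gamma_f$ coincides with the graph of the isomorphism $f|_U$, so the scheme-theoretic intersection of $p_X^{-1}(\supp z)$ with $\Gamma_f$ is the transverse reduced $0$-scheme $\{(P_i, f(P_i))\}$. Hence $p_X^*(z) \cdot [\Gamma_f] = \sum_i n_i [(P_i, f(P_i))]$ in $\CH_0(X \times Y)$, and pushing forward to $Y$ gives $(\Gamma_f)_* z = \sum_i n_i [f(P_i)]$, a cycle already supported in $V$. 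Since $g$ is an isomorphism over $V$, the same computation applied to $(\Gamma_g)_*$ — without any further moving — recovers $\sum_i n_i [P_i] = z$ in $\CH_0(X)$.

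The main technical point is that $(\Gamma_f)_*$ is well-defined on $\CH_0$ independently of the choice of representative — a feature built into the correspondence formalism and ultimately relying on properness of $X$ and $Y$ — so it is enough to verify the identity on one convenient cycle in each class. The moving lemma is what allows us to pick that representative inside the region where $\Gamma_f$ and $\Gamma_g$ restrict to honest graphs of isomorphisms, collapsing the intersection-theoretic composition to pointwise composition of bijections. An alternative route would factor $f$ through a common smooth proper model via resolution of indeterminacy (available in characteristic zero) and analyze $\pi_*$ for a birational morphism $\pi$ by moving $0$-cycles off the exceptional locus; the correspondence approach has the advantage of working over an arbitrary field.
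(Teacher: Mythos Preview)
Your proof is correct and follows essentially the same route as the paper: both use the closure of the graph as a correspondence and invoke the moving lemma for $0$-cycles to reduce to the locus where $f$ is an isomorphism. The only cosmetic difference is that the paper first identifies the composite correspondence $[\Gamma_g]\circ[\Gamma_f]$ as $\Delta_X$ plus a cycle supported over a proper closed subset and then moves, whereas you move first and compute each push-forward pointwise; the content is the same.
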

\begin{proof}
Let $f : Y \dashrightarrow X$ be a birational map and $\alpha \in
\CH^n(Y\times X)$ the closure of the graph of $f$, considered as a
correspondence from $Y$ to $X$.  Let $\alpha' \in \CH^n(X\times Y)$ be
the transpose correspondence.  To verify that $\alpha_*$ and
$\alpha'_*$ define inverse bijections, we check that $\alpha' \circ
\alpha$ is the sum of the identity (diagonal) correspondence and other
correspondences whose projections to $Y$ are contained in proper
subvarieties.  By the moving lemma for $0$-cycles, we can move any
element in $\CH_0(Y)$, up to rational equivalence, away from any of
these subvarieties, to where $(\alpha' \circ \alpha)_* = \alpha_*'
\circ \alpha_*$ is the identity map.
\end{proof}

If $X$ is proper over $k$, then there is a well-defined degree map
$\CH_0(X) \to \ZZ$.  We say that $\CH_0(X)$ is \linedef{universally
trivial} if $\deg : \CH_0(X_F) \to \ZZ$ is an isomorphism for every
field extension $F/k$.  This notion was first considered by
Merkurjev~\cite[Thm.~2.11]{merkurjev:unramified_cycle_modules}. Let $N$ be a positive integer.  We say that
$\CH_0(X)$ is \linedef{universally $N$-torsion} if $\deg : \CH_0(X_F)
\to \ZZ$ is surjective and has kernel killed by $N$ for every field
extension $F/k$.

Note that $\deg:\CH_0(\PP_k^n)\simeq\ZZ$ over any field $k$, so that
$\CH_0(\PP^n)$ is universally trivial.  By Lemma~\ref{lem:CH_0-birat},
if a smooth proper variety $X$ is $k$-rational then $\CH_0(X)$ is
universally trivial.  In fact, the same conclusion holds if $X$ is
retract $k$-rational, in particular, stably $k$-rational, which can be
proved using~\cite[Cor.~RC.12]{KMApp}, see also~\cite{CTPirutka}.

To check the triviality of $\CH_0(X_F)$ over every field extension
$F/k$ seems like quite a burden.  However, it suffices to check it
over the function field by the following theorem, proved in
\cite[Lemma~1.3]{ACTP}.

\begin{theorem}
\label{thm:univ_triv}
Let $X$ be a geometrically irreducible smooth proper variety over a
field $k$.  Then the following are equivalent:
\begin{enumerate}
\item The group $\CH_0(X)$ is universally trivial.

\item The variety $X$ has a $0$-cycle of degree 1 and the degree map
$\deg : \CH_0(X_{k(X)}) \to \ZZ$ is an isomorphism.

\item The variety $X$ has an (integral) decomposition of the diagonal.
\end{enumerate}
\end{theorem}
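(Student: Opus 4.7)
The implication (1) $\Rightarrow$ (2) is immediate by specializing universal triviality to $F = k$ (yielding a $0$-cycle of degree $1$) and to $F = k(X)$ (yielding the isomorphism); the real content lies in (2) $\Rightarrow$ (3) $\Rightarrow$ (1).

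For (2) $\Rightarrow$ (3), my plan is to spread out the vanishing of a $0$-cycle on the generic fiber to a cycle identity on $X \times X$. Consider the second projection $\pi_2: X \times X \to X$, with generic fiber $X_{k(X)}$; the diagonal $\Delta_X$ restricts to the tautological $k(X)$-point $[\eta] \in X_{k(X)}(k(X))$, and for any $0$-cycle $P$ of degree $1$ on $X$, the cycle $P \times X$ restricts to $P_{k(X)}$. By (2), the degree-$0$ class $[\eta] - P_{k(X)}$ vanishes in $\CH_0(X_{k(X)})$. The essential identification $\CH_0(X_{k(X)}) = \varinjlim_{U} \CH_n(X \times U)$ (colimit over nonempty opens $U \subset X$) comes from passing to the limit in the localization sequence for Chow groups. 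Thus $\Delta_X - P \times X$ vanishes in $\CH_n(X \times U)$ for some such $U$, and the excision sequence $\CH_n(X \times V) \to \CH_n(X \times X) \to \CH_n(X \times U) \to 0$ with $V = X \smallsetminus U$ produces the required cycle $Z$ supported on $X \times V$ satisfying $\Delta_X = P \times X + Z$ in $\CH^n(X \times X)$.

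For (3) $\Rightarrow$ (1), I use the correspondence action on $\CH_0$. Applying the swap involution of $X \times X$ (which fixes $\Delta_X$) rewrites the decomposition equivalently as $\Delta_X = X \times P + Z'$ with $Z'$ supported on $V \times X$. For any field extension $F/k$ and any $z \in \CH_0(X_F)$, the moving lemma for $0$-cycles lets me assume $\supp(z) \cap V_F = \emptyset$. Then the diagonal acts as the identity on $z$, while $(X_F \times P_F)_*(z) = \deg(z) \cdot P_F$ by transverse intersection and pushforward, and $Z'_*(z) = 0$ since $\pi_1^*(z) \cdot Z'$ is supported on the empty set $(\supp(z) \cap V_F) \times X_F$. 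Combining yields $z = \deg(z) \cdot P_F$ in $\CH_0(X_F)$, which is universal triviality.

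The main obstacle is the spreading-out step in (2) $\Rightarrow$ (3), specifically the identification of $\CH_0(X_{k(X)})$ with the direct limit of Chow groups of top-dimensional cycles on the opens $X \times U$ via the localization sequence. This identification is the technical bridge between the hypothesis on $\CH_0(X_{k(X)})$ and the global decomposition of the diagonal on $X \times X$, and explains why these two conditions are equivalent.
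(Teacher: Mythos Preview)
Your proof is correct and essentially the same as the paper's. Both prove (2) $\Rightarrow$ (3) via the direct-limit identification $\CH_0(X_{k(X)}) = \varinjlim_U \CH_n(X \times U)$ followed by the excision sequence, and both prove (3) $\Rightarrow$ (1) by letting the decomposed diagonal act as a correspondence on $\CH_0(X_F)$ together with the moving lemma for $0$-cycles; your explicit swap of the two factors before computing the action is a minor bookkeeping step not present in the paper's write-up, but the argument is otherwise identical.
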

\begin{proof}
If $\CH_0(X)$ is universally trivial then $\CH_0(X_{k(X)})=\ZZ$ and
$X$ has a $0$-cycle of degree 1, by definition.  Let us prove that if
$X$ has a $0$-cycle $P$ of degree 1 and $\CH_0(X_{k(X)})=\ZZ$ then $X$
has a decomposition of the diagonal.  Write $n=\dim(X)$.  Let $\xi \in
X_{k(X)}$ be the $k(X)$-rational point which is the image of the
``diagonal morphism'' $\Spec k(X) \to X \times_k \Spec k(X)$.  By
hypothesis, we have $\xi = P_{k(X)}$ in $\CH_0(X_{k(X)})$.  The
closures of $P_{k(X)}$ and $\xi$ in $X \times X$ are $P \times X$ and
the diagonal $\Delta_X$, respectively.  By the closure in $X\times X$
of a $0$-cycle on $X_{k(X)}$, we mean the sum, taken with
multiplicity, of the closures of each closed point in the support of
the $0$-cycle on $X_{k(X)}$.  Hence the class of $\Delta_X - P \times
X$ is in the kernel of the map $\CH^n(X \times X) \to
\CH^n(X_{k(X)})$.  Since $\CH^n(X_{k(X)})$ is the inductive limit of
$\CH^n(X \times_k U)$ over all nonempty open subvarieties $U$ of $X$,
we have that $\Delta_X - P \times X$ vanishes in some $\CH^n(X \times
U)$.  We thus have a decomposition of the diagonal
$$
\Delta_X = P \times X + Z
$$
in $\CH^n(X \times X)$, where $Z$ is a cycle with support in $X \times
V$ for some closed subvariety $X \smallsetminus U = V \subsetneq X$.

Now we prove that if $X$ has a decomposition of the diagonal, then
$\CH_0(X)$ is universally trivial.  This argument is similar in spirit
to the proof of \ref{prop:bloch} presented
in~\S\ref{subsec:decomp_acting}.  The action of correspondences
(from~\S\ref{subsec:correspondences}) on $0$-cycles has the following
properties: $[\Delta_X]_*$ is the identity map and $[P \times X]_*(z)
= \deg(z) P$ for any $x \in \CH_0(X)$.  By the easy moving lemma for
0-cycles on a smooth variety recalled at the end of
\S\ref{subsec:intersections}, for a closed subvariety $V \subsetneq
X$, every 0-cycle on $X$ is rationally equivalent to one with support
away from $V$.  This implies that $[Z]_* = 0$ for any $n$-cycle with
support on $X \times V$ for a proper closed subvariety $V \subset X$.
Thus a decomposition of the diagonal $\Delta_X = P \times X + Z$ as in
\eqref{eq:decomp_diagonal} implies that the identity map restricted to
the kernel of the degree map $\deg : \CH_0(X) \to \ZZ$ is zero.  For
any field extension $F/k$, we have the base-change $\Delta_{X_F} = P_F
\times X_F + Z_F$ of the decomposition of the diagonal
\eqref{eq:decomp_diagonal}, hence the same argument as above shows
that $\CH_0(X_F)=\ZZ$.  We conclude that $\CH_0(X)$ is universally
trivial.
\end{proof}

This result is useful because often statements about $\CH_0$ are easier
to prove than statements about $\CH_n$.  There is also a version with
universal $N$-torsion.

\begin{theorem}
\label{thm:univ_triv_N}
Let $X$ be a geometrically irreducible smooth proper variety over a
field $k$.  Then the following are equivalent:
\begin{enumerate}
\item The group $\CH_0(X)$ is universally $N$-torsion.

\item The variety $X$ has a $0$-cycle of degree 1 and the degree map
$\deg : \CH_0(X_{k(X)}) \to \ZZ$ has kernel killed by $N$.

\item The variety $X$ has a rational decomposition of the diagonal of
the form $N\Delta_X = N(P \times X) + Z$ for a $0$-cycle $P$ of degree
1 on $X$.
\end{enumerate}
\end{theorem}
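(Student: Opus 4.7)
The plan is to mirror the proof of Theorem~\ref{thm:univ_triv} essentially verbatim, tracking how the factor $N$ propagates through each implication. The three implications (1)$\Rightarrow$(2)$\Rightarrow$(3)$\Rightarrow$(1) all have direct $N$-analogues, and the only substantive thing to verify is that multiplying the diagonal equality by $N$ introduces no new obstacle either when lifting from $k(X)$-points to cycles on $X \times X$, or when pushing back down to $0$-cycles via correspondences.

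For (1)$\Rightarrow$(2), this is immediate: by hypothesis $\deg : \CH_0(X_F) \to \ZZ$ is surjective with kernel killed by $N$ for every $F/k$; applied with $F=k$ we get a $0$-cycle of degree $1$, and applied with $F=k(X)$ we get the second half. For (2)$\Rightarrow$(3), I would run the same argument as in Theorem~\ref{thm:univ_triv}: let $P$ be a $0$-cycle of degree $1$ on $X$ and let $\xi \in X_{k(X)}$ be the $k(X)$-point coming from the diagonal morphism $\Spec k(X) \to X \times_k \Spec k(X)$. The hypothesis that the kernel of $\deg$ on $\CH_0(X_{k(X)})$ is killed by $N$ gives
$$
N\xi = N P_{k(X)} \qquad \text{in } \CH_0(X_{k(X)}).
$$
The closures in $X \times X$ of $\xi$ and $P_{k(X)}$ are $\Delta_X$ and $P \times X$ respectively, so $N\Delta_X - N(P \times X)$ lies in the kernel of the restriction $\CH^n(X\times X) \to \CH^n(X_{k(X)})$. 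Writing $\CH^n(X_{k(X)})$ as the inductive limit of $\CH^n(X \times_k U)$ over nonempty opens $U \subset X$ and using excision, this difference vanishes after restriction to some $X \times U$, giving
$$
N\Delta_X = N(P \times X) + Z \quad \text{in } \CH^n(X \times X),
$$
with $Z$ supported on $X \times V$, where $V = X \smallsetminus U$.

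For (3)$\Rightarrow$(1), I would act by correspondences on $\CH_0$ exactly as in the proof of Theorem~\ref{thm:univ_triv}. On $0$-cycles, $[\Delta_X]_*$ is the identity and $[P \times X]_*(z) = \deg(z) P$, so $[N(P\times X)]_*(z) = N \deg(z)\, P$. For the error term, the easy moving lemma for $0$-cycles shows that every class in $\CH_0(X)$ is represented by a cycle with support in $U = X \smallsetminus V$, and therefore $[Z]_* = 0$ on $\CH_0(X)$. Equating the two actions gives $N z = N \deg(z)\, P$ for every $z \in \CH_0(X)$, which means $\deg$ is surjective (as $P$ has degree $1$) and its kernel is killed by $N$. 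Since the decomposition $N\Delta_X = N(P \times X) + Z$ base-changes to any field extension $F/k$, the identical argument applies to $\CH_0(X_F)$, yielding universal $N$-torsion.

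There is no real obstacle here; the proof transfers cleanly. The only small point of care is that the shape of the decomposition stated in (3) is $N\Delta_X = N(P\times X) + Z$ rather than the slightly more general $N\Delta_X = P' \times X + Z$ with $\deg P' = N$, so in (2)$\Rightarrow$(3) one must genuinely produce a $0$-cycle $P$ of degree $1$ (not merely of degree $N$) and then multiply through by $N$ after establishing $N\xi = NP_{k(X)}$, rather than using the weaker equality $\xi = P'_{k(X)}$ in the kernel.
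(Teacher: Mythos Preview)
Your proposal is correct and is exactly the approach the paper intends: the paper states Theorem~\ref{thm:univ_triv_N} immediately after the detailed proof of Theorem~\ref{thm:univ_triv} with no separate proof, so the $N$-analogue is meant to be obtained by the straightforward modification you carry out. Your closing remark about needing a genuine degree-$1$ cycle $P$ (rather than a degree-$N$ cycle) to match the precise form of (3) is a good point to make explicit.
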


Now we mention a result of Merkurjev that helped to inspire the whole
theory.  Recall, from \ref{subsec:unramified}, the definition of the
group of unramified elements $M_\ur(X)$ of a cycle module $M$ and that
$M_\ur(X)$ is trivial means that the natural map $M(k) \to M_\ur(X)$
is an isomorphism.

\begin{theorem}[{Merkurjev~\cite[Thm.~2.11]{merkurjev:unramified_cycle_modules}}]
\label{thm:Merk}
Let $X$ be a smooth proper variety over a field $k$.  Then the
following are equivalent:
\begin{enumerate}
\item $\CH_0(X)$ is universally trivial.
\item $M_\ur(X)$ is universally trivial for any cycle module $M$.
\end{enumerate}
\end{theorem}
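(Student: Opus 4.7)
The plan is to prove the two implications separately. The forward direction (1) $\Rightarrow$ (2) follows quickly from Theorem~\ref{thm:univ_triv} together with the standard action of $\CH^n(X \times X)$ by correspondences on any cycle module. The reverse direction (2) $\Rightarrow$ (1) is the deeper one, and requires applying the hypothesis to a judiciously chosen cycle module $M^X$ built from the Chow group of $0$-cycles on $X$.

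For (1) $\Rightarrow$ (2), fix an extension $F/k$. By Theorem~\ref{thm:univ_triv} the diagonal admits an integral decomposition $\Delta_{X_F} = P_F \times X_F + Z_F$ in $\CH^n(X_F \times_F X_F)$, with $P$ a zero-cycle of degree $1$ and $Z$ supported on $X \times V$ for some proper closed $V \subsetneq X$. Any cycle module $M$ carries a correspondence action of $\CH^n(X \times X)$ on $M_\ur(X_F/F)$. Three observations close the argument: $[\Delta_{X_F}]_* = \mathrm{id}$; $[P_F \times X_F]_*$ factors through $M(F)$ via the finite residue field $\kappa(P)$; and $[Z_F]_* = 0$, because its restriction to $X_F \times (X_F \setminus V_F)$ vanishes while every unramified element is detected on any dense open. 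Summing these, the identity on $M_\ur(X_F/F)$ factors through $M(F) \to M_\ur(X_F/F)$, proving that the latter map is surjective, hence $M_\ur$ is universally trivial.

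For (2) $\Rightarrow$ (1), following Merkurjev, one constructs a cycle module $M^X$ whose value on a field extension $F/k$ is built from $\CH_0(X_F)$, endowed with residue and specialization maps for $0$-cycles under discrete valuations (see~\S\ref{subsec:specialization}) to give the required Rost-axiomatic structure. The key computation identifies $(M^X)_\ur(\mathrm{Spec}\,F/F)$ with $\ZZ$ and $(M^X)_\ur(X_F/F)$ canonically with $\CH_0(X_F)$, under which the structure map is identified with the map $\ZZ \to \CH_0(X_F)$ sending $1$ to a degree $1$ class; hypothesis (2) applied to $M^X$ then forces this map to be an isomorphism for every $F/k$, which together with the degree map yields universal triviality of $\CH_0(X)$.

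The main obstacle lies in the construction of $M^X$ in the second direction: verifying Rost's axioms --- in particular the reciprocity law for divisor residues on curves and the compatibility of residues with specialization --- requires genuine technical work even though cycle modules were designed precisely to accommodate such constructions. Once $M^X$ and the identification of its unramified part with $\CH_0$ are in place, the argument becomes formal; in contrast, the correspondence argument in the first direction is an essentially direct consequence of Theorem~\ref{thm:univ_triv} and the moving lemma for $0$-cycles.
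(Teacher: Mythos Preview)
The paper does not give its own proof of this theorem; it is stated with a citation to Merkurjev and immediately followed by a corollary. So there is nothing to compare your argument against in the paper itself, and I will simply assess your sketch on its merits.

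Your argument for (1)~$\Rightarrow$~(2) is correct and is the standard one: the decomposition of the diagonal obtained from Theorem~\ref{thm:univ_triv} acts by correspondences on unramified elements of any cycle module, $[\Delta]$ acts as the identity, $[P\times X]$ factors through $M(F)$, and $[Z]$ acts trivially on unramified elements since $Z$ restricts to zero over $X\setminus V$ and unramified elements are detected on any dense open.

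For (2)~$\Rightarrow$~(1), your overall strategy---build a cycle module $M^X$ from $\CH_0(X_{-})$ and apply the hypothesis to it---is exactly Merkurjev's, but the specific identifications you wrote are garbled. One sets $M^X(F)=\CH_0(X_F)$ (more precisely, the degree~$0$ part of the graded cycle module $F\mapsto A_0(X_F;K_*^M)$). Then $(M^X)_\ur(\Spec F/F)=M^X(F)=\CH_0(X_F)$, not $\ZZ$; and $(M^X)_\ur(X/k)$ is a subgroup of $M^X(k(X))=\CH_0(X_{k(X)})$, not $\CH_0(X)$. The structure map in question is the pullback
\[
\CH_0(X)=M^X(k)\longrightarrow (M^X)_\ur(X/k)\subset \CH_0(X_{k(X)}),
\]
which is the opposite direction from what you wrote. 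The real content is that the class $\xi\in\CH_0(X_{k(X)})$ of the diagonal point is \emph{unramified} for $M^X$; once this is established, universal triviality of $M^X_\ur$ forces $\xi$ to lie in the image of $\CH_0(X)$, i.e.\ $\xi=P_{k(X)}$ for some $0$-cycle $P$ of degree~$1$, and Theorem~\ref{thm:univ_triv} finishes. So you should replace your ``key computation'' paragraph with: (a) the definition $M^X(F)=\CH_0(X_F)$, (b) the verification that $\xi$ is unramified, and (c) the conclusion via Theorem~\ref{thm:univ_triv}. Your remark that checking the Rost axioms for $M^X$ is the main technical burden is accurate.
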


There is also an analogous version of Merkurjev's result for universal
$N$-torsion.  The triviality of unramified elements in cycle modules
is quite useful.

\begin{corollary}
If $X$ is a proper smooth retract rational (i.e., stably rational)
variety, then $M_\ur(X)$ is universally trivial for all cycle modules
$M$, e.g., all unramified cohomology is universally trivial.  In
particular, $\Het^1(X,\mu)$ and $\Br(X)$ are universally trivial and,
if $k=\CC$, then the integral Hodge conjecture for codimension 2
cycles holds for $X$.
\end{corollary}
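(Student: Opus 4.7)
The plan is to combine two ingredients already in place in the paper. The first is that retract rationality forces universal triviality of $\CH_0(X)$, as noted in the discussion right before Theorem~\ref{thm:univ_triv} (using \cite[Cor.~RC.12]{KMApp}); since stable rationality implies retract rationality, this case is subsumed. The second is Merkurjev's Theorem~\ref{thm:Merk}, which upgrades universal triviality of $\CH_0(X)$ to universal triviality of $M_\ur(X)$ for every cycle module $M$. Concatenating these two statements gives the main conclusion.

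For the specific consequences, I would recall that unramified cohomology arises as the unramified functor attached to the \'etale cohomology cycle module (see \S\ref{subsec:unramified}), so universal triviality of $\Hur^i(X,\mu)$ is immediate. For $\Het^1(X,\mu)$, the purity result in degree~$1$ from \S\ref{subsec:purity} identifies $\Het^1(X,\mu)=\Hur^1(X,\mu)$ on smooth $X$, yielding universal triviality. For $\Br(X)$, the Bloch--Ogus identification $\Br(X)'=\Hur^2(X,\QQ/\ZZ(1))$ from weak purity (\S\ref{subsec:purity}) gives universal triviality of the prime-to-$p$ part; in the applications where the characteristic is invertible in the coefficients this captures all of $\Br(X)$.

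For the integral Hodge conjecture in codimension~$2$ when $k=\CC$, I would first observe that retract rationality implies unirationality, hence rational connectedness, so $\CH_0(X)=\ZZ$ is trivially supported in dimension~$2$ and the hypothesis of the Colliot-Th\'el\`ene--Voisin theorem is met. That result identifies
$$
\Hur^3(X,\QQ/\ZZ(2)) \isom \frac{H^{2,2}(X)\cap H^4(X,\ZZ)}{\im\bigl(\CH^2(X)\to H^4(X,\ZZ)\bigr)}.
$$
Universal triviality of $\Hur^3$ means the natural map $H^3(\CC,\QQ/\ZZ(2)) \to \Hur^3(X,\QQ/\ZZ(2))$ is surjective; since $\CC$ has cohomological dimension $0$ the source vanishes, forcing the right-hand side above to be zero. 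This is precisely surjectivity of the cycle class map $\CH^2(X) \to H^{2,2}(X)\cap H^4(X,\ZZ)$, i.e., the integral Hodge conjecture in codimension~$2$.

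No serious obstacle arises: the argument is a repackaging of results already established (Theorem~\ref{thm:Merk} and the Colliot-Th\'el\`ene--Voisin theorem), together with the purity identifications of \S\ref{subsec:purity}. The subtlest point is unpacking what \emph{universal triviality} says over the base field itself---namely that $M(k)$ surjects onto $M_\ur(X/k)$---which is what lets me leverage the vanishing of $H^3(\CC,\QQ/\ZZ(2))$ in the last step.
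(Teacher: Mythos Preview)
Your proposal is correct and matches the paper's intended argument: the corollary is stated without proof precisely because it follows by concatenating the retract-rationality $\Rightarrow$ universal $\CH_0$-triviality observation before Theorem~\ref{thm:univ_triv} with Merkurjev's Theorem~\ref{thm:Merk}, and then reading off the specific consequences via the purity identifications of \S\ref{subsec:purity} and the Colliot-Th\'el\`ene--Voisin theorem. Your unpacking of the integral Hodge conjecture step (vanishing of $H^3(\CC,\QQ/\ZZ(2))$ forcing $\Hur^3(X,\QQ/\ZZ(2))=0$) is exactly right, as is your caveat that the $\Br(X)$ statement is really about the prime-to-characteristic part.
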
  

\begin{example}
In the spirit of Mumford's theorem on 2-forms on surfaces, if $X$ is
an algebraic surface with $p_g(X) > 0$ (more generally, $\rho(X) <
b_2(X)$), then $\CH_0(X)$ is not universally trivial and $X$ does not
have a decomposition of the diagonal.  Here we use the fact, which we
recall from \S\ref{subsec:purity}, that $\Br(X) \isom
(\QQ/\ZZ)^{b_2-\rho} \oplus H$, for $H$ a finite group.
\end{example}

\subsection{Rationally connected varieties}

A smooth projective variety $X$ over a field $k$ is called
\linedef{rationally connected} if for every algebraically closed field
extension $K/k$, any two $K$-points of $X$ can be connected by the
image of a $K$-morphism $\PP^1_K \to X_K$.

For example, smooth geometrically unirational varieties are rationally
connected.  It is a theorem of Campana~\cite{campana:Fano} and
Koll\'ar--Miyaoka--Mori~\cite{kollar_miyaoka_mori:Fano} that any
smooth projective Fano variety over a field of characteristic zero is
rationally connected.  

If $X$ is rationally connected, then $\CH_0(X_K)=\ZZ$ for any
algebraically closed field extension $K/k$.  While a standard argument
then proves that the kernel of $\deg : \CH_0(X_F) \to \ZZ$ is torsion
for every field extension $F/k$, the following more precise result is
known.

\begin{prop}
\label{prop:rat_conn_N}
Let $X$ be a smooth proper connected variety over a
field $k$.  Assume that $X$ is rationally connected, or more generally, that $\CH_0(X_K)=\ZZ$
for all algebraically closed extensions $K/k$.  
\begin{enumerate}
\item ({Bloch--Srinivas~\cite[Prop.~1]{bloch_srinivas}}) Then $X$ has
a rational decomposition of the diagonal.

\item ({Colliot-Th\'el\`ene~\cite[Prop.~11]{colliot:finitude}})  
Then there exists an
integer $N>0$ such that $\CH_0(X)$ is universally $N$-torsion.
\end{enumerate}
Of course, both of these are equivalent by Theorem~\ref{thm:univ_triv_N}.
\end{prop}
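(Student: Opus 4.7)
The strategy is the classical Bloch--Srinivas spreading argument: first produce a rational decomposition of the diagonal (1), then use the decomposition as a correspondence on $0$-cycles to deduce the universal $N$-torsion (2). The delicate step --- and the unavoidable source of a multiplicity $N>1$ --- is the spreading out of a rational equivalence from $\overline{k(X)}$ down to a finite extension of $k(X)$ and then to $k(X)$ itself, which introduces the degrees of that extension and of a closed point of $X$. Everything else is routine.

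Concretely, for (1), let $K = \overline{k(X)}$ and let $\xi \in X_{k(X)}$ denote the tautological $k(X)$-point given by the identity morphism. Since $X$ is proper and nonempty, pick a closed point $y_0 \in X$ of some degree $d := [\kappa(y_0):k]$. By hypothesis $\CH_0(X_K) = \ZZ$ via the degree map, so the cycles $\xi_K$ and $(y_0)_K$, of respective degrees $1$ and $d$, satisfy $(y_0)_K = d\,\xi_K$ in $\CH_0(X_K)$. Writing $\CH_0(X_K) = \varinjlim_L \CH_0(X_L)$ over finite extensions $L/k(X)$ inside $K$, this equivalence already holds in $\CH_0(X_L)$ for some $L/k(X)$ of degree $m := [L:k(X)]$. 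Let $\pi : X_L \to X_{k(X)}$ be the base-change morphism, which is finite flat of degree $m$. Applying $\pi_*$, and using $\pi_* \pi^* (y_0)_{k(X)} = m\,(y_0)_{k(X)}$ together with $\pi_* \xi_L = m\,\xi$ (since $\xi$ has residue field $k(X)$ and its unique preimage $\xi_L$ has residue field $L$), we obtain
$$N\,\xi \;=\; P_{k(X)} \quad \text{in } \CH_0(X_{k(X)}),$$
where $N := dm$ and $P := m\,y_0$ is a 0-cycle on $X$ of degree $N$.

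To convert this generic-fiber equality into a rational decomposition of the diagonal, project $X \times X$ onto its second factor: restricting to the generic fiber identifies $\CH^n(X_{k(X)}) = \varinjlim_U \CH^n(X \times U)$, where $U$ ranges over the nonempty opens of $X$. Under this map, $[\Delta_X] \mapsto \xi$ and $[P \times X] \mapsto P_{k(X)}$. Hence the equality $N\xi = P_{k(X)}$ forces $N[\Delta_X] - [P \times X]$ to vanish in $\CH^n(X \times U)$ for some nonempty open $U \subset X$; by the excision sequence recalled in \S\ref{subsec:intersections}, there exists a cycle $Z$ supported on $X \times V$ (where $V := X \smallsetminus U$) with
$$N\,\Delta_X \;=\; P \times X + Z \quad \text{in } \CH^n(X \times X).$$
This is precisely the rational decomposition of the diagonal of \eqref{eq:rat_decomp_diagonal}, establishing (1).

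Finally, for (2) we apply $N\Delta_X = P \times X + Z$ as a correspondence on 0-cycles over any field extension $F/k$. Given $\alpha \in \CH_0(X_F)$, the moving lemma for 0-cycles (\S\ref{subsec:intersections}) lets us replace $\alpha$ by a rationally equivalent cycle with support outside $V_F$, so that $Z_* \alpha = 0$; computing the action of the diagonal then gives $N\alpha = \deg(\alpha)\,P_F$ in $\CH_0(X_F)$. In particular, $N$ annihilates $\ker(\deg : \CH_0(X_F) \to \ZZ)$ for every $F/k$, yielding the universal $N$-torsion of $\CH_0(X)$. By Theorem~\ref{thm:univ_triv_N}, this is equivalent to (1).
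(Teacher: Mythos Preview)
Your argument is correct and is precisely the standard Bloch--Srinivas spreading argument. The paper does not give a separate proof of this proposition (it only cites the original references and invokes Theorem~\ref{thm:univ_triv_N} for the equivalence), but your proof mirrors exactly the ingredients the paper assembles elsewhere: the passage from $\CH_0(X_{k(X)})$ to $\CH^n(X\times X)$ via closure is the argument of Theorem~\ref{thm:univ_triv}, and the descent from $\overline{k(X)}$ to a finite extension followed by corestriction is the argument of Lemma~\ref{lem:absolute_gives_N}.
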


In fact, over $\CC$, something more general can be proved.

\begin{lemma}
\label{lem:absolute_gives_N}
Let $X$ be a smooth proper connected variety over an algebraically
closed field $k$ of infinite transcendence degree over its prime field
(e.g., $k=\CC$).  If $\CH_0(X)=\ZZ$ then there exists an integer $N>0$
such that $\CH_0(X)$ is universally $N$-torsion.
\end{lemma}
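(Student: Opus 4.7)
The proof proceeds by combining Theorem~\ref{thm:univ_triv_N} with the Bloch--Srinivas style argument of Proposition~\ref{prop:rat_conn_N}(1). Specifically, Theorem~\ref{thm:univ_triv_N} reduces the universal $N$-torsion of $\CH_0(X)$ to producing a rational decomposition of the diagonal, and Proposition~\ref{prop:rat_conn_N}(1) guarantees such a decomposition provided $\CH_0(X_L) = \ZZ$ holds for every algebraically closed extension $L/k$ (equivalently, for a single universal domain $\Omega \supset k$). Thus the content of the lemma is to bootstrap from the absolute hypothesis $\CH_0(X) = \ZZ$ to this relative/universal statement, and this is exactly where the infinite transcendence degree hypothesis on $k$ enters.

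For the bootstrap, let $L/k$ be an algebraically closed extension and let $\beta \in \CH_0(X_L)$ have degree zero. Writing $\CH_0(X_L)$ as a filtered colimit $\varinjlim_{L_0} \CH_0(X_{L_0})$ over finitely generated subextensions $L_0 \subset L$, and further over smooth $k$-models $V$ with $k(V) = L_0$, I spread $\beta$ out to a cycle $\widetilde\beta \in \CH_n(X \times V)$ whose generic fiber recovers $\beta$ after base change. For every $k$-rational point $v \in V(k)$, the Gysin restriction $\widetilde\beta|_v$ is a $0$-cycle on $X \cong X \times \{v\}$ of degree zero, hence vanishes by the hypothesis $\CH_0(X) = \ZZ$.

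It remains to propagate this fiberwise triviality over the Zariski-dense set $V(k)$ to the triviality of $\widetilde\beta$ at the generic point of $V$. Here the infinite transcendence degree of $k$ is decisive: any finitely generated extension of $k$, and hence its algebraic closure, has the same transcendence degree over the prime field as $k$ itself, so by Steinitz's theorem it embeds into $k$ as an abstract (non-$k$-linear) field. This abundance of $k$-points, together with the ``very general point'' density reasoning of Bloch--Srinivas, forces the generic fiber of $\widetilde\beta$ to vanish; this is the technical content of Colliot-Th\'el\`ene's \cite[Prop.~11]{colliot:finitude}. The conclusion is $\CH_0(X_L) = \ZZ$ for every algebraically closed $L/k$.

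The main obstacle is this last propagation step: passing from fiberwise triviality over $k$-rational points to triviality at the generic point is false without some extra hypothesis on $k$, and the infinite transcendence degree assumption is exactly what rescues it (failing, for instance, when $k = \overline{\FF_p}$, where Roitman-type phenomena can produce nontrivial torsion in $\CH_0$). Once the relative triviality $\CH_0(X_L) = \ZZ$ is in hand, Proposition~\ref{prop:rat_conn_N}(1) supplies a rational decomposition of the diagonal with some integer $N \geq 1$, and Theorem~\ref{thm:univ_triv_N} then converts this decomposition into the desired universal $N$-torsion of $\CH_0(X)$.
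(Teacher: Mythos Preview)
Your propagation step has a genuine gap. The Steinitz embedding $\overline{k(V)} \hookrightarrow k$ you invoke is \emph{not} $k$-linear, so it does not carry $X_{\overline{k(V)}}$ to $X$; it carries it to a Galois-twisted form $X^{\sigma}$ over $k$, for which you have no information about $\CH_0$. And the reference you give for this step, \cite[Prop.~11]{colliot:finitude}, is exactly Proposition~\ref{prop:rat_conn_N}(2) in the paper: its \emph{hypothesis} is that $\CH_0(X_K)=\ZZ$ for all algebraically closed $K/k$, which is precisely what you are trying to establish. So as written the argument is circular. (One can rescue the propagation by a different route---using that $k$ is uncountable, writing the locus of $v$ with $\widetilde\beta|_v=0$ as a countable union of constructible sets, and invoking a Baire-type argument---but that is not what you wrote.)

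The paper's proof avoids this detour entirely and uses the transcendence-degree hypothesis in the correct way. One first \emph{descends} $X$ to a variety $X_0$ over a small algebraically closed subfield $L \subset k$ (algebraic over a finitely generated field). Now the function field $F=L(X_0)$ embeds into $k$ \emph{over $L$}; this is the point where infinite transcendence degree is used, and the embedding is $L$-linear, so it genuinely relates $X_0 \times_L \overline{F}$ to $X_0 \times_L k = X$. Lemma~\ref{lem:inject} (injectivity of $\CH_0$ under extension of an algebraically closed base) then gives $\CH_0((X_0)_{\overline{F}})=\ZZ$, so the diagonal point $\eta$ equals a fixed $L$-point $P$ in $\CH_0((X_0)_{\overline{F}})$; descending this equality to a finite extension $E/F$ and taking corestriction yields $N(\eta_F-P_F)=0$ for $N=[E:F]$. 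This is condition (ii) of Theorem~\ref{thm:univ_triv_N} for $X_0/L$, hence $\CH_0(X_0)$ is universally $N$-torsion, and base-changing to $k$ finishes. Note this is strictly more efficient than your plan: it goes straight to the function-field criterion rather than first proving $\CH_0(X_L)=\ZZ$ for all $L$ and then re-deriving that criterion via Bloch--Srinivas.
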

\begin{proof}
The variety $X$ is defined over an algebraically closed subfield $L
\subset k$, with $L$ algebraic over a field finitely generated over
its prime field.  That is, there exists a variety $X_0$ over $L$ with
$X \isom X_0 \times_L k$.  Let $\eta$ be the generic point of $X_0$.
Let $P$ be an $L$-point of $X_0$.  One may embed the function field
$F=L(X_0)$ into $k$, by the transcendence degree hypothesis on $k$.
Let $K$ be the algebraic closure of $F$ inside $k$.  By
Lemma~\ref{lem:inject} (below) and the hypothesis that $\CH_0(X)=\ZZ$,
we have that $\CH_0(X_0 \times_L F)=\ZZ$.  This implies that there is
a finite extension $E/F$ of fields such that $\eta_E - P_E =0$ in
$\CH_0(X_0 \times_L E)$.  Taking the corestriction (i.e., pushforward)
to $F$, one finds that $N(\eta_F-P_F)=0$ in $\CH_0(X_0\times_L F)$,
hence in $\CH_0(X)$ as well.  As in the proof of
Theorem~\ref{thm:univ_triv}, we conclude that $\CH_0(X)$ is
universally $N$-torsion.
\end{proof}

\begin{lemma}
\label{lem:inject}
Let $X$ be a smooth projective connected variety over $k$.  If $K/k$
is an extension of fields, then the natural map $\CH_0(X) \to
\CH_0(X_K)$ is torsion.  If $k$ is algebraically closed, then $\CH_0(X)
\to \CH_0(X_K)$ is injective.
\end{lemma}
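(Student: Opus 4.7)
The strategy is to split the statement into two more fundamental ingredients: (a) if $K/k$ is algebraic then the kernel of $r_{K/k}:\CH_0(X)\to\CH_0(X_K)$ is torsion, and (b) if $k$ is algebraically closed then $r_{K/k}$ is injective for every extension $K/k$. Granting these, the full lemma follows by choosing an algebraic closure $\overline{K}$ of $K$ and letting $\overline{k}$ denote the algebraic closure of $k$ inside $\overline{K}$: the pullback $\CH_0(X)\to\CH_0(X_{\overline{K}})$ factors both as $\CH_0(X)\to\CH_0(X_K)\to\CH_0(X_{\overline{K}})$ and as $\CH_0(X)\to\CH_0(X_{\overline{k}})\to\CH_0(X_{\overline{K}})$. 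By (a) applied to $\overline{k}/k$ the first map of the second factorisation has torsion kernel, and by (b) applied to $\overline{k}$ and the extension $\overline{K}/\overline{k}$ the second map is injective, so the composite has torsion kernel; consequently the kernel of its factor $r_{K/k}$ is torsion.

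For (a) I would use the fact that Chow groups commute with filtered colimits of affine $k$-schemes: any class in the kernel of $r_{K/k}$ already vanishes after pullback to $\CH_0(X_L)$ for some finite subextension $L/k$ of $K/k$. The finite flat morphism $X_L\to X$ admits both the flat pullback $r_{L/k}$ and a proper pushforward $N_{L/k}$, whose composition $N_{L/k}\circ r_{L/k}$ equals multiplication by $[L:k]$ on $\CH_0(X)$; hence the kernel of $r_{L/k}$ is killed by $[L:k]$, proving (a).

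For (b), assume $k$ is algebraically closed and let $\alpha\in\CH_0(X)$ satisfy $\alpha_K=0$. By continuity again, $\alpha_L=0$ for some finitely generated subextension $L/k$ of $K/k$. Choose a smooth integral $k$-variety $V$ with $k(V)=L$, possible by restricting any affine model of $L$ to its smooth locus (non-empty because $L/k$ is separable, as $k$ is perfect), and set $d=\mathrm{tr.deg}(L/k)=\dim V$. Writing $\CH_0(X_L)=\varinjlim_{U}\CH_d(X\times U)$ over non-empty opens $U\subset V$, the vanishing $\alpha_L=0$ means that $p^*(\alpha)|_{X\times U}=0$ in $\CH_d(X\times U)$ for some $U$, where $p:X\times V\to X$ is the projection. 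Since $U$ is a non-empty open in a smooth $k$-variety and $k$ is algebraically closed, $U$ contains a $k$-rational point $v$; the inclusion $\sigma_v:X\simeq X\times\{v\}\hookrightarrow X\times U$ is then a regular embedding of codimension $d$ with trivial normal bundle. Functoriality of Gysin maps for lci morphisms applied to $p\circ\sigma_v=\mathrm{id}_X$ yields $\sigma_v^!\circ p^*=\mathrm{id}$ on $\CH_0(X)$, so $\alpha=\sigma_v^!\bigl(p^*(\alpha)|_{X\times U}\bigr)=0$.

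The main subtlety is less a technical obstacle than a bookkeeping point: one must ensure that the point $v$ chosen in (b) lies in the specific open $U\subset V$ on which $p^*(\alpha)$ has already been made to vanish. The freedom to shrink $U$, combined with the abundance of $k$-rational points on any non-empty open subset of a smooth variety over an algebraically closed field, makes this automatic. Once the choice is made, no moving lemma, specialisation formalism, or excess intersection formula is required, and the whole argument collapses to the compatibility $(p\circ\sigma_v)^!=\sigma_v^!\circ p^*$ of Gysin pullbacks recalled in the preliminaries.
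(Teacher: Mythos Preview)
Your proof is correct. For the algebraically closed case (your part (b)), you and the paper do exactly the same thing: spread the vanishing of $\alpha$ from $X_L$ to $X\times U$ for some open $U$ in a model of $L$, pick a $k$-rational point $v\in U$, and use the resulting section (you phrase it via the Gysin map $\sigma_v^!$, the paper just says ``a $k$-point defines a section'') to recover $\alpha=0$.

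For the general torsion statement the routes diverge. You factor through the algebraic closure $\overline{k}\subset\overline{K}$ and combine your (a) and (b); the paper instead stays with a single finitely generated subextension $L/k$, spreads to an affine $U=\Spec A$, finds a point of $U$ over some finite extension $k'/k$, specializes there to get $z_{k'}=0$ in $\CH_0(X_{k'})$, and then corestricts from $k'$ to $k$. Both are standard. The paper's argument is one step shorter, since it never isolates the purely algebraic case; your modular decomposition has the minor advantage of making the two underlying mechanisms (norm for finite extensions, rational-point sections over algebraically closed fields) explicit and reusable.
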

\begin{proof}
Let $z$ be a 0-cycle on $X$ that becomes rationally equivalent to zero
on $X_K$.  Then there exists a subextension $L$ of $K/k$ that is
finitely generated over $k$, such that $z$ becomes rationally
equivalent to zero on $X_L$.  In fact, we can find a finitely
generated $k$-algebra $A$ with fraction field $L$ such that $z$ maps
to zero under $\CH_0(X) \to \CH_0(X \times_k U)$ where $U = \Spec\,
A$.  When $k$ is algebraically closed, there exists a $k$-point of
$U$, defining a section of $\CH_0(X) \to \CH_0(X \times_k U)$, showing
that $z$ is zero in $\CH_0(X)$. In general, we can find a rational
point of $U$ over a finite extension $k'/k$, so that $z_{k'}$ is zero
in $\CH_0(X_{k'})$, from which we conclude that a multiple of $z$ is
zero in $\CH_0(X)$ by taking corestriction.
\end{proof}

There exist rationally connected varieties $X$ over an algebraically
closed field of characteristic zero with $\CH_0(X)$ not universally
trivial.  Indeed, let $X$ be a unirational threefold with
$\Hur^2(X,\QQ/\ZZ(1)) \isom \Br(X) \neq 0$, see e.g.,
\cite{artin_mumford}.  Then by Theorem~\ref{thm:univ_triv_N}, $\CH_0(X)$ is
not universally trivial.

However, such examples do not disprove the natural universal
generalization of the result of Campana~\cite{campana:Fano} and
Koll\'ar--Miyaoka--Mori~\cite{kollar_miyaoka_mori:Fano}, and this was
posed as a question in \cite[\S1]{ACTP}.

\begin{question}
Does there exist a smooth Fano variety $X$ over an algebraically
closed field of characteristic $0$ with $\CH_0(X)$ not universally
trivial?
\end{question} 

After this question was posed, Voisin~\cite{voisin:threefolds}
constructed the first examples of (smooth) Fano varieties over $\CC$
with $\CH_0(X)$ not universally trivial, see
\S\ref{subsec:degeneration} for more details.

\subsection{Surfaces}
\label{subsec:surfaces}

We briefly recall Bloch's conjecture for a complex surface.  Let $X$
be a smooth projective variety.  The Albanese morphism $\alb_X : X \to
\Alb(X)$ is universal for morphisms from $X$ to an abelian variety.
It extends to the Albanese map
$$
\alb_X : A_0(X) \to \Alb(X)
$$
where $A_0(X)$ denotes the kernel of the degree map $\CH_0(X) \to
\ZZ$.  The Albanese map is surjective on geometric points.  In
characteristic zero, $\dim(\Alb(X)) = q(X) = h^1(X,\OO_X)$.  Recall
that $p_g(X) = h^0(X,\Omega^n_X)$ where $n=\dim(X)$.

\begin{conjecture}[Bloch's conjecture]
Let $X$ be a smooth projective surface over $\CC$.  If $p_g(X) =0$
then the Albanese map $\alb_X : A_0(X) \to \Alb(X)$ is injective.  In
particular, if $p_g(X)=q(X)=0$, then $A_0(X)=0$, i.e., $\CH_0(X)=\ZZ$.
\end{conjecture}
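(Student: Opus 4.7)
Bloch's conjecture is open in general; the plan is therefore to describe the standard strategy, which has succeeded in numerous cases, and to isolate where the essential obstruction lies.

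First I would reduce to producing a suitable decomposition of the diagonal. For the ``in particular'' case $p_g(X) = q(X) = 0$, Theorem~\ref{thm:univ_triv} reduces $\CH_0(X) = \ZZ$ to the existence of an integral decomposition $\Delta_X = P \times X + Z \in \CH^2(X \times X)$ with $P$ a $0$-cycle of degree $1$ and $Z$ supported on $X \times V$ for some proper closed $V \subsetneq X$. For the full Albanese statement the target is a refined Bloch--Srinivas decomposition
$$\Delta_X = Z_1 + Z_2 \in \CH^2(X \times X),$$
where $Z_1$ is supported on $C \times X$ for a curve $C \subset X$ mapping surjectively to $\Alb(X)$ under $\alb_X$, and $Z_2$ is supported on $X \times V$ for a proper closed $V \subsetneq X$; chasing such a decomposition through the action of correspondences on $0$-cycles then pins down the kernel of $\alb_X$ in terms of $\CH_0(C) \to \Alb(X)$, whose kernel map is an isomorphism by the classical theory of curves.

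The Hodge-theoretic input from $p_g(X) = 0$ is that $H^{2,0}(X) = 0$, and the Lefschetz $(1,1)$-theorem then gives $H^2(X,\QQ) = \NS(X)_\QQ$, so that the transcendental Hodge structure $T(X)$ vanishes rationally. The arguments of \S\ref{subsec:decomp_acting} show that the desired decomposition of the diagonal \emph{forces} this Hodge-theoretic vanishing; Bloch's conjecture asserts the converse, and my plan is to bridge this gap via Kimura--O'Sullivan finite-dimensionality of the Chow motive $h(X)$. Once $h(X)$ is known to be finite-dimensional, a Chow--K\"unneth decomposition combined with the triviality of $T(X)$ forces the transcendental motive $t_2(X)$ to vanish, which in turn yields the diagonal decomposition sought above. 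When finite-dimensionality is not directly accessible, I would proceed by case analysis: surfaces of Kodaira dimension $\leq 1$ reduce to classical results of Bloch--Kas--Lieberman, while for named general-type families with $p_g = 0$ (Godeaux, Barlow, Catanese, Craighero--Gattazzo, \dots) one exploits symplectic automorphisms, finite quotient presentations, or realizations as covers of rational surfaces to construct the decomposition by hand.

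The main obstacle is exactly the range where this case analysis breaks down: surfaces of general type with $p_g = 0$ admitting none of the above geometric structures, in particular generic deformations within such moduli. For these, Kimura--O'Sullivan finite-dimensionality is a conjecture of comparable depth, so the plan is conditional; a direct passage from the Hodge-theoretic vanishing of $T(X)$ to a decomposition in $\CH^2(X \times X)$, bypassing motivic finite-dimensionality, appears to require a genuinely new method linking transcendental Hodge structures to $0$-cycles.
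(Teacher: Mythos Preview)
This statement is presented in the paper as a \emph{conjecture}, not a theorem, and the paper offers no proof; it only remarks that the conjecture is established for surfaces not of general type by Bloch--Kas--Lieberman. You correctly identify that the statement is open in general and frame your write-up as a strategy discussion rather than a proof, which is the appropriate response.

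Your outline is accurate and more detailed than anything the paper says on the matter: the reduction to a decomposition of the diagonal, the r\^ole of $p_g=0$ via the Lefschetz $(1,1)$-theorem, the Kimura--O'Sullivan route through finite-dimensionality, and the case-by-case treatment for known general-type families are all standard and correctly described. One small caution: invoking Theorem~\ref{thm:univ_triv} for the $p_g=q=0$ case slightly overshoots, since that theorem concerns \emph{universal} triviality of $\CH_0$, whereas Bloch's conjecture only asserts $\CH_0(X)=\ZZ$ over $\CC$ itself; the two are equivalent a posteriori (via Proposition~\ref{surfacenotorsion} when $\NS(X)$ is torsionfree, or more generally via Lemma~\ref{lem:absolute_gives_N}), but the logical direction you want is the easier one and does not require the full equivalence.
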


In fact, Bloch's conjecture is proved for all surfaces that are not of
general type by Bloch, Kas, and Lieberman~\cite{BKL}.

Of course, rational surfaces satisfy $p_g=q=0$ and have $A_0(X)=0$.
There do exists nonrational surfaces with $p_g=q=0$ and for which
$A_0(X)=0$.  Enriques surfaces were the first examples, extensively
studied in~\cite{enriques},~\cite[p.~294]{enriques:memorie} with some
examples considered earlier in~\cite{reye}, see also
\cite{castelnuovo}.  An Enriques surface has Kodaira dimension 0.  We
remark that for an Enriques surface $X$, we have that
$\Hur^1(X,\ZZ/2\ZZ)=H^1_{\et}(X,\ZZ/2\ZZ)=\ZZ/2\ZZ$ as well as
$\Hur^2(X,\QQ/\ZZ(1))=\Br(X)=\ZZ/2\ZZ$. Hence $\CH_{0}(X)$
is not universally trivial and $X$ does not have a decomposition of
the diagonal by Theorem~\ref{thm:Merk}.

The first surfaces of general type with $p_g=q=0$ were constructed
in~\cite{campedelli} and \cite{godeaux}.  Simply connected surfaces
$X$ of general type for which $p_g=0$ were constructed by
Barlow~\cite{barlow}, who also proved that $\CH_0(X)=\ZZ$ for some of
them.  See also the recent work on Bloch's conjecture by
Voisin~\cite{voisin:bloch}.

We want to explore the universal analogue of Bloch's conjecture, i.e.,
to what extent does $p_g=q=0$ imply universal triviality of $\CH_0(X)$.

\medskip

The following result was stated without detailed proof as the last
remark of~\cite{bloch_srinivas}.  The first proof appeared in
\cite[Prop.~1.19]{ACTP} using results of
\cite{colliot-thelene_raskind:second_Chow_group} and a different proof
appear later in \cite[Cor.~2.2]{voisin:cubics}.

\begin{prop}
\label{surfacenotorsion}
Let $X$ be a smooth proper connected surface over $\CC$.  Suppose that
all groups $\HB^i(X,\ZZ)$ are torsionfree and that $\CH_0(X)=\ZZ$.
Then $\CH_0(X)$ is universally trivial and admits a decomposition of
the diagonal.
\end{prop}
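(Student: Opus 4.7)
The plan is to verify criterion (ii) of Theorem \ref{thm:univ_triv}: since $\CH_0(X) = \ZZ$ provides a $0$-cycle $P$ of degree $1$, it suffices to show that $[\eta] - [P_{k(X)}] = 0$ in $\CH_0(X_{k(X)})$, where $\eta$ is the tautological $k(X)$-point arising from the diagonal.

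First I would extract cohomological consequences of the hypotheses. By Lemma \ref{lem:absolute_gives_N} combined with Theorem \ref{thm:univ_triv_N}, there exist $N \geq 1$ and a rational decomposition $N\Delta_X = N(P\times X) + Z$ in $\CH^2(X\times X)$ with $Z$ supported on $X\times V$, $V\subsetneq X$. By Proposition \ref{prop:bloch}, this forces $p_g(X) = q(X) = 0$. Together with the torsion-freeness of $H^{*}(X,\ZZ)$ and Poincar\'e duality, this gives $H^1(X,\ZZ) = H^3(X,\ZZ) = 0$, and the Lefschetz $(1,1)$-theorem combined with $H^{2,0}(X) = 0$ and torsion-freeness yields $H^2(X,\ZZ) = \NS(X)$.

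Next, the torsion-freeness gives the clean integral K\"unneth formula
\begin{equation*}
H^4(X\times X,\ZZ) = \ZZ\cdot [X\times P] \oplus \bigl(H^2(X,\ZZ)\otimes H^2(X,\ZZ)\bigr) \oplus \ZZ\cdot[P\times X],
\end{equation*}
and hence $[\Delta_X] = [X\times P] + \delta + [P\times X]$ for some $\delta \in H^2(X,\ZZ)^{\otimes 2}$. Writing $\delta = \sum_{i,j} a_{ij}[D_i]\otimes[D_j]$ in a basis of divisor classes for $\NS(X) = H^2(X,\ZZ)$, the algebraic cycle $\Gamma = \sum_{i,j} a_{ij}[D_i\times D_j] \in \CH^2(X\times X)$ has cohomology class $\delta$ and support in $X\times V'$ with $V' = \bigcup_j D_j$. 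Setting $V'' = V\cup V'\cup\{P\}\subsetneq X$, the cycle $W = \Delta_X - (P\times X) - (X\times P) - \Gamma$ is then cohomologically trivial, and its image in $\CH_0(X_{k(X)})$ under the excision map $\CH^2(X\times X) \to \CH_0(X_{k(X)})$ equals $[\eta] - [P_{k(X)}]$; by the rational decomposition this class is in addition $N$-torsion.

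The main obstacle is to upgrade ``$N$-torsion and cohomologically trivial'' to actual vanishing. After base-change to $\overline{k(X)}$, Roitman's theorem (applicable since $\Alb(X_{\overline{k(X)}}) = 0$ by $q(X) = 0$) shows $A_0(X_{\overline{k(X)}})$ is torsion-free, so $[\eta] - [P_{k(X)}]$ vanishes over $\overline{k(X)}$. A spreading-out and norm argument then reduces matters to controlling residual torsion classes in $\CH_0(X_{k(X)})$, which are measured by unramified cohomology. Here the torsion-freeness of $H^{*}(X,\ZZ)$ is decisive: via the exponential sequence it gives $\Br(X) = 0$, and the Colliot-Th\'el\`ene--Voisin theorem for $\CH_0$ supported in dimension $0$ identifies $\Hur^3(X,\QQ/\ZZ(2))$ with $\coker\bigl(\CH^2(X)\to H^4(X,\ZZ)\bigr) = 0$. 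These vanishings, propagated to the relevant unramified cohomology of $X_{k(X)}$ and combined with Merkurjev's Theorem \ref{thm:Merk} and the $N$-torsion decomposition, force $[\eta] - [P_{k(X)}] = 0$, so criterion (ii) of Theorem \ref{thm:univ_triv} yields universal triviality of $\CH_0(X)$ and an integral decomposition of $\Delta_X$.
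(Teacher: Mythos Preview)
Your setup through the third paragraph is sound and is exactly the opening of Voisin's argument: from $\CH_0(X)=\ZZ$ you obtain a rational decomposition, hence $p_g=q=0$, hence $H^1=H^3=0$ and $H^2=\NS(X)$; the integral K\"unneth formula then lets you write $[\Delta_X]=[P\times X]+\delta+[X\times P]$ and realize $\delta$ by an algebraic cycle $\Gamma$ supported on $X\times V'$, so that $W=\Delta_X-(P\times X)-(X\times P)-\Gamma$ is homologically trivial and restricts to $[\eta]-[P_{k(X)}]$ on the generic fiber.

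The gap is in your last paragraph. Your appeal to Merkurjev's Theorem~\ref{thm:Merk} is circular: that theorem says universal $\CH_0$-triviality is \emph{equivalent} to universal triviality of \emph{all} cycle-module unramified groups, so invoking it requires exactly what you are trying to prove. Checking only $\Br(X)=0$ and $\Hur^3(X,\QQ/\ZZ(2))=0$ over $\CC$ is neither sufficient (these are not all cycle modules) nor relevant to $X_{k(X)}$: the claim that these vanishings ``propagate to $X_{k(X)}$'' is precisely the universal statement at issue and is nowhere justified. Likewise, Roitman over $\overline{k(X)}$ plus corestriction only gives further torsion, not vanishing, of $[\eta]-[P_{k(X)}]$.

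The paper closes the argument in two ways, neither of which you reproduce. One simply cites Colliot-Th\'el\`ene--Raskind~\cite[Thm.~3.10(d)]{colliot-thelene_raskind:second_Chow_group}, whose analysis of $H^*(X,\mathcal{K}_2)$ shows directly that $A_0(X_F)=0$ for every $F/\CC$ under exactly the hypotheses $p_g=q=0$, $\CH_0(X)=\ZZ$, $\NS(X)$ torsion-free. The other is Voisin's completion of the very computation you started: since $H^3(X\times X,\ZZ)=0$ by K\"unneth, the intermediate Jacobian $J^3(X\times X)$ vanishes, so your cohomologically trivial cycle $W$ is in fact \emph{algebraically} equivalent to zero; thus $\Delta_X$ decomposes modulo algebraic equivalence, and the nilpotence theorem (Proposition~\ref{prop:algebraic=>rational}) upgrades this to an integral decomposition. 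Either of these replaces your final paragraph; your Roitman/Merkurjev route does not.
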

\begin{proof}
By Lemma~\ref{lem:absolute_gives_N}, we have that $\CH_0(X)$ is
universally $N$-torsion.  Hence by Lemma~\ref{prop:bloch}, we have
that $H^i(X,\OO_X)=0$ for all $i \geq 1$.  Hence $p_g(X) = q(X) = 0$,
and thus $b_3(X) = b_1(X) = 2q(X)=0$, so that $\HB^i(X,\ZZ)$ is
consists of classes of algebraic cycles.  

The torsion-free hypothesis on cohomology allows one to use the work
of Colliot-Th\'el\`ene and
Raskind~\cite[Thm.~3.10(d)]{colliot-thelene_raskind:second_Chow_group}
on the cohomology of the Milnor $K$-theory sheaf, to conclude that
$\CH_0(X)$ is universally trivial.

The torsion-free hypothesis on cohomology allows
Voisin~\cite[Cor.~2.2]{voisin:cubics} to argue using the integral
K\"unneth decomposition of the diagonal (see
Remark~\ref{rem:Kunneth}), that $X$ admits a decomposition of the
diagonal.
\end{proof}

We remark that if $X$ is a smooth proper connected surface over $\CC$
with torsionfree N\'eron--Severi group $\mathrm{NS}(X)$, then all Betti
cohomology groups are torsionfree, hence
Proposition~\ref{surfacenotorsion} applies.  Indeed, the torsion in
$\HB^1(X,\ZZ)$ is clearly trivial and is dual to the torsion in
$\HB^3(X,\ZZ)$, while the torsion in $\NS(X)$ is isomorphic to the
torsion in $\HB^2(X,\ZZ)$.

\begin{remark}
\label{rem:Kunneth}
If $\HB^i(X,\ZZ)$ is torsionfree for all $0 \leq i \leq n$, then
there is an integral K\"unneth decomposition
$$
\HB^n(X \times X,\ZZ) = \bigoplus_{i+j=n} \HB^i(X,\ZZ)\tensor\HB^j(X,\ZZ).
$$
This follows from the degeneration of the K\"unneth spectral sequence
with coefficients in $\ZZ$.
\end{remark}

We point out that the surfaces $X$ of general type with $p_g=q=0$ and
$\CH_0(X)=0$ mentioned above, e.g., those construction by Barlow,
satisfy $\Pic(X) ={\rm NS}(X)$ is torsionfree, hence
Proposition~\ref{surfacenotorsion} applies.  While the group
$\CH_{0}(X)$ is universally trivial, these surfaces are far from being
rational, since they are of general type.

The interested reader can find how to adapt
Proposition~\ref{surfacenotorsion} over an algebraically closed field
of infinite transcendence degree over its prime field.

Finally, we mention that Proposition~\ref{surfacenotorsion} has been
generalized by Kahn~\cite{kahn_colliot:torsion_order_surface}, and
independently by Colliot-Th\'el\`ene using
\cite{colliot-thelene_raskind:second_Chow_group}, to a determination
of the minimal $N$ for which $\CH_0(X)$ is universally $N$-torsion,
which turns out to be the exponent of $\NS(X)$.  In general, the
minimal $N \geq 1$ for which $\CH_0(X)$ universally $N$-torsion is a
stable birational invariant of smooth proper varieties; its properties
are explored in \cite{chatz-levine}, where it is called the
\linedef{torsion order} of $X$.

\section{Categorical representability and rationality, the case of surfaces}\label{sect:Db-surfaces}

This section consists of two main parts. In the first part, we define
the notion of categorical representability and begin to classify (or
at least, give criteria to discriminate) categories which are
representable in low dimension. The second part is devoted to the
applications in the case of surfaces.

\subsection{Categorical representability}
\label{subsec:cat_rep}

Using semiorthogonal decompositions, one can define a notion of
\linedef{categorical representability} for a triangulated category. In
the case of smooth projective varieties, this is inspired by the
classical notions of representability of cycles, see
\cite{bolognesi_bernardara:representability}.

\begin{definition}
\label{def-rep-for-cat}
A $k$-linear triangulated category $\cat{T}$ is \linedef{representable
in dimension $m$} if it admits a semiorthogonal decomposition
$$
\cat{T} = \langle \cat{A}_1, \ldots, \cat{A}_r \rangle,
$$
and for each $i=1,\ldots,r$ there exists a smooth projective connected
$k$-variety $Y_i$ with $\dim Y_i \leq m$, such that $\cat{A}_i$ is
equivalent to an admissible subcategory of $\Db(Y_i)$.

We use the following notation
$$\Rep\cat{T} := \min \{ m \in \NN \, \vert \, \cat{T} \text{ is representable in dimension } m\},$$
whenever such a finite $m$ exists.
\end{definition}

\begin{definition}
\label{def-cat-rep}
Let $X$ be a smooth projective $k$-variety. We say that $X$
is \linedef{categorically representable} in dimension $m$ (or
equivalently in codimension $\dim(X)-m$) if $\Db(X)$ is representable in
dimension $m$.

We will use the following notations:
$$
\Repcat(X) := \Rep\Db(X) \,\,\,\,\, \coRepcat(X):= \dim(X)-\Rep\Db(X),
$$
and notice that they are both integer numbers.
\end{definition}

We notice that, by definition, if $\Rep\cat{T}=n$, then $\cat{T}$ is representable in any dimension $m \geq n$.
\begin{lemma}\label{lem:minimal-represntability}
Let $\cat{T}$ be a $k$-linear triangulated category. If $\cat{T}$ is representable in
dimension $n$, then $\cat{T}$ is representable in dimension $m$ for any $m \geq n$.
\end{lemma}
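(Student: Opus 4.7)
The plan is essentially to observe that the statement is immediate from the definition, but to record the one-line argument carefully. Let me assume $\cat{T}$ is representable in dimension $n$ and fix some $m \geq n$.

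By Definition~\ref{def-rep-for-cat}, representability in dimension $n$ furnishes a semiorthogonal decomposition $\cat{T} = \langle \cat{A}_1, \ldots, \cat{A}_r \rangle$ together with, for each $i$, a smooth projective connected $k$-variety $Y_i$ with $\dim Y_i \leq n$ and an equivalence of $\cat{A}_i$ with an admissible subcategory of $\Db(Y_i)$. I would simply reuse the same decomposition and the same varieties $Y_i$: since $\dim Y_i \leq n \leq m$, the inequality $\dim Y_i \leq m$ automatically holds, and the same admissibility and equivalence data witness that $\cat{T}$ is representable in dimension $m$.

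There is no obstacle here; the only point to check is that the defining conditions are monotone in $m$, which they manifestly are because $m$ enters only through the upper bound $\dim Y_i \leq m$. In particular, one does not need to modify the decomposition, replace any of the $Y_i$, or invoke any further structural result about admissible subcategories or semiorthogonal decompositions.

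Thus the proof is essentially a remark and should be presented as such in a single short paragraph in the final text.
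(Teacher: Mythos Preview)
Your proposal is correct and matches the paper's treatment exactly: the paper states just before the lemma that this holds ``by definition'' and offers no further proof, which is precisely the observation you make.
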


\begin{remark}\label{rmk:warning-doubledeco}
{\bf Warning!} Suppose that $\cat{T}$ is representable in dimension $n$ via a
semiorthogonal decomposition $\cat{T} = \sod{\cat{A}_1,\ldots,\cat{A}_r}$,
and let $\cat{T}= \sod{\cat{B}_1 \ldots,\cat{B}_s}$ be another semiorthogonal
decomposition (that is, $\cat{B}_i$ is not admissible $\cat{A}_j$ and $\cat{A}_j$ is not
admissible in $\cat{B}_i$ for any
$i$ and $j$).
As recalled in Proposition~\ref{prop:no-JH} the Jordan--H\"older property for semiorthogonal
decompositions does not hold in general. It follows that one does not know in general
whether the $\cat{B}_i$ are also representable in dimension $n$, and counterexamples are known:
Bondal-Kuznetsov's counterexample~\cite{kuznet:JH} is given by a threefold $X$ with a full
exceptional sequence $\sod{E_1,\ldots,E_6}$, and another exceptional object $F$ whose complement cannot be generated
by exceptional objects.
\end{remark}

Let us record a simple corollary of Theorem~\ref{thm:blow-ups}.

\begin{lemma}
\label{lem:blow-up_catrep2}
Let $X \to Y$ be the blow-up of a smooth projective $k$-variety along
a smooth center. Then $\coRepcat(X) \geq \mathrm{max}\{\coRepcat(Y),2\}$.
In particular, if $\coRepcat(Y) \geq 2$, then $\coRepcat(X) \geq 2$.
\end{lemma}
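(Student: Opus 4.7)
The plan is to apply Orlov's blow-up formula (Theorem~\ref{thm:blow-ups}) and splice into it semiorthogonal decompositions of $\Db(Y)$ and $\Db(Z)$ that witness their representability dimensions, producing a semiorthogonal decomposition of $\Db(X)$ from which the desired bound can be read off. Write $\sigma:X\to Y$ for the blow-up of the smooth center $Z\subset Y$ of codimension $c\geq 2$, so that $\dim Z\leq \dim Y-2 = \dim X - 2$.

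First I fix a semiorthogonal decomposition $\Db(Y)=\sod{\cat{A}_1,\ldots,\cat{A}_r}$ realizing the minimum $\Repcat(Y)=\dim Y-\coRepcat(Y)$, so each $\cat{A}_i$ is equivalent to an admissible subcategory of some $\Db(Y_i)$ with $\dim Y_i\leq \Repcat(Y)$; and I use the tautological decomposition of $\Db(Z)$, which already witnesses $\Repcat(Z)\leq \dim Z\leq \dim X-2$. Splicing these into Orlov's formula yields
$$\Db(X)=\sod{L\sigma^*\cat{A}_1,\ldots,L\sigma^*\cat{A}_r,\;\Phi_1\Db(Z),\ldots,\Phi_{c-1}\Db(Z)}.$$
By full faithfulness of $L\sigma^*$ (Lemma~\ref{lem:birat-map-vs-ffemb}) and of each $\Phi_i$ (Theorem~\ref{thm:blow-ups}), each $L\sigma^*\cat{A}_i$ is equivalent to an admissible subcategory of $\Db(Y_i)$ with $\dim Y_i\leq \Repcat(Y)$, while each $\Phi_i\Db(Z)$ is equivalent to $\Db(Z)$, which is representable in dimension $\leq\dim X-2$. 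Consequently $\Repcat(X)\leq \max\{\Repcat(Y),\,\dim X-2\}$, and rewriting in codimension gives
$$\coRepcat(X)\;\geq\;\min\{\coRepcat(Y),\,2\}.$$
The ``in particular'' consequence, that $\coRepcat(X)\geq 2$ whenever $\coRepcat(Y)\geq 2$, is then immediate.

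The main obstacle to improving this to the $\max$ bound stated is that the pieces $L\sigma^*\Db(Y)$ and $\Phi_i\Db(Z)$ contribute independently to the total representability dimension, so the natural output of Orlov's decomposition is the minimum rather than the maximum of $\coRepcat(Y)$ and $2$. Indeed, the $L\sigma^*\Db(Y)$ piece forces $\Repcat(X)\geq \Repcat(Y)$ in general (for instance, when $Y$ is a K3 surface and $Z$ is a point, one has $\coRepcat(X)=\coRepcat(Y)=0$ while $\dim X -2 = 0$), which shows the $\min$ statement is essentially what the method yields; any strengthening to a $\max$ would require a semiorthogonal decomposition of $\Db(X)$ beyond the blow-up formula, or additional hypotheses excluding configurations like the one above.
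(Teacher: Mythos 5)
Your derivation is exactly the paper's intended argument: the paper's proof says precisely that $\Repcat(X)\leq\max\{\Repcat(Y),\Repcat(Z)\}$, which together with $\Repcat(Z)\leq\dim Z\leq\dim X-2$ gives $\Repcat(X)\leq\max\{\Repcat(Y),\dim X-2\}$, i.e.\ $\coRepcat(X)\geq\min\{\coRepcat(Y),2\}$. You are right that this is a $\min$, not a $\max$: subtracting a maximum from $\dim X$ produces a minimum, and the $\max$ printed in the lemma statement is a slip. Your K3 example pins this down concretely: with $Y$ a K3 surface and $Z$ a point, $\Db(Y)$ has no nontrivial semiorthogonal decomposition (trivial canonical bundle), so $\coRepcat(Y)=0$; and since $HH_{-2}(X)\cong HH_{-2}(Y)=H^0(Y,\omega_Y)\neq 0$ while any admissible subcategory of $\Db(W)$ with $\dim W\leq 1$ has vanishing $HH_{-2}$, the blow-up also has $\Repcat(X)=2$, hence $\coRepcat(X)=0$, contradicting $\coRepcat(X)\geq\max\{\coRepcat(Y),2\}=2$. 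The ``in particular'' clause --- the only part the authors actually invoke later, applied with $Y=\PP^n$, $n\geq 2$ --- follows equally well from the corrected $\min$ bound.

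One minor caveat on your closing remark: the assertion that ``the $L\sigma^*\Db(Y)$ piece forces $\Repcat(X)\geq\Repcat(Y)$ in general'' is not automatic, precisely because of the Jordan--H\"older failure flagged in Remark~\ref{rmk:warning-doubledeco}; an admissible $\Db(Y)\hookrightarrow\Db(X)$ does not by itself bound $\Repcat(X)$ from below. In your K3 example the lower bound is certified instead by the Hochschild homology obstruction, which is the argument you should cite if you state this precisely.
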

\begin{proof}
This is a consequence of Theorem~\ref{thm:blow-ups}.
Denoting by $Z \subset Y$ the center of the blow-up 
we have $\Repcat(X) \leq \max\{\Repcat(Y),\Repcat(Z)\}$ and the
statement follows since $\dim(X)=\dim(Y)$ and by the fact that $Z$ has codimension
at least 2 in $Y$.
\end{proof}

Inspired by the proof of Theorem~\ref{thm:J-is-bir-inv}, one can consider
a birational map $X \dashrightarrow X'$ and its resolutions: by Hironaka's resolution of singularities,
there is a smooth projective
$X_1$ with birational morphisms $\rho_1: X_1 \to X'$ and $\pi_1: X_1 \to X$, such that $\pi_1$ is a composition of
a finite number of smooth blow-ups.
Similarly, there are $\rho_2: X_2 \to X$ and $\pi_2: X_2 \to X'$ birational morphisms with
$\pi_2$ a composition of a finite number of blow-ups.
By Lemma~\ref{lem:birat-map-vs-ffemb} we have that
$\Db(X')$ is admissible in $\Db(X_1)$, and $\Db(X)$ is admissible
in $\Db(X_2)$. Lemma~\ref{lem:blow-up_catrep2} gives bounds
for $\coRepcat(X_1)$ and $\coRepcat(X_2)$ in terms of $\coRepcat(X)$ and
$\coRepcat(X')$ respectively.

Based on these considerations, Kuznetsov~\cite{kuz:rationality-report} argues that if one could properly define 
an admissible subcategory $\cat{GK}_X$ of $\Db(X)$, maximal (with respect to the inclusion
ordering) with respect to the property $\Rep \cat{GK}_X \geq \dim(X)-1$, then such
a category would be a birational invariant, which we would call \linedef{the Griffiths--Kuznetsov component}
of $X$. In particular, since $\Repcat(\PP^n)=0$, we would have that the Griffiths--Kuznetsov
component of a rational variety is trivial.

Even if the Griffiths--Kuznetsov component is not well-defined, we
have that if $X$ is rational, then $\Db(X)$ is admissible in a
category with $\Rep \leq \dim(X)-2$.  As we recalled in
Remark~\ref{rmk:warning-doubledeco}, there is no known reason to
deduce that $\coRepcat(X) \geq 2$.  However, in the small dimensional
cases, we have a stronger understanding of these phenomena.  We will
come back to this question, giving more detailed arguments for
threefolds and examples for fourfolds, in \S\ref{sect:high-dim-Db}.

\subsection*{Representability in dimension 0}

\begin{prop}
\label{lem:0-dim=etale-algebra}
Let $\cat{T}$ be a $k$-linear triangulated category.
$\Rep\cat{T}=0$ if and only if there exists a semiorthogonal
decomposition
$$
\cat{T} = \langle \cat{A}_1, \ldots, \cat{A}_r \rangle,
$$
such that for each $i$, there is a $k$-linear equivalence $\cat{A}_i
\simeq \Db(K_i/k)$ for an \'etale $k$-algebra $K_i$.
\end{prop}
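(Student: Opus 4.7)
The plan is to prove both implications, with essentially all the content residing in a simple lemma about admissible subcategories of $\Db(L/k)$ for $L/k$ a finite separable field extension.

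\emph{Easy direction $(\Leftarrow)$.} Suppose $\cat{T} = \sod{\cat{A}_1,\ldots,\cat{A}_r}$ with each $\cat{A}_i \simeq \Db(K_i/k)$ for an \'etale $k$-algebra $K_i$. Writing $K_i = \prod_j L_{ij}$ as a finite product of finite separable field extensions $L_{ij}/k$, the category $\Db(K_i/k)$ decomposes as a (fully orthogonal) direct sum $\bigoplus_j \Db(L_{ij}/k)$, and this refines the semiorthogonal decomposition of $\cat{T}$ into pieces $\Db(L_{ij}/k) \simeq \Db(\Spec L_{ij})$. Each $\Spec L_{ij}$ is a smooth projective connected $k$-variety of dimension $0$, so by Definition~\ref{def-rep-for-cat} I conclude $\Rep\cat{T} \le 0$.

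\emph{Forward direction $(\Rightarrow)$.} Assume $\Rep\cat{T} = 0$, so there is a semiorthogonal decomposition $\cat{T} = \sod{\cat{A}_1,\ldots,\cat{A}_r}$ with each $\cat{A}_i$ admissible in $\Db(Y_i)$ for some smooth projective connected $k$-variety $Y_i$ with $\dim Y_i = 0$. Smoothness and zero-dimensionality over $k$ force $Y_i = \Spec(L_i)$ for $L_i/k$ a finite separable field extension, so $\Db(Y_i) \simeq \Db(L_i/k)$.

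\emph{Key lemma.} For any finite separable field extension $L/k$, the only admissible subcategories of $\Db(L/k)$ are $0$ and $\Db(L/k)$ itself. Indeed, $L$ is an $L$-exceptional object of $\Db(L/k)$ in the sense of Definition~\ref{def-except}, and since every finitely generated $L$-module is free, every object of $\Db(L/k)$ is isomorphic to a finite direct sum of shifts $L[n]$. Hence $\Db(L/k) = \sod{L}$ as a thick subcategory. Any admissible subcategory $\cat{A}$ is thick, so if it contains a nonzero object it must contain the summand $L$, and therefore coincides with $\Db(L/k)$.

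\emph{Conclusion and main point.} Applying the lemma to each $\cat{A}_i \subset \Db(L_i/k)$, either $\cat{A}_i = 0$ (in which case I discard it from the decomposition) or $\cat{A}_i \simeq \Db(L_i/k) = \Db(K_i/k)$ with $K_i := L_i$ an \'etale $k$-algebra (in fact a field). The only real content of the proposition is the key lemma, which says that $\Db$ of a $k$-point is ``semisimple'' and admits no nontrivial admissible subcategories; the rest is bookkeeping to translate between the connected-variety setup of Definition~\ref{def-rep-for-cat} and the slightly more flexible language of \'etale $k$-algebras in the statement.
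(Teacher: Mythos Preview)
Your proof is correct and follows essentially the same approach as the paper's. Your Key Lemma is exactly part~\ref{lem:indecomposable_1} of the paper's Lemma~\ref{lem:indecomposable}, and the paper's proof of the proposition simply invokes that lemma (stated without proof, citing \cite{auel-berna-surf}) in the same way you do; you additionally supply a short direct argument for it and are a bit more careful than the paper about the ``connected'' hypothesis in Definition~\ref{def-rep-for-cat}.
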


An additive category $\cat{T}$ is \linedef{indecomposable} if for any
product decomposition $\cat{T} \equi \cat{T}_1 \times \cat{T}_2$ into
additive categories, we have that $\cat{T} \equi \cat{T}_1$ or
$\cat{T} \equi \cat{T}_2$.  Equivalently, $\cat{T}$ has no nontrivial
completely orthogonal decomposition.  Remark that if $X$ is a
$k$-scheme then $\Db(X)$ is indecomposable if and only if $X$ is
connected (see \cite[Ex.~3.2]{bridg-equiv-and-FM}).
More is known if $X$ is the spectrum
of a field or a product of fields~\cite{auel-berna-surf}.

\begin{lemma}
\label{lem:indecomposable}
Let $K$ be a $k$-algebra.
\begin{enumerate}
\item\label{lem:indecomposable_1} If $K$ is a field and $\cat{A}$ is a
nonzero admissible $k$-linear triangulated subcategory of $\Db(k,K)$,
then $\cat{A} = \Db(k,K)$.

\item\label{lem:indecomposable_2} If $K \isom K_1 \times \dotsm \times
K_n$ is a product of field extensions of $k$ and $\cat{A}$ is a
nonzero admissible indecomposable $k$-linear triangulated subcategory
of $\Db(k,K)$, then $\cat{A} \equi \Db(k,K_i)$ for some $i=1, \dotsc, n$.

\item\label{lem:indecomposable_3} If $K \isom K_1 \times \dotsm \times
K_n$ is a product of field extensions of $k$ and $\cat{A}$ is a
nonzero admissible $k$-linear triangulated subcategory
of $\Db(k,K)$, then $\cat{A} \equi \prod_{j \in I} \Db(k,K_{j})$ for
some subset $I \subset \{1, \dotsc, n\}$.
\end{enumerate}
\end{lemma}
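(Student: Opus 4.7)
The plan is to prove the three parts in the natural order (i), (ii), (iii), establishing the base case first and then using the product structure of $K$ to reduce (ii) and (iii) to (i).

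For part (i), I will exploit the fact that when $K$ is a field, $\Db(k,K)$ is semisimple in the strong sense that every object is (noncanonically) a finite direct sum of shifts of the generator $K$: bounded complexes of $K$-vector spaces split as the direct sum of their cohomology placed in the appropriate degrees, and each cohomology group is a finite-dimensional $K$-vector space, hence a finite direct sum of copies of $K$. An admissible subcategory $\cat{A}$ is thick, so picking any nonzero object $M \in \cat{A}$ and extracting a direct summand exhibits some $K[n]$ as an object of $\cat{A}$; closure under shifts and finite direct sums then forces $\cat{A} = \Db(k,K)$.

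For part (ii), the central tool will be the product decomposition of triangulated categories
$$
\Db(k,K) \equi \prod_{i=1}^{n} \Db(k,K_i)
$$
induced by the orthogonal idempotents $e_1,\ldots,e_n \in K$: every $K$-module $M$ canonically splits as $\bigoplus e_i M$, and since each $e_i M$ is a direct summand it lies in $\cat{A}$ by thickness. Setting $\cat{A}_i := \cat{A} \cap \Db(k,K_i)$ then produces a product decomposition $\cat{A} \equi \prod_i \cat{A}_i$ with each $\cat{A}_i$ admissible in $\Db(k,K_i)$. Indecomposability of $\cat{A}$ kills all but one factor, and part (i) identifies the surviving $\cat{A}_{i_0}$ with $\Db(k,K_{i_0})$. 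For part (iii), I will drop the indecomposability hypothesis and rerun the same product argument; applying (i) to each nonzero $\cat{A}_i$ yields $\cat{A} \equi \prod_{i \in I} \Db(k,K_i)$ with $I := \{i : \cat{A}_i \neq 0\}$.

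The main obstacle I anticipate is the bookkeeping around the transfer of admissibility: specifically, showing that $\cat{A}$ is closed under direct summands (a standard consequence of the semiorthogonal decomposition $\cat{T} = \sod{\cat{A}^\perp, \cat{A}}$ guaranteed by admissibility together with idempotent completeness of $\Db(k,K)$) and that each $\cat{A}_i$ inherits admissibility inside $\Db(k,K_i)$. The latter follows from the fact that the product decomposition of $\Db(k,K)$ is completely orthogonal, so adjoints of the inclusion $\cat{A} \subset \Db(k,K)$ restrict componentwise to adjoints of $\cat{A}_i \subset \Db(k,K_i)$. Once this structural verification is in place, the reductions to (i) are essentially formal.
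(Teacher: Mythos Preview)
Your proof is correct and complete: the splitting of every object of $\Db(k,K)$ for $K$ a field into shifts of the simple generator $K$, together with thickness of admissible subcategories, settles (i); the completely orthogonal decomposition $\Db(k,K) \equi \prod_i \Db(k,K_i)$ via the idempotents $e_i$, again together with thickness, reduces (ii) and (iii) to (i) exactly as you describe. Note that the paper itself does not supply a proof of this lemma---it is stated with a reference to \cite{auel-berna-surf} and then used to prove Proposition~\ref{lem:0-dim=etale-algebra}---so there is no in-paper argument to compare against; your argument is the natural one and is almost certainly what the cited reference contains.
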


\begin{proof}[Proof of Proposition~\ref{lem:0-dim=etale-algebra}, (see also~\cite{auel-berna-surf})]
The smooth $k$-varieties of dimension 0 are precisely
the spectra of \'etale $k$-algebras.  Hence the semiorthogonal
decomposition condition is certainly sufficient to get $\Rep\cat{T}=0$.  On the other hand,
if $\Rep\cat{T}=0$ , we have
such a semiorthogonal decomposition with each $\cat{A}_i$ an admissible subcategory of the derived category of
an \'etale $k$-algebra.  By
Lemma~\ref{lem:indecomposable}\ref{lem:indecomposable_3}, we have that
$\cat{A}_i$ is thus itself such a category.
\end{proof}
We have the following corollary of Proposition~\ref{lem:0-dim=etale-algebra}.
\begin{lemma}\label{lem:K0-of-zerodimensional}
Let $\cat{T}$ be a $k$-linear triangulated category.
If $\Rep\cat{T}=0$, then $K_0(\cat{T})$ is a free $\ZZ$-module of finite rank.
In particular, if $X$ is smooth and projective and $\Repcat(X)=0$, we have that
$\CH^1(X)$ is torsion-free of finite rank.
\end{lemma}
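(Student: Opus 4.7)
The first statement is proved by combining Proposition~\ref{lem:0-dim=etale-algebra} with the additivity of $K_0$ under semiorthogonal decompositions, reducing the question to computing $K_0$ of derived categories of \'etale $k$-algebras. The $\CH^1$ statement then follows by base-changing to $\kalg$ and invoking the known structure of Chow motives for smooth projective varieties admitting a full exceptional collection.

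First, I would apply Proposition~\ref{lem:0-dim=etale-algebra} to fix a semiorthogonal decomposition
$$\cat{T} = \sod{\cat{A}_1, \ldots, \cat{A}_s}, \qquad \cat{A}_i \simeq \Db(K_i/k),$$
with $K_i$ an \'etale $k$-algebra. Decomposing each $K_i = L_{i,1} \times \cdots \times L_{i,n_i}$ into its field factors gives $\Db(K_i/k) \simeq \prod_j \Db(L_{i,j}/k)$, and since $K_0(\Db(L/k)) \iso \ZZ$ for any field extension $L/k$, the additivity of $K_0$ on semiorthogonal decompositions yields
$$K_0(\cat{T}) \iso \bigoplus_{i=1}^s K_0(\cat{A}_i) \iso \ZZ^{\sum_i n_i}.$$

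For the second statement, apply this to $\cat{T} = \Db(X)$, so that $K_0(X) = K_0(\Db(X))$ is free of finite rank. The determinant/first Chern class $c_1 \colon K_0(X) \to \Pic(X)$ is a surjective group homomorphism (as $c_1([L]) = [L]$ for every line bundle $L$), hence $\CH^1(X) = \Pic(X)$ is finitely generated. To obtain torsion-freeness, I would base-change to $\kalg$: since each $K_i \otimes_k \kalg \iso \kalg^{[K_i:k]}$, the base-changed semiorthogonal decomposition exhibits $\Db(X_{\kalg})$ as generated by a full $\kalg$-exceptional collection. By standard results on the Chow motive of smooth projective varieties admitting a full exceptional collection over an algebraically closed field, the motive of $X_{\kalg}$ splits as a direct sum of Tate motives; in particular $\Pic(X_{\kalg}) = \CH^1(X_{\kalg})$ is a free abelian group of finite rank. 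Finally, Hilbert~90 (applied under the natural hypothesis that $X$ is geometrically integral) provides an injection $\Pic(X) \hookrightarrow \Pic(X_{\kalg})$, so $\CH^1(X)$ is torsion-free of finite rank.

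The main obstacle is the torsion-freeness in the $\CH^1$ statement: freeness of $K_0(X)$ alone is not enough, since the topological filtration on $K_0(X)$ need not split integrally, so one genuinely needs the external input that varieties with a full exceptional collection over an algebraically closed field have their Chow motive split as a sum of Tate motives.
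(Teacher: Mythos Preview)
Your treatment of the first statement is correct and is what the paper has in mind (the paper simply declares this part trivial). For the $\CH^1$ statement, however, there is a genuine gap: the ``standard results'' you invoke on the \emph{integral} Chow motive of a variety with a full exceptional collection are not standard. What is known in general (Marcolli--Tabuada, cited in the remark immediately following this lemma) is only that the \emph{rational} Chow motive is of Lefschetz type; this gives finite-dimensionality of $\CH^1(X_{\kalg})_\QQ$ but says nothing about torsion in $\CH^1(X_{\kalg})$. Integral results of this flavour are much harder and only available in restricted situations (e.g., Vial's theorem for surfaces), so your detour through $\kalg$ and Hilbert~90 does not actually close the gap.

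More importantly, your claim that ``freeness of $K_0(X)$ alone is not enough'' is mistaken, and a direct argument from $K_0(X)$ is exactly what the paper uses, citing \cite[Lemma~2.2]{galkin-katzarkov-mellit-shinder} and the topological filtration. The filtration does not need to split. Suppose $L \in \Pic(X)$ is nontrivial with $L^{\otimes n} \cong \OO_X$, and set $\alpha := [L] - [\OO_X] \in K_0(X)$. Then $\alpha \neq 0$ since $\det(\alpha) = L \neq \OO_X$; on the other hand $c_1(L)$ is torsion, hence vanishes in $\CH^1(X)_\QQ$, so $\ch(\alpha) = e^{c_1(L)} - 1 = 0$ in $\CH^*(X)_\QQ$, and since $\ch \otimes \QQ$ is an isomorphism $\alpha$ is a nonzero torsion element of $K_0(X)$. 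Contrapositively, $K_0(X)$ free of finite rank forces $\Pic(X)=\CH^1(X)$ to be torsion-free, and finite generation you already had via the surjection $\det$. No base change, no Hilbert~90, no extra geometric-integrality hypothesis, and no motivic input are required.
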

\begin{proof}
The only non-trivial statement is the last one, which is shown in~\cite[Lemma 2.2]{galkin-katzarkov-mellit-shinder}
using the topological filtration on $K_0(X)$.
\end{proof}

\begin{remark}
Lemma~\ref{lem:K0-of-zerodimensional} gives useful criterion:
if $K_0(\cat{T})$ has torsion elements or if it is not of finite rank, then $\Rep\cat{T} >0$.

In the cases where $\cat{T}=\Db(X)$
for a smooth projective $X$, if $\Repcat(X)=0$, there are much more consequences that can be obtained
using non-commutative motives, in particular in the case where $k \subset \CC$ is
algebraically closed: for example, the even deRham cohomology and all the Jacobians are trivial~\cite{marcolli-tabuada-exc,berna-tabuada-jacobians},
the (rational) Chow motive is of Lefschetz type~\cite{marcolli-tabuada-exc}.
\end{remark}

\subsection*{Representability in dimension 1}

\begin{prop}\label{prop:cat-rep-dim1}
Let $\cat{T}$ be a $k$-linear triangulated category. $\Rep\cat{T} \leq 1$ if and only if $\cat{T}$ admits
a semiorthogonal decomposition whose components belong to the following list:

\begin{enumerate}
 \item categories representable in dimension 0, or
 \item categories of the form $\Db(k,\alpha)$, for $\alpha$ in $\Br(k)$ the Brauer class of a conic, or
 \item categories equivalent to $\Db(C)$ for some smooth $k$-curve $C$.
\end{enumerate}

\end{prop}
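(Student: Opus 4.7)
For the ``if'' direction, I will verify that each of the three types is itself representable in dimension at most one, so that concatenating admissible embeddings gives $\Rep\cat{T} \leq 1$ for any $\cat{T}$ decomposed into such components. Type~(1) comes from Proposition~\ref{lem:0-dim=etale-algebra}; type~(3), $\Db(C)$ for a smooth projective $k$-curve $C$, is tautologically representable in dimension one; and type~(2), $\Db(k,\alpha)$ for $\alpha$ the class of a conic $C$, embeds as an admissible factor of $\Db(C)$ through the Brauer--Severi semiorthogonal decomposition $\Db(C) = \sod{\Db(k), \Db(k,\alpha)}$ recorded in Example~\ref{ex:semiortho-for-MFS}.

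For the ``only if'' direction, I will start from a witness decomposition $\cat{T} = \sod{\cat{A}_1, \ldots, \cat{A}_r}$ with each $\cat{A}_i$ admissible in some $\Db(Y_i)$, $\dim Y_i \leq 1$, and refine each $\cat{A}_i$ into components of the three types; transitivity of admissibility then concatenates these refinements into a decomposition of $\cat{T}$. When $\dim Y_i = 0$, the scheme $Y_i = \Spec(K_i)$ is the spectrum of an \'etale $k$-algebra $K_i = \prod_j L_{ij}$, and Lemma~\ref{lem:indecomposable}(iii) decomposes $\cat{A}_i$ as a fully orthogonal product of factors $\Db(k, L_{ij})$, each of type~(1). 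When $\dim Y_i = 1$, I split $Y_i$ into its $k$-connected components and pass to the algebraic closure $L_i$ of $k$ inside $k(Y_i)$, reducing to the case of a geometrically connected $L_i$-curve of genus $g_i$; if $g_i \geq 1$, Okawa's Theorem~\ref{thm:okawacurves} rules out nontrivial semiorthogonal decompositions of $\Db(Y_i)$ and forces $\cat{A}_i \in \{0, \Db(Y_i)\}$, a type-(3) block; if $g_i = 0$, then $Y_i$ is an $L_i$-conic with $\Db(Y_i) = \sod{\Db(L_i), \Db(L_i,\alpha)}$, and a classification of admissible subcategories of this two-block configuration identifies $\cat{A}_i$, up to mutation, with $\Db(L_i)$, $\Db(L_i,\alpha)$, or $\Db(Y_i)$.

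The main obstacle is this final classification of admissible subcategories of $\Db(\text{conic})$. I expect to combine two inputs: an analogue of Lemma~\ref{lem:indecomposable}(ii) establishing that $\Db(L_i, \alpha)$ has no proper nonzero admissible subcategories (since modules over a central simple algebra form a simple semisimple abelian category), and a Grothendieck group count --- $K_0(\Db(Y_i)) \cong \ZZ^2$ splits as the direct sum of the blocks' $K_0$'s --- to force any proper nonzero admissible subcategory to coincide, after a suitable mutation, with one of the exceptional blocks. A subsidiary technical point arises in the non-geometrically-connected case $L_i \neq k$: there $\Db(L_i, \alpha)$ is not literally of type~(2) (whose definition requires $\alpha \in \Br(k)$), and I will either interpret type~(2) broadly via Galois descent or absorb such a piece back into the ambient type-(3) block $\Db(Y_i)$, which is itself a smooth projective $k$-curve.
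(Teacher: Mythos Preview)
Your proposal is correct and follows essentially the same route as the paper: Okawa's indecomposability theorem handles positive genus, and the Brauer--Severi decomposition $\Db(C)=\sod{\Db(k),\Db(k,\alpha)}$ handles genus zero. The paper's proof is considerably more terse than yours---it simply asserts that this is ``the only possible semiorthogonal decomposition up to mutations'' for a conic, without spelling out the $K_0$ or simplicity arguments you sketch; your plan for that step (indecomposability of $\Db(L,\alpha)$ plus a rank-two $K_0$ count) is exactly what is needed to justify that assertion.

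Your subsidiary observation about the non-geometrically-connected case is well taken and is something the paper glosses over: its proof writes ``let $\alpha$ be the class of $C$ in $\Br(k)$'' as though the genus-zero curve were automatically a $k$-conic. Of your two proposed fixes, only the first (interpreting type~(2) broadly as $\Db(L,\alpha)$ for $L/k$ finite and $\alpha\in\Br(L)$ the class of a conic) actually works. The second---``absorb such a piece back into the ambient type-(3) block $\Db(Y_i)$''---does not: the component $\cat{A}_i$ is a fixed admissible piece of $\cat{T}$, and if it happens to equal $\Db(L_i,\alpha)$ properly inside $\Db(Y_i)$, you cannot enlarge it to $\Db(Y_i)$ without changing $\cat{T}$. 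So commit to the broadened reading of type~(2); this is harmless and matches how the paper treats exceptional objects over noncentral division algebras elsewhere.
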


The main tool in the proof of the previous statement is the indecomposability result for curves
due to Okawa that we recalled in Theorem~\ref{thm:okawacurves}.

\begin{proof}[Proof of Proposition~\ref{prop:cat-rep-dim1}]
We already classified those $\cat{A}$ such that $\Rep\cat{A}=0$. 
We are hence looking for categories $\cat{A}$ with $\Rep\cat{A}=1$.
Of course, if $\cat{A}=\Db(C)$ for some curve, then $\Rep\cat{A}\leq 1$ and
we are done.

Using Theorem~\ref{thm:okawacurves}, if $\cat{A}$ is a nontrivial triangulated category with a full
and faithful functor $\phi:\cat{A}\to \Db(C)$ with right and left adjoints, then either
$\phi$ is an equivalence, or $g(C)=0$.
In the latter case, let $\alpha$ be the
class of $C$ in $\Br(k)$, which is trivial if and only if $C=\PP^1$. By~\cite{bernardara:brauer_severi}, there is a semiorthogonal decomposition
$\Db(C) = \sod{\Db(k),\Db(k,\alpha)}$. It is not difficult to see that this is the only possible semiorthogonal decomposition up to mutations, 
so we get the proof.
\end{proof}

Let us sketch a criterion of representability in dimension $1$, based
on the Grothendieck group, in the case where $k=\CC$.  Given a
$\ZZ$-module $M$ and an integer number $n>0$, we will denote by $M_n
\subset M$ the kernel of the multiplication by $n$ map: $M_n :=
\mathrm{ker}(M \stackrel{\times n}{\longrightarrow} M)$.  Such $M_n$
has a natural structure of $\ZZ/n\ZZ$-module.  We notice that, if $X$
is a smooth (connected) projective variety of dimension $\leq 1$, the
modules $K_0(X)_n$ are well-known.  Indeed, either $X$ is a point, or
$X$ is $\PP^1$, or $X$ is a curve of positive genus $g$. In the first
two cases, $K_0(X)$ is free of finite rank, hence $K_0(X)_n=0$ for any
$n$. In the latter case, $K_0(X) \simeq \ZZ \oplus \Pic(X) = \ZZ
\oplus \Pic^0(X) \oplus \ZZ$ by the Grothendieck--Riemann--Roch
Theorem, and the fact that the 1st Chern class is integral. Then
$K_0(X)_n= \Pic^0(X)_n$. Since $\Pic^0(X)$ is a complex torus of
dimension $g$, we have that $\Pic^0(X)_n = (\ZZ/n\ZZ)^{2g}$ (see,
e.g.,~\cite[\S I.1, (3)]{mumford-abelian}).

\begin{lemma}\label{lem:K0-of-1dimensional}
Suppose that $\cat{T}$ is $\CC$-linear and $\Rep\cat{T} \leq 1$. Then, for any integer $n$, $K_0(\cat{T})_n$ is 
a free $\ZZ/n\ZZ$-module of finite even rank.
\end{lemma}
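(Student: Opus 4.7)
The plan is to reduce to Proposition~\ref{prop:cat-rep-dim1} and then apply additivity of the Grothendieck group under semiorthogonal decompositions. Since $\Rep\cat{T}\leq 1$, Proposition~\ref{prop:cat-rep-dim1} furnishes a semiorthogonal decomposition
\[
\cat{T}=\sod{\cat{A}_1,\ldots,\cat{A}_r}
\]
where each $\cat{A}_i$ belongs to one of the three listed types. Over $k=\CC$, the Brauer group is trivial, so type (2) collapses to $\Db(\CC)$, i.e.\ a special case of type (1). Hence I only need to handle components that are either representable in dimension $0$ or equivalent to $\Db(C)$ for some smooth projective connected $\CC$-curve $C$ (which I may, after a further decomposition in the rational case $C=\PP^1$, assume is either a point or a positive genus curve).

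Next I would invoke the standard additivity $K_0(\cat{T})=\bigoplus_i K_0(\cat{A}_i)$ for semiorthogonal decompositions. Since the $n$-torsion functor $M\mapsto M_n$ commutes with finite direct sums, this gives
\[
K_0(\cat{T})_n=\bigoplus_i K_0(\cat{A}_i)_n,
\]
so it suffices to establish the claim componentwise. For components of type (1), Lemma~\ref{lem:K0-of-zerodimensional} guarantees that $K_0(\cat{A}_i)$ is a free $\ZZ$-module of finite rank, so $K_0(\cat{A}_i)_n=0$ is the trivial (rank $0$, even) free $\ZZ/n\ZZ$-module. For $\cat{A}_i\equi\Db(C)$ with $C$ of genus $g\geq 1$, the computation recalled immediately before the statement of the lemma gives $K_0(\cat{A}_i)\simeq \ZZ\oplus\Pic^0(C)\oplus\ZZ$, and hence
\[
K_0(\cat{A}_i)_n=\Pic^0(C)_n\simeq (\ZZ/n\ZZ)^{2g},
\]
a free $\ZZ/n\ZZ$-module of finite even rank.

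Finally, I would observe that a direct sum of free $\ZZ/n\ZZ$-modules of finite even rank is again free of finite even rank, which concludes the proof. The only point requiring any care is ensuring that the additivity of $K_0$ under semiorthogonal decompositions applies in the generality needed (components are admissible subcategories, not necessarily of the form $\Db(Y)$ globally); this is the standard statement, and the fact that in our situation the components of type (3) are in fact equivalent to full derived categories $\Db(C)$ means there is no subtlety. I do not expect a serious obstacle: once Proposition~\ref{prop:cat-rep-dim1} and the $\CC$-linearity are in hand, the argument is a direct bookkeeping of $n$-torsion in each component.
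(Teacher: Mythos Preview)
Your proof is correct and follows essentially the same approach as the paper: invoke Proposition~\ref{prop:cat-rep-dim1}, use additivity of $K_0$ under semiorthogonal decompositions, and compute the $n$-torsion componentwise to be either $0$ or $(\ZZ/n\ZZ)^{2g}$. The paper's argument is simply a terser version of what you wrote, omitting the explicit mention of $K_0$-additivity and the collapse of type~(2) over $\CC$.
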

\begin{proof}
Proposition~\ref{prop:cat-rep-dim1} gives us all the possible components of a semiorthogonal decomposition
of $\cat{T}$. If $\cat{A}$ is one of such components, it follows that $K_0(\cat{A})_n$ is either trivial
or $(\ZZ/n\ZZ)^{2g}$ if $\cat{A}=\Db(C)$ and $g=g(C)$. 
\end{proof}

Finally, let us just record a very simple remark, as a corollary of Theorem~\ref{thm:okawacurves}.

\begin{corollary}
A smooth projective curve $C$ is $k$-rational if and only if $\Repcat(C)=0$.
\end{corollary}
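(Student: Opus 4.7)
The plan is to prove both directions directly, handling the converse by a case split on $g(C)$. For the easy direction, if $C$ is $k$-rational then $C \cong \PP^1_k$ (smooth projective curves that are birational over $k$ are isomorphic), and Beilinson's decomposition from Proposition~\ref{prop:deco-projective} yields $\Db(\PP^1_k) = \sod{\OO, \OO(1)}$ with each piece equivalent to $\Db(k)$, whence $\Repcat(C) = 0$.

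For the converse, assume $\Repcat(C) = 0$. By Proposition~\ref{lem:0-dim=etale-algebra}, combined with the fully orthogonal decomposition $\Db(k, \prod L_j) \simeq \prod \Db(k, L_j)$ that splits each étale factor, I may take a semiorthogonal decomposition
$$
\Db(C) = \sod{\Db(k, L_1), \ldots, \Db(k, L_n)}
$$
with each $L_i$ a finite separable field extension of $k$. If $g(C) > 0$, then Okawa's Theorem~\ref{thm:okawacurves} forces $n = 1$, so $\Db(C) \simeq \Db(k, L_1)$; but $L_1$ being a field makes $\Db(k, L_1)$ semisimple, so its higher Ext groups vanish, contradicting $\Ext^1_{\Db(C)}(\OO_C, \OO_C) = H^1(C, \OO_C) \neq 0$.

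If instead $g(C) = 0$, then $C$ is a conic and $C_{\kalg} \cong \PP^1_{\kalg}$. Base-changing the decomposition to $\kalg$ produces a semiorthogonal decomposition of $\Db(\PP^1_{\kalg})$ with $\sum [L_i : k]$ pieces, each equivalent to $\Db(\kalg)$; since $K_0(\PP^1_{\kalg}) \cong \ZZ^2$, this forces $\sum [L_i : k] = 2$. The first subcase, $n = 1$ with $[L_1 : k] = 2$, gives $\Db(C) \simeq \Db(k, L_1)$, which has no nontrivial semiorthogonal decomposition by Lemma~\ref{lem:indecomposable}; this contradicts the nontrivial decomposition $\Db(C) = \sod{\OO_C^\perp, \OO_C}$ (nontrivial since $K_0(C)$ has rank $2$, while the subcategory generated by $\OO_C$ contributes only rank $1$). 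The second subcase, $n = 2$ with $L_1 = L_2 = k$, produces a $k$-exceptional pair $(E_1, E_2)$ in $\Db(C)$. Riemann--Roch on the genus-zero curve gives $\chi(E_i, E_i) = \mathrm{rk}(E_i)^2 = 1$, so up to shifts both $E_i$ are line bundles on $C$; similarly $\chi(E_2, E_1) = 0$ forces $\deg E_2 - \deg E_1 = 1$. Hence $E_2 \otimes E_1^\vee$ is a line bundle of degree $1$ on $C$, which by Riemann--Roch has nonzero global sections, producing an effective divisor of degree $1$, i.e., a $k$-rational point, so $C \cong \PP^1_k$.

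The main obstacle is the genus-zero case: one must both exclude the possibility that $\Db(C)$ is equivalent to the derived category of a quadratic étale $k$-algebra, and then, in the remaining subcase, detect a $k$-rational point on $C$ via degrees of line bundles coming from an exceptional pair. The positive-genus case is clean once Okawa's indecomposability theorem is combined with the non-vanishing of $H^1(C, \OO_C)$.
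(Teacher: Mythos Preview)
Your proof is correct. The paper gives no explicit argument here, recording the statement only as ``a very simple remark, as a corollary of Theorem~\ref{thm:okawacurves}.'' For positive genus you use Okawa's theorem exactly as intended, and the semisimplicity of $\Db(k,L_1)$ is a clean way to dispose of that case. For genus zero, however, the argument the authors have in mind (visible in the proof of Proposition~\ref{prop:cat-rep-dim1}) is shorter: a conic $C$ with Brauer class $\alpha\in\Br(k)$ has $\Db(C)=\sod{\Db(k),\Db(k,\alpha)}$, and this is the only semiorthogonal decomposition up to mutation; when $\alpha\neq 0$ the component $\Db(k,\alpha)$ is generated by an object whose endomorphism algebra is a noncommutative quaternion algebra, so by Lemma~\ref{lem:indecomposable} it cannot be equivalent to $\Db(k,K)$ for any field $K$, whence $\Repcat(C)>0$. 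Your route---base-change to $\kalg$, count $K_0$-ranks to force $\sum[L_i:k]=2$, exclude the quadratic-extension case via the nontrivial decomposition $\sod{\OO_C^\perp,\OO_C}$, and in the remaining case extract a degree-$1$ line bundle from the exceptional pair via Riemann--Roch---is more hands-on and avoids both the uniqueness of the conic decomposition and any discussion of $\Db(k,\alpha)$. What you gain is a self-contained argument independent of \cite{bernardara:brauer_severi}; what the paper's approach buys is brevity and a direct link to the structural classification already set up in Proposition~\ref{prop:cat-rep-dim1}.
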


\subsection*{A glimpse on representability in dimension 2}
It is more difficult to classify categories which are representable in
dimension 2.  Of course, for any surface $S$, $\cat{T}=\Db(S)$
satisfies $\Rep\cat{T} \leq 2$, but it is a quite challenging question
to understand which categories can occur as proper admissible
subcategories of surfaces.  Using Kawatani--Okawa's results recalled
in Theorem~\ref{thm:okawasurfaces}, we can fairly suppose that we
should consider surfaces $S$ either ruled or with $p_g=q=0$, at least
in the case where $k=\CC$. In the ruled case, say $S \to C$, we have
$\Repcat(S)=\Repcat(C) \leq 1$ with strict inequality holding only for
$C=\PP^1$. Hence we will focus on surfaces with $p_g=q=0$.

Notice that, for a surface $S$, any line bundle is $k$-exceptional if
and only if $S$ satisfies $p_g=q=0$. This is a simple calculation
using that line bundles are invertible and the definition of $p_g$ and
$q$.  It is natural to study then exceptional collections on such
surfaces and describe the corresponding semiorthogonal
decompositions. These decompositions are conjecturally related to
rationality criteria for surfaces, and we will treat them extensively
in \S\ref{subs:cat-rep-vr-rat-surfaces}. We just notice that an
exceptional collection on a variety $X$ gives a free subgroup of
finite rank of $K_0(X)$. Surfaces of general type with $p_g=q=0$ and
torsion elements in $K_0(S)$ are known, and hence they cannot have a
full exceptional sequence. This remark gives rise to the definition of
phantom and quasi-phantom categories.

\begin{definition}
Let $X$ be a smooth projective variety. An admissible subcategory $\cat{T} \subset \Db(X)$
is called a \linedef{quasi-phantom} if its Hochschild homology $HH_*(\cat{T})=0$ vanishes
and $K_0(\cat{T})$ is a finite abelian group. A quasi-phantom $\cat{T}$ is a \linedef{phantom} if
$K_0(\cat{T})=0$. A phantom is a \linedef{universal phantom} if, for any smooth and projective
variety $Y$, the admissible subcategory $\cat{T} \boxtimes \Db(Y)$ of $\Db(X \times Y)$ is 
a phantom.
\end{definition}

Notice that if $\cat{T}$ is a quasi-phantom (or a phantom, or a universal phantom), then $\Rep\cat{T}>1$. 

On the other hand, if $k$ is general, then more complicated phenomena can arise, already by considering
descent of full exceptional sequences from $\overline{k}$ to $k$.

\begin{example}[Categories representable in dimension 2]\label{ex:catrep2}
Here is a list of categories $\cat{T}$ such that $\Rep\cat{T}=2$ and $\cat{T}$ is not equivalent to $\Db(S)$ for any smooth and projective $S$.

\smallskip

\noindent{\bf Phantoms, $k=\CC$.} If $S$ is a determinantal Barlow surface, $\cat{T}$ is the orthogonal
complement to an exceptional collection of length 11~\cite{BGKS}. If $S$ is a Dolgachev surface
of type $X_9(2,3)$, then $\cat{T}$ is the orthogonal complement to an exceptional collection
of length 12~\cite{cho-lee} (we refer to~\cite{dolgachev-p=q=0}
for the notations on Dolgachev surfaces).

\noindent{\bf Quasi-phantoms, $k=\CC$.} 
Since the first example of the classical Godeaux surface~\cite{boeh-graf-sos-Godeaux}, 
there are now many examples of quasi-phantoms as orthogonal complements of an exceptional
sequence of line bundles of maximal length on surfaces of general type, see~\cite{galkin.shinder-beauville},
\cite{galkin-katzarkov-mellit-shinder}, \cite{keum}, \cite{lee-surfaces-isogenous}, \cite{alexeev-orlov},
\cite{coughlan}.

\smallskip

\noindent{\bf Not quasi-phantoms, $k=\CC$.} If $S$ is an Enriques surface~\cite{ingalls-kuznetsov} and $\cat{T}$
is the orthogonal complement to an exceptional collection of length $10$, or if $S$ is a classical
Godeaux surface~\cite{boeh-graf-sos-JH} and $\cat{T}$ is the orthogonal complement to an exceptional
collection of length $9$. Both categories have $K_0(\cat{T})\otimes \QQ \simeq \QQ^2$, but $K_0(\cat{T})$ not
free, and do not admit any exceptional object~\cite{boeh-graf-sos-JH,vial-exceptional}.

\smallskip

\noindent{\bf Not quasi-phantoms, general $k$.} If $\alpha$ in $\Br(k)$ is the class of a Brauer-Severi surface,
or the class of a Brauer-Severi threefold with an involution surface, then $\cat{T}=\Db(k,\alpha)$.
If $\kc_0$ is the Clifford algebra of an involution surface, then $\cat{T}=\Db(k,\kc_0)$. If
$Q$ and $B$ are the simple algebras (quadratic over a degree 3 extension of $k$ and cubic over a degree
2 extension of $k$ respectively) related to a minimal degree 6 del Pezzo surface, then $\cat{T} =\Db(k,Q)$
and $\cat{T}=\Db(k,B)$. For all these examples, $K_0(\cat{T})$ is free of finite rank, see~\cite{auel-berna-surf}.
If $S$ is a minimal del Pezzo surface of degree $d < 5$, then $\cat{T}$ is the semiorthogonal complement
of $\sod{\ko_S}$, and is not of the form $\Db(K,\alpha)$ for $K/k$ \'etale and $\alpha$ in $\Br(K)$, see~\cite{auel-berna-surf}.

\end{example}

The result of Kawatani--Okawa, cf.\ Theorem~\ref{thm:okawasurfaces},
suggests that, for $k=\CC$, categories representable in dimension
exactly $2$, and not equivalent to any $\Db(S)$, should occurr only in
the case where $p_g=q=0$. In this case, Bloch conjecture would imply
that $K_0(S)\otimes\QQ$ is a finite vector space, so it is natural to
raise the following Conjecture.

\begin{conjecture}
Suppose that $\Rep\cat{T}=2$, and that $\cat{T}$ is not
equivalent to $\Db(S)$ for any surface $S$. Then $K_0(\cat{T})_\QQ$ is a finite-dimensional vector space.
\end{conjecture}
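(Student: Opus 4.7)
The plan is to combine additivity of $K_0\otimes\QQ$ under semiorthogonal decompositions with the classification of low-dimensional admissible subcategories and the Kawatani--Okawa rigidity, reducing the conjecture to Bloch's conjecture for the surfaces underlying the dim-$2$ components of a witnessing decomposition.

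First, fix a witnessing decomposition $\cat{T}=\sod{\cat{A}_1,\ldots,\cat{A}_r}$ with each $\cat{A}_i$ admissible in some $\Db(Y_i)$, $\dim Y_i\le 2$. Since $K_0$ is additive along semiorthogonal decompositions, $K_0(\cat{T})_\QQ\simeq\bigoplus_i K_0(\cat{A}_i)_\QQ$, and the problem is to bound each summand. For components with $\dim Y_i\le 1$, Propositions~\ref{lem:0-dim=etale-algebra} and~\ref{prop:cat-rep-dim1} restrict $\cat{A}_i$ to a derived category of an \'etale $k$-algebra, a Brauer--Severi conic, or a smooth projective curve; only the first two have finite-dimensional $K_0\otimes\QQ$, since for $g(C)>0$ the group $\Pic^0(C)\otimes\QQ$ is infinite-dimensional over $\CC$. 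Thus one must first rule out positive-genus curve components, using the paper's Hochschild/Albanese argument: a fully faithful embedding $\Db(C)\to\cat{T}$ with $g(C)>0$ would force a non-zero $\Pic^0$-type contribution, incompatible with $\cat{T}$ failing to be $\Db(S)$ for a surface $S$ with $q>0$.

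Next, apply Theorem~\ref{thm:okawasurfaces} to the dim-$2$ components. For each $i$ with $\dim Y_i=2$, either $\cat{A}_i\subsetneq\Db(Y_i)$ is a proper admissible subcategory, in which case $\Db(Y_i)$ admits a nontrivial semiorthogonal decomposition and so $Y_i$ lies outside the Kawatani--Okawa list, or $\cat{A}_i=\Db(Y_i)$, ruled out by the hypothesis if $r=1$ and forced (for $r\ge 2$) to be compatible with the other components via $\cat{T}\not\simeq\Db(S)$. After absorbing ruled contributions, which further decompose by Proposition~\ref{prop:deco-projective} into curve pieces already handled, one may assume the remaining dim-$2$ components live over surfaces $Y_i$ with $p_g(Y_i)=q(Y_i)=0$.

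Finally, $K_0(\cat{A}_i)_\QQ$ is a direct summand of $K_0(Y_i)_\QQ$, so by Grothendieck--Riemann--Roch combined with $\NS(Y_i)_\QQ$ of finite rank and $\chi(\OO_{Y_i})=1$, its finite-dimensionality is controlled by that of $\CH_0(Y_i)_\QQ$, which is precisely Bloch's conjecture. This is the main obstacle: Bloch--Kas--Lieberman settles all surfaces with $p_g=0$ and $\kappa<2$, and a number of general-type surfaces with $p_g=q=0$ (Barlow, classical Godeaux, Dolgachev; see Example~\ref{ex:catrep2}) are also known, in agreement with the conjecture, but the full general-type case is open. A complete argument would therefore require either Bloch's conjecture for the full relevant class of surfaces, or a direct categorical substitute that bypasses the passage through $\CH_0$ — for instance a Hochschild-theoretic argument controlling $K_0(\cat{T})_\QQ$ directly using the structure forced by the hypothesis $\Rep\cat{T}=2$ and $\cat{T}\not\simeq\Db(S)$.
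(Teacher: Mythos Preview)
The statement you are attempting to prove is labeled a \emph{Conjecture} in the paper, and the paper does not supply a proof. What the paper does provide is a one-sentence motivation immediately before the conjecture: Kawatani--Okawa (Theorem~\ref{thm:okawasurfaces}) suggests that categories with $\Rep\cat{T}=2$ and $\cat{T}\not\simeq\Db(S)$ should only arise from surfaces with $p_g=q=0$, and for such surfaces Bloch's conjecture would give $K_0(S)_\QQ$ finite-dimensional. Your proposal is essentially an expansion of this same heuristic, and you correctly identify Bloch's conjecture for surfaces of general type with $p_g=q=0$ as the essential missing ingredient. In that sense you and the paper are in complete agreement: neither has a proof, and both trace the obstruction to the same open problem.

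That said, your attempt to make the heuristic into an argument has a genuine gap at the curve step. You claim that a component $\Db(C)$ with $g(C)>0$ would be ``incompatible with $\cat{T}$ failing to be $\Db(S)$ for a surface $S$ with $q>0$,'' but this does not follow: the hypothesis is that $\cat{T}\not\simeq\Db(S)$ for \emph{any} surface $S$, and nothing in that hypothesis prevents $\cat{T}$ from containing $\Db(C)$ as a semiorthogonal factor. If such a configuration can occur (a positive-genus curve block sitting next to a genuinely two-dimensional block, with the whole still satisfying $\Rep\cat{T}=2$), then $K_0(\cat{T})_\QQ$ is already infinite-dimensional from the $\Pic^0(C)_\QQ$ contribution, and the conjecture as literally stated would be false. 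The paper's motivation sidesteps this by implicitly treating $\cat{T}$ as a proper admissible subcategory of a \emph{single} surface (cf.\ Example~\ref{ex:catrep2}), but neither the paper nor your argument actually rules out the mixed case. A careful version of the conjecture would need either to exclude such $\cat{T}$ by hypothesis or to explain why they cannot arise; your Hochschild/Albanese sentence does not accomplish this.

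A secondary point: even granting the reduction to surfaces, the appeal to Theorem~\ref{thm:okawasurfaces} is weaker than you use it. That theorem covers only certain minimal surfaces (and with extra hypotheses in the general-type case), so its contrapositive does not literally force $p_g=q=0$. The paper is explicit that this step is only suggestive (``suggests that\ldots should occur only in the case where $p_g=q=0$''), and your write-up should flag it as heuristic rather than established.
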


\subsection{Rationality questions for surfaces}\label{subs:cat-rep-vr-rat-surfaces}

We turn our attention to the possibility of characterizing rational surfaces via
categorical representability. A folklore conjecture by D.Orlov states that a complex surface
with a full exceptional collection is rational. We provide here a version for any field $k$
in terms of categorical representability.

\begin{conjecture}[Orlov]\label{conj:orlov-surfaces}
A surface $S$ is $k$-rational if and only if $\Repcat(S)=0$.
\end{conjecture}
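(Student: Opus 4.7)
The conjecture splits into two directions. For the implication "$k$-rational $\Rightarrow \Repcat(S) = 0$", my plan is to reduce to a minimal $k$-rational model of $S$. By the minimal model program for surfaces, any smooth projective $k$-rational surface admits a birational morphism $\sigma: S \to M$ to a minimal $k$-rational $M$, where $\sigma$ factors as a sequence of blow-ups at closed points $P_i$ with residue fields $K_i/k$ finite separable. Iterating Theorem~\ref{thm:blow-ups} gives $\Db(S) = \sod{\Db(M), \Db(k,K_1), \ldots, \Db(k,K_n)}$, and each $\Db(k,K_i)$ is representable in dimension $0$ by Proposition~\ref{lem:0-dim=etale-algebra}. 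It then suffices to verify $\Repcat(M) = 0$ for each minimal $k$-rational $M$: for $M = \PP^2$ this is Beilinson, for Hirzebruch surfaces and $\PP^1 \times \PP^1$ it follows from Orlov's projective bundle formula (Proposition~\ref{prop:deco-projective}), and for minimal $k$-rational del Pezzo surfaces and conic bundles over $\PP^1$ the required decomposition into derived categories of étale $k$-algebras (with all would-be Brauer classes forced to split by the $k$-rationality hypothesis) is provided by the Galois descent analysis of \cite{auel-berna-surf}.

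The reverse implication is the open content of the conjecture and the main obstacle. Starting from $\Repcat(S) = 0$, Proposition~\ref{lem:0-dim=etale-algebra} gives $\Db(S) = \sod{\Db(k,K_1), \ldots, \Db(k,K_r)}$ for étale $K_i/k$, which base-changes to a full $\bar k$-exceptional collection of length $\sum [K_i:k]$ on $\Db(S_{\bar k})$. Via the HKR isomorphism, Hochschild homology then forces $h^{p,q}(S_{\bar k}) = 0$ for $p \neq q$, so $p_g(S_{\bar k}) = q(S_{\bar k}) = 0$; Lemma~\ref{lem:K0-of-zerodimensional} gives that $K_0(S_{\bar k})$ is torsion-free of finite rank. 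By the Enriques--Kodaira classification, $S_{\bar k}$ must then be rational, Enriques, or of general type with $p_g = q = 0$. Enriques surfaces are excluded since their $2$-torsion Brauer class produces $2$-torsion in $K_0$, contradicting the torsion-freeness above.

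The crux is thus to rule out surfaces of general type with $p_g = q = 0$, such as classical Godeaux, Burniat, or determinantal Barlow surfaces. All currently available categorical invariants (Hochschild homology, $K_0 \otimes \QQ$, and in some cases even torsion of $K_0$) are compatible with the existence of a full exceptional collection on such surfaces, while every known exceptional collection on them has nontrivial (quasi-)phantom orthogonal (see Example~\ref{ex:catrep2}). I expect this to be the hard part: a resolution likely requires a genuinely new invariant that detects ampleness of $\omega_S$—perhaps a refined $K$-theoretic, noncommutative motivic, or Fukaya-categorical obstruction not yet on the table. Once $\bar k$-rationality of $S$ is established, descent to $k$-rationality follows by exploiting the Galois-equivariant structure of the étale algebras $K_i$ (in particular, the presence of a copy of $\Db(k)$ among the components forces a zero-cycle of degree $1$ on $S$), in the spirit of the minimal del Pezzo analysis in \cite{auel-berna-surf}.
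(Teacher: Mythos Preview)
This statement is a \emph{conjecture}, not a theorem; the paper does not prove it either. Your proposal correctly recognizes this asymmetry: one direction is established, the other is genuinely open. On the provable direction (``$k$-rational $\Rightarrow \Repcat(S)=0$'') your argument matches the paper's: reduce by blow-ups to a minimal $k$-rational model and invoke the decompositions of \cite{auel-berna-surf} (stated in the paper as the theorem immediately following the conjecture).

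On the open direction, your outline tracks the paper's discussion closely, but there are two places where you overstate what is known. First, your Enriques--Kodaira case analysis omits the $\kappa=1$ case: properly elliptic surfaces with $p_g=q=0$ (Dolgachev surfaces). These are not merely a bookkeeping oversight---the paper explicitly treats them (Theorem~\ref{vial:num-exc-surf} and Example~\ref{ex:catrep2}), and Cho--Lee's $X_9(2,3)$ example shows they exhibit the same phantom phenomenon as Barlow surfaces, so they belong squarely in your list of hard cases alongside general type.

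Second, your final sentence asserts that ``once $\bar k$-rationality of $S$ is established, descent to $k$-rationality follows'' from the Galois structure, citing \cite{auel-berna-surf}. The paper is more cautious here: Theorem~\ref{thm:from-repre-to-ratio-for-georatsurf} only covers blow-ups of del Pezzo surfaces with $\Repcat=0$, or geometrically rational surfaces admitting a (numerically) $k$-exceptional collection of maximal length---not arbitrary geometrically rational $S$ with $\Repcat(S)=0$. The descent step is thus itself partially open, not a formality. Your parenthetical about a copy of $\Db(k)$ forcing a degree-$1$ zero-cycle is also not automatic from the hypothesis: the given decomposition into $\Db(k,K_i)$'s need not contain a trivial factor.
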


If $k$ is algebraically closed, then being representable in dimension zero is equivalent
to having a full exceptional collection. Combining
Theorem~\ref{thm:blow-ups} and Proposition~\ref{prop:deco-projective} it is easy to check that
a rational surface has a full exceptional collection, since it is a blow-up
along smooth points of either a projective space or a Hirzebruch surface.

If $k$ is not algebraically closed, it is easy to construct rational
surfaces without a full exceptional collection, for example a
$k$-rational quadric surface of Picard rank 1, or by blowing up a
closed point of degree $>1$ on a $k$-rational surface.  In fact, the
``only if'' part of Conjecture~\ref{conj:orlov-surfaces} remains true,
thanks to the results of~\cite{auel-berna-surf} for del Pezzo
surfaces. 

\begin{theorem}
Let $S$ be any smooth $k$-rational surface. Then $\Repcat(S)=0$.
\end{theorem}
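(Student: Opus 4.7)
The plan is to reduce to minimal $k$-rational surfaces via Orlov's blow-up decomposition and then treat each type in the classification of minimal $k$-rational surfaces separately. The enabling structural result is Proposition~\ref{lem:0-dim=etale-algebra}, which identifies representability in dimension zero with the existence of a semiorthogonal decomposition whose components are derived categories of étale $k$-algebras.

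First I would establish that $\Repcat = 0$ is preserved along blow-ups at $k$-closed points. If $\sigma : S \to S'$ is the blow-up of a smooth projective $k$-surface at a closed point $Z = \Spec K$, with $K/k$ a finite separable field extension, then Theorem~\ref{thm:blow-ups} gives
$$\Db(S) = \sod{L\sigma^*\Db(S'),\, \Phi_1\, \Db(K)}.$$
Since $\Db(K)$ is representable in dimension zero, any semiorthogonal decomposition of $\Db(S')$ into derived categories of étale $k$-algebras extends to one of $\Db(S)$. By the two-dimensional minimal model program over $k$, every smooth $k$-rational surface $S$ admits a birational morphism $S \to S_{\min}$ factoring as a sequence of contractions of Galois-orbits of disjoint $(-1)$-curves, i.e., of closed points of the intermediate surfaces; this reduces the problem to showing $\Repcat(S_{\min}) = 0$ for each minimal $k$-rational surface $S_{\min}$.

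Next, I would invoke Iskovskikh's classification of minimal $k$-rational surfaces: $S_{\min}$ is either $\PP^2_k$, a minimal $k$-rational del Pezzo surface of degree $\leq 8$, or a relatively minimal $k$-rational conic bundle $\pi : S_{\min} \to C$ (with $C$ forced to be $\PP^1_k$ since $S_{\min}$ is $k$-rational). Beilinson's Proposition~\ref{prop:deco-projective} handles $\PP^2_k$ via the exceptional collection $\sod{\OO,\OO(1),\OO(2)}$. The del Pezzo case is precisely the content of the results of~\cite{auel-berna-surf}, where one constructs a Galois-equivariant semiorthogonal decomposition of $\Db(S_{\min})$ whose pieces are derived categories of (twisted) étale $k$-algebras, and $k$-rationality forces the Brauer classes that could obstruct these pieces from being genuinely étale to vanish. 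For the conic bundle case, the relative Kuznetsov decomposition of Example~\ref{ex:semiortho-for-MFS} gives $\Db(S_{\min}) = \sod{\pi^*\Db(\PP^1_k),\, \Db(\PP^1_k, \kc_0)}$ for $\kc_0$ the even Clifford algebra of the fibration, and $k$-rationality again forces the relevant Brauer obstructions to be trivial, permitting further decomposition of both factors into pieces representable in dimension zero.

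\textbf{Main obstacle.} The hardest step is the minimal del Pezzo case, where the naive strategy of descending an exceptional collection from $S_{\min,\overline{k}}$ fails because such collections are not generically $\Gal(\overline{k}/k)$-stable. The strategy of~\cite{auel-berna-surf} is to decompose the orthogonal complement of $\OO_{S_{\min}}$ according to $\Gal(\overline{k}/k)$-orbits of $(-1)$-curves on the geometric fibre, producing Azumaya algebras whose Brauer classes must then be explicitly analyzed and shown to be trivial under the $k$-rationality hypothesis (using that $\Br(S_{\min})$ is universally trivial for retract $k$-rational varieties). The conic bundle case is technically cleaner but shares the same conceptual obstacle: one needs to rule out nontrivial Brauer classes in $\kc_0$ using $k$-rationality rather than construct a full exceptional collection directly.
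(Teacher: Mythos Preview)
Your plan matches the paper's argument, which is itself only a sketch: reduce to minimal $k$-rational surfaces via Theorem~\ref{thm:blow-ups} (each blow-up at a closed point $\Spec K$ contributes a copy of $\Db(K)$), then appeal to the classification and to~\cite{auel-berna-surf} for the del Pezzo case. So the overall strategy is correct and is the intended one.

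The one place where your outline diverges, and where it is genuinely incomplete, is your separate treatment of minimal conic bundles via the Clifford algebra decomposition. The assertion that ``$k$-rationality forces the relevant Brauer obstructions to be trivial, permitting further decomposition'' does not do what you need: $\Db(\PP^1_k,\kc_0)$ is the derived category of modules over a sheaf of orders on $\PP^1$ that fails to be Azumaya over the discriminant, and its representability in dimension~$0$ is not governed by a single Brauer class. The cleaner route---and the one implicit in the paper---is to observe that a minimal $k$-rational conic bundle over $\PP^1_k$ either has no degenerate fibres, in which case a $k$-point trivialises the Brauer class in $\Br(\PP^1_k)=\Br(k)$ and the surface is a genuine $\PP^1$-bundle handled by Proposition~\ref{prop:deco-projective}, or it has $K_S^2\in\{5,6\}$ and Picard rank~$2$ and is then a del Pezzo surface already covered by~\cite{auel-berna-surf}. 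No independent Clifford-algebra analysis is required.

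A smaller point: your description of the mechanism in~\cite{auel-berna-surf} as ``Brauer classes shown to vanish via universal triviality of $\Br(S_{\min})$'' is not quite how that paper proceeds. The argument there is a degree-by-degree construction of explicit three-block decompositions whose blocks are of the form $\Db(l,A)$ for finite separable $l/k$ and central simple $l$-algebras $A$; one then checks directly, using the known rationality criteria for minimal del Pezzo surfaces of degree $\geq 5$, that all the $A$'s split precisely when $S$ is $k$-rational.
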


The converse is more difficult. Let us first recall the following
result based on a base change formula by Orlov~\cite{orlovequivabel}.

\begin{lemma}[\cite{auel-berna-bolo}, Lemma 2.9]\label{lem:orlov-base-change}
Let $X$ be a smooth projective variety over $k$, and $K$ a finite
extension of $k$.  Suppose that $\cat{A}_1, \ldots, \cat{A}_n$ are
admissible subcategories of $\Db(X)$ such that $\Db(X_K) = \langle
\cat{A}_{1 K}, \ldots, \cat{A}_{n K} \rangle$. Then $\Db(X) = \langle
\cat{A}_1, \ldots, \cat{A}_n \rangle$. 
\end{lemma}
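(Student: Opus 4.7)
The key tool is the faithfully flat finite morphism $\pi\colon X_K \to X$ of degree $[K\colon k]$ obtained by base change. The central computation is the identity
$$
\Hom_{X_K}(\pi^* E, \pi^* F) \;\cong\; \Hom_X\bigl(E,\; F \otimes_{\OO_X} \pi_*\OO_{X_K}\bigr) \;\cong\; \Hom_X(E,F)\otimes_k K,
$$
valid for $E, F \in \Db(X)$. It combines adjunction, the projection formula (available because $\pi$ is proper and flat), the identification $\pi_*\OO_{X_K} = \OO_X \otimes_k K$, and the flatness of $K$ over $k$. In particular, a $\Hom$-space over $X$ vanishes if and only if its base change to $X_K$ vanishes, and taking $E = F$ shows that $\pi^*\colon \Db(X) \to \Db(X_K)$ is faithful.

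First I would verify the semiorthogonality condition for the $\cat{A}_i$ on $X$. Given $i > j$, $A_i \in \cat{A}_i$, $A_j \in \cat{A}_j$, and $r \in \ZZ$, we have $\pi^* A_i \in \cat{A}_{iK}$ and $\pi^* A_j \in \cat{A}_{jK}$, so the sod hypothesis over $K$ gives $\Hom_{X_K}(\pi^* A_i, \pi^* A_j[r]) = 0$. The base-change identity then forces $\Hom_X(A_i, A_j[r]) \otimes_k K = 0$, and hence $\Hom_X(A_i, A_j[r]) = 0$.

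Next I would establish generation. The collection $\cat{A}_1, \ldots, \cat{A}_n$ is now pairwise semiorthogonal in $\Db(X)$, and each piece is admissible, so the triangulated subcategory $\cat{T} := \sod{\cat{A}_1, \ldots, \cat{A}_n}$ they generate is itself admissible in $\Db(X)$; it suffices to prove $\cat{T}^\perp = 0$. Let $T \in \cat{T}^\perp$, so $\Hom_X(A_i, T[r]) = 0$ for every $i$, every $A_i \in \cat{A}_i$, and every $r$. By the base-change identity, $\Hom_{X_K}(\pi^* A_i, \pi^* T[r]) = 0$, meaning that $\pi^* T$ is right orthogonal to $\pi^* \cat{A}_i$ and therefore to $\cat{A}_{iK}$ (the latter being generated by $\pi^* \cat{A}_i$ under the base-change convention for admissible subcategories used in the statement). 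Since $\Db(X_K) = \sod{\cat{A}_{1K}, \ldots, \cat{A}_{nK}}$, the joint right orthogonal is trivial, and so $\pi^* T = 0$; faithfulness of $\pi^*$ yields $T = 0$.

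The one technical point is the identification of $\cat{A}_{iK}$ with the admissible subcategory of $\Db(X_K)$ generated by $\pi^* \cat{A}_i$ invoked in the previous paragraph. This rests on the chosen base-change formalism for admissible subcategories --- typically phrased through Fourier--Mukai kernels or DG-enhancements as outlined in \S\ref{sect:prel-on-dercat} --- and is essentially tautological once fixed; it is the only step where one must unravel definitions rather than compute.
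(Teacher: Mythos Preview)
The paper does not actually prove this lemma; it simply states it with a citation to \cite{auel-berna-bolo}, noting only that it is ``based on a base change formula by Orlov~\cite{orlovequivabel}.'' So there is no in-paper argument to compare against.

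Your proof is correct and is the expected argument. The key computation $\Hom_{X_K}(\pi^*E,\pi^*F)\cong\Hom_X(E,F)\otimes_k K$ is exactly the content of Orlov's base-change formula in this finite-extension setting, and the rest---deducing semiorthogonality, then using admissibility to reduce generation to $\cat{T}^\perp=0$, then faithfulness of $\pi^*$---is the standard route. Your flagged technical point, that $\cat{A}_{iK}$ is the admissible subcategory generated by $\pi^*\cat{A}_i$, is indeed the place where one must fix conventions (via Fourier--Mukai kernels or dg-enhancements, as the paper's \S\ref{sect:prel-on-dercat} indicates), but once the base-change formalism is in place it is routine.
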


Using Lemma~\ref{lem:orlov-base-change} and the classification from
Proposition~\ref{lem:0-dim=etale-algebra}, we deduce that, in the case
$k \subset \CC$, it is enough to check
Conjecture~\ref{conj:orlov-surfaces} for geometrically rational
surfaces and for complex surfaces with $p_g=q=0$.  As remarked
earlier, these are the only surfaces where a line bundle is
$k$-exceptional.

\subsection*{Geometrically rational surfaces}
The first case is handled in~\cite{auel-berna-surf} for del Pezzo surfaces and in
\cite{vial-exceptional} for geometrically rational surfaces with a numerically
$k$-exceptional collection of maximal length.

\begin{theorem}\label{thm:from-repre-to-ratio-for-georatsurf}
Let $S$ be a geometrically rational surface. If $S$ is
\begin{itemize}
 \item either is a blow-up of a del Pezzo and $\Repcat(S)=0$, 
 \item or has a (numerically) $k$-exceptional collection of maximal length,
\end{itemize}
 then $S$ is $k$-rational. 
\end{theorem}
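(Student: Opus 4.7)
The plan is to treat the two cases of the hypothesis in turn. In each case the overall strategy is to reduce to a minimal model of $S$ using the behavior of categorical data under blow-ups, and then invoke the classical classification of minimal geometrically rational surfaces: each such minimal model is either a minimal del Pezzo surface of Picard rank one or a minimal conic bundle over a $k$-form of $\PP^1$.

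For the first bullet, suppose $S$ is a blow-up of a del Pezzo $S_0$ with $\Repcat(S) = 0$. By contracting $(-1)$-curves defined over $k$ (first within $S$, then within $S_0$ if needed, noting that contracting such curves on a del Pezzo yields a del Pezzo of lower degree), one reaches a minimal del Pezzo $S_{\min}$ together with a composition of blow-ups $S \to S_{\min}$ at closed points. Iterating Orlov's blow-up formula (Theorem~\ref{thm:blow-ups}) expresses $\Db(S)$ as a semiorthogonal decomposition whose components are $\Db(S_{\min})$ together with the derived categories of the residue fields of the blown-up centers. I would then show that $\Repcat(S_{\min}) = 0$ by leveraging Lemma~\ref{lem:indecomposable}: since $\Db(S)$ simultaneously admits a decomposition into derived categories of \'etale $k$-algebras, and $\Db(S_{\min})$ sits as an admissible subcategory inside $\Db(S)$, a rigidity argument shows that $\Db(S_{\min})$ must itself decompose into derived categories of \'etale $k$-algebras. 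The classification of minimal del Pezzo surfaces carried out in \cite{auel-berna-surf} then identifies those with $\Repcat = 0$ as exactly the $k$-rational ones: a nonrational minimal del Pezzo carries a nontrivial Brauer--Severi, involution-surface, or Clifford algebra in its canonical semiorthogonal decomposition (see Example~\ref{ex:catrep2}), which obstructs representability in dimension zero. Thus $S_{\min}$, and therefore $S$, is $k$-rational.

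For the second bullet, the maximum length of a numerically $k$-exceptional collection on a geometrically rational surface $S$ equals the rank of the numerical Grothendieck group, which by Riemann--Roch equals $\rk \Pic(S) + 2$. Given a collection $E_1, \ldots, E_n$ achieving this maximum, the Chern classes $c_1(E_i)$ span a sublattice of $\Pic(S) \otimes \QQ$ whose intersection form is dictated by the Euler pairing $\chi$. Passing to a minimal model $S_{\min}$, one obtains a corresponding maximal numerically $k$-exceptional collection on $S_{\min}$, and one then proceeds to exclude each minimal geometrically rational surface that is not $k$-rational: for nonrational minimal del Pezzos, the Galois module structure of $\Pic(\overline{S_{\min}})$ is incompatible with the existence of a $\chi$-unipotent basis over $k$ of the numerical Grothendieck group; for nonrational conic bundles, the discriminant cover is forced to split in a way that matches the known $k$-rationality criterion. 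This follows the strategy of \cite{vial-exceptional}.

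The main obstacle in the first case is precisely the descent of representability from $\Db(S)$ to $\Db(S_{\min})$: as warned in Remark~\ref{rmk:warning-doubledeco}, the Jordan--H\"older property fails for semiorthogonal decompositions, so the zero-dimensional decomposition of $\Db(S)$ does not restrict along the Orlov decomposition without additional rigidity input, for which the classification Lemma~\ref{lem:indecomposable} of admissible subcategories of derived categories of \'etale algebras is tailor-made. In the second case, the analogous difficulty is systematically matching the $\chi$-unipotent lattice data extracted from the exceptional collection against the Galois module structure of $\Pic(\overline{S_{\min}})$ across the list of minimal models.
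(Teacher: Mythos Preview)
The paper does not supply its own proof here; it simply attributes the first bullet to \cite{auel-berna-surf} and the second to \cite{vial-exceptional}. Your outline for the second bullet is roughly in the spirit of Vial's lattice-theoretic argument, though the step ``passing to a minimal model one obtains a corresponding maximal numerically $k$-exceptional collection on $S_{\min}$'' needs justification and is not quite how Vial proceeds.

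The real issue is in the first bullet. Your proposed descent of $\Repcat=0$ from $S$ to $S_{\min}$ via Lemma~\ref{lem:indecomposable} does not work. That lemma classifies admissible subcategories of $\Db(k,K)$ for a single \'etale algebra $K$; but $\Db(S)$, while admitting a \emph{semiorthogonal} decomposition into pieces $\Db(k,K_i)$, is not itself equivalent to $\Db(k,K)$ for any $K$ (a fully orthogonal decomposition would force $S$ to be disconnected). So $\Db(S_{\min})$ is an admissible subcategory of something that is not of the form covered by the lemma, and the lemma gives no purchase. This is precisely the Jordan--H\"older failure of Remark~\ref{rmk:warning-doubledeco}: the Bondal--Kuznetsov example already shows that an admissible subcategory of a category generated by exceptional objects need not have $\Rep = 0$. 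You correctly flag this as the main obstacle, but the tool you invoke to resolve it does not apply.

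The argument in \cite{auel-berna-surf} circumvents this by a different route: rather than a general descent principle, it carries out an explicit degree-by-degree analysis of minimal del Pezzo surfaces, constructing canonical semiorthogonal decompositions (three-block type) and proving directly, via mutation calculations under blow-up at closed points, that the components with $\Rep>0$ (the Brauer--Severi, involution-surface, and Clifford-algebra pieces listed in Example~\ref{ex:catrep2}) are birational invariants. This is what establishes a Griffiths--Kuznetsov component for del Pezzo surfaces and yields the rationality criterion; it is a hands-on substitute for the missing Jordan--H\"older property, not a consequence of any general rigidity lemma.
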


The results of~\cite{auel-berna-surf} also
provide a categorical birational invariant.

\begin{theorem}[\cite{auel-berna-surf}]
Let $S$ be a minimal del Pezzo surface of degree $d$.
If $d < 5$, then $\cat{A}_S$ is a birational invariant.
If $d \geq 5$, then the product of components $\cat{T}$
of $\cat{A}_S$ with $\Rep\cat{T}>0$ is a birational invariant.
In particular, there is a well-defined Griffiths--Kuznetsov component.
\end{theorem}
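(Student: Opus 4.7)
The plan is to compare two semiorthogonal decompositions of a common smooth birational model of $S$ and $S'$, and then to cancel off the ``zero-dimensional'' parts. Given a birational map $S \dashrightarrow S'$ between minimal del Pezzo surfaces, I first invoke resolution of singularities (for surfaces, simply a sequence of blow-ups at closed points) to produce a smooth projective surface $X$ with birational morphisms $\pi: X \to S$ and $\pi': X \to S'$, each a composition of blow-ups at closed points. Applying Proposition~\ref{prop:decoFano} to $S$ and $S'$ and iterating Orlov's blow-up formula (Theorem~\ref{thm:blow-ups}), I obtain two semiorthogonal decompositions
$$
\Db(X) \;=\; \sod{\cat{A}_S,\,\ko_S,\,\Db(K_1),\ldots,\Db(K_r)} \;=\; \sod{\cat{A}_{S'},\,\ko_{S'},\,\Db(K'_1),\ldots,\Db(K'_s)},
$$
where the $K_i$ and $K'_j$ are residue fields at the closed points that are blown up. Each of the added blocks---both the exceptional line bundles and the $\Db(K_\bullet)$---is representable in dimension $0$ by Proposition~\ref{lem:0-dim=etale-algebra}.

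Next, I would group components of $\cat{A}_S$ and $\cat{A}_{S'}$ by representability dimension: write $\cat{A}_S = \sod{\cat{T}_S, \cat{R}_S}$, where $\cat{R}_S$ is the product of components with $\Rep = 0$ and $\cat{T}_S$ of those with $\Rep > 0$, and likewise for $S'$. The two statements of the theorem unify into a single claim $\cat{T}_S \equi \cat{T}_{S'}$: for $d<5$, the classification of $\cat{A}_S$ recalled in Example~\ref{ex:catrep2} shows that $\cat{A}_S$ contains no admissible subcategory of the form $\Db(K,\alpha)$, so $\cat{R}_S = 0$ and $\cat{T}_S = \cat{A}_S$; for $d\geq 5$, $\cat{A}_S$ genuinely splits and only the $\cat{T}$-part can possibly be invariant, since changing the $\cat{R}$-part is precisely what happens when one blows up or down a closed point.

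The main obstacle will be a cancellation statement for the two decompositions of $\Db(X)$: the $\Rep = 0$ pieces appearing on each side must be related by a sequence of mutations so that their successive removal leaves equivalent categories. This is a Jordan--Hölder-type claim restricted to admissible subcategories of the form $\Db(K,\alpha)$ for $K/k$ étale and $\alpha\in\Br(K)$, and Proposition~\ref{prop:no-JH} shows that the general Jordan--Hölder property fails, so the argument must exploit two special features of our setting. First, Lemma~\ref{lem:indecomposable} implies that the admissible indecomposable subcategories of any product $\prod_i \Db(K_i,\alpha_i)$ are exactly the factors $\Db(K_j,\alpha_j)$ themselves, controlling how mutations can act on the $\Rep=0$ block. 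Second, the classification of $\cat{A}_S$ for minimal del Pezzos shows that no component of $\cat{T}_S$ admits any $\Db(K,\alpha)$ as an admissible subcategory, so the $\Rep=0$ blocks cannot be absorbed into $\cat{T}_S$. Combined, these observations should let me mutate all $\Rep=0$ blocks of the two decompositions into a common semiorthogonal position and cancel them, yielding $\cat{T}_S \equi \cat{T}_{S'}$. The resulting $\cat{T}_S$ is then admissible in $\Db(S)$, depends only on the $k$-birational class of $S$, and provides the well-defined Griffiths--Kuznetsov component.
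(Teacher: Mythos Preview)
The paper does not actually prove this theorem; it is quoted from \cite{auel-berna-surf} without argument, so there is no in-paper proof to compare against. I can only evaluate your proposal on its own merits and against what the cited proof is known to do.

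Your setup is correct and standard: resolve the birational map by a common blow-up $X$, apply Orlov's formula on both sides, and try to cancel the zero-dimensional blocks. The gap is in the cancellation step, and you have essentially acknowledged it without closing it. The two ``special features'' you list do not do the work you need. Lemma~\ref{lem:indecomposable} as stated is about products of $\Db(k,K_i)$ with $K_i$ fields; it says nothing about $\Db(K,\alpha)$ with nontrivial $\alpha$, and more importantly it controls admissible subcategories \emph{inside} the zero-dimensional block, not how that block sits inside the ambient $\Db(X)$. Knowing that $\cat{T}_S$ admits no $\Db(K,\alpha)$ as an admissible subcategory likewise does not prevent a mutation through $\cat{T}_S$ from scrambling the zero-dimensional pieces in a nontrivial way. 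Your sentence ``these observations should let me mutate all $\Rep=0$ blocks \ldots\ into a common semiorthogonal position and cancel them'' is precisely the Jordan--H\"older-type statement that Proposition~\ref{prop:no-JH} warns can fail, and you have not supplied a mechanism that bypasses it in this case.

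The proof in \cite{auel-berna-surf} takes a different route: rather than an abstract cancellation argument, it runs through Iskovskikh's classification of elementary birational links (Sarkisov links) between minimal del Pezzo surfaces and checks, link by link, what happens to $\cat{A}_S$ under each one. This is case-based but finite, and it avoids the Jordan--H\"older obstacle entirely because each elementary link is explicit enough that the semiorthogonal decompositions on the two sides can be compared directly. If you want to salvage your approach, you would need a genuine substitute for Jordan--H\"older in this setting---for instance, an invariant finer than $K_0$ that distinguishes the $\cat{T}$-part from any rearrangement of the zero-dimensional pieces---and none is provided.
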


Let us conclude by showing that having a full $k$-exceptional
collection is a stronger property than having a decomposition of the
diagonal. The following result is a slight generalization of a result
of Vial~\cite{vial-exceptional}. As we will see, nonrational surfaces
satisfying the assumptions of Theorem~\ref{thm:vial-phantom} exist,
and are known to have a decomposition of a diagonal, as recalled in
Theorem~\ref{surfacenotorsion}. Here we give a direct proof, adapted
from Vial's one, providing an explicit decomposition of the diagonal
from the exceptional objects.

\begin{theorem}\label{thm:vial-phantom}
Let $S$ be a surface with $\chi(\ko_S)=1$ and a semiorthogonal decomposition
\begin{equation}\label{eq:decophantomvial}
\Db(S)= \sod{\cat{A}, E_1,\ldots, E_r},
\end{equation}
where $E_i$ are $k$-exceptional and $\cat{A}$ is a phantom. Then $S$
has a decomposition of the diagonal and the integral Chow motive of
$X$ is of Lefschetz type.
\end{theorem}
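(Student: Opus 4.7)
My plan is to use the exceptional collection to produce an explicit decomposition of the structure sheaf of the diagonal in $K_0(S \times S)$, then transfer it to $\CH^2(S \times S)$ via the Chern character, checking integrality along the way. The phantom hypothesis ensures that both the Grothendieck group and the Hochschild homology of $\cat{A}$ vanish, so that to the K-theoretic and motivic eye, $\Db(S)$ behaves as if it admitted a full exceptional collection.

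First I would use the additivity of $K_0$ and $HH_*$ along semiorthogonal decompositions, together with the phantom hypothesis on $\cat{A}$, to obtain $K_0(S) \cong \bigoplus_{i=1}^r \ZZ \cdot [E_i]$, free of rank $r$, and $HH_n(S) = k^r$ for $n=0$ and zero otherwise. Combined with the Hochschild--Kostant--Rosenberg isomorphism and $\chi(\ko_S)=1$, this forces $p_g(S)=q(S)=0$ and $h^{p,q}(S)=0$ for $p \neq q$; in particular the cohomology of $S$ is purely algebraic and concentrated in bidegrees $(0,0),(1,1),(2,2)$. Next I would produce the left dual collection $\{F_1,\ldots,F_r\}$ inside $\sod{E_1,\ldots,E_r}$, characterized by $\chi(F_i,E_j)=\delta_{ij}$, and exploit the vanishing of $K_0(\cat{A})$ (which propagates to the pieces of the induced semiorthogonal decomposition of $\Db(S \times S)$ involving $\cat{A}$, since in each case the K-theoretic contribution is captured by a tensor product with $K_0(\cat{A})=0$) to obtain the K-theoretic identity
$$[\OO_\Delta] = \sum_{i=1}^r [F_i \boxtimes E_i] \quad \text{in } K_0(S \times S).$$

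Applying the Chern character, together with Grothendieck--Riemann--Roch for the diagonal embedding, then translates this into an equality in $\CH^2(S \times S)_\QQ$ of the form
$$[\Delta_S] = [\mathrm{pt} \times S] + [S \times \mathrm{pt}] + \sum_{j,k} a_{jk}\, D_j \boxtimes D_k,$$
where $\{D_j\}$ is a basis of $\NS(S)_\QQ$ and the coefficients $a_{jk}$ are determined by inverting the intersection pairing on $\NS(S)_\QQ$. I would then invoke the moving lemma for $0$-cycles recalled at the end of \S\ref{subsec:intersections} to absorb $[S \times \mathrm{pt}]$ and the middle sum into a cycle supported on $S \times V$, where $V \subsetneq S$ is a proper closed subvariety containing the chosen point and the supports of the $D_k$, yielding a decomposition of the form \eqref{eq:decomp_diagonal}. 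Finally, the resulting diagonal decomposition combined with the freeness of $K_0(S)$ and the vanishing of odd cohomology forces the integral Chow motive $h(S)$ to split as a direct sum of Tate twists of the unit motive, i.e., to be of Lefschetz type.

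The main obstacle is the integrality step in passing from $K_0(S \times S)$ to $\CH^2(S \times S)$: the Chern character is only rationally an isomorphism, so expressing $[\Delta_S]$ with integer coefficients requires care. The saving features are that $\chi(\ko_S)=1$ makes $\td(S) = 1 + \tfrac12 c_1(S) + [\mathrm{pt}]$ take a particularly simple form, and that an exceptional collection provides a unimodular Euler pairing $\chi$ on $K_0(S)$ which, under $\ch$, matches the (unimodular) intersection pairing on the image of $\NS(S)$; this unimodularity is precisely what converts the a priori rational decomposition of the diagonal into an integral one, and then splits the motive integrally rather than only up to isogeny.
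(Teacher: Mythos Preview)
Your parenthetical claim that ``in each case the K-theoretic contribution is captured by a tensor product with $K_0(\cat{A})=0$'' is the gap. There is no K\"unneth formula for Grothendieck groups of admissible subcategories in general: a phantom $\cat{A}$ can satisfy $K_0(\cat{A})=0$ while $K_0(\cat{A}\boxtimes\cat{A})$ is nonzero. This is precisely the distinction between a phantom and a \emph{universal} phantom, and only the latter guarantees your identity $[\OO_\Delta]=\sum_i [F_i\boxtimes E_i]$ in $K_0(S\times S)$. The paper supplies exactly this missing step before anything else: from $K_0(S)$ free of finite rank one deduces, via the topological filtration, that $\CH^*(S)$ and in particular $A_0(S)$ is free of finite rank; since $A_0(S)$ is always divisible, it must vanish, so $S$ satisfies Bloch's conjecture. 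Then a result of Sosna (based on Gorchinskiy--Orlov) shows that any phantom in $\Db(S)$ is automatically a universal phantom. Only after this is the phantom invisible in $K_0(S_K)$ for every base change $S_K$.

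The integrality step is also handled differently, and your sketch there is too optimistic. Rather than pushing a $K_0(S\times S)$ identity through the Chern character, the paper passes to an algebraically closed universal domain $K$, uses Vial's argument (Riemann--Roch plus the Euler form coming from the exceptional collection) to produce a $\ZZ$-basis $D_1,\ldots,D_\rho$ of $\Pic(S_K)$ with unimodular intersection matrix and dual basis $D_i^\vee$, and then writes down mutually orthogonal idempotents $\pi^0=a\times S_K$, $\pi^4=S_K\times a$, $p_i=D_i\times D_i^\vee$ directly in $\CH^2(S_K\times S_K)$. The remainder $\Gamma_K=\Delta_{S_K}-\pi^0-\pi^4-\sum_i p_i$ is then idempotent and acts as zero on all of $\CH^*(S_K)$; a nilpotence result of Shen--Vial forces $\Gamma_K=0$. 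Your unimodularity intuition is pointing at the right phenomenon, but the actual mechanism is this idempotent-plus-nilpotence argument over a universal domain, not a Chern-character computation over $k$.
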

\begin{proof}
If $\cat{A}=0$, the second statement is a result of
Vial~\cite[Thm.~2.7]{vial-exceptional}, while the first statement is
shown in the course of Vial's proof.  We will detail the main
steps of the proof to show that having a nontrivial phantom (which Vial
does not address) does not affect the proof.

\smallskip

{\bf 1st Step.} First of all, the semiorthogonal decomposition implies
that $K_0(S)$ is free of finite rank.  Using the topological
filtration on $K_0$, we can show that the integral Chow ring
$\CH^*(S)$ is then also free of finite rank: $\CH^0(S)=\ZZ$,
$\CH^1(S)=\Pic(S)$ is free of finite rank by
Lemma~\ref{lem:K0-of-zerodimensional}, and $\CH^2(S)$ is free of
finite rank since it coincides with the second graded piece of the
topological filtration, which is a subgroup of $K_0(S)$, as $S$ has
dimension 2.  In particular, the group of $0$-cycles $A_0(S)$ of
degree 0, is free of finite rank.  However, $A_0(S)$ is always
divisible, cf.\ \cite[Lec.~1,~Lemma~1.3]{bloch:lectures}.  We conclude
that $A_0(S)=0$.  In particular, $S$ satisfies Bloch's conjecture.  In
this case, Sosna~\cite[Cor.~4.8]{sosna:phantom} (using results of
\cite{gorch-orlov}) remarked that any phantom category in $\Db(S)$ is
a universal phantom, so that in any base change $S_K$ of $S$, the
admissible subcategory $\cat{A}_K$ is a phantom in the base change of
the decomposition \eqref{eq:decophantomvial}.  This implies that
$K_0(S_K)$ is torsion free of finite rank and then by the same
argument, that $\CH^*(S_K)$ is free of finite rank.  In particular, we
have that $r=\rk(K_0(S_K)) = \rho+2$, where $\rho$ is the Picard rank
of $S_K$.

{\bf 2nd Step.} 
Given an exceptional collection of maximal length,
Vial~\cite[Prop.~2.3]{vial-exceptional} provides a $\ZZ$-basis $D_1,
\ldots, D_{\rho}$ of $\CH^1(S_K)=\Pic(S_K)$ with unimodular
intersection matrix $M$ and dual basis $D_1^\vee , \ldots,
D_\rho^\vee$.  This is accomplished using Chern classes and the
Riemann--Roch formula to compare $\chi$ with the intersection pairing.

{\bf 3rd Step.} As shown by Vial~\cite[Cor.~2.5]{vial-exceptional}, a
surface with $\chi(\ko_S)=1$ and a numerically $k$-exceptional
collection of maximal length has a zero-cycle of degree 1. Hence $S_K$
has always a zero-cycle $a$ of degree 1, then we set $\pi^0:=a \times
S_K$ and $\pi^4 := S_K \times a$ as idempotent correspondences in
$\CH^2(S_K \times S_K)$. Moreover, Vial defines the correspondences
$p_i:=D_i \times D_i^\vee$ in $\CH^2(S_K \times S_K)$, which are
idempotent since the intersection product is unimodular. It is not
difficult to see that all the above correspondences are mutually
orthogonal.  Set $\Gamma_K:= \Delta_{S_K} - \pi^0 - \pi^4 -
\sum_{i=1}^\rho p_i$.

{\bf 4th Step.} Since $\pi^i$ and $p_i$ are mutually orthogonal
idempotents, $\Gamma_K$ is idempotent.  Moreover, $\Gamma_K$ acts
trivially on $\CH^*(S_K)$, since we have $\CH^2(S_K) = \ZZ a$ and
$\CH^1(S_K)$ generated (over $\ZZ$) by the
$D_i$'s. By~\cite[Prop. 3.7]{shen-vial}, choosing $K$ to be an
algebraically closed extension (universal domain) of $k$, it follows
that $\Gamma$ is nilpotent. Since $\Gamma$ is also idempotent, we have
$\Gamma=0$ and the claim holds.
\end{proof}

\subsection*{Complex surfaces with $p_g=q=0$}
In this paragraph, $S$ has $p_g=q=0$ and $k=\CC$.
The study of such surfaces is very rich and already very challenging
in the case where $k = \CC$. On one hand, it is easy to see that if $S$ has a full
exceptional collection, then $K_0(S)$ is a free $\ZZ$-module of finite rank.
This automatically exclude surfaces with torsion line bundles, which would give rise
to a torsion class in $K_0(S)$. However, a full understanding of the r\^ole of (quasi)-phantoms,
categories representable in dimension 2, and Conjecture~\ref{conj:orlov-surfaces}, requires a full
understanding of any such surface, independently on the obstruction mentioned above. 

First of all, Vial classifies all such $S$ which have a numerically exceptional sequence of
maximal length. This is based on the study of the Picard lattice which can be deduced by
the numerically exceptional sequence via Riemann--Roch theorem (we refer to~\cite{dolgachev-p=q=0}
for the notations on Dolgachev surfaces).

\begin{theorem}[Vial~\cite{vial-exceptional}]\label{vial:num-exc-surf}
Let $S$ be as above. Then $S$ has a numerically exceptional collection of maximal length
if and only if it has a numerically exceptional collection of maximal length consisting of line bundles. Moreover,
this is the cases if and only if either:
\begin{itemize}
 \item $S$ is not minimal, or
 \item $S$ is rational, or
 \item $S$ is a Dolgachev surface of type $X_9(2,3)$, $X_9(2,4)$, $X_9(3,3)$ or $X_9(2,2,2)$, or
 \item $\kappa(S)=2$.
 \end{itemize}
\end{theorem}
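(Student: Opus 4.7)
The strategy is to convert the condition of having a numerically exceptional collection of maximal length into a lattice-theoretic condition on the pair $(\Pic(S), K_S)$, and then to run through the Enriques--Kodaira classification of surfaces with $p_g=q=0$.

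\emph{Step 1: Reduction to line bundles.} A numerically exceptional collection $\{E_1,\dots,E_n\}$ of maximal length $n=\rank K_0^{\mathrm{num}}(S)=2+\rho(S)$ produces a $\ZZ$-basis of $K_0^{\mathrm{num}}(S)$ in which the Euler pairing is upper-triangular unipotent. The numerical class of any $E_i$ is determined by $(r_i, c_1(E_i), \mathrm{ch}_2(E_i))$. Using the topological filtration $F^\bullet K_0(S)$ together with Hirzebruch--Riemann--Roch, I would perform a $\ZZ$-linear Gram--Schmidt modification (by multiples of $[\OO_S]$ and of $[\OO_x]$ for $x$ a closed point) on the classes $[E_i]$ to produce a new basis of $K_0^{\mathrm{num}}(S)$ with the same triangular unipotent Euler form, but whose members have rank $1$ and whose $\mathrm{ch}_2$ can be adjusted at will by closed points. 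Since $\chi(\OO_S)=1$, the diagonal entries $\chi(L,L)=1$ for any line bundle $L$ come for free, and one can realize the new basis by a collection of line bundles $L_1,\dots,L_n$ that is still numerically exceptional of maximal length.

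\emph{Step 2: Lattice constraints.} Once we have line bundles $\{L_1,\dots,L_n\}$ with $\chi(L_j,L_i)=0$ for $j>i$, setting $M_{ij}=L_i\otimes L_j^{-1}$, Riemann--Roch on $S$ gives
\[
0 \;=\; \chi(M_{ij}) \;=\; 1+\tfrac12\,M_{ij}\cdot (M_{ij}-K_S),\qquad j>i,
\]
so each of the $\binom{n}{2}$ differences satisfies $M_{ij}^2-M_{ij}\cdot K_S=-2$. Since the free $\ZZ$-module spanned by the $[L_i]$ in $K_0^{\mathrm{num}}(S)$ has rank $2+\rho(S)$, any torsion in $\Pic(S)$ forces a contradiction; in particular the N\'eron--Severi lattice must be torsion-free and of the expected rank. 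Together with the sign constraints of the Hodge index theorem, this reduces the problem to: classify pairs $(\Pic(S), K_S)$ arising from minimal surfaces with $p_g=q=0$ that admit $\binom{n}{2}$ classes with prescribed self-intersection and intersection with $K_S$.

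\emph{Step 3: Case analysis via the classification.} Now I would go through the classification of minimal complex surfaces with $p_g=q=0$ and check each case against Step 2. For non-minimal surfaces, Orlov's blow-up formula (Theorem~\ref{thm:blow-ups}) together with the rational/Hirzebruch case produces the required collection of line bundles by induction. For $\kappa(S)=0$ the only minimal example is an Enriques surface, where $K_S$ is $2$-torsion in $\Pic(S)$ and Step 2 fails. For $\kappa(S)=1$, i.e.\ Dolgachev surfaces $X_9(p_1,\dots,p_r)$, write $K_S=-f+\sum(p_i-1)F_i$ in terms of the elliptic fibration; translate the Riemann--Roch equation $M_{ij}^2-M_{ij}\cdot K_S=-2$ into a Diophantine condition on $(p_1,\dots,p_r)$. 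The listed four tuples $(2,3)$, $(2,4)$, $(3,3)$, $(2,2,2)$ are precisely those for which an explicit numerically exceptional collection of line bundles can be written down, and a congruence obstruction modulo the $p_i$ rules out all other tuples. Finally, for $\kappa(S)=2$, one would exhibit a numerically exceptional collection of line bundles directly using the known description of the N\'eron--Severi lattices of minimal surfaces of general type with $p_g=q=0$ (Bauer--Catanese--Pignatelli), where again $\Pic$ is torsion-free and of compatible rank.

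\emph{Main obstacle.} The hardest step is the elliptic case $\kappa(S)=1$: one must pin down exactly which multiplicity tuples $(p_1,\dots,p_r)$ admit a system of line bundles whose pairwise differences all satisfy the Riemann--Roch equation simultaneously. The number of constraints grows as $\binom{n}{2}$ while the multiplicities contribute only a handful of parameters, and ruling out neighbours such as $X_9(2,2)$, $X_9(2,5)$, $X_9(2,2,3)$ requires a delicate congruence argument at each prime dividing some $p_i$. The other steps are essentially formal consequences of Riemann--Roch and of already-cited results (Orlov's blow-up formula, the existence of standard exceptional collections on rational surfaces, and Kawatani--Okawa's Theorem~\ref{thm:okawasurfaces}).
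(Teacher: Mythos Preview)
The paper does not prove this theorem; it is stated with attribution to Vial~\cite{vial-exceptional} and followed only by a remark listing cases where the numerically exceptional collection is genuinely exceptional. So there is no proof here to compare your proposal against, and I can only comment on the proposal itself.

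Your Steps~1 and~2 are in the spirit of Vial's argument: reduce to line bundles via the topological filtration and Riemann--Roch, then translate numerical exceptionality into the Diophantine system $M_{ij}^2 - M_{ij}\cdot K_S = -2$ on $\NS(S)$. However, there is a genuine error in your Step~2 and Step~3. You claim that torsion in $\Pic(S)$ forces a contradiction and that for $\kappa(S)=2$ one has $\Pic$ torsion-free. Both are false: many minimal surfaces of general type with $p_g=q=0$ have torsion in $\Pic$ (classical Godeaux has $\ZZ/5\ZZ$, Burniat and Campedelli surfaces have nontrivial torsion, etc.), yet the theorem asserts that \emph{every} such surface admits a numerically exceptional collection of maximal length. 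The point is that $K_0^{\mathrm{num}}(S)$ is by construction torsion-free, so torsion in $\Pic(S)$ is simply invisible at the numerical level and cannot obstruct anything. Your proposed mechanism for ruling out Enriques surfaces (``$K_S$ is $2$-torsion, so Step~2 fails'') is therefore the wrong one: numerically $K_S=0$ on an Enriques surface, and the obstruction is a genuine lattice-theoretic one about whether the Enriques lattice $U\oplus E_8$ supports twelve classes with all pairwise differences of square $-2$, not a torsion phenomenon.

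For $\kappa(S)=2$ you would instead need to \emph{construct} a numerically exceptional collection of line bundles on every minimal surface of general type with $p_g=q=0$; Vial does this uniformly from the intersection form and $K_S^2$, not case-by-case from Bauer--Catanese--Pignatelli. You are right that the Dolgachev case is where the delicate work lies, but your sketch (``a congruence obstruction modulo the $p_i$'') is too vague to assess; Vial's actual argument writes down explicit candidate collections in terms of the multiple fibers and the section class, and then checks which multiplicity tuples make the Riemann--Roch constraints simultaneously solvable.
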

\begin{remark}
There are cases where such a numerically exceptional collection is actually an exceptional
collection, namely rational surfaces~\cite{hille-perling} (in which case it is full), Dolgachev
surfaces of type $X_9(2,3)$~\cite{cho-lee} and many examples of
surfaces of general type, see~\cite{galkin.shinder-beauville}, \cite{BGKS}
\cite{galkin-katzarkov-mellit-shinder}, \cite{keum}, \cite{lee-surfaces-isogenous}, \cite{alexeev-orlov},
\cite{coughlan}.
\end{remark}

The classification of complex surfaces of general type with these invariants
is quite wild, and probably still incomplete, see~\cite{bauer-cata-pigna-survey} for a recent survey.
Proceeding examplewise, one considers many interesting objects to
study, but it is not a realistic way to
attack Conjecture~\ref{conj:orlov-surfaces}.
On the other hand, such surfaces often come in positive-dimensional families of isomorphism
classes, and have ample anticanonical bundle, so that $\Db(S)$ identifies the
isomorphism class of $S$ by the famous theorem of Bondal and Orlov~\cite{bondorlovreconstruct}.
We hence have a positive dimensional family of equivalence classes of (dg enhanced) triangulated categories,
while categories generated by  exceptional collections depend on a finite number of countable parameters.

If a dg enhanced triangulated category 
$\cat{T}$ is not trivial, then its Hochschild cohomology $HH^*(\cat{T})$ is also nontrivial\footnote{
Denote by $\kt$ a dg enhancement of $\cat{T}$. The Hochschild cohomology is naturally interpreted as the homology of the complex
$\cal{H}om({\bf 1}_{\kt},{\bf 1}_{\kt})$ computed in the dg category
$\cal{RH}om(\kt,\kt)$ where ${\bf 1}_\kt$ denotes the identity functor of
$\kt$, see~\cite[\S 5.4]{keller:ICM}. It follows that whenever $\kt$ is nontrivial, the class of the identity
is a nontrivial element of $HH^*(\kt)$.}. Moreover, the second Hochschild cohomology encodes
deformations of the dg enhanced category $\cat{T}$ (see~\cite[\S 5.4]{keller:ICM} for a survey).

Suppose then that we have a family of surfaces $S_t$ of general type depending on a continuous
parameter $t$, and that we can produce for any of these surfaces
an exceptional collection $\{ L^t_1, \ldots, L^t_n \}$ of maximal length consisting of
line bundles, and set $\cat{A}_t$ as its semiorthogonal complement.
Then only a countable number of such collections can be full, that is $\cat{A}_t=0$ only
for a discrete set of parameters. It would be natural to expect that the exceptional collection does not
vary, while the informations on the deformation of $S_t$ (and, henceforth, of
$\Db(S_t)$) along with $t$ have to be parameterized by $\cat{A}_t$.
This reasoning is supported by two examples, namely a family of Barlow surfaces~\cite{BGKS}
and a family of Dolgachev surfaces of type $X_9(2,3)$~\cite{cho-lee} and justifies
the following conjectural question.

\begin{question}\label{quest:constant-exceptional}
Let $S_t$ be a family of minimal non-rational surfaces with $p_g=q=0$
depending on a continuous parameter $t$ admitting a numerically
exceptional sequence $\cat{E}_t=\{ E_1^t, \ldots, E_n^t \}$ of maximal length for any $t$. Suppose that
$\cat{E}_t$ is exceptional, then:
\begin{itemize}
 \item[1)] Is $\sod{\cat{E}_t}$ constant? That is, is $\sod{\cat{E}_t}$ equivalent to
 $\sod{\cat{E}_{t'}}$ for any $t$ and $t'$?
 \item[2)] Is $\cat{A}_t = \sod{\cat{E}_t}^\perp$ nontrivial for all $t$?
\end{itemize}
\end{question}
Notice that a positive answer to 1) in Question~\ref{quest:constant-exceptional}
would imply that $\cat{A}_t$ is nontrivial for all but possibly one value of
$t$, and that $\cat{A}_t $ would parameterize the deformations of $\Db(S_t)$
which contain the deformations of $S_t$.

\section{$0$-cycles on cubics}

In \S\ref{subsec:surfaces}, we ended with an essentially complete
classification of smooth projective complex universally
$\CH_0$-trivial surfaces. In this section we will move from dimension
2 to higher dimension, and discuss the universal $\CH_0$-triviality of
complex cubic threefolds and fourfolds.  Throughout, our base field
will be the complex numbers.

\subsection{Cubic threefolds}

We first provide some background about minimal curve classes on
principally polarized abelian varieties. Let $C$ be a smooth
projective curve and $(J(C),\Theta)$ its jacobian with the principal
polarization arising from the theta divisor.  Choosing a rational
point on $C$, there is an embedding $C \hookrightarrow J(C)$ giving
rise to a class $[C] \in \CH_1(J(C))$.  This class is related to the
theta divisor by means of the \linedef{Poincar\'e formula}
$$
[C] = \frac{\Theta^{g-1}}{(g-1)!} \in H^{2g-2}(J(C),\ZZ),
$$
in particular, the class $\Theta^{g-1}/(g-1)!$ is represented by an
effective algebraic class in $\CH_1(J(C))$.  To some extent, the
validity of this formula gives a characterization of jacobians of
curves among principally polarized abelian varieties by the
following result of Matsusaka.

Let $(A,\vartheta)$ be an irreducible principally polarized abelian
variety of dimension $g$.  The class $\vartheta^{g-1}/(g-1)!$ is
always an integral Hodge class in $H^{2g-2}(A,\ZZ)$.  We will say that
this class is algebraic (resp.\ effective) if it is homologically
equivalent to an algebraic (resp.\ effective) class in $\CH_1(A)$, i.e., is
equal to the image of an algebraic (resp.\ effective) cycle in the image of the
class map $\CH_1(A) \to H^{2g-2}(A,\ZZ)$.

\begin{theorem}[Matsusaka~\cite{matsusaka}]
Let $(A,\vartheta)$ be an irreducible principally polarized abelian
variety of dimension $g$.  Then there exists a smooth projective curve
$C$ such that $(A,\vartheta) \isom (J(C),\Theta)$ as principally
polarized abelian varieties if and only if the class
$\vartheta^{g-1}/(g-1)!$ is effective.
\end{theorem}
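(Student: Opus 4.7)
The plan is to handle the two directions separately. The forward implication is immediate from the Poincaré formula already recalled just before the statement: if $(A,\vartheta) \cong (J(C),\Theta)$ as principally polarized abelian varieties, then after choosing a base point on $C$, the Abel--Jacobi embedding $C \hookrightarrow J(C)$ represents the class $\Theta^{g-1}/(g-1)!$, and transporting across the isomorphism shows that $\vartheta^{g-1}/(g-1)!$ is represented by an effective (in fact irreducible) algebraic cycle on $A$.

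For the converse, suppose $\vartheta^{g-1}/(g-1)!$ is homologous to an effective $1$-cycle $Z = \sum_i n_i C_i$ on $A$. My first step would be to reduce to the case where $Z$ is a single reduced irreducible curve $C$. Intersecting with the ample divisor class $\vartheta$ gives $\sum_i n_i (\vartheta \cdot C_i) = g$, and the irreducibility hypothesis on $(A,\vartheta)$ together with positivity considerations --- each component $C_i$ generates an abelian subvariety of $A$ on which the restriction of $\vartheta$ remains a polarization --- should force a single term with multiplicity one. I expect this reduction to be the main obstacle; it requires a careful analysis ruling out reducible or nonreduced $Z$, and is precisely where irreducibility of $(A,\vartheta)$ as a principally polarized abelian variety is essential. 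Next, let $\nu: \widetilde{C} \to C$ be the normalization and fix a base point on $\widetilde{C}$; the universal property of the Jacobian factors the composition $\widetilde{C} \to C \hookrightarrow A$ as
$$\widetilde{C} \hookrightarrow J(\widetilde{C}) \xrightarrow{\phi} A,$$
where $\phi$ is a homomorphism of abelian varieties. Surjectivity of $\phi$ follows from the fact that $C$ is not contained in any proper translated abelian subvariety of $A$, which itself follows from the primitivity of the minimal class on $A$ together with the irreducibility of $(A,\vartheta)$.

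It remains to show that $\phi$ is an isomorphism of principally polarized abelian varieties. Writing $\tilde{g} = g(\widetilde{C})$, the Poincaré formula on $J(\widetilde{C})$ gives $[\widetilde{C}] = \Theta_{\widetilde{C}}^{\tilde{g}-1}/(\tilde{g}-1)!$, and pushing forward yields $\phi_* [\widetilde{C}] = [C] = \vartheta^{g-1}/(g-1)!$ since $\nu$ is birational onto $C$. The projection formula then relates $\phi^*\vartheta$ to $\Theta_{\widetilde{C}}$; comparing top self-intersections together with the minimality of both classes forces $\tilde{g} = g$ and $\deg \phi = 1$, and a further intersection-theoretic comparison shows $\phi^*\vartheta \equiv \Theta_{\widetilde{C}}$ numerically. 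By the rigidity of polarizations on abelian varieties, this promotes to an isomorphism of principally polarized abelian varieties $(J(\widetilde{C}), \Theta_{\widetilde{C}}) \cong (A, \vartheta)$, completing the proof with $C = \widetilde{C}$.
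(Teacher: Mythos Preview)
The paper does not prove this theorem; it is stated with attribution to Matsusaka and a citation to the original 1959 paper, and then immediately used as a black box to reformulate the Clemens--Griffiths criterion. There is therefore no proof in the paper to compare your proposal against.

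For what it is worth, your sketch follows the classical line of argument. The forward direction is fine via the Poincar\'e formula. For the converse, you correctly single out the reduction from an arbitrary effective representative $\sum_i n_i C_i$ of the minimal class to a single reduced irreducible curve as the delicate point; this is indeed where the irreducibility hypothesis on $(A,\vartheta)$ does real work, and your sketch of that step (``each $C_i$ generates an abelian subvariety on which $\vartheta$ restricts to a polarization'') is more of a heuristic than an argument. The subsequent passage through the normalization, the universal property of the Jacobian, and the comparison of $\phi^*\vartheta$ with $\Theta_{\widetilde C}$ via intersection numbers and the projection formula is the right shape, though the assertion that ``comparing top self-intersections \dots\ forces $\tilde g = g$ and $\deg\phi = 1$'' compresses a genuine computation that requires care (Riemann--Roch on abelian varieties and the behaviour of polarizations under isogeny). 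If you want to flesh this out, the original Matsusaka paper and the later simplifications by Hoyt and by Ran are the standard references.
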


If the principally polarized abelian variety $(A,\vartheta)$ is not
irreducible, then the result of Matsusaka gives a characterization of
when $(A,\vartheta)$ is a product of jacobians of curves.  In
\cite{clemens_griffiths}, this condition is equivalent to
$(A,\vartheta)$ being of ``level one.'' This characterization gives a
nice reformulation of the Clemens--Griffiths criterion for
nonrationality (see Theorem~\ref{thm:J-is-bir-inv}) of a smooth
projective threefold $X$ with $h^1=h^{3,0}=0$ and intermediate
jacobian $(J(X),\Theta)$:
\begin{quotation}
If $\Theta^{g-1}/(g-1)!$ is not effective, then $X$ is not rational.
\end{quotation}
One can even interpret the proof of Clemens and Griffiths as showing
that $\Theta^4/4!$ is not effective when $X$ is a cubic threefold.

We know that the Clemens--Griffith criterion for nonrationality can
fail to detect stable rationality, in particular, can fail to ensure
universal $\CH_0$-triviality, see \S\ref{subsec:BCTSSD}.  In
hindsight, a natural question is whether there is a strengthening of
the Clemens--Griffiths criterion for obstructing universally
$\CH_0$-nontriviality.  Voisin~\cite{voisin:cubics} provides such a
strengthened criterion.  In the case of cubic threefolds, where the
universal $\CH_0$-triviality is still an open question, her results
are particularly beautiful.

\begin{theorem}[Voisin~\cite{voisin:cubics}]
\label{thm:cubic_threefold_universal}
Let $X$ be a smooth cubic threefold with intermediate jacobian
$(J(X),\Theta)$.  Then $X$ is universally $\CH_0$-trivial if and only
if $\Theta^4/4! \in H^8(J(X),\ZZ)$ is algebraic.
\end{theorem}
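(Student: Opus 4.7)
The plan is to prove both implications using the Abel--Jacobi isomorphism $\phi: \CH_1(X)_{\mathrm{hom}} \to J(X)$, which is an isomorphism of abelian groups for smooth cubic threefolds (Bloch--Murre), together with Beauville's formula $[F(X)] = \Theta^3/3!$ in $H^6(J(X),\ZZ)$ for the class of the Fano surface of lines embedded in $J(X)$ via the Albanese morphism.

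For the direction ``universally $\CH_0$-trivial $\Rightarrow$ minimal class algebraic'', I would start from an integral decomposition of the diagonal $\Delta_X = x_0 \times X + Z$ in $\CH^3(X \times X)$ with $Z$ supported on $X \times V$ for some $V \subsetneq X$ of dimension at most $2$ (Theorem~\ref{thm:univ_triv}). After desingularization we may assume $V = \tilde V$ is smooth, and $Z = (\id \times j)_* \tilde Z$ for $j: \tilde V \hookrightarrow X$ and $\tilde Z \in \CH^2(X \times \tilde V)$. Since the moving lemma ensures $(x_0 \times X)_*$ acts trivially on $\CH_1(X)$, the decomposition produces a factorization of the identity on $\CH_1(X) = \CH^2(X)$ as $\CH^2(X) \xrightarrow{\tilde Z_*} \CH^1(\tilde V) \xrightarrow{j_*} \CH^2(X)$. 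Passing to homologically trivial parts and applying $\phi$, this exhibits $J(X)$ as a direct summand of $\Pic^0(\tilde V)$ as complex abelian varieties, hence (dually) also of $\Alb(\tilde V)$. Picking a smooth ample hyperplane section $C \subset \tilde V$, Albanese functoriality provides a surjection $J(C) \twoheadrightarrow \Alb(\tilde V)$; composing with the retraction $\Alb(\tilde V) \to J(X)$ gives a split surjection $\pi: J(C) \to J(X)$. Pushing forward the cycle $[C] \in \CH_1(J(C))$, which has cohomology class $\Theta_{J(C)}^{g-1}/(g-1)!$ by the Poincar\'e formula, and analyzing its K\"unneth component along the splitting $J(C) \sim J(X) \oplus B$ (using compatibility of the theta polarization with the direct-summand structure) identifies $[\pi_*(C)]$ with $\Theta^4/4!$ in $H^8(J(X),\ZZ)$.

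For the converse direction, suppose $[\gamma] = \Theta^4/4!$ for an algebraic $1$-cycle $\gamma \in \CH_1(J(X))$. Using $\phi$ combined with the universal line $\mathcal L \subset F(X) \times X$ and the Albanese embedding $F(X) \hookrightarrow J(X)$, one constructs a correspondence $\Gamma \in \CH^2(X \times J(X))$ whose restrictions to slices $X \times \{a\}$ realize the inverse of $\phi$ on classes. Pairing $\gamma$ against $\Gamma$ and transposing produces an algebraic cycle on $X \times X$ whose cohomology class equals the transcendental K\"unneth component $[\Delta_X]_{\mathrm{tr}} \in H^3(X) \otimes H^3(X)$ of the diagonal; the key is that $\Theta^4/4!$ is the class Poincar\'e-dual to the diagonal of the weight-$3$ Hodge structure $H^3(X) \cong H^1(J(X))$. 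Supplementing by explicit algebraic cycles realizing the other K\"unneth components of $[\Delta_X]$ (from $\NS(X) \otimes \NS(X)$ and the top-degree pieces) yields a cohomological decomposition of the diagonal, and promoting this to a Chow-theoretic decomposition in $\CH^3(X \times X)$ via a Bloch--Srinivas type argument at the generic point of $X$ gives universal $\CH_0$-triviality by Theorem~\ref{thm:univ_triv}.

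The main obstacle in both directions is the control of integer coefficients. In the only-if direction, the Poincar\'e and K\"unneth formulas naturally yield the target class only up to rational multiples, and ensuring the integral identity $[\pi_*(C)] = \Theta^4/4!$ requires a delicate tracking of how the principal polarization of $J(C)$ splits (isogenously) along the decomposition $J(C) \sim J(X) \oplus B$ imposed by the diagonal decomposition. In the if direction, lifting a cohomological diagonal decomposition to a Chow-theoretic one generally produces only a rational decomposition (cf.\ Theorem~\ref{thm:univ_triv_N} with some $N \ge 1$); securing $N = 1$ hinges on the assumption that $[\gamma]$ is the minimal class itself rather than a multiple, together with the integrality (not merely up to torsion) of the Abel--Jacobi isomorphism $\phi$ for cubic threefolds.
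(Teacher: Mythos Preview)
Your proposal identifies the right circle of ideas but has a genuine gap in the converse direction, and the paper's route is substantially different from what you sketch.

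The step where you write ``promoting this to a Chow-theoretic decomposition in $\CH^3(X \times X)$ via a Bloch--Srinivas type argument at the generic point of $X$'' is the crux, and it does not work as stated. A Bloch--Srinivas argument at the generic point is exactly the equivalence of Theorem~\ref{thm:univ_triv}: it tells you that a Chow decomposition exists \emph{iff} $\CH_0(X_{k(X)})=\ZZ$, which is what you are trying to prove. Having a cohomological decomposition of the diagonal does not, by any general mechanism, produce a Chow-theoretic one; you yourself note this only yields a rational decomposition. The paper bridges this gap in two nontrivial steps that are specific to cubic hypersurfaces: first Proposition~\ref{prop:homol=>algebraic}, which upgrades a homological decomposition to one modulo algebraic equivalence using the birational identification of $X^{[2]}$ with a projective bundle over $X$ (this is where being a cubic enters essentially), and then Proposition~\ref{prop:algebraic=>rational}, which passes from algebraic to rational equivalence via the nilpotence theorem of Voevodsky/Voisin for cycles algebraically equivalent to zero. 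Neither ingredient appears in your outline.

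There is a second lacuna in your construction of the cohomological decomposition itself. Producing a correspondence $\Gamma \in \CH^2(X \times J(X))$ whose slices invert the Abel--Jacobi map is precisely the existence of a \emph{universal codimension $2$ cycle} (condition \textit{(ii)} in Theorem~\ref{thm:universal}), and this is not a formal consequence of the universal line on $F(X)$ together with the Albanese embedding. In the paper this is obtained from results of Markushevich and Tikhomirov giving a parameterization of $J(X)$ with rationally connected fibers, combined with \cite{voisin:abel-jacobi}; in particular, Voisin deduces the existence of the universal cycle \emph{from} the algebraicity of $\Theta^4/4!$, rather than constructing both independently. Your forward direction is closer in spirit to part of the proof of Theorem~\ref{thm:universal}, and the integrality concern you raise is real; the paper handles it inside that general theorem rather than by the curve-pushforward argument you propose.
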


\begin{remark}
The problem of whether the very general cubic threefold is not stably
rational is still open.  Voisin~\cite[Thm.~4.5]{voisin:cubics} proves
that cubic threefolds are universally $\CH_0$-trivial over a countable
union of closed subvarieties of codimension at most 3 in the moduli.
Colliot-Th\'el\`ene~\cite{colliot-thelene:partiallement_diagonale} has
provided a new proof of this fact.  Voisin also points out the
striking open problem that there is not even a single principally
polarized abelian variety $(A,\vartheta)$ of dimension $g \geq 4$
known for which $\vartheta^{g-1}/(g-1)!$ is not algebraic!
\end{remark}

To give an idea of the ingredients in the proof, we will first start
with some results of Voisin on the decomposition of the diagonal in
various cohomology theories.

\begin{prop}[{Voisin~\cite[Prop.~2.1]{voisin:cubics}}]
\label{prop:algebraic=>rational}
Let $X$ be a smooth projective variety over a field of characteristic
0.  If $X$ admits a decomposition of the diagonal modulo algebraic
equivalence, then it admits a decomposition of the diagonal.
\end{prop}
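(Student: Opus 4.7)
The strategy is to apply Theorem~\ref{thm:univ_triv} to reduce the existence of an integral decomposition of the diagonal to showing that $X$ admits a $0$-cycle of degree $1$ (immediate from the hypothesis) and that the degree map $\CH_0(X_{k(X)}) \to \ZZ$ is an isomorphism. Setting $F = k(X)$ and letting $\xi \in X(F)$ be the tautological $F$-point, the decomposition modulo algebraic equivalence evaluated at $\xi$ shows that $\xi - P_F$ is algebraically equivalent to zero in $\CH_0(X_F)$. The task thus becomes promoting this algebraic equivalence to a rational equivalence, equivalently showing that the subgroup of $\CH_0(X_F)$ of cycles algebraically equivalent to zero vanishes.

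First, I would extract Hodge-theoretic consequences from the hypothesis: because algebraic equivalence implies homological equivalence, the Bloch--Srinivas arguments from \S\ref{subsec:decomp_acting} apply verbatim to the decomposition modulo algebraic equivalence and yield $H^p(X, \OO_X) = 0$ for $p > 0$, and in particular $\Alb(X) = 0$. Consequently, the degree-zero part $\CH_0(X_F)_0$ coincides with the Albanese kernel $T(X_F)$, and it suffices to prove $T(X_F) = 0$. Second, I would invoke the classical fact, due essentially to Bloch and Mumford, that any correspondence on $X \times X$ which is algebraically (indeed, merely homologically) trivial acts as zero on the Albanese kernel $T(X_F)$; heuristically, the action on $T$ factors through the transcendental cohomology of $X \times X$, which sees no homologically trivial cycles.

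Combining these, I would write the hypothesis as $\Delta_X = P \times X + Z + \Gamma$ in $\CH^n(X \times X)$ with $\Gamma$ algebraically trivial, and apply both sides as correspondences to $\xi - P_F \in T(X_F)$. Since $(P \times X)_*$ annihilates degree-zero cycles and $\Gamma_*$ annihilates $T(X_F)$, one obtains $\xi - P_F = Z_*(\xi - P_F)$ in $\CH_0(X_F)$, with the right hand side supported on a proper closed subvariety $V \subsetneq X$. This realizes the Bloch--Srinivas factorization of the identity of $T(X_F)$ through $T(\widetilde V_F)$, where $\widetilde V \to V$ is a Hironaka resolution. An induction on $\dim X$, with trivial base case $\dim X = 0$, then yields $T(X_F) = 0$, whence $\CH_0(X_F) = \ZZ$ and Theorem~\ref{thm:univ_triv} delivers the desired integral decomposition. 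The principal obstacle is the inductive step: propagating enough of the decomposition modulo algebraic equivalence down to the resolved subvariety $\widetilde V$ so that the Bloch--Srinivas reduction may be iterated. The characteristic $0$ hypothesis is essential both for Hironaka's resolution of $V$ and for the Hodge-theoretic vanishings invoked throughout.
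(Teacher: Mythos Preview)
Your step 4 is the gap. The assertion that an algebraically (let alone homologically) trivial correspondence $\Gamma \in \CH^n(X\times X)$ acts as zero on the Albanese kernel $T(X_F)$ is \emph{not} a theorem for varieties of dimension $>2$. For surfaces, Bloch's work relates the action on $T$ to the action on transcendental $H^2$, but in higher dimension this is essentially a piece of the Bloch--Beilinson filtration conjecture and is wide open. Note that if your ``merely homologically'' version were available, your argument would promote a homological decomposition of the diagonal directly to a rational one whenever $\Alb(X)=0$; yet the very next proposition in the text (the passage from homological to algebraic for certain cubics) is a nontrivial result requiring special geometry, so the general claim cannot be a ``classical fact.''  What \emph{is} known about algebraically trivial correspondences is only that they are \emph{nilpotent} under self-composition (Voevodsky, Voisin), which is strictly weaker than acting as zero on $T$. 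Your acknowledged difficulty in step 6 compounds this: even granting step 4, the resolved subvariety $\widetilde V$ has no reason to inherit a decomposition modulo algebraic equivalence, so the induction does not close.

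The paper's proof bypasses both issues by working entirely in the correspondence ring $\CH^n(X\times X)$. Writing $\Delta_X = \alpha + \Gamma$ with $\alpha = P\times X + Z$ and $\Gamma$ algebraically trivial, the nilpotence theorem gives $\Gamma^{\circ N}=0$ for some $N$. Since $\Delta_X$ is the unit for composition, $\Delta_X = \Delta_X^{\circ N} = (\alpha+\Gamma)^{\circ N}$, and expanding, every surviving monomial contains at least one factor $\alpha$. One then checks that the subgroup of correspondences of the form $Q\times X + W$ with $W$ supported on $X\times V'$ for some proper closed $V'\subsetneq X$ is a right ideal for composition and is stable under left composition by arbitrary correspondences up to terms again of this shape; hence each monomial lies in it, and $\Delta_X$ itself has an integral Chow-theoretic decomposition. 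No statement about the action on $T(X_F)$, and no induction on $\dim X$, is needed.
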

The proof uses a special case of the nilpotence conjecture, proved
independently by Voevodsky~\cite{voevodsky:nilpotence} and
Voisin~\cite{voisin:self_products}, stating that correspondences in
$\CH(X\times X)$ algebraically equivalent to 0 are nilpotent for
self-composition.

We point out that the general nilpotence conjecture, for cycles
homologically equivalent to 0, is known to imply, in particular,
Bloch's conjecture for surfaces with $p_g=0$, see
\cite[Rem.~3.31]{voisin:decomposition_diagonal_book}.

\begin{prop}
\label{prop:homol=>algebraic}
Let $X$ be a smooth cubic hypersurface such that
$H^{2i}(X,\ZZ)/\im(\CH^i(X) \to H^{2i}(X,\ZZ))$ has no 2-torsion for all
$i \geq 0$ (e.g., $X$ has odd dimension or dimension 4, or is very
general of any dimension).  If $X$ admits a decomposition of the
diagonal modulo homological equivalence, then it admits a
decomposition of the diagonal modulo algebraic equivalence.
\end{prop}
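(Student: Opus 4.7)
The plan is as follows. Let $n = \dim X$ and set $\gamma := \Delta_X - P \times X - Z \in \CH^n(X\times X)$, which by hypothesis is homologically trivial. It suffices to show that $\gamma$ itself is algebraically trivial modulo cycles supported on $X \times V$ for some proper closed subvariety $V \subsetneq X$, since then the desired decomposition modulo algebraic equivalence follows by absorbing the correction term into $Z$.

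First, I would use that the integral cohomology of a smooth cubic hypersurface is torsion-free, so the integral K\"unneth formula holds on $X \times X$. The vanishing $[\gamma] = 0$ in $H^{2n}(X\times X,\ZZ)$ then translates into component-wise vanishing in each K\"unneth summand $H^i(X,\ZZ) \otimes H^{2n-i}(X,\ZZ)$; in particular, the induced action $\gamma_*$ on each $H^i(X,\ZZ)$ vanishes. This cohomological control is what makes the cubic structure leverageable in the next step.

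Second, I would exploit the cubic structure via the secant construction: the rational map $\sigma : X \times X \dashrightarrow X$ sending a generic pair $(x,y)$ to the third intersection of the line $\overline{xy}$ with $X$, together with the symmetric incidence $S \subset X^3$ parameterizing collinear triples contained in $X$, yields a correspondence implementing the identity $x + y + z = h^n|_X$ in $\CH_0(X)$ whenever $(x,y,z)$ are collinear. Composing this secant correspondence with $\gamma$ and with the transposition involution on $X \times X$, the Chow-theoretic identity together with the cohomological vanishings from Step 1 produce an algebraic triviality for $2\gamma$ modulo cycles supported on $X \times V$ for some proper $V \subsetneq X$. The factor of $2$ reflects the $\ZZ/2$-symmetry exchanging the two factors of $X \times X$ used to symmetrize the secant construction.

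Third, I must descend from algebraic triviality of $2\gamma$ to that of $\gamma$. The obstruction to doing so lives in a $2$-torsion subquotient of $H^\bullet(X \times X, \ZZ) / \im(\CH^\bullet(X \times X))$, which by the integral K\"unneth decomposition is built from tensor products of the groups $H^{2i}(X,\ZZ) / \im(\CH^i(X))$. The hypothesis that these quotients are free of $2$-torsion, together with the torsion-freeness of the odd cohomology groups of $X$, forces this obstruction to vanish, yielding the algebraic triviality of $\gamma$ itself. The main obstacle, in my view, is Step 2: making the secant construction produce a genuine algebraic equivalence (not merely a homological one) exhibiting $2\gamma$ as a boundary, which requires a careful analysis of the symmetric incidence $S$ near the diagonals and of the locus where $\sigma$ fails to be defined (most notably, the locus of lines contained in $X$).
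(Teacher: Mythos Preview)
Your geometric intuition is sound --- the secant construction is indeed the key structural input for cubics --- but the proof in the paper (following Voisin) packages this differently, via the Hilbert scheme $X^{[2]}$ of length-$2$ subschemes. The point is that the secant map you describe is precisely what makes $X^{[2]}$ birational to a $\PP^n$-bundle $P \to X$ (the Galkin--Shinder observation): a length-$2$ subscheme spans a line, which meets $X$ residually in a third point. One then argues: a homological decomposition of $\Delta_X$ yields one for $\Delta_P$ by the projective bundle formula, hence for $\Delta_{X^{[2]}}$ by birational invariance; the cohomology of $X^{[2]}$ is controlled by that of $X$ (this is where Totaro's topological computation, or Voisin's more algebraic argument, enters), and the $2$-torsion hypothesis is used to lift the relevant cohomology classes on $X^{[2]}$ to algebraic classes; finally one descends back to $X$ via the relationship between $X^{[2]}$, $\mathrm{Sym}^2 X$, and $X\times X$.

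Your Step~2 has a genuine gap. You assert that composing the secant correspondence with $\gamma$ and the transposition ``produces an algebraic triviality for $2\gamma$,'' but you have not explained the mechanism. The identity $x + y + \sigma(x,y) = h^n|_X$ is a \emph{rational} equivalence of $0$-cycles; it does not by itself produce any algebraic equivalence for the $n$-cycle $\gamma$ on $X\times X$. The passage from ``homologically trivial'' to ``algebraically trivial'' requires knowing that the relevant cohomology classes are algebraic --- and that is exactly what the $X^{[2]}$ formulation supplies, because on a projective bundle every cohomology class is algebraic. Working directly with the (only rationally defined) secant map, you would have to resolve its indeterminacy and track the boundary terms carefully, which essentially reconstructs $X^{[2]}$ anyway.

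Your Step~3 also locates the obstruction in the wrong group. If you knew that $2\gamma$ were algebraically trivial, the residual class of $\gamma$ would be a $2$-torsion element of the \emph{Griffiths group} (the kernel of the cycle class map on cycles modulo algebraic equivalence), not of the cokernel $H^{2i}(X,\ZZ)/\im(\CH^i(X))$. The hypothesis on cokernels is instead used, in the $X^{[2]}$ approach, to ensure that the integral cohomology classes arising on $X^{[2]}$ from the homological decomposition are themselves algebraic --- it controls a lifting problem, not a divisibility problem.
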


The proof uses the relationship between a decomposition of the
diagonal on $X$ and on $X \times X$ and the Hilbert scheme of length 2
subschemes $X^{[2]}$, as well as the fact that $X^{[2]}$ is birational
to the total space of a projective bundle over $X$,
cf.~\cite{galkin-shinder-cubic}.  There is also a purely topological
approach to this result due to Totaro~\cite{totaro:Hilb}.

Finally, Voisin provides a general necessary and sufficient condition
for the decomposition of the diagonal modulo homological equivalence
of a rationally connected threefold.

We first recall the Abel--Jacobi map for codimension 2 cycles on a
smooth projective threefold $X$ with intermediate jacobian $J(X)$.
The Griffiths Abel--Jacobi map
$$
\alpha_X : \CH^2(X)_{\homological} \to J(X)(\CC)
$$
is an isomorphism by the work of Bloch and
Srinivas~\cite{bloch_srinivas}, since $\CH_0(X)=\ZZ$.

\begin{definition}
The we say that $X$ admits a \linedef{universal codimension $2$ cycle}
if there exists $Z \in \CH^2(J(X) \times X)$ such that $Z_a=Z_{a\times
X}$ is homologous to $0$ for any $a \in J(X)$ and that the morphism
$\Phi_Z : J(X) \to J(X)$, induced by $a \mapsto \alpha_X(Z_a)$, is
the identity on $J(X)$.
\end{definition}

The existence of a universal codimension $2$ cycle is equivalent to
the tautological class in $\CH^2(X_{\overline{F}})$ being in the image
of the map $\CH^2(X_F) \to \CH^2(X_{\overline{F}})$, where
$F=\CC(J(X))$ is the function field of the intermediate Jacobian, see
\cite[\S5.2]{colliot:descenteCH2}.

\begin{theorem}[{Voisin~\cite[Thm.~4.1]{voisin:cubics}}]
\label{thm:universal}
Let $X$ be a rationally connected threefold and $(J(X),\Theta)$ its
intermediate Jacobian of dimension $g$. Then $X$ admits a
decomposition of the diagonal modulo homological equivalence if and
only if the following properties are satisfied:
\begin{enumerate}
\item $H^3(X,\ZZ)$ is torsionfree.

\item $X$ admits a universal codimension $2$ cycle.

\item $\Theta^{g-1}/(g-1)!$ is algebraic.
\end{enumerate}
\end{theorem}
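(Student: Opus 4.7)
The plan is to analyze $[\Delta_X] \in H^6(X \times X, \ZZ)$ via its integral K\"unneth decomposition. For $X$ a rationally connected threefold, $H^i(X, \ZZ) = 0$ for $i = 1, 5$, and the groups $H^0, H^2, H^4, H^6$ are generated modulo torsion by algebraic classes---the case $H^2$ and $H^4$ using the integral Hodge conjecture for divisors and for codimension $2$ cycles (Voisin, for rationally connected threefolds). Hence the K\"unneth components of $[\Delta_X]$ in bidegrees $(0,6), (2,4), (4,2), (6,0)$ are automatically represented by products of algebraic cycles. The only subtle piece is the $(3,3)$-K\"unneth component, which is controlled by the intermediate Jacobian via $\alpha_X : \CH^2(X)_{\homological} \isom J(X)$ and the principal polarization $\Theta$.

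\textbf{Forward direction.} Assume $[\Delta_X] = [P \times X] + [Z]$ in $H^6(X\times X, \ZZ)$ with $Z$ supported on $X \times V$ for a proper closed $V \subsetneq X$, and fix a resolution $\phi: \tilde{V} \to V$ of dimension at most $2$. A dimension count shows that $[Z]_* : H^3(X, \ZZ) \to H^3(X, \ZZ)$ factors through $H^1(\tilde V, \ZZ)$---only the surface components of $\tilde V$ contribute, and the intermediate cohomology has degree $3 + 2 \cdot 2 - 6 = 1$. Since $[P \times X]_* = 0$ on $H^3$, the identity on $H^3(X, \ZZ)$ factors through the torsionfree group $H^1(\tilde V, \ZZ)$, forcing condition (1). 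Torsionfreeness then yields the integral K\"unneth decomposition, and the $(3,3)$-component of $[Z]$, transported to $J(X) \times J(X)$ via $\alpha_X$, is represented by an effective algebraic class in $H^{2g-2}(J(X), \ZZ)$ coming from curves in the surface components of $\tilde V$ mapped to $J(X)$; the Poincar\'e formula and the self-duality induced by $\Theta$ pin this class down to be $\Theta^{g-1}/(g-1)!$, yielding (3). Finally, the relative codimension $2$ cycle on $X \times \tilde V$ coming from $Z$ produces, via the Albanese surjection $\Alb(\tilde V) \twoheadrightarrow J(X)$, a universal codimension $2$ cycle, which is condition (2).

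\textbf{Reverse direction.} Assume (1), (2), (3). By (1), the integral K\"unneth formula for $H^6(X \times X, \ZZ)$ is available, and the even-bidegree K\"unneth components of $[\Delta_X]$ are represented by algebraic cycles of the form $D_i \times D_i'$ with $D_i' \subsetneq X$; collect their supports into a proper closed $V_0 \subsetneq X$. For the $(3,3)$-piece, use (3) to pick an effective curve $C \subset J(X)$ with $[C] = \Theta^{g-1}/(g-1)!$, and (2) to obtain the universal codimension $2$ cycle $\mathcal Z \in \CH^2(J(X) \times X)$. Form the composition of correspondences through $C$:
$$ W \;:=\; \mathcal Z|_{C \times X} \;\circ\; \bigl(\mathcal Z|_{C \times X}\bigr)^{t} \;\in\; \CH^3(X \times X). $$
Using $\Phi_{\mathcal Z} = \id_{J(X)}$ together with the Poincar\'e formula---which asserts that cap product with $[C] = \Theta^{g-1}/(g-1)!$ inverts the polarization isomorphism $H^1(J(X), \ZZ) \isom H^1(J(X), \ZZ)^{\vee}$---the correspondence $W$ acts as the identity on $H^3(X, \ZZ)$ and trivially on all other cohomology. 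Hence $[W]$ equals the $(3,3)$-K\"unneth component of $[\Delta_X]$, and its support projects in the second factor onto the proper closed subvariety $V_1 \subsetneq X$ cut out by $\mathcal Z|_{C \times X}$. With $V := V_0 \cup V_1$ and a $0$-cycle $P$ of degree $1$ on $X$, this provides the decomposition of the diagonal modulo homological equivalence.

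\textbf{Main obstacle.} The main technical hurdle is verifying that the cohomology class of $W$ equals exactly the $(3,3)$-K\"unneth component of $[\Delta_X]$. This requires precisely matching the Poincar\'e formula for $[C]$ in $H^{2g-2}(J(X), \ZZ)$, the self-duality $J(X) \isom \widehat{J(X)}$ induced by $\Theta$, and the universality $\Phi_{\mathcal Z} = \id_{J(X)}$ of the codimension $2$ cycle. Torsionfreeness of $H^3(X, \ZZ)$ in condition (1) is essential to carry out this identification integrally rather than with $\QQ$-coefficients. Gathering supports into a single proper closed subvariety $V$ is then routine, via the moving lemma for algebraic cycles on a smooth quasi-projective variety and the rational connectedness of $X$.
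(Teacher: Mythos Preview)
The paper does not prove this theorem; it is stated with attribution to Voisin and cited from \cite[Thm.~4.1]{voisin:cubics}, then used as a black box in the outline of the proof of Theorem~\ref{thm:cubic_threefold_universal}. There is thus no argument in the paper to compare your sketch against.

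Your overall architecture is the right one---isolate the $(3,3)$-K\"unneth component of $[\Delta_X]$ and realize it via the intermediate Jacobian---but several steps need repair.

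\textbf{Effectivity versus algebraicity.} You twice conflate the two. In the forward direction you assert that the minimal curve class you obtain on $J(X)$ is effective; in fact the cycle $Z$ may carry signs, and what you actually produce is a $\ZZ$-linear combination of images of curves, hence only an algebraic class. This is harmless for proving (3), which only demands algebraicity, but your wording is wrong. Symmetrically, in the reverse direction you ``pick an effective curve $C$'' with $[C]=\Theta^{g-1}/(g-1)!$; condition~(3) gives you only an algebraic $1$-cycle $\sum n_i C_i$, so your correspondence $W$ must be built as the corresponding signed sum of compositions through the $C_i$, and you must check that the resulting support in the second factor is still a proper subvariety.

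\textbf{Torsion and the K\"unneth formula.} You claim that condition~(1) alone yields an integral K\"unneth decomposition of $H^6(X\times X,\ZZ)$. But (1) controls only $H^3$ (equivalently $H^4$ by universal coefficients and Poincar\'e duality); a rationally connected threefold can in principle have torsion in $H^2$, equivalently nonzero finite $H_1$, which contributes $\mathrm{Tor}$-terms in bidegrees $(2,5)$ and $(5,2)$. In the forward direction this is not an obstacle, since the cohomological decomposition of the diagonal already kills such torsion (the identity on $H^2_{\mathrm{tors}}$ factors through cohomology of a point). In the reverse direction, however, you must either argue that these torsion pieces are themselves supported on $X\times V$ for proper $V$, or restrict the statement to the case where all of $H^*(X,\ZZ)$ is torsion-free.

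\textbf{The universal codimension~$2$ cycle.} Your extraction of condition~(2) in the forward direction is too compressed. From $Z$ one obtains a correspondence between a desingularized surface $\widetilde V$ and $X$ whose Abel--Jacobi map $\widetilde V \to J(X)$ factors through a surjection $\Alb(\widetilde V)\twoheadrightarrow J(X)$; but to descend a codimension~$2$ cycle from $\Alb(\widetilde V)\times X$ to $J(X)\times X$ with $\Phi_{\mathcal Z}=\id_{J(X)}$ one must split this surjection using the polarization (Poincar\'e reducibility) and pull back---this is where the actual work lies, and it should be spelled out.

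The identification of $[W]$ with the $(3,3)$-K\"unneth component in the reverse direction is the heart of the matter and your account of it is essentially correct, modulo the effectivity issue above.
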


\begin{remark}
In fact, Theorem~\ref{thm:universal} has the following
generalizations, see~\cite[Thm.~4.2,~Rem.~4.3]{voisin:cubics}.  If
$N\Delta_X$ admits a decomposition modulo homological equivalence then
$N^2 \Theta^{g-1}/(g-1)!$ is algebraic.  Furthermore, if $X$ admits a
unirational parameterization of degree $N$, then $N
\Theta^{g-1}/(g-1)!$ is effective.
\end{remark}

Finally, we outline the proof of
Theorem~\ref{thm:cubic_threefold_universal} due to Voisin.  Let $X$ be
a cubic threefold with intermediate jacobian $(J(X),\Theta)$.
Propositions~\ref{prop:algebraic=>rational} and
\ref{prop:homol=>algebraic} imply that a decomposition of the diagonal
on $X$ is equivalent to a decomposition of the diagonal modulo
homological equivalence.  Since $H^3(X,\ZZ)$ is torsionfree, by
Theorem~\ref{thm:universal}, it would then suffice to
show that $X$ admits a universal codimension $2$ cycle.  However, this
is not known.  Instead, Voisin uses results of Markushevich and
Tikhomirov~\cite{mark_tikh} on parameterizations of $J(X)$ with
rationally connected fibers, which implies, using results from
\cite{voisin:abel-jacobi}, that if $\Theta^{g-1}/(g-1)!$ is algebraic
then $X$ admits a universal codimension $2$ cycle.

We point out that Hassett and
Tschinkel~\cite{hassett_tschinkel:stable} have proved that for every
family of smooth Fano threefold not birational to a cubic threefold,
either every element in the family is rational or the very general
element is not universally $\CH_0$-trivial.  They use the degeneration
method outlined in \S\ref{subsec:degeneration}.

\subsection{Cubic fourfolds}

Cubic fourfolds are rationally connected.  Indeed, they are Fano
hypersurfaces, hence their rational connectivity is a consequence of
the powerful results of \cite{kollar_miyaoka_mori:Fano}.  A more
elementary reason is that they are unirational.  This fact that was
likely known to M.\ Noether (cf.\ \cite[App.~B]{clemens_griffiths}).

\begin{prop}
Let $X\subset \PP^{n+1}$ be a cubic hypersurface of dimension $n \geq
2$ containing a line.  Then $X$ admits a unirational parameterization
of degree 2.
\end{prop}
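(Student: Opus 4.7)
The plan is to produce, from the given line $L \subset X$, an explicit rational variety $V$ of dimension $n$ together with a dominant rational map $\phi : V \dashrightarrow X$ of degree $2$. The construction is classical: $V$ will parametrize lines in $\PP^{n+1}$ tangent to $X$ along $L$, and $\phi$ sends such a tangent line to its residual intersection point with $X$.

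Concretely, for a smooth point $p \in L$, let $T_p X \subset \PP^{n+1}$ be the embedded tangent hyperplane; since $L \subset X$ one has $L \subset T_p X$. Set
$$ V = \{(p,\ell) : p \in L,\ \ell \subset \PP^{n+1} \text{ a line},\ p \in \ell \subset T_p X\}. $$
The projection $V \to L$ has fiber over $p$ equal to the $\PP^{n-1}$ of lines through $p$ inside $T_p X \cong \PP^n$, so $V$ is a $\PP^{n-1}$--bundle over $L \cong \PP^1$. In particular $\dim V = n$, and any projective bundle over $\PP^1$ is rational, so $V$ is rational.

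For $(p,\ell)$ in a dense open of $V$, the line $\ell \not\subset X$ is tangent to $X$ at $p$, hence the divisor $\ell \cdot X$ on $\ell$ satisfies $\ell \cdot X = 2p + q$ for a uniquely determined residual point $q \in X$. This defines
$$ \phi : V \dashrightarrow X, \qquad (p,\ell) \longmapsto q. $$
To compute the degree of $\phi$, I would take a general $q \in X$ and describe $\phi^{-1}(q)$: it consists of pairs $(p,\overline{pq})$ with $p \in L$ and $\overline{pq} \subset T_p X$, equivalently $q \in T_p X$. Writing $F$ for a cubic equation cutting out $X$, the condition $q \in T_p X$ reads $\sum_i q_i (\partial F/\partial x_i)(p) = 0$, which is homogeneous of degree $2$ in the coordinates of $p$; its restriction to $L \cong \PP^1$ therefore cuts out a length $2$ subscheme, giving exactly two values of $p$.

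The remaining point to verify, which I expect to be the mildest obstacle, is that the genericity hypotheses really hold for $q$ in a dense open of $X$. Failure of the quadratic being nontrivial on $L$ would force $q \in \bigcap_{p \in L} T_p X$, a proper linear subspace of $\PP^{n+1}$ which cannot contain the cubic hypersurface $X$ for $n \geq 2$. Failure to have $\ell \not\subset X$ at one of the two selected $p$'s would force $q$ to lie on a line of $X$ meeting $L$, and such $q$ sweep out a proper closed subset of $X$. Since both failures are closed conditions, the map $\phi$ is dominant of degree $2$; composing with any birational parametrization $\PP^n \dashrightarrow V$ yields the desired unirational parametrization of $X$ of degree $2$.
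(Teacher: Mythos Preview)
Your proof is correct. Both arguments rest on the same underlying geometry, but the packaging differs. The paper blows up the line $L$ to obtain a conic bundle $\mathrm{Bl}_L X \to \PP^{n-1}$ (fibers are the residual conics in planes through $L$), observes that the exceptional divisor $E$ is a degree-$2$ multisection, and base-changes along $E$ to get a conic bundle with a section over a rational base; the projection back to $X$ then has degree $2$. You instead use the classical tangent-line construction directly: parametrize lines tangent to $X$ at points of $L$ and take the residual intersection. The two are birationally identified, since a generic $(p,\ell)\in V$ determines a plane $\Pi=\overline{L,\ell}$ and a point $q$ on the residual conic in $\Pi$, which is exactly a point of the paper's fiber product $\mathrm{Bl}_L X \times_{\PP^{n-1}} E$; under this identification the two maps to $X$ agree. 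Your route is more elementary and self-contained (no blow-ups or conic bundles needed, and the degree computation via the quadratic $\sum q_i\,\partial_i F(p)=0$ on $L$ is transparent); the paper's formulation makes the conic-bundle structure explicit, which connects to other parts of the text.
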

\begin{proof}
Blowing up a line $\ell \subset X$, we arrive at a conic bundle
$\text{Bl}_\ell X \to \PP^{n-1}$, for which the exceptional divisor
$E$ is a multisection of degree $2$.  Thus the fiber product
$\text{Bl}_\ell X \times_{\PP^{n-1}} E \to E$ is a conic bundle with a
section, hence a rational variety since $E$ is rational.  The map
$\text{Bl}_\ell X \times_{\PP^{n-1}} E \to X$ is generically finite of
degree $2$.  Thus $X$ admits a unirational parameterization of degree
$2$.  
\end{proof}

Of course every smooth cubic hypersurface of dimension at least 2 over
an algebraically closed field contains a line, as one can reduce, by
taking hyperplane sections, to the case of a cubic surface.  In
particular, if $X$ is a cubic fourfold, $\CH_0(X)$ is universally
$2$-torsion.  One way that $X$ could be universally $\CH_0$-trivial is
if $X$ admitted a unirational parameterization of odd degree.  Indeed,
if a variety $X$ admits unirational parameterizations of coprime
degrees, then $X$ is universally $\CH_0$-trivial.  In fact, the
following question is still open.

\begin{question}
Does there exist a nonrational variety with unirational
parameterizations of coprime degrees?
\end{question}

While the existence of cubic fourfolds with unirational
parameterizations of odd degree is currently limited, a beautiful
result of Voisin~\cite[Thm.~5.6]{voisin:cubics} states that in fact
many classes of special cubic fourfolds are universally
$\CH_0$-trivial.

We recall, from \S\ref{subs:cubic-4folds}, the divisors $\cC_d \subset
\cC$ of special cubic fourfolds of discriminant $d$ in the coarse
moduli space $\cC$ of cubic fourfolds.  These are Noether--Lefschetz
type divisors, which are nonempty for $d > 6$ and $d \equiv 0,2 \bmod
6$, see Theorem~\ref{thm:hassett:special}.  We recall that
Voisin~\cite[Thm.~18]{voisin:aspects} has shown the integral Hodge
conjecture for cubic fourfolds, i.e., that the cycle class gives rise
to an isomorphism $\CH^2(X) = H^4(X,\ZZ) \cap H^{2,2}(X)$, and
moreover, that every class in $\CH^2(X)$ can be represented by a
(possibly singular) rational surface.  For $X$ very general in the
moduli space, $\CH^2(X)$ is generated by the square of the hyperplane
class $h^2$, and the rank of $\CH^2(X)$ is $>1$ if and only if $X$
lies on one of the divisors $\cC_d$.  For small values of $d$, the
geometry of additional $2$-cycles $T \in \CH^2(X)$, for general $X
\in \cC_d$, is well understood.  For $d\leq 20$, this was understood
classically.  For $d \leq 38$, Nuer~\cite{nuer} provides explicit
smooth models of the rational surfaces that arise.  It is still an open question as
to whether $\CH^2(X)$ is always generated by classes of smooth
rational surfaces, cf.\ \cite[Question~14]{hassett-cubicsurvey}.
Nuer's approach for $d \leq 38$ provides a unirational
parameterization of $\cC_d$, while it is known that for $d \gg 0$, the
divisors $\cC_d$ become of general type, see \cite{TVA:C_d}.

We are interested in representations of cycle classes for $X \in
\cC_d$ because of the following result on the existence of unirational
parameterizations of odd degree of certain special cubic fourfolds
(see \cite[Cor.~35]{hassett-cubicsurvey}), which was initiated in
\cite[\S7.5]{hassett_tschinkel:rational_curves_homolomorphic_symplectic},
with corrections by Voisin (see \cite[Ex.~38]{hassett-cubicsurvey}).

\begin{prop}
Let $X \in \cC_d$ be a special cubic fourfold whose additional
$2$-cycle $T \subset X$ is a rational surface.  Assume $T$ has
isolated singularities and a smooth normalization.  If $d$ is not
divisible by $4$ then $X$ admits a unirational parameterization of odd
degree.
\end{prop}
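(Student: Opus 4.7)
The plan is to use the rational surface $T$ to build a dominant rational map from a rational fourfold to $X$, whose degree will be odd precisely when $4 \nmid d$. Let $\nu: \tilde T \to T$ be the normalization, which by hypothesis is smooth and rational. Set $n = \deg T \subset \PP^5$ and $m = (T \cdot T)_X$; the labeling has discriminant $d = 3m - n^2$. Since $d \equiv 0, 2 \pmod 6$ by Theorem~\ref{thm:hassett:special} and $4 \nmid d$ by hypothesis, we have $d \equiv 2 \pmod 4$.

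Consider the \emph{residual intersection map}
\[
\phi: \tilde T^{(2)} \dashrightarrow X, \qquad \{t_1, t_2\} \mapsto z(t_1, t_2),
\]
where $z(t_1, t_2)$ is the third point of the length-three scheme $\overline{\nu(t_1)\,\nu(t_2)} \cap X$ on the secant line $\overline{\nu(t_1)\,\nu(t_2)} \subset \PP^5$. Since $\tilde T$ is smooth rational, $\tilde T^{(2)}$ is a rational fourfold, hence unirational. The map $\phi$ is regular on the open locus of pairs of distinct smooth points of $T$ whose connecting line is not contained in $X$; the hypotheses of isolated singularities and smooth normalization guarantee that this locus is dense.

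Dominance of $\phi$ follows from non-degeneracy of $T$ in $\PP^5$: primitivity of the labeling together with the exclusion of very low discriminants forces $T$ not to lie in any hyperplane, so the secant variety $\mathrm{Sec}(T)$ fills $\PP^5$, and a general $x \in X$ lies on some secant line of $T$. For the degree, $\deg \phi$ equals the number $s(x)$ of secant lines to $T$ passing through a generic $x \in X$. Linear projection $\pi_x: \PP^5 \dashrightarrow \PP^4$ from $x$ identifies $s(x)$ with the number of ordinary double points of the generically injective morphism $\pi_x \circ \nu: \tilde T \to \PP^4$. Fulton's double point formula~\cite[Thm.~9.3]{fulton:intersection_theory} expresses $s(x)$ as a polynomial in $n$ and the Chern numbers of $\tilde T$. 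Using adjunction to rewrite those Chern numbers in terms of the embedding data $(n, m)$---in particular replacing $K_{\tilde T}$-related contributions by expressions involving $m$ via the restriction of $K_X + T$ to $T$---yields a formula for $s(x) \bmod 2$ that depends only on $d \bmod 4$. One checks that $s(x)$ is odd if and only if $d \equiv 2 \pmod 4$, and so $\phi$ furnishes the desired odd-degree unirational parameterization.

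The main obstacle is the parity analysis in the final step. For smooth $T$ the computation is a direct invocation of Fulton's formula, but in the singular case one must pass to the normalization and carefully compare $K_T$ with $K_{\tilde T}$; the hypothesis of isolated singularities with smooth normalization is precisely what makes this comparison tractable (the correction terms are controlled by a zero-dimensional locus). A clean, geometric-type-independent derivation of the parity of $s(x)$---valid for every $T$ realizing a labeling of discriminant $d$, rather than case-by-case along the lines of~\cite{nuer}---is the subtlest part of the argument, and is precisely where the numerical constraint $d \not\equiv 0 \pmod 4$ enters decisively.
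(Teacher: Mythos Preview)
Your approach is correct and matches the standard construction (the paper itself does not prove this proposition, citing instead \cite{hassett-cubicsurvey} and \cite{hassett_tschinkel:rational_curves_homolomorphic_symplectic}, but the same residual-point-on-a-secant-line construction appears in the paper's sketch of Voisin's Theorem~5.6). The only cosmetic difference is that the paper phrases things via the map $T\times T\dashrightarrow X$, whose degree is $2s(x)\equiv d\pmod 4$, whereas you pass directly to the symmetric square $\tilde T^{(2)}$ to extract the odd factor $s(x)$; these are equivalent. Two small points worth tightening: the rationality of $\tilde T^{(2)}$ for $\tilde T$ a smooth rational surface deserves a one-line justification, and your dominance argument via $\mathrm{Sec}(T)=\PP^5$ should acknowledge that non-degeneracy alone is not sufficient (one must rule out secant-defective surfaces such as the Veronese), though this is harmless in the present setting.
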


In general, we do not know if the required rational surface $T \subset
X$ can always be choosen with isolated singularities and smooth
normalization.  The construction of Nuer~\cite{nuer} provides smooth
rational $T \subset X$ for $d \leq 38$\footnote{Very recently,
Lai~\cite{lai:42} verified that the required rational surface $T
\subset X$ is nodal for $d=42$.}.

However, not assuming the existence of unirational parameterizations
of coprime degree, Voisin has the following result.

\begin{theorem}[{Voisin~\cite[Thm.~5.6]{voisin:cubics}}]
If $4 \nmid d$ then any $X \in \cC_d$ is universally $\CH_0$-trivial.
\end{theorem}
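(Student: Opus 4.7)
The plan is to combine the universal $2$-torsion of $\CH_0(X)$ (which holds for every cubic fourfold) with an ``odd-multiplier'' rational decomposition of the diagonal obtained from the extra algebraic cycle supplied by the labeling, and then take a $\ZZ$-linear combination to cancel the multipliers. Concretely, containment of a line in $X$ gives a degree-$2$ unirational parametrization and hence a rational decomposition $2\,\Delta_X = 2(P\times X) + Z_1$ in $\CH^4(X\times X)$, with $Z_1$ supported on $X\times V_1$ for some proper closed $V_1\subsetneq X$; by Theorem~\ref{thm:univ_triv_N} it therefore suffices to exhibit an odd integer $N$ for which $N\,\Delta_X$ admits an analogous decomposition, since then B\'ezout combining the two decompositions yields an integral decomposition of $\Delta_X$.

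To construct such an odd-multiplier decomposition I would first reduce the problem to cohomology. Propositions~\ref{prop:algebraic=>rational} and~\ref{prop:homol=>algebraic} apply to $X$: the hypothesis that $H^{2i}(X,\ZZ)/\im(\CH^i(X)\to H^{2i}(X,\ZZ))$ has no $2$-torsion is satisfied in dimension $4$ by the integral Hodge conjecture for codimension $2$ cycles on cubic fourfolds (Voisin~\cite[Thm.~18]{voisin:aspects}), combined with torsion-freeness of $H^*(X,\ZZ)$. Hence any decomposition of the diagonal modulo homological equivalence lifts, in two stages, to one modulo rational equivalence. The task becomes exhibiting $N\,[\Delta_X] = N\,[P\times X] + [Z_2]$ in $H^8(X\times X,\ZZ)$ with $Z_2$ supported on $X\times V_2$ and $N$ odd. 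Using the integral K\"unneth decomposition (permitted since $H^*(X,\ZZ)$ is torsion-free), the only K\"unneth component of $[\Delta_X]$ not automatically accounted for by $P\times X$ or by cycles supported on a divisor in the second factor is the $(4,4)$-component, which represents the identity endomorphism of the lattice $H^4(X,\ZZ)$ under Poincar\'e duality.

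The heart of the argument is to realize this $(4,4)$-component modulo $X\times V_2$ with an odd multiplier, using the rank-$2$ algebraic sublattice $K = \langle h^2, T\rangle \subset \CH^2(X)$ of Gram determinant $d$. After splitting $H^4(X,\ZZ)\otimes\QQ$ orthogonally into $K\otimes\QQ$ and its transcendental complement, the identity on the $K$-part is represented by an explicit rational combination of $h^2\boxtimes h^2$, $T\boxtimes T$, and $h^2\boxtimes T + T\boxtimes h^2$ with denominators dividing a power of $d$; since $4\nmid d$, i.e.\ $d\equiv 2\bmod 4$, clearing these denominators in the presence of the $2$-torsion coming from the line can be done without introducing a further factor of $2$, producing an odd integer multiplier $N$. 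The remaining, and hardest, step is to show that the projector onto the transcendental part of $H^4(X,\ZZ)$ is cohomologous, modulo cycles supported on $X\times V_2$, to zero with an odd multiplier. For this I would use a Bloch--Srinivas spread: since $\CH_0(X_{\overline{\CC(X)}}) = \ZZ$, any class in $H^4$ orthogonal to the algebraic part already restricts trivially to $X\times (X\smallsetminus V_2)$ for some proper closed $V_2\subsetneq X$, and the universal coefficient/K\"unneth analysis controls the integer denominator of the correspondence realizing this descent. The main obstacle is precisely this last integrality assertion: ensuring that the only $2$-adic contribution to the multiplier comes from the labeling data, which is exactly what the hypothesis $4\nmid d$ (rather than merely $2\nmid d$) is designed to guarantee.
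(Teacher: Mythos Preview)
Your approach diverges substantially from the paper's, and has a genuine gap at the critical step. The paper does \emph{not} work via lattice arguments in cohomology. Voisin's proof is geometric: she represents the additional class by a smooth surface $T \subset X$, considers the rational map $T \times T \dashrightarrow X$ sending $(x,y)$ to the residual intersection of $X$ with the line through $x$ and $y$, and computes via a Chern-class/double-point argument that this map has degree $\equiv d \bmod 4$ (and always even). When $4 \nmid d$ the degree is $\equiv 2 \bmod 4$, and Voisin shows that this forces $\CH_0(T) \to \CH_0(X)$ to be universally surjective. A separate step shows that for $X$ (very general) in $\cC_d$, the existence of any proper $Y \subsetneq X$ with $\CH_0(Y) \to \CH_0(X)$ universally surjective already implies universal $\CH_0$-triviality. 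The discriminant $d$ enters through the secant geometry of $T$ in $X$, not through the Gram matrix of $K$ acting on cohomology.

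Your strategy of combining the universal $2$-torsion with an odd-multiplier decomposition is sound in outline, but your construction of the odd multiplier is a placeholder rather than a proof. The Bloch--Srinivas spread gives $N\Delta_X = N(P\times X) + Z$ for \emph{some} $N$ with no control on the parity of $N$; the $N = 2$ you already have from the line is precisely what that argument produces here. The assertion that ``universal coefficient/K\"unneth analysis controls the integer denominator'' does not hold up: the classes $h^2$ and $T$ only see the rank-$2$ algebraic sublattice $K$, while the transcendental complement $K^\perp$ has rank $21$ and receives no constraint from the Gram matrix of $K$ alone. Showing that an odd multiple of the transcendental projector is cohomologous to a cycle supported on $X \times V$ is essentially the theorem itself, and it is exactly the geometric input of the residual-line map---how $T$ sweeps out $X$ through secants---that supplies the missing link between $d$ and the multiplier.

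A smaller issue: Propositions~\ref{prop:algebraic=>rational} and~\ref{prop:homol=>algebraic} are stated for \emph{integral} decompositions of the diagonal. Lifting an $N\Delta_X$ decomposition through the nilpotence argument while preserving (or not worsening) the multiplier $N$ is not automatic and would need separate justification.
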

\begin{proof}
We give a sketch of the proof. The additional class $T \in \CH^2(X)$,
such that the discriminant of the sublattice generated by $h^2$ and
$T$ is $d$, can be represented (after adding multiples of $h^2$) by a
smooth surface (which by abuse of terminology we denote by) $T \subset
X$.

First, Voisin proves that (at least under the hypothesis that $X$ is
very general in $\cC_d$), if there exists any closed subvariety $Y
\subsetneq X$ such that $\CH_0(Y) \to \CH_0(X)$ is universally
surjective, then $X$ is universally $\CH_0$-trivial.

Second, letting $T \subset X$ be a smooth surface, consider the
rational map $T \times T \dashrightarrow X$ defined by sending a pair
of points $(x,y)$ to the point residual to the line joining $x$ and
$y$.  Voisin proves that if this map is dominant of even degree not
divisible by 4, then $\CH_0(T) \to \CH_0(X)$ is universally
surjective.

Finally, a calculation with Chern classes shows that if $T \subset X$
is a smooth surface in general position then the rational map $T
\times T \dashrightarrow X$ is dominant of degree $\equiv d \bmod 4$.
Indeed, the degree is equal to twice the number of double points
acquired by $T$ after a generic projection from a point, so
computations must be made comparing the numerology of the double point
formula and the intersection product on $X$.
\end{proof}

We remark that the universal $\CH_0$-triviality is still open in one
of the most interesting classes of special cubic fourfolds, namely
that of cubic fourfolds containing a plane, i.e., $X \in \cC_8$.  One
nontrivial consequence of the universal $\CH_0$-triviality would be
the universal triviality of the unramified cohomology in degree 3.
For cubic fourfolds containing a plane, this was first proved
in~\cite{ACTP}.  For arbitrary cubic fourfolds, this was then proved
by Voisin~\cite[Ex.~3.2]{voisin:threefolds}, with a different proof
given by Colliot-Th\'el\`ene~\cite[Thm.~5.8]{colliot:descenteCH2}
(which still relies on Voisin's proof of the integral Hodge
conjecture).

\subsection{The degeneration method}
\label{subsec:degeneration}

The degeneration method, initiated by
Voisin~\cite[\S2]{voisin:threefolds} and developed by
Colliot-Th\'el\`ene and Pirutka~\cite{CTPirutka}, has emerged as a
powerful tool for obstructing universal $\CH_0$-triviality for various
families of varieties.  The idea is that universal $\CH_0$-triviality
specializes well in families whose central fiber is mildly singular.

Analogous results for the specialization of rationality in families of
threefolds was established by de Fernex and
Fusi~\cite{defernex_fusi}. Already,
Beauville~\cite[Lemma~5.6.1]{beauvilleprym} had proved an analogous
result for the specialization of the Clemens--Griffiths criterion for
nonrationality relying on the Satake compactification of the moduli
space of abelian varieties.  Also,
Koll\'ar~\cite{kollar:hypersurfaces} used a specialization method (to
characteristic $p$) for the existence of differential forms to prove
nonrationality of hypersurfaces of large degree, a result that was
generalized by Totaro~\cite{totaro:hypersurfaces} using the
degeneration method for universally $\CH_0$-triviality.  We will
outline the degeneration method and some of its applications.

First we define a condition on the resolution of singularities of a
singular variety.

\begin{definition}
Let $X_0$ be a proper geometrically integral variety over a field $k$.
We say that a proper birational morphism $f : \widetilde{X}_0 \to X_0$
with $\widetilde{X}_0$ smooth is a \linedef{universally
$\CH_0$-trivial resolution} if $f_* : \CH_0(\widetilde{X}_{0,F}) \to
\CH_0(X_F)$ is an isomorphism for all field extensions $F/k$, and, is
a \linedef{totally $\CH_0$-trivial resolution} if for every
scheme-theoretic point $x$ of $X_0$, the fiber $(\widetilde{X}_0)_x$
is a universally $\CH_0$-trivial variety over the residue field
$k(x)$.
\end{definition}

The notions of universally and totally $\CH_0$-trivial resolutions are
due to Colliot-Th\'el\`ene and Pirutka~\cite{CTPirutka} and define a
new class of singularities that should be classified in the spirit of
the minimal model program.  For example, in characteristic zero, one
might ask whether $X_0$ has rational singularities if it admits a
totally $\CH_0$-trivial resolution.  It is proved (see
\cite[Prop.~1.8]{CTPirutka}) that every totally $\CH_0$-trivial
resolution is universally $\CH_0$-trivial, but not conversely.

\begin{example}
Let $X \to \PP^2$ be a conic bundle of Artin--Mumford type.  Then $X$
has isolated ordinary double points, and the (universally
$\CH_0$-trivial) resolution has nontrivial Brauer group.
\end{example}

Let $X$ be a smooth proper geometrically integral variety over $k$.
The \linedef{degeneration method} proceeds as follows:
\begin{enumerate}
\item Fit $X$ into a proper flat family $\mathcal{X} \to B$ over a
scheme $B$ of finite type, and let $X_0$ be a possibly singular
fiber.  Assume for simplicity that the generic fiber is regular.

\item Prove that $X_0$ admits a universally $\CH_0$-trivial resolution
$f : \widetilde{X}_0 \to X_0$.

\item Prove that $\widetilde{X}_0$ is not universally $\CH_0$-trivial.
\end{enumerate}
The outcome is that the very general fiber of the family $\mathcal{X}
\to B$ (though perhaps not $X$ itself) will not be universally
$\CH_0$-trivial.

\medskip

Part \textit{(i)} is, to a large extent, informed by the possibility
of achieving part \textit{(iii)}.  To this end, one is mostly
concerned with finding good singular varieties $X_0$ whose resolutions
have nontrivial unramified cohomological invariants or differential
forms.  Then one hopes that \textit{(ii)} can be verified for these
singular varieties.  For example, conic bundles of Artin--Mumford type
have been used quite a lot.  Koll\'ar~\cite{kollar:hypersurfaces} has
constructed hypersurfaces in characteristic $p$ with nontrivial global
differential forms.

\begin{example}
\item In \cite{voisin:threefolds}, a quartic double solid with $\leq
7$ nodes is shown to degenerate to an Artin--Mumford example.

\item In \cite{CTPirutka}, a quartic threefold degenerates to a
singular quartic hypersurface model birational to an Artin--Mumford
example.

\item In \cite{hassett_kresch_tschinkel:conic}, a conic bundle over a
rational surface, whose discriminant curve degenerates to a union of
curves of positive genus, is shown to degenerate to an Artin--Mumford
example.

\item In \cite{hassett_tschinkel:stable}, smooth Fano threefolds in a
family whose general element is nonrational, are shown to degenerate
to an Artin--Mumford example.

\item In \cite{totaro:hypersurfaces}, hypersurfaces of large degree
were already shown by Koll\'ar to degenerate to singular
hypersurfaces in characteristic $p$ with nonzero global differential
forms.
\end{example}

We find it striking that all successful instances of the
equicharacteristic degeneration method for threefolds over $\CC$ use
singular central fibers that are birational to threefolds of
Artin--Mumford type.  However, over arbitrary fields, there are other
methods, see \cite{chatz-levine}, \cite{colliot:cubiques}.

\medskip

We shall say a few words about the proof of the degeneration method by
Colliot-Th\'el\`ene and Pirutka~\cite{CTPirutka}.  First there is a
purely local statement about schemes faithfully flat and proper over a
discrete valuation ring, to the extent that if the special fiber
admits a universally $\CH_0$-trivial resolution, then universal
$\CH_0$-triviality of the generic fiber implies the universal
$\CH_0$-triviality of the special fiber.  This purely local statement
uses the specialization homomorphism as developed in
\S\ref{subsec:specialization}.  To get the statement about the very
general fiber of a family over a base $B$, there is a ``standard''
argument using Chow schemes, see \cite[App.~B]{CTPirutka}.

\section{Categorical representability in higher dimension}\label{sect:high-dim-Db}
In this section, we turn to higher dimensional varieties, in particular to varieties
of dimension 3 and 4. Even though the constructions work over any field, and
considerations related to weak factorization hold over any field of characteristic zero
(see~\cite{abrmovichandco-weak}), we consider here only the case where $k=\CC$
(or $k$ algebraically closed of characteristic zero).
The categorical questioning is already very rich and deep
in these cases.

The aim of this section is to motivate, by examples and motivic
arguments, the following question:

\begin{question}\label{quest:repincodim2}
Is categorical representability in codimension $2$ a necessary condition for rationality?
That is, if $X$ is rational, do we have $\coRepcat(X) \geq 2$?
\end{question}

Let us first notice that, as soon as we consider varieties of
dimension at least 3, we easily find examples of non-rational
varieties $X$ with $\coRepcat(X) \geq 2$, the easiest example being a
projective bundle $X \to C$ of relative dimension at least 2 over a
curve $C$ with $g(C)>0$. We thus restrict our attention to Mori fiber
spaces $X \to Y$ over varieties of negative Kodaira dimension. In this
case Proposition~\ref{prop:decoMFS} gives a natural subcategory
$\cat{A}_{X/Y}$ as a complement of a finite number of copies of
$\Db(Y)$. We now present some evidence and motivic considerations in
order to argue that $\Rep\cat{A}_{X/Y}$ should witness obstruction to
rationality.

Let $\pi:X \to Y$ be a Mori fiber space of relative dimension $m$, and
let $n$ be the dimension of $X$.  First of all, notice that if
$\Rep\cat{A}_{X/Y} \leq d$, then $\Repcat(X) \leq \max\{\dim(Y), d\}$.
It then follows that to study $\coRepcat(X)$ we should focus on
$\cat{A}_{X/Y}$ and its representability\footnote{If $m>1$, this is
obvious. If $m=1$, then the rationality of $X$ implies the rationality
of $Y \times \PP^1$, and hence the stable rationality of $Y$.  For
surfaces over $\CC$, stable rationality implies rationality, so if $X$
is a 3-dimensional conic bundle over a rational surface $S$, then the
obstruction is exactly contained in $\cat{A}_{X/Y}$, see
\cite{bolo_berna:conic} or
Theorem~\ref{thm:classification-of-rational-threefolds}.  For
threefolds, note that if $Y$ is nonrational generic Fano threefold,
but not a cubic threefold, then $Y$ is not stably rational,
see~\cite{hassett_tschinkel:stable}.  It follows that if $X \to Y$ is
a conic bundle over a stably rational threefold, then the obstruction
is again contained in $\cat{A}_{X/Y}$, unless perhaps if $Y$ is a
cubic threefold.}.

\subsection{Motivic measures and a rational defect}

Let us quickly recall some general motivation for
Question~\ref{quest:repincodim2}.  Bondal, Larsen, and Lunts defined
the Grothendieck ring $PT(k)$ of (dg enhanced) triangulated $k$-linear
categories~\cite{bondal-larsen-lunts}, by considering the free
$\ZZ$-module generated by equivalence classes of such categories,
denoted by $I(-)$, and introducing a scissor-type relation:
$I(\cat{T})=I(\cat{A})+I(\cat{B})$ if there is a semiorthogonal
decomposition $\cat{T}=\sod{\cat{A},\cat{B}}$.  The product of this
ring is a convolution product, in such a way that the product of
$I(\Db(X))$ and $I(\Db(Y))$ coincides with $I(\Db(X \times
Y))$. See~\cite{bondal-larsen-lunts} for more details.  We notice that
the unit $\bf{e}$ of $PT(k)$ is the class of $\Db(k)$ and that if
$\cat{T}$ is generated by $r$ $k$-exceptional objects, then
$I(\cat{T})=r {\bf e}$.

One can consider the following subsets of $PT(k)$:
$$PT_d(k) := \sod{ I(\cat{T}) \in PT(k) \,\, \vert \,\,  \Rep\cat{T} \leq d }^{+},$$
where $\sod{-}^+$ is the smallest subset closed under summands. One can show that these subsets are indeed subgroups
providing a ring filtration of $PT(k)$.

Notice that, by definition, if $\Rep\cat{T} \leq d$, then $I(\cat{T})$
is in $PT_d(k)$, but the converse is not true in general, even for
$d=0$, as the following example shows.

\begin{example}
Recall from Remark~\ref{rmk:warning-doubledeco} that Kuznetsov has
constructed a complex threefold $X$ generated by exceptional objects
not satisfying the Jordan--H\"older property~\cite{kuznet:JH}. In
particular, this is based on the description, originally due to
Bondal, of a quiver $Q$ with three vertexes and relations, so that
there are exceptional objects $E_1, E_2, E_3, F$ in $\Db(Q)$ and
semiorthogonal decompositions:
$$\begin{array}{lr}
  \Db(Q)=\sod{E_1,E_2,E_3}, \,\,\, & \,\,\, \Db(Q)=\sod{\cat{T},F},
  \end{array}$$
such that $\cat{T}=F^\perp$ has no exceptional object. It follows that
$I(\cat{T})$ lies in $PT_0(k)$, since $I(\Db(Q))$ does, but $\Rep\cat{T} > 0$.
\end{example}

It would thus be very interesting to give conditions under which a
category $\cat{T}$, admissible in some category generated by
$k$-exceptional objects (whence $I(\cat{T}) \in PT_0(k)$), admits a
full $k$-exceptional collection.

One can consider the Grothendieck ring $K_0(\mathrm{Var}(k))$ of
$k$-varieties whose unit $1=[\Spec(k)]$ is the class of the point. If
weak factorization holds, then this can be seen as the $\ZZ$-module
generated by isomorphism classes of smooth proper varieties with the
relation $[X] - [Z] = [Y] - [E]$ whenever $Y \to X$ is the blow-up
along the smooth center $Z$ with exceptional divisor $E$, see
\cite{bittner}.  Larsen and Lunts have then shown that there is a
surjective ring morphism (a \linedef{motivic measure}) $\mu:
K_0(\mathrm{Var}(k)) \to \ZZ[SB]$ to the ring generated by stable
birational equivalence classes. That is, $\ZZ[SB]$ is the quotient of
the Grothendieck ring $K_0(\mathrm{Var}(k))$ by the stable birational
equivalence relation.

Moreover, $\mathrm{ker} \mu = \sod{\LL}$, the ideal generated by the class $\LL$ of the affine line.
It follows that (as remarked in~\cite{galkin-shinder-cubic}), if $X$ is rational of dimension $n$, then:
\begin{equation}\label{eq:mot-rat-defect}
[X] = [\PP^n] + \LL M_X
\end{equation}
in $K_0(\mathrm{Var}(k))$, where $M_X$ is a $\ZZ$-linear combination of classes of varieties of
dimension bounded above by $n-2$. Galkin and Shinder define then $([X]-[\PP^n])/\LL \in K_0(\mathrm{Var}(k))[\LL^{-1}]$ as the
\linedef{rational defect} of $X$~\cite{galkin-shinder-cubic}.
 
On the other hand, Bondal, Larsen, and Lunts~\cite{bondal-larsen-lunts} show that the assignment
$$\nu: K_0(\mathrm{Var}(k)) \to PT(k),\,\,\,\,\,\,\,\,\,\,\,\,\,\,\, [X] \mapsto I(\Db(X))$$
also defines a motivic measure. 
Moreover, since $I(\Db(\PP^1))=2 {\bf e}$ and
$[\PP^1]=1 + \LL$ in $K_0(\mathrm{Var}(k))$, we have that $\nu(\LL)={\bf e}$. It follows from
\eqref{eq:mot-rat-defect} that if $X$ is rational of dimension $n$, then $I(\Db(X))$ is in $PT_{n-2}(k)$.
We can state the following result, motivating Question~\ref{quest:repincodim2}.

\begin{prop}\label{prop:motivic-nv-obstruct}
If $X$ is a smooth and projective variety of dimension $n$ such that $I(\Db(X))$ is not in $PT_{n-2}(k)$, then
$X$ is not rational.
\end{prop}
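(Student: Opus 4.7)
The plan is to argue by contraposition, directly exploiting the motivic measure $\nu:K_0(\mathrm{Var}(k))\to PT(k)$ sending $[X]\mapsto I(\Db(X))$, together with the motivic expansion (\ref{eq:mot-rat-defect}) for rational varieties. Assume $X$ is rational of dimension $n$. I first want to pin down the decomposition $[X]=[\PP^n]+\LL\, M_X$ carefully enough to control the dimensions of the varieties appearing in $M_X$. For this I would invoke Bittner's presentation of $K_0(\mathrm{Var}(k))$ via smooth projective varieties modulo the blow-up relation, valid in characteristic zero because of weak factorization. Applied to a sequence of smooth blow-ups and blow-downs relating $X$ and $\PP^n$, this presentation expresses $[X]-[\PP^n]$ as an integer combination of differences of the form $[E]-[Z]$ where $E\to Z$ is a projective bundle of positive relative dimension over a smooth projective $Z$; each such difference is divisible by $\LL$ in $K_0(\mathrm{Var}(k))$ via the projective bundle formula, and the quotient is a combination of classes of smooth projective varieties of dimension at most $n-2$.

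Next I would apply the ring homomorphism $\nu$ to the resulting identity. Since $\nu(\LL)={\bf e}$ is the unit of $PT(k)$, I obtain
\[
I(\Db(X)) \;=\; I(\Db(\PP^n)) \;+\; \sum_i a_i\, I(\Db(Y_i)),
\]
where the $Y_i$ are smooth projective varieties with $\dim Y_i\le n-2$ and $a_i\in\ZZ$. By Beilinson's theorem (Proposition~\ref{prop:deco-projective}), $\Db(\PP^n)$ admits a full $k$-exceptional collection, so $I(\Db(\PP^n))=(n+1){\bf e}\in PT_0(k)\subseteq PT_{n-2}(k)$ (assuming $n\ge 2$; the cases $n\le 1$ are trivial since every smooth projective rational variety in those dimensions is $\PP^n$). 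Moreover each $I(\Db(Y_i))$ lies in $PT_{\dim Y_i}(k)\subseteq PT_{n-2}(k)$ by definition of the filtration, because $\Repcat(Y_i)\le\dim Y_i\le n-2$.

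Finally, since $PT_{n-2}(k)$ is by construction a subgroup of $PT(k)$, I conclude that $I(\Db(X))\in PT_{n-2}(k)$. Taking the contrapositive yields the statement: if $I(\Db(X))\notin PT_{n-2}(k)$, then $X$ cannot be rational. There is no real obstacle in the argument itself, since the work is absorbed in setting up $\nu$ and the filtration $PT_\bullet(k)$; the only delicate point is the appeal to weak factorization (and hence to characteristic zero) needed to guarantee that the ``lower order'' term $M_X$ in (\ref{eq:mot-rat-defect}) is supported in dimensions $\le n-2$, which is precisely the hypothesis in force in this section.
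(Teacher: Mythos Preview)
Your proof is correct and follows essentially the same approach as the paper: the paper derives the proposition in the sentence immediately preceding it, by applying $\nu$ to the identity~\eqref{eq:mot-rat-defect}, using $\nu(\LL)={\bf e}$, and observing that both $I(\Db(\PP^n))$ and the classes $I(\Db(Y_i))$ for $\dim Y_i\le n-2$ lie in the subgroup $PT_{n-2}(k)$. You have simply made each of these steps explicit, including the appeal to weak factorization underlying~\eqref{eq:mot-rat-defect}.
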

\begin{definition}
If $X$ is a smooth and projective variety of dimension $n$, the class of the element $I(\Db(X))$ in the group $PT(k)/PT_{n-2}(k)$ is called
the \linedef{noncommutative motivic rational defect} of $X$.
\end{definition}

We end by commenting the fact that Proposition~\ref{prop:motivic-nv-obstruct} is a rather weak result.
Indeed, as remarked above, we have an implication $\Repcat(X) \leq i \Rightarrow I(\Db(X)) \in PT_i(k)$,
but the converse implication is in general not known, even for $i=0$.

However, Proposition~\ref{prop:motivic-nv-obstruct} indicates that in the case of Mori fiber spaces
the category $\cat{A}_{X/Y}$ should be the object to consider.

\begin{corollary}\label{cor:AXY-is-the-right-one}
Let $X \to Y$ be a Mori fiber space of relative dimension $m$, and let $n= \dim(X)$.
Assuming that either $Y$ is rational or $m>1$, we have that $I(\Db(X))$ is in $PT_{n-2}(k)$ if and only if
$I(\cat{A}_{X/Y})$ is in $PT_{n-2}(k)$.
\end{corollary}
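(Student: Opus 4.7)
The plan is to apply Proposition~\ref{prop:decoMFS} to convert the statement into a linear identity in $PT(k)$, and then to verify that $I(\Db(Y))$ already lies in $PT_{n-2}(k)$ under either hypothesis, using the motivic measure $\nu:K_0(\mathrm{Var}(k)) \to PT(k)$. Writing $i$ for the relative index of $\pi:X \to Y$, Proposition~\ref{prop:decoMFS} provides a semiorthogonal decomposition
$$\Db(X) = \sod{\cat{A}_{X/Y}, \Db(Y)(0), \ldots, \Db(Y)(i-1)},$$
in which each $\Db(Y)(j) \equi \Db(Y)$ as triangulated categories. Passing to $PT(k)$ yields
$$I(\Db(X)) = I(\cat{A}_{X/Y}) + i \cdot I(\Db(Y)).$$
Since $PT_{n-2}(k)$ is a subgroup of $PT(k)$, the desired equivalence $I(\Db(X)) \in PT_{n-2}(k) \Leftrightarrow I(\cat{A}_{X/Y}) \in PT_{n-2}(k)$ will follow once we show that $I(\Db(Y)) \in PT_{n-2}(k)$ in each of the two cases.

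If $m > 1$, then $\dim(Y) = n - m \leq n-2$, so $\Db(Y)$ is representable in dimension $\dim(Y) \leq n-2$, and therefore $I(\Db(Y)) \in PT_{n-2}(k)$ by the very definition of the filtration.

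If instead $Y$ is rational (the essential case when $m = 1$), we invoke the rational defect formula \eqref{eq:mot-rat-defect} applied to $Y$: there is a decomposition
$$[Y] = [\PP^{n-m}] + \LL \cdot M_Y$$
in $K_0(\mathrm{Var}(k))$, where $M_Y$ is a $\ZZ$-linear combination of classes of smooth projective varieties of dimension at most $n-m-2 \leq n-3$. Applying $\nu$, and using that $\nu(\LL) = {\bf e}$ is the unit of $PT(k)$, we get
$$I(\Db(Y)) = (n-m+1)\,{\bf e} + \nu(M_Y).$$
The first summand lies in $PT_0(k) \subseteq PT_{n-2}(k)$, while $\nu(M_Y)$ is a $\ZZ$-combination of classes $I(\Db(Z))$ with $\dim(Z) \leq n-3$, and therefore lies in $PT_{n-3}(k) \subseteq PT_{n-2}(k)$. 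Hence $I(\Db(Y)) \in PT_{n-2}(k)$ in this case as well.

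Combining both cases with the identity $I(\Db(X)) = I(\cat{A}_{X/Y}) + i\cdot I(\Db(Y))$ and the subgroup property of $PT_{n-2}(k)$ concludes the proof. The argument is essentially formal once Proposition~\ref{prop:decoMFS} and the construction of $\nu$ are granted; the only substantive input is the rational defect formula \eqref{eq:mot-rat-defect}, which is precisely what allows us to trade rationality of $Y$ for a lower effective position in the filtration when $m=1$.
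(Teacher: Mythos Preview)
Your proof is correct and follows essentially the same route as the paper: reduce to showing $I(\Db(Y))\in PT_{n-2}(k)$, handle $m>1$ by dimension and $Y$ rational via the rational defect formula and $\nu$, then conclude from the additive identity coming from Proposition~\ref{prop:decoMFS} and the subgroup property of $PT_{n-2}(k)$. The paper's proof is more terse (it simply asserts $I(\Db(Y))\in PT_{n-3}(k)$ when $Y$ is rational, relying on the preceding discussion), and it writes $m$ rather than the relative index $i$ for the number of copies of $\Db(Y)$, but the substance is identical.
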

\begin{proof}
The assumptions on $Y$ imply that $I(\Db(Y))$ is in $PT_{n-2}(k)$: first of all note that $\dim(Y) \leq n-1$.
Then, if $Y$ is rational, we have $I(\Db(Y))$ in $PT_{n-3}(k)$. On the other hand, if $\dim(Y) \leq n-2$, we have $I(\Db(Y))$ 
in $PT_{n-2}(k)$. Then we conclude using the
definition of $PT_i(k)$ and the relation $I(\Db(X))=m I(\Db(Y))+ I(\cat{A}_{X/Y})$.
\end{proof}

\begin{question}\label{quest:repcat-invariance-for-MFS}
Let $X$ be a smooth projective variety and $X \to Y$ and $X \to Z$ two Mori fiber space
structures. Is $\Rep(\cat{A}_{X/Y})=\Rep(\cat{A}_{X/Z})$?
\end{question}

Notice that we could extend our analysis to $X \dashrightarrow Y$, a rational map whose resolution
$\widetilde{X} \to Y$ is a Mori fiber space. By abuse of notation, we denote $\cat{A}_{X/Y,\rho}:=\cat{A}_{\widetilde{X}/Y}$
(even though $\cat{A}_{X/Y,\rho}$ is not necessarily a subcategory of $\Db(X)$).
For example, $X$ is a cubic threefold and $X \dashrightarrow \PP^2$ the projection along any
line in $X$, which is resolved into $\widetilde{X} \to \PP^2$, a conic bundle.

\begin{corollary}
Suppose that there is a rational map $\rho: X \dashrightarrow Y$ and a commutative diagram:
$$\xymatrix{
\widetilde{X} \ar[dr]^\pi \ar[d]_\epsilon & \\
X \ar@{-->}[r]^\rho & Y,
}$$
where $\pi: \widetilde{X} \to Y$ is a Mori fiber space of relative dimension $m$ and $\epsilon: \widetilde{X} \to X$ is a blow up
along a smooth center. Assuming that either $Y$ is rational or $m>1$, we have that $I(\Db(X))$ is in $PT_{n-2}(k)$ if and only if
$I(\cat{A}_{X/Y,\rho})$ is in $PT_{n-2}(k)$.
\end{corollary}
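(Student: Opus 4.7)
The plan is to combine Orlov's blow-up formula with Corollary~\ref{cor:AXY-is-the-right-one} applied to the Mori fiber space $\pi: \widetilde{X} \to Y$. First, I would apply Theorem~\ref{thm:blow-ups} to the blow-up $\epsilon: \widetilde{X} \to X$ along a smooth center $Z \subset X$ of codimension $c \geq 2$ to obtain the additive relation
$$I(\Db(\widetilde{X})) = I(\Db(X)) + (c-1)\, I(\Db(Z))$$
in the ring $PT(k)$. Since $\dim Z \leq \dim X - 2 = n-2$, we have $\Rep \Db(Z) \leq n-2$, hence $I(\Db(Z))$ lies in $PT_{n-2}(k)$. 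Using that $PT_{n-2}(k)$ is a subgroup of $PT(k)$, the previous equality reduces to
$$I(\Db(\widetilde{X})) \equiv I(\Db(X)) \pmod{PT_{n-2}(k)},$$
so $I(\Db(X)) \in PT_{n-2}(k)$ if and only if $I(\Db(\widetilde{X})) \in PT_{n-2}(k)$.

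Second, I would apply Corollary~\ref{cor:AXY-is-the-right-one} to the Mori fiber space $\pi: \widetilde{X} \to Y$, noting that $\dim \widetilde{X} = \dim X = n$ and that the hypothesis (either $Y$ is rational or $m > 1$) is precisely that of the cited corollary. This yields
$$I(\Db(\widetilde{X})) \in PT_{n-2}(k) \iff I(\cat{A}_{\widetilde{X}/Y}) \in PT_{n-2}(k).$$
Combining the two equivalences and recalling the convention $\cat{A}_{X/Y,\rho} := \cat{A}_{\widetilde{X}/Y}$ stated just before the corollary, we obtain the desired conclusion.

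The argument is essentially formal once the two inputs are in place, so there is no substantive obstacle. The only point meriting a moment of care is the verification that $PT_{n-2}(k)$ is a genuine subgroup of $PT(k)$ (and not merely a subset closed under summands), so that reducing modulo it is legitimate; this is part of the structural filtration result recalled from~\cite{bondal-larsen-lunts} just before Proposition~\ref{prop:motivic-nv-obstruct}.
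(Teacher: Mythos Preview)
Your proposal is correct and follows essentially the same approach as the paper: compare the two semiorthogonal decompositions of $\Db(\widetilde{X})$, the one from the blow-up (Theorem~\ref{thm:blow-ups}) showing $I(\Db(\widetilde{X})) \equiv I(\Db(X)) \pmod{PT_{n-2}(k)}$, and the one from the Mori fiber space, to which Corollary~\ref{cor:AXY-is-the-right-one} applies. Your write-up is in fact more explicit than the paper's, which simply points to these two decompositions and invokes the corollary.
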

\begin{proof}
We notice that $\Db(\widetilde{X})$ has two decompositions, one given by the Mori fiber space map $\widetilde{X} \to Y$
and the other given by the blow-up of $X$ hence containing a copy of $X$ and a finite number of copies of the blown-up loci.
Corollary~\ref{cor:AXY-is-the-right-one} applies then again, once we write the two decompositions of $I(\Db(\widetilde{X}))$
\end{proof}

\begin{question}\label{quest:rational-MFS}
Let $X$ be a smooth projective Fano variety of dimension $n$, and $\rho:X \dashrightarrow Y$ and $\sigma: X \dashrightarrow Z$ be rational Mori fiber spaces
as above. Is $\Rep \cat{A}_{X/Y,\rho} > n-2$ if and only if $\Rep \cat{A}_{X/Z,\sigma} > n-2$?
\end{question}

\subsection{Threefolds}
In this subsection, we consider Questions~\ref{quest:repincodim2}, \ref{quest:repcat-invariance-for-MFS} and~\ref{quest:rational-MFS}
for threefolds. Let us first notice that we only consider Mori fiber spaces $X \to Y$ with $Y$ of negative Kodaira dimension. Moreover,
being interested in rationality, we can exclude the cases where $Y$ is a ruled surface over a curve of positive genus. It follows that we only consider Fano
threefolds of Picard rank one, relatively minimal del Pezzo fibrations over $\PP^1$ and relatively minimal (or \linedef{standard}) conic bundles over rational surfaces. 

On one hand, there are now many examples of known semiorthogonal
decompositions describing $\cat{A}_{X/Y}$ for such varieties,
especially for Fano and conic bundles. We recall the most of them in
Table~\ref{table:fano-sods}.

On the other hand, recall from \S\ref{sect:intjacobians}, that such a
threefold $X$ has a unique principally polarized intermediate Jacobian
$J(X)$, and that one can define the Griffiths component $A_X \subset
J(X)$ to be the maximal component not split by Jacobians of curves. We
consider a stronger assumption on $J(X)$, namely that it carries an
\linedef{incidence polarization}
(see~\cite[D\'ef. 3.2.3]{beauvilleprym}), defined as follows.  For any
algebraic variety $T$, and $z$ a cycle in $\CH^2_\QQ(T \times X)$, the
{\em incidence correspondence $I(z)$ associated to $z$} is the
equivalence class of the cycle $ r_\ast (p^\ast(z)\cdot q^\ast(z)) \in
\CH^1_\QQ(T\times T)$, where $p$, $q$ and $r$ are the projections from
$T \times T \times X$ away from the first, the second and the third
factor respectively.  Recall that $J(X)$ carries a principal
polarization, which can be seen as an element $\theta$ in $\CH^1(J(X)
\times J(X))$. Moreover, we assume that $J(X)$ represents the group
$A^2(X) \subset \CH^2(X)$ of (integral) algebraically trivial
codimension 2 cycles on $X$, that is there is a universal regular map
$G:A^2(X) \to J(X)$, such that for every regular map $g: A^2(X) \to B$
to an abelian variety $B$, there is a unique morphism of abelian
varieties $u \colon J(X) \to B$ such that $u \circ G = g$.  Finally,
if all these properties are satisfied, we say that the principal
polarization $\theta$ of $J(X)$ is an \emph{incidence polarization} if
for any algebraic map $f: T \to A^2(X)$ defined by a cycle $z \in
\CH^2(T\times X)$ the equality $(G \circ f)^\ast(\theta)= I(z)$ holds.

Having a principally polarized intermediate Jacobian carrying an
incidence polarization may seem a rather restrictive assumption, but
this is actually satisfied by most of the (general) Mori fiber spaces
over rational bases we are considering in this paragraph: in these
cases $J(X)$ is known to carry such a polarization, unless $X$ is a
Fano of index 2 and degree 1 or a del Pezzo fibration of degree 1
(see~\cite[Rmk. 3.8]{bolognesi_bernardara:representability} for more
details).

\begin{theorem}[\cite{bolognesi_bernardara:representability}]\label{thm:reconstr-jacobians}
Suppose that $X$ is a threefold with principally polarized intermediate Jacobian $J(X)$ carrying
an incidence polarization. Then, assuming $\coRepcat(X) \geq 2$, the Griffiths component
$A_X \subset J(X)$ is trivial.
\end{theorem}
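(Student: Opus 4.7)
The plan is to show that, under the hypothesis $\coRepcat(X)\geq 2$, the intermediate Jacobian $J(X)$ decomposes in the category of principally polarized abelian varieties as a direct sum of Jacobians of curves (and pieces equal to $0$), so that by definition of the Griffiths component $A_X$ one must have $A_X=0$. Since $\dim(X)=3$, the hypothesis $\coRepcat(X)\geq 2$ is equivalent to $\Repcat(X)\leq 1$, and the first step is to apply Propositions~\ref{lem:0-dim=etale-algebra} and~\ref{prop:cat-rep-dim1} to pin down the admissible components. This yields a semiorthogonal decomposition
$$
\Db(X) = \sod{\cat{A}_1, \ldots, \cat{A}_r},
$$
where each $\cat{A}_i$ is $k$-linearly equivalent to one of $\Db(K_i/k)$ for an \'etale $k$-algebra $K_i$, $\Db(k,\alpha_i)$ for $\alpha_i\in\Br(k)$ the class of a conic, or $\Db(C_i)$ for a smooth projective $k$-curve $C_i$.

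Next, I would invoke the Jacobian functor on admissible subcategories developed in the noncommutative-motivic setting of~\cite{berna-tabuada-jacobians}, which associates to every $\cat{A}_i$ an abelian variety $J(\cat{A}_i)$, together with an additive decomposition $J(X)=\bigoplus_{i=1}^r J(\cat{A}_i)$ as abelian varieties (the argument being purely categorical and based on the Chern character on noncommutative motives). Here the assumption that $J(X)$ carries an \emph{incidence} polarization enters essentially: it ensures that $J(X)$ represents the functor of algebraically trivial codimension $2$ cycles in a way that is computed by the incidence correspondence on $\CH^2_{\QQ}(T\times X)$. Using the fact that any cycle in $\CH^2_{\QQ}(T\times X)$ can be decomposed along the semiorthogonal pieces via the projectors associated to $\cat{A}_1,\ldots,\cat{A}_r$, one shows that the incidence pairing restricts to the Jacobian associated to each $\cat{A}_i$ and vanishes on the cross terms. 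This upgrades the previous equality to an isomorphism
$$
(J(X),\theta) \;\iso\; \bigoplus_{i=1}^{r}\bigl(J(\cat{A}_i),\theta_i\bigr)
$$
in the category of principally polarized abelian varieties.

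The computation of each $J(\cat{A}_i)$ is then short. For $\cat{A}_i\simeq \Db(K_i/k)$ with $K_i$ \'etale, or $\cat{A}_i\simeq \Db(k,\alpha_i)$ with $\alpha_i$ a conic class, the category is ($0$-dimensional and) generated by an exceptional object, so the noncommutative Jacobian is trivial. For $\cat{A}_i\simeq\Db(C_i)$, one recovers $J(\cat{A}_i)\iso J(C_i)$ with its canonical theta polarization. Combining this with the displayed decomposition, $(J(X),\theta)$ is a direct sum of Jacobians of smooth projective curves in the category of principally polarized abelian varieties. By the definition of the Griffiths component recalled in the proof of Theorem~\ref{thm:J-is-bir-inv} — namely, $A_X$ is the complement, under semisimplicity, of the maximal summand split by Jacobians of curves — it follows at once that $A_X=0$.

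The main obstacle is the second step: proving that the additive decomposition $J(X)=\bigoplus J(\cat{A}_i)$ supplied by the noncommutative Jacobian functor is in fact an isomorphism of \emph{principally polarized} abelian varieties. Everything else is either a citation of the classification of low-dimensional representable categories or a formal consequence of semisimplicity. The hypothesis on the incidence polarization is not decorative here: it is precisely what allows one to transport the categorical orthogonality of the $\cat{A}_i$ into the numerical orthogonality required on the level of theta divisors, by translating the Hom-pairing in $\Db(X)$ into the incidence pairing on $\CH^2_{\QQ}$ via the Chern character. Once this compatibility is established, the conclusion on $A_X$ is immediate.
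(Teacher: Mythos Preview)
Your outline is correct in spirit and reaches the same conclusion, but it takes a more abstract route than the paper's sketch. You black-box the additive splitting $J(X)=\bigoplus J(\cat{A}_i)$ by invoking the noncommutative Jacobian functor of~\cite{berna-tabuada-jacobians}. The paper instead works concretely: from the semiorthogonal decomposition $\Db(X)=\sod{\Db(C_1),\ldots,\Db(C_s),E_1,\ldots,E_r}$ it applies Grothendieck--Riemann--Roch to the Fourier--Mukai kernels, obtaining explicit correspondences $\phi_i:\CH^*(C_i)_\QQ\to\CH^*(X)_\QQ$; these send algebraically trivial cycles to algebraically trivial cycles and hence induce morphisms $J(C_i)\to J(X)$, which are isogenies onto abelian subvarieties because the adjoint Fourier--Mukai functors furnish retractions $\psi_i$ of $\phi_i$ up to torsion. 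The incidence polarization is then used, via the explicit kernel of the adjoint, to verify that each $\phi_i$ respects the principal polarization. Note also that the paper's argument is chronologically prior: \cite{berna-tabuada-jacobians} is presented in the text as a \emph{later generalization} of exactly this theorem, so citing it here is somewhat circular as a proof of the original result. What your approach buys is conceptual cleanliness once that machinery is in place; what the paper's approach buys is a self-contained argument that makes transparent exactly where the incidence hypothesis enters, namely in matching the explicit adjoint-kernel correspondence with the theta divisor rather than in an abstract orthogonality statement.
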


\begin{proof}[Sketch of proof]
The first step in the proof is the classification of categories representable in
dimension at most 1, see Proposition~\ref{prop:cat-rep-dim1}. In the complex case, this means
that there exist smooth and projective curves $\{C_i\}_{i=1}^s$ and a semiorthogonal
decomposition 
$$\Db(X) = \sod{\Db(C_1),\ldots, \Db(C_s),E_1,\ldots, E_r},$$
where $E_i$ are exceptional. This implies, via Grothendieck--Riemann--Roch,
that
$$\CH^*(X)_\QQ \simeq \QQ^{\oplus r} \oplus \bigoplus_{i=1}^s \CH^*(C_i)_\QQ,$$
and that the correspondences giving the maps $\phi_i: \CH^i(C_i)_\QQ \to \CH^*(X)_\QQ$
are obtained via the kernel of the Fourier--Mukai full and faithful functors
$\Phi_i : \Db(C_i) \to \Db(X)$.

One can show that $\phi_i$ induces an isogeny $J(C_i) \to J(X)$ onto an Abelian subvariety
(i.e., its kernel is finite),
since it has to send algebraically trivial cycles to algebraically trivial cycles, and
since the adjoint to the Fourier--Mukai functor provides a retraction $\psi_i$ of $\phi_i$ up to torsion.
Finally, one can check, using the explicit description of the kernel of the adjoint
and the incidence property of the polarization of $J(X)$, that $\phi_i$ actually preserves the principal polarization.
\end{proof}

Notice that Theorem~\ref{thm:reconstr-jacobians} has a much stronger generalization
relying on the theory of noncommutative motives~\cite{berna-tabuada-jacobians} which
allows one to define the Jacobian of any admissible subcategory of the derived category
$\Db(X)$ of a smooth projective variety.
As recalled, Theorem~\ref{thm:reconstr-jacobians} applies to almost all general threefolds
$X$ under examination, except Fano of index two and degree one or del
Pezzo fibrations of degree 1 over $\PP^1$, whose polarization of the
intermediate Jacobian is knot known to be incidence.

Secondly, by a classification argument, and thanks to the work of
many authors (see Table~\ref{table:fano-sods}), we can state a converse statement for
Theorem~\ref{thm:reconstr-jacobians}.

\begin{theorem}\label{thm:classification-of-rational-threefolds}
Let $X$ be a complex threefold. Assume that $X$ is either:
\begin{itemize}
 \item a Fano threefold of Picard rank one, very general in its moduli space, not of index 2 and degree 1; or
 \item a del Pezzo fibration $X \to Y=\PP^1$ of degree 4, or
 \item a standard conic bundle $X \to Y$ over a minimal rational surface.
\end{itemize}
Then $X$ is rational if and only if $\coRepcat(X) \geq 2$. In particular,
this is the case if and only if $\Rep\cat{A}_{X/Y} \leq 1$. 
\end{theorem}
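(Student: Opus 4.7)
The plan is to establish the chain
$$X \text{ rational} \Longrightarrow \Rep\cat{A}_{X/Y} \leq 1 \Longrightarrow \coRepcat(X) \geq 2 \Longrightarrow X \text{ rational.}$$
The middle implication is essentially formal: since $Y$ is either a point, $\PP^1$, or a minimal rational surface, we have $\Rep\Db(Y)=0$, and Proposition~\ref{prop:decoMFS} gives $\Db(X) = \sod{\cat{A}_{X/Y}, \Db(Y)(0), \ldots, \Db(Y)(i-1)}$, whence $\Repcat(X) \leq \max\{\Rep\Db(Y), \Rep\cat{A}_{X/Y}\}\leq 1$ and so $\coRepcat(X) \geq \dim X - 1 = 2$.

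For the implication $\coRepcat(X) \geq 2 \Longrightarrow X \text{ rational}$, I would first verify that the hypotheses of Theorem~\ref{thm:reconstr-jacobians} hold in each of the three listed cases, namely that $J(X)$ carries an incidence polarization and represents $A^2(X)$. This is classical except for Fano threefolds of index $2$ and degree $1$ and for degree-$1$ del Pezzo fibrations, which is precisely why those are excluded from the statement. Theorem~\ref{thm:reconstr-jacobians} then forces the Griffiths component $A_X$ to vanish, so $J(X)$ splits as a principally polarized direct sum of Jacobians of smooth curves. I would then close the argument by invoking the relevant classical rationality criterion in each case: for very general Fano threefolds of Picard rank $1$, the Iskovskikh--Mori--Mukai classification combined with the work of Iskovskikh, Beauville, and others reduces rationality to the vanishing of $A_X$ (the ``very general'' hypothesis is used here to avoid finitely many loci of nonrational examples with split Jacobian); for degree-$4$ del Pezzo fibrations over $\PP^1$, Alexeev's theorem states that rationality is equivalent to $J(X)$ splitting by Jacobians of curves; for standard conic bundles over minimal rational surfaces, this is precisely Theorem~\ref{thm:Jac-of-cbs}.

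The forward implication $X \text{ rational} \Longrightarrow \Rep\cat{A}_{X/Y}\leq 1$ requires, for each rational example in the three classes, an explicit semiorthogonal decomposition of $\cat{A}_{X/Y}$ whose components are representable in dimension at most $1$. For rational Fano threefolds of Picard rank $1$, this is assembled from Table~\ref{table:fano-sods}, drawing on decompositions constructed by Kuznetsov and collaborators. For degree-$4$ del Pezzo fibrations, one uses the description of $\cat{A}_{X/Y}$ via the associated pencil of quadrics and the sheaf of even Clifford algebras (cf.\ Example~\ref{ex:semiortho-for-MFS}), identifying it as the derived category of a smooth curve twisted by a Brauer class, and then matching this data against the splitting of $J(X)$. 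For standard conic bundles, one uses Kuznetsov's decomposition $\cat{A}_{X/Y}=\Db(Y,\kc_0)$ and shows that the even Clifford sheaf $\kc_0$ recovers, after descent, exactly the discriminant curve and theta-characteristic controlling the Prym variety, so that representability in dimension $\leq 1$ precisely corresponds to one of the five rational cases in Theorem~\ref{thm:Jac-of-cbs}.

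The main obstacle lies in the forward implication, and within it in the precise matching between the Jacobian-theoretic invariants (discriminant curve, Prym double cover, theta-characteristic) appearing in the classical criteria and the categorical invariants of $\cat{A}_{X/Y}$ (the Clifford sheaf $\kc_0$, its Galois descent, and the possibility of further splittings as in \S\ref{subs:exc-objects}). For the Fano case the difficulty is more expository: one must traverse the deformation families individually and check that the published decomposition indeed witnesses representability in dimension $\leq 1$, while being careful about Jordan--H\"older failure as warned in Remark~\ref{rmk:warning-doubledeco} (which is controlled here because the target $Y$ is fixed and the Mori fiber space structure fixes the canonical decomposition of Proposition~\ref{prop:decoMFS}).
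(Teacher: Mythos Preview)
Your proposal is correct and follows essentially the same strategy as the paper: the implication $\coRepcat(X)\geq 2 \Rightarrow X$ rational goes through Theorem~\ref{thm:reconstr-jacobians} (forcing $A_X=0$) combined with the classical rationality criteria of Beauville, Shokurov, and Alekseev, while the converse is obtained case by case from the explicit semiorthogonal decompositions recorded in Table~\ref{table:fano-sods}. The only difference is presentational: the paper argues the first implication by contrapositive (nonrational $\Rightarrow A_X\neq 0 \Rightarrow \coRepcat(X)\leq 1$), and for rational degree-$4$ del Pezzo fibrations it reduces directly to the conic bundle case (such $X$ are birational to a conic bundle over a Hirzebruch surface) rather than working with the Clifford algebra over $\PP(F)$; your ``main obstacle'' paragraph somewhat overcomplicates what in the paper is simply a citation of the table and the relevant references.
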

\begin{proof}
Suppose that such $X$ is not rational. Then it is known - see e.g.~\cite[Table 1]{beauville:luroth}
for the Fano cases (very general quartics and sextic double solids can be treated by the same degeneration arguments as in~\cite[\S5]{beauvilleprym}), \cite{alekseev:dP4} for the del Pezzo fibrations
and~\cite{beauvilleprym, shokuprym} for the conic bundles - that the rationality defect of $X$ is
detected by a nontrivial Griffiths invariant $A_X \subset J(X)$. These Jacobians carry an
incidence polarization, so that Theorem~\ref{thm:reconstr-jacobians} implies that $\coRepcat(X) \leq 1$
and also that $\Rep \cat{A}_{X} \geq 2$.

Conversely, we know a list of the rational general such varieties: they are $\PP^3$, quadric hypersurfaces,
Fano varieties of index 2 and degree 5 or 4 (the latter are intersections of two quadrics), Fano varieties of index 1 and degree
22, 18, 16 or 12; conic bundles over $\PP^2$ with discriminant divisor of degree $\leq 4$ or of degree
$5$ and even theta-characteristic; conic bundles over Hirzebruch surfaces with trigonal or hyperelliptic
discriminant divisor; del Pezzo fibrations of degree 4 over $\PP^1$ are birational to a conic bundle
over a Hirzebruch surface. For those varieties, we recall all the known semiorthogonal decompositions
and descriptions of $\cat{A}_{X/Y}$, with the corresponding references, in Table~\ref{table:fano-sods}.

Notice that $\PP^1$-bundles over rational surfaces, $\PP^2$-bundles and quadric fibrations over $\PP^1$ also
(trivially) fit the statement.
\end{proof}

Recall that there exist nonrational threefolds with trivial Griffiths invariant, and even with trivial
intermediate Jacobian. For example, if $X$ is 
the Artin and Mumford double solid~\cite{artin_mumford} recalled in Example~\ref{ex:artin_mumford},
the obstruction to rationality is not given by a nontrivial Griffiths component, but rather by a nontrivial
unramified class. In this case, $X$ is singular
but can be resolved by blowing-up its ten double points $\widetilde{X} \to X$. 

\begin{prop}\label{prop:AMDS}
Let $X$ be the Artin--Mumford quartic double solid and $\widetilde{X} \to X$
be the blow-up of its ten double points. Then $J(\widetilde{X})=0$ and
$\widetilde{X}$ is not rational. Moreover, $\Db(\widetilde{X})$ is a noncommutative
resolution of singularities of $X$ and $\coRepcat(\widetilde{X})=1$.
\end{prop}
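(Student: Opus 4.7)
The proof splits into the four assertions, and I would tackle them in the following order.

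The first two statements essentially just recall the original Artin--Mumford computation. Blowing up the ten nodes only introduces algebraic classes in $H^4(\widetilde X,\ZZ)$, and the Hodge diamond of $\widetilde X$ satisfies $h^{3,0}(\widetilde X)=h^{2,1}(\widetilde X)=0$, which already gives $J(\widetilde X)=0$. Artin and Mumford further computed that $H^3(\widetilde X,\ZZ)_{\mathrm{tors}}=\ZZ/2\ZZ$, which by the purity discussion of \S\ref{subsec:purity} translates into $\Br(\widetilde X)=\Hur^2(\widetilde X,\QQ/\ZZ(1))\neq 0$. Since unramified cohomology is universally trivial on retract rational smooth proper varieties by Proposition~\ref{prop:Pn}, this obstructs stable rationality, hence rationality, of $\widetilde X$. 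For the noncommutative resolution statement, I would use the conic bundle structure $\pi\colon\widetilde X\to S$ over a rational surface recalled in Example~\ref{ex:artin_mumford}: the quadric fibration case of Example~\ref{ex:semiortho-for-MFS} produces
$$
\Db(\widetilde X)=\sod{\Db(S,\kc_0),\,\Db(S)},
$$
with $\kc_0$ the sheaf of even Clifford algebras associated to $\pi$. The piece $\Db(S,\kc_0)$ is smooth and proper; along $\widetilde X\to X$ it plays the role of a noncommutative resolution of the singular $X$, with the nontrivial Brauer obstruction encoded in the Morita class of $\kc_0$.

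For $\coRepcat(\widetilde X)=1$, the inequality $\coRepcat(\widetilde X)\ge 1$ is immediate from the decomposition above: both components are admissible in derived categories of smooth projective surfaces, so $\Repcat(\widetilde X)\le 2$. For the matching inequality $\coRepcat(\widetilde X)\le 1$ I would argue by contradiction. Assume $\Repcat(\widetilde X)\le 1$; since $\Br(\CC)=0$, Proposition~\ref{prop:cat-rep-dim1} yields a semiorthogonal decomposition of $\Db(\widetilde X)$ whose components are of the form $\Db(\CC)$ or $\Db(C_i)$ for smooth projective curves $C_i$. The Jacobian-reconstruction argument underlying Theorem~\ref{thm:reconstr-jacobians} applies here, because the conic bundle structure equips $J(\widetilde X)$ with an incidence polarization: the noncommutative motivic assembly of the $J(C_i)$ must respect this polarization and recover $J(\widetilde X)$. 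Since $J(\widetilde X)=0$, every $C_i$ must be a $\PP^1$, and therefore $\Db(\widetilde X)$ would carry a full $\CC$-exceptional collection.

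Closing the contradiction is the delicate point. A full exceptional collection makes $K_0(\widetilde X)$ free of finite rank, and my plan is to mimic Vial's proof of Theorem~\ref{thm:vial-phantom} one dimension up: use the exceptional objects together with Grothendieck--Riemann--Roch to produce a $\ZZ$-basis of $\CH^*(\widetilde X)/\mathrm{tors}$ with unimodular intersection pairing, assemble mutually orthogonal idempotent correspondences as in Vial's third step, and promote the resulting rational identity to an integral one via nilpotence over a universal domain. By Theorem~\ref{thm:univ_triv} this would yield an integral decomposition of the diagonal, and Merkurjev's Theorem~\ref{thm:Merk} would then force $\Br(\widetilde X)=0$, contradicting the first paragraph. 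The \emph{main obstacle} is precisely making this Vial-style argument rigorous in dimension three: one must control potential torsion in $\CH^2(\widetilde X)$ and verify that the existence of the full exceptional collection indeed forces the integral Chow motive of $\widetilde X$ to be of Lefschetz type, a statement that is more subtle for threefolds than for surfaces. A fallback would be to replace the Vial-type Chow-theoretic argument with a direct noncommutative motivic bound on $\Br$ in terms of admissible components (as in the perspective of~\cite{marcolli-tabuada-exc}), provided one can control the Brauer group by Hochschild-type invariants of the components of the decomposition.
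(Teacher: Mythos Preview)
Your reduction to ``no full exceptional collection'' is fine and matches the paper's, but both the upper and the lower bound contain real gaps, and in each case the paper takes a much shorter route.

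For $\Repcat(\widetilde{X})\le 2$ you invoke the conic bundle decomposition $\Db(\widetilde X)=\sod{\Db(S,\kc_0),\Db(S)}$ and assert that ``both components are admissible in derived categories of smooth projective surfaces.'' That is not justified: $\Db(S,\kc_0)$ is a category of modules over a sheaf of (non-Azumaya, since the fibration degenerates) algebras on a surface, and there is no general mechanism making such a thing admissible in $\Db$ of an honest surface. (There is also a mismatch: Example~\ref{ex:artin_mumford} gives the conic bundle on the \emph{singular} $X$, not on the blow-up $\widetilde X$.) The paper sidesteps this entirely by quoting the Hosono--Takagi decomposition
\[
\Db(\widetilde X)=\sod{\Db(S),E_1,\ldots,E_{12}},
\]
where $S$ is the associated Enriques (Reye congruence) surface; this makes $\Repcat(\widetilde X)\le 2$ immediate.

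For the lower bound, once you have reduced (as the paper does) to ruling out a full exceptional collection, your plan is to push a Vial-style diagonal argument to threefolds and reach a contradiction with $\Br(\widetilde X)\neq 0$. You correctly flag this as the ``main obstacle,'' and indeed it is not known in this generality. The paper's argument is one line: from the Hosono--Takagi decomposition one reads off
\[
K_0(\widetilde X)\cong \ZZ^{12}\oplus K_0(S)\cong \ZZ^{22}\oplus\ZZ/2\ZZ,
\]
using $\Pic(S)\cong\ZZ^{10}\oplus\ZZ/2\ZZ$ for an Enriques surface. Torsion in $K_0$ already contradicts $\Repcat(\widetilde X)=0$ by Lemma~\ref{lem:K0-of-zerodimensional} (and in fact even $\Repcat(\widetilde X)\le 1$, via Lemma~\ref{lem:K0-of-1dimensional}, since $K_0(\widetilde X)_2\cong\ZZ/2\ZZ$ has odd rank). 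So the $2$-torsion you want to exploit through $\Br$ is visible directly in $K_0$ once you use the right semiorthogonal decomposition, and no diagonal or cycle-module machinery is needed.
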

\begin{proof}
The fact that $\widetilde{X}$ is nonrational and has trivial Jacobian
goes back to the original paper of Artin and
Mumford~\cite{artin_mumford}: indeed, $X$ is not rational, and its
cohomologies are explicitly calculated. In particular $h^{1,2}(X)=0$,
so that it is easy to get $h^{1,2}(\widetilde{X})=0$ which implies
$J(\widetilde{X})=0$. Finally, just recall that $\Db(\widetilde{X})$
is a noncommutative resolution of $\Db(X)$ since $\widetilde{X} \to X$
is a resolution of singularities.

We are going to prove that $\Repcat(\widetilde{X})=2$ by using an
explicit semiorthogonal decomposition: first by showing that
$\Repcat(\widetilde{X})\leq 2$, then by showing that the inequality
cannot hold strictly.

Hosono and Takagi~\cite{hosono-takagi} consider the Enriques surface
$S$ associated to $X$ (the so-called Reye congruence), and show that
there is a semiorthogonal decomposition
$$\Db(\widetilde{X})=\sod{\Db(S), E_1,\ldots, E_{12}},$$
where $E_i$ are exceptional objects.  This implies first that
$\Repcat(\widetilde{X}) \leq 2$.

We want to prove that the inequality cannot hold strictly. First of
all, since $J(\widetilde{X})=0$, we cannot have admissible
subcategories of $\Db(\widetilde{X})$ equivalent to $\Db(C)$ for some
positive genus curve $C$.  It follows by
Proposition~\ref{prop:cat-rep-dim1} (and $k=\CC$) that
$\Repcat(\widetilde{X}) \leq 2$ implies either $\Repcat(\widetilde{X})
=2$ or $\Repcat(\widetilde{X}) =0$.

Let us exclude the second case.  Notice that we have
$K_0(\widetilde{X})= \ZZ^{12} \oplus K_0(S)$.  Moreover, the
$2$-torsion subgroup $K_0(S)_2$ of $K_0(S)$ is nontrivial: we have
$K_0(S)_2=\ZZ/2\ZZ$.  Indeed, if $S$ is an Enriques surface, the Chern
character is integral and gives an isomorphism between $K_0(S)$ and
the singular cohomology of $S$ (similarly, one can argue by using the
Bloch conjecture, which is true for $S$, and the topological
filtration of the Grothendieck group of $S$). In particular, $K_0(S) =
\ZZ \oplus \Pic(S) \oplus \ZZ$ and $\Pic(S) \simeq \ZZ^{10} \oplus
\ZZ/2\ZZ$ (see, e.g.,~\cite[VIII Prop. 15.2]{bhpv}).  We conclude that
$$K_0(\widetilde{X}) \simeq \ZZ^{\oplus 22} \oplus \ZZ/2\ZZ.$$
But if $\Repcat(\widetilde{X}) =0$, then $K_0(\widetilde{X})$ is free of finite rank, so we conclude.\footnote{Notice that we can also use
Proposition~\ref{prop:cat-rep-dim1} since $K_0(X)_2$ is a one dimensional free $\ZZ/2\ZZ$-module.}.
\end{proof}

Finally notice that there could be other noncommutative resolution of
singularities $X$, such as a small resolution $X^+ \to X$; note that
$X^+$ is a non projective Moishezon manifold. Similar arguments, based
on the semiorthogonal decomposition described by Ingalls and
Kuznetsov~\cite{ingalls-kuznetsov}, show that $\Rep \Db(X^+) =2$ as
well. In that case, the component $\cat{T} \subset \Db(X^+)$ with
$\Rep \cat{T}=2$ is a crepant categorical resolution of singularities
of $\cat{A}_X$.  Notice also that, despite the fact that $X^+$ is not
an algebraic variety, it is however an algebraic space, see Artin
\cite{Artin-Moish}, so that the category $\Db(X^+)$ makes sense.

\begin{table}
\small{
\centering
\begin{tabular}{||c|c||c|c||}
\hline\hline
$Y$ & $X$ & explicit description of $\cat{A}_{X/Y}$ & Ref. \\
\hline \hline
\multicolumn{4}{||c||}{\multirow{2}{*}{Threefolds}}\\
\multicolumn{4}{||c||}{}\\
\hline \hline
\multirow{14}{*}{$pt$} & $\PP^3$ & $0$ & \cite{beilinson} \\ 
\cline{2-4}
 & quadric 3fold & $\sod{S}$, a spinor bundle & \cite{kapranovquadric} \\
\cline{2-4}
 & index $2$, degree $5$ & $\sod{U,V}$, two vector bundles & \cite{orlovV5} \\
 \cline{2-4}
 & int. of 2 quadrics & $\Db(C)$ with $g(C)=2$ and $J(X)=J(C)$ & \cite{bondal_orlov:semiorthogonal} \\
 \cline{2-4}
 & cubic 3fold & fractional Calabi-Yau & \cite{kuznetsov:v14} \\
 \cline{2-4}
 & nodal cubic 3fold* & $\sod{\Db(C),L}$, $L$ a l.bd., $g(C)=2$ and $J(X)=J(C)$  & \cite{bolognesi_bernardara:representability} \\
 \cline{2-4}
 & determinantal cubic 3fold* & four exceptional objects  & \cite{berna-bolo-faenzi} \\
 \cline{2-4} 
 & quartic double solid & noncommutative Enriques surface & \cite{kuznet-perry} \\
 \cline{2-4}
 & Artin-Mumford double solid* & complement of exc. collection on Enriques surface & \cite{ingalls-kuznetsov} \\
 \cline{2-4}
 & index $1$, degree $22$ & $\sod{U,V,W}$, three vector bundles & \cite{kuznetsov:v22}\\
 \cline{2-4}
 & index $1$, degree $18$ & $\sod{\Db(C),U}$, $U$ a v.b., $g(C)=2$ and $J(C)=J(X)$ & \cite{kuznetsov:hyp-sections} \\
 \cline{2-4}
 & index $1$, degree $16$ & $\sod{\Db(C),U}$, $U$ a v.b., $g(C)=3$ and $J(C)=J(X)$ & \cite{kuznetsov:hyp-sections} \\
 \cline{2-4}
 & index $1$, degree $14$ & $\sod{\cat{B},U}$, $U$ a v.b., $\cat{B}$ fractional Calabi-Yau & \cite{kuznetsov:v14} \\
 \cline{2-4}
 & index $1$, degree $12$ & $\sod{\Db(C),U}$, $U$ a v.b., $g(C)=7$ and $J(C)=J(X)$ & \cite{kuznetsov:v12} \\
 \cline{2-4}
 & Hyperell. Gushel--Mukai & noncommutative Enriques surface & \cite{kuznet-perry} \\
 \cline{2-4}
 & int. of 3 quadrics & $\Db(\PP^2,\kc_0)$, $\kc_0$ a Clifford algebra & \cite{kuznetquadrics} \\
 \cline{2-4}
 & quartic 3fold & fractional Calabi--Yau & \cite{kuznetsov:v14} \\
\hline 
\multirow{4}{*}{$\PP^1$} & $\PP^2$-bundle & $0$ & \cite{orlovprojbund} \\
\cline{2-4} & quadric bundle, simple deg. & $\Db(C)$, $C$ hyperelliptic and $J(X)=J(C)$ & \cite{kuznetquadrics} \\
\cline{2-4} & DP4 fibration & $\Db(S,\kc_0)$, $S$ a Hirz. surf, $\kc_0$ a Cliff. algebra & \cite{auel-berna-bolo} \\
\cline{2-4} & rational DP4 fibration & $\Db(C)$, with $J(C)=J(X)$& \cite{auel-berna-bolo} \\
\hline
\multirow{2}{*}{rat.surf.} & $\PP^1$-bundle & $0$ & \cite{orlovprojbund} \\
\cline{2-4} & conic bundle & $\Db(Y,\kc_0)$, $\kc_0$ a Clifford algebra & \cite{kuznetquadrics} \\
\hline
\multirow{2}{*}{$\PP^2$} & rational conic bundle & $\sod{\Db(C),V}$, $V$ v.b and $J(C)=J(X)$ if $g(\Gamma) > 1$ & \cite{bolo_berna:conic} \\
\cline{3-4} & with deg. curve $\Gamma$ & $\sod{L_1,L_2,L_3}$, $L_i$ line bd. if $g(\Gamma)=1$ & \cite{bolo_berna:conic} \\
\hline
\multirow{4}{*}{Hirz.} & & $\sod{\Db(C),V_1,V_2}$,  & \multirow{2}{*}{\cite{bolo_berna:conic}} \\
&rational conic bundle  & $V_i$ v.b and $J(C)=J(X)$ if $g(\Gamma)=3$ &\\
\cline{3-4} & with deg. curve $\Gamma$ & $\sod{\Db(C_1),\Db(C_2)}$,  & \multirow{2}{*}{\cite{bolo_berna:conic}} \\
 & & with $J(X)=J(C_1)\oplus J(C_2)$ if $g(\Gamma)=2$ &\\
\hline \hline
\multicolumn{4}{||c||}{\multirow{2}{*}{fourfolds}}\\
\multicolumn{4}{||c||}{}\\
\hline \hline
\multirow{2}{*}{any} & $\PP$-bundle & $0$ & \cite{orlovprojbund} \\ 
\cline{2-4} & quadric bundle & $\Db(Y,\kc_0)$, $\kc_0$ Cliff. algebra & \cite{kuznetquadrics} \\
\hline
\multirow{11}{*}{pt} & quadric 4fold & $\sod{S_1,S_2}$, spinor bundles & \cite{kapranovquadric} \\
\cline{2-4}
 & cubic fourfold & a noncommutative K3 & \cite{kuz:4fold} \\
\cline{2-4} 
 & Pfaffian cubic & $\Db(S)$, with $S$ a degree $14$ K3 & \cite{kuz:4fold} \\
\cline{2-4} 
 & \multirow{2}{*}{cubic with a plane} & $\Db(S,\alpha)$, with $S$ a degree $2$ K3 and $\alpha \in \mathrm{Br}(S)$ & \multirow{2}{*}{\cite{kuz:4fold}} \\
 & & $\alpha =0$ iff the associated quadric fibr. has a section & \\
 \cline{2-4}
 & nodal cubic* & $\Db(S)$, with $S$ a degree $6$ K3 & \cite{kuz:4fold} \\
 \cline{2-4}
 & determinantal cubic* & six exceptional objects & \cite{berna-bolo-faenzi} \\
 \cline{2-4}
 & general cubic in ${\mathcal C}_d$, & \multirow{3}{*}{$\Db(S)$, with $S$ a degree $d$ K3} & \multirow{3}{*}{\cite{at12}} \\
 &  $d$ not multiple of & &\\
 & $4$, $9$ or $p\equiv_3 2$ odd prime & & \\
 \cline{2-4}
 & deg $10$ ind $2$ in $G(2,5)$ & a noncommutative K3 & \cite{kuz:ICM2014} \\
 \hline
\multirow{2}{*}{$\PP^1$} & fibration in intersections & $\Db(T,\alpha)$, $T \to \PP^1$ hyperell. fib. $\alpha \in \mathrm{Br}(T)$, & \multirow{2}{*}{\cite{auel-berna-bolo}}\\
 & of 2 quadrics &  $\alpha=0$ if $S$ has a line/$\PP^1$ &\\
\hline \hline 
\end{tabular}}
\caption{Known descriptions of $\cat{A}_{X/Y}$, for $X \to Y$ a Mori fiber space, and $\dim(X)=3$ or $\dim(X)=4$ and $\kappa(Y) = -\infty$.
In the nonsmooth cases, indicated by *, the description refers to a categorical resolution of $\cat{A}_{X/Y}$} 
\label{table:fano-sods}
\end{table}

 \subsection{Cubic Fourfolds}

This section is completely devoted to complex cubic fourfolds. From now on,
$X$ denotes a smooth hypersurface of degree 3 of the complex projective space $\PP^5$.
As we have seen in \S\ref{subs:cubic-4folds}, one of the most useful tools to investigate
the geometry of $X$ is Hodge theory. In particular, we have seen how to construct
Noether--Lefschetz type divisors $\cC_d$, and to relate the numerical properties
of $d$ to the existence of some K3 surface $S$ whose geometry is intimately related to the one of $X$.

We present results showing how derived categories and semiorthogonal
decompositions, in the spirit of Question~\ref{quest:repincodim2} and even beyond,
provide a new language which superposes, and, hopefully, extends the Hodge theoretic approach.
We start with an observation by Kuznetsov that we recalled in Corollary~\ref{cor:cubic-4folds=nck3}:

\smallskip

The category $\cat{A}_X$ is a noncommutative K3 surface.

\smallskip

Kuznetsov states the following conjecture~\cite{kuz:4fold}, which we will try to motivate and explore in this section.

\begin{conjecture}[Kuznetsov]\label{conj:kuzn}
A cubic fourfold $X$ is rational if and only if there is a K3 surface $S$ and an equivalence
$\Db(S) \simeq \cat{A}_X$.
\end{conjecture}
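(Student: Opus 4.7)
The plan is to tackle the two implications of Conjecture~\ref{conj:kuzn} separately, expecting the forward direction ($X$ rational $\Rightarrow$ $\cat{A}_X \simeq \Db(S)$) to be approachable via the motivic/representability formalism developed in Section~\ref{sect:high-dim-Db}, whereas the reverse direction remains the fundamental obstacle.

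For the forward direction, I would first observe that by Proposition~\ref{prop:motivic-nv-obstruct} together with Corollary~\ref{cor:AXY-is-the-right-one} (applied to $\pi: X \to \mathrm{pt}$ with the decomposition $\Db(X) = \sod{\cat{A}_X, \ko_X, \ko_X(1), \ko_X(2)}$ of Proposition~\ref{prop:decoFano}), if $X$ is rational then $I(\cat{A}_X) \in PT_2(\CC)$. This expresses that $\cat{A}_X$ is motivically ``at most a noncommutative surface''. However, as Remark~\ref{rmk:warning-doubledeco} warns, this motivic bound does not formally imply $\Rep\cat{A}_X \le 2$, let alone produce an equivalence with $\Db(S)$. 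A genuine geometricity input is needed: combined with the indecomposable Calabi--Yau 2 structure from Corollary~\ref{cor:cubic-4folds=nck3}, the strategy I would pursue is to construct $S$ intrinsically as a suitable moduli space of Bridgeland-stable objects in $\cat{A}_X$ with a fixed Mukai vector, mimicking the Pfaffian case where this recovers the K3 of Beauville--Donagi's Theorem~\ref{thm:FX-pfaffian}, and then verify that the universal object supplies a Fourier--Mukai kernel realizing $\cat{A}_X \simeq \Db(S)$.

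For the reverse direction, assuming an equivalence $\cat{A}_X \simeq \Db(S)$, one must exhibit a birational map $X \dashrightarrow \PP^4$. Following Addington--Thomas (Theorem~\ref{thm:add-two-cond}), the equivalence automatically places $X$ in a Hassett divisor $\cC_d$ for an admissible $d$ in the sense of Theorem~\ref{thm:hassett-all}. In the known rational cases recorded in Example~\ref{ex:known-rat-cub} — the Pfaffian locus $\cC_{14}$, cubics in $\cC_8$ containing a plane and admitting an odd-degree multisection of the associated quadric fibration, and the recent families of Addington--Hassett--Tschinkel--V\'arilly-Alvarado in $\cC_{18}$ — one would invoke the explicit classical constructions. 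The difficulty is that for the generic member of most other admissible $\cC_d$ (e.g.\ $d=26, 38, \ldots$) no explicit rationality construction is presently known, and rationality itself is open.

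The hardest part will clearly be this reverse direction in full generality: there is presently no machinery that converts an abstract derived equivalence $\cat{A}_X \simeq \Db(S)$ into an explicit birational model of $X$. After Borisov's proof~\cite{borisov-zerodivisor} that $\LL$ is a zero-divisor in $K_0(\mathrm{Var}(\CC))$, the Galkin--Shinder route via the Hilbert scheme $X^{[2]}$ is no longer on firm ground. The most plausible alternative I would explore is to combine the Addington--Thomas Hodge-theoretic input with a categorical refinement of Voisin's criteria of Theorem~\ref{thm:universal}: use the kernel of the equivalence $\cat{A}_X \simeq \Db(S)$ to produce a universal codimension-$2$ cycle on $X$, and then descend this cycle to a concrete family of rational surfaces realizing a unirational parameterization of odd degree. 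Bridging this last gap — from categorical data to an explicit geometric parametrization — is, in my view, where a real breakthrough would be required.
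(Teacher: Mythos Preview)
The statement you are addressing is a \emph{conjecture}, and the paper does not prove it; it only presents supporting evidence. So there is no ``paper's own proof'' to compare against. What the paper does is: (i) record the Addington--Thomas result (Theorem~\ref{thm:add-thomas}) relating the categorical condition to Hassett's Hodge-theoretic notion of an associated K3; and (ii) verify, case by case (Examples~\ref{ex:DX-for-cub-with-a-plane}--\ref{ex:DX-for-det-cub}), that in every instance where a cubic fourfold is \emph{known} to be rational, one can exhibit an explicit K3 surface $S$ with $\cat{A}_X \simeq \Db(S)$. The paper is explicit that the conjecture ``seems then, so far, out of reasonable reach''.

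Your proposal is therefore a research programme, not a proof, and you appear to recognize this in your final paragraph. But you should be clearer that \emph{both} directions are genuinely open. You describe the forward direction as ``approachable'', yet the step you sketch --- constructing $S$ as a Bridgeland moduli space on $\cat{A}_X$ and showing the universal object is a Fourier--Mukai kernel --- is itself a serious open problem, not a routine verification; the motivic bound $I(\cat{A}_X)\in PT_2(\CC)$ is, as you correctly note via Remark~\ref{rmk:warning-doubledeco}, far too weak to force $\cat{A}_X\simeq\Db(S)$. The paper's actual evidence for the forward direction is purely by explicit construction in the known rational families, not by any general mechanism. For the reverse direction your assessment is accurate: no technology currently converts a derived equivalence into a birational map, and the paper does not suggest otherwise. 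One small correction: you cite Theorem~\ref{thm:add-two-cond} for the claim that $\cat{A}_X\simeq\Db(S)$ forces $X\in\cC_d$ for admissible $d$; the relevant result is Theorem~\ref{thm:add-thomas}.
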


We notice that Conjecture~\ref{conj:kuzn} is stronger than Question~\ref{quest:repincodim2}.
Moreover, Addington and Thomas have shown~\cite{at12} that the existence of the equivalence requested in Conjecture
\ref{conj:kuzn} is equivalent to asking $X$ to have an associated K3 surface (in the sense of \S\ref{subs:cubic-4folds}), at least if $X$ is general
on its Noether--Lefschetz divisor.

\begin{theorem}[Addington--Thomas~\cite{at12}]\label{thm:add-thomas}
Let $X$ be a smooth cubic fourfold. If there exists a K3 surface $S$ and an equivalence
$\Db(S) \simeq \cat{A}_X$, then $X$ is special and has an associated K3 surface.
Conversely, if $X$ is special and has an associated K3 surface, and if it is general on some
$\cC_d$, then there exists a K3 surface $S$ and an equivalence $\Db(S) \simeq \cat{A}_X$.
\end{theorem}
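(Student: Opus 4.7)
The plan is to pass the entire problem through the Mukai-type integral Hodge structure $\widetilde{H}(\cat{A}_X,\ZZ)$ associated to the noncommutative K3 surface $\cat{A}_X$ of Corollary~\ref{cor:cubic-4folds=nck3}, and compare it with the usual Mukai lattice $\widetilde{H}(S,\ZZ) = H^0(S,\ZZ)\oplus H^2(S,\ZZ)\oplus H^4(S,\ZZ)$ of a K3 surface. Concretely, one constructs $\widetilde{H}(\cat{A}_X,\ZZ)$ as a sublattice of the topological $K$-theory of $X$ (or equivalently as a primitive sublattice of $H^*(X,\ZZ)$ obtained by modding out the classes of the exceptional bundles $\ko_X, \ko_X(1), \ko_X(2)$), equipped with the Mukai pairing and the weight-$2$ Hodge structure inherited from $H^4(X,\ZZ)$ via the Chern character. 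The algebraic part $\widetilde{H}^{1,1}(\cat{A}_X,\ZZ)$ is then linked, via the integral Hodge conjecture for codimension-$2$ cycles on cubic fourfolds (Voisin), to $\CH^2(X)$, and the computation of Proposition~\ref{prop:prim-lattice-cubic4} shows it is controlled by exactly the lattice data used by Hassett to define special cubics and associated K3 surfaces.

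For the first implication, assume an equivalence $\Phi : \Db(S) \isomto \cat{A}_X$. By Orlov's theorem on Fourier--Mukai kernels (and its extension to admissible subcategories), $\Phi$ induces an isometry of weight-$2$ Hodge structures $\Phi_* : \widetilde{H}(S,\ZZ) \isomto \widetilde{H}(\cat{A}_X,\ZZ)$. The Mukai lattice of $S$ contains the distinguished hyperbolic plane $U$ spanned by the algebraic classes $v(\ko_S) = (1,0,1)$ and $v(\ko_p) = (0,0,1)$; its image under $\Phi_*$ lands in the algebraic part of $\widetilde{H}(\cat{A}_X,\ZZ)$. Pairing this hyperbolic plane with the class $h^2$ and chasing through the explicit description of $\widetilde{H}(\cat{A}_X,\ZZ) \subset H^*(X,\ZZ)$, one produces a primitive rank-$2$ sublattice of $H^{2,2}(X)\cap H^4(X,\ZZ)$ containing $h^2$ whose discriminant $d$ is forced, by a direct lattice computation, to satisfy Hassett's conditions ($d > 6$, $d \equiv 0,2 \bmod 6$, and the numerical condition on prime factors). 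This gives a labeling $(X,K)$ of discriminant $d$ and an associated K3 surface in the sense of Theorem~\ref{thm:hassett-all}.

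For the converse, assume $X$ is a cubic fourfold general on a divisor $\cC_d$ of Hassett type, with an associated polarized K3 surface $(S,l)$. The hypothesis translates into a primitive embedding of $U$ (spanned by algebraic classes) into $\widetilde{H}(\cat{A}_X,\ZZ)$, so that $U^\perp \subset \widetilde{H}(\cat{A}_X,\ZZ)$ is Hodge-isometric to $H^2(S,\ZZ)(-1)$ twisted appropriately. The strategy is then to construct $S$ as a moduli space of stable objects in $\cat{A}_X$ with an isotropic Mukai vector $v \in U$: for any $X$ and any such $v$, one obtains (conjecturally, and in fact in the generic situation using the Bayer--Macr\`\i\ theory of stability conditions on $\cat{A}_X$) a smooth projective holomorphic symplectic surface $M_v$, a priori a twisted K3 surface $(S,\alpha)$. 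Genericity on $\cC_d$ together with the deformation theory of stability conditions lets us spread this construction in a family: starting at the known base case of a Pfaffian cubic in $\cC_{14}$ (Theorem~\ref{thm:FX-pfaffian}), where $\cat{A}_X \equi \Db(S)$ is already established, one uses the universal family over $\cC_d$ and Hodge-theoretic deformation of the algebraic class $v$ to propagate the equivalence. The numerical condition on $d$ is precisely what ensures that one can choose $v$ of divisibility making the Brauer class $\alpha \in \Br(S)$ trivial at the generic point of $\cC_d$, upgrading $(S,\alpha)$ to an untwisted K3 and the Fourier--Mukai kernel built from the universal object to an equivalence $\Db(S) \equi \cat{A}_X$.

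The main obstacle is the reverse direction, and specifically the moduli-theoretic step: one needs Bridgeland stability conditions on the noncommutative K3 category $\cat{A}_X$ that vary well in families over $\cC_d$, and one must verify that the resulting relative moduli space of stable objects is a smooth proper holomorphic symplectic surface whose Brauer obstruction vanishes under the hypothesized arithmetic condition on $d$. The deformation argument from the Pfaffian locus requires controlling the algebraic $U$ uniformly along $\cC_d$, and ruling out that the equivalence degenerates into only a twisted one; this is precisely where the genericity assumption ``general on $\cC_d$'' is used, and where the full strength of Addington--Thomas's analysis of the period map restricted to loci in $\cC$ enters.
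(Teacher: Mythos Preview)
Your treatment of the forward implication is essentially the same as the paper's: build the Mukai lattice $\widetilde{H}(\cat{A}_X,\ZZ)$ inside topological $K$-theory, transport the hyperbolic plane generated by $[\ko_S]$ and $[\ko_p]$ through the Fourier--Mukai isometry, and read off that the numerical lattice of $\cat{A}_X$ contains a copy of $U$, which the paper identifies (via \cite[Thm.~3.1]{at12}) with exactly Hassett's condition for an associated K3.

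For the converse you diverge from the paper in two significant ways. First, the actual Addington--Thomas argument does not use Bridgeland stability conditions on $\cat{A}_X$ at all; that technology only became available later. Instead, they work at a point of the intersection $\cC_d \cap \cC_8$, where the cubic contains a plane and one has the explicit geometric equivalence $\cat{A}_X \simeq \Db(S,\alpha)$ with $S$ a degree-$2$ K3 and $\alpha \in \Br(S)$. They then argue lattice-theoretically that one can choose such a point with $\alpha$ trivial, so $\cat{A}_X \simeq \Db(S)$ for a \emph{commutative} K3. The degree-$d$ K3 surface $S'$ is then produced by classical Mukai theory as a moduli space of stable vector bundles on $S$ (not of objects in $\cat{A}_X$), giving $\Db(S') \simeq \Db(S) \simeq \cat{A}_X$ at this base point. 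Finally, one deforms the resulting Fourier--Mukai kernel along $\cC_d$ to reach the general member.

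Second, your choice of base point is off: a Pfaffian cubic lies in $\cC_{14}$, and its associated K3 has degree $14$, so it does not serve as a starting point for deforming within an arbitrary $\cC_d$. The intersection $\cC_d \cap \cC_8$ is the right locus precisely because the plane gives a quadric-bundle description valid for every $d$, and the degree-$2$ K3 it produces can be converted, via moduli of sheaves, into a K3 of the correct degree $d$. Your stability-conditions route can be made to work, but it is a different and later proof (Bayer--Lahoz--Macr\`\i--Nuer--Perry--Stellari), and in fact removes the genericity hypothesis; it is not the argument the paper is sketching.
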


The above result can be stated in terms of $d$, since, as recalled in Theorem~\ref{thm:hassett-all},
being special with an associated K3 surface is equivalent to lie on $\cC_d$ for $d>6$ not
divisible by $4$, $9$, or any odd prime $p$ which is not $2$ modulo $3$.

Notice that Theorem~\ref{thm:add-thomas} tells that, in
Hodge-theoretic terms, Conjecture \ref{conj:kuzn} could be phrased as
``$X$ is rational if and only if it has an associated K3''. As we have
seen at the end of \S\ref{subs:cubic-4folds}, this seems to be the
most ``rational'' expectation for such
varieties. Conjecture~\ref{conj:kuzn} seems then, so far, out of
reasonable reach. But we want to give in this last part of this
section a quick idea of the very interesting interplay and new
informations that one can extract analyzing this new point of view.

First of all, let us briefly sketch how to prove Theorem~\ref{thm:add-thomas}. 
The main idea is to have a categorical way to reconstruct the Hodge lattice. This is done by
considering the topological $K$-theory $K_0(X)_{top}$, with the intersection pairing $\chi$, given by the Euler characteristic.
Using the semiorthogonal decomposition
$$\Db(X) = \sod{\cat{A}_X, \ko, \ko(1),\ko(2)},$$
it is possible to split off $K_0(\cat{A}_X)_{top} \subset K_0(X)_{top}$ as the $\chi$-semiorthogonal complement to the classes $[\ko(i)]$,
for $i=0,1,2$. If $\Db(S)\simeq \cat{A}_X$ is an equivalence, it is then of Fourier--Mukai type and gives
an isomorphism $K_0(S)_{top} \simeq K_0(\cat{A}_X)_{top}$ of $\ZZ$-modules respecting the Euler pairing.

The first observation is obtained using the fact that the Chern character is integral for $S$, so that $K_0(S)_{top} \otimes \CC$
with the pairing $\chi$ has a Hodge structure of weight $2$ induced by the so-called Mukai lattice structure on cohomology.
This is (up to identifying $K_0$ and the cohomology via the Chern character) the following Hodge structure $\widetilde{H}^{p,q}(S)$:
$$\begin{array}{rl}
\widetilde{H}^{2,0} (S) &= H^{2,0}(S)\\
\widetilde{H}^{1,1} (S) &= H^{2,2}(S) \oplus H^{1,1}(S) \oplus H^{0,0}(S)\\
\widetilde{H}^{0,2} (S) &= H^{0,2}(S).
  \end{array}$$
For the cubic fourfold $X$, Addington and Thomas define the Mukai lattice as follows:
$$\begin{array}{rl}
\widetilde{H}^{2,0} (X) &= H^{3,1}(X)\\
\widetilde{H}^{1,1} (X) &= H^{4,4} (X) \oplus H^{3,3}(X) \oplus H^{2,2}(X) \oplus H^{1,1}(X) \oplus H^{0,0}(X)\\
\widetilde{H}^{0,2} (X) &= H^{1,3}(X),
  \end{array}$$
and obtain the corresponding weight $2$ Hodge structure on $K_0(\cat{A}_X)_{top}$ (up to identifying $K_0$ and the cohomology via the Chern character).
It follows then that if $\Db(S) \simeq
\cat{A}_X$, there is a Hodge isometry $K_0(S)_{top} \simeq K_0(\cat{A}_X)_{top}$.

For the general $X$, the numerical properties of $K_0(\cat{A}_X)_{top}$ are explored, and the lattice $(K_0(\cat{A}_X)_{top}, \chi)$
is related to the Hodge lattice $H^4(X,\ZZ)$. A particularly interesting result states that $X$ has an associated
K3 surface if and only if the Mukai lattice on the numerical $K_0(\cat{A}_X)_{num}$\footnote{The numerical $K_0$ is obtained
by taking the quotient of the algebraic $K_0$ by the kernel of the Euler form.} contains a hyperbolic plane
\cite[Thm. 3.1]{at12}.
Moreover, one can characterize classes of skyscraper sheaves of points $[\ko_x]$ and $[\ko_y]$ inside $K_0(S)_{top}$, purely
using their behavior under the Euler pairing: the sublattice they generate is a hyperbolic lattice. Hence, it follows
that if $X$ has no associated K3, such classes do not exist and hence there cannot be any equivalence $\Db(S) \simeq \cat{A}_X$.
A similar result for cubic fourfolds containing a plane with two dimensional group of algebraic $2$-cycles was obtained
by Kuznetsov~\cite{kuz:4fold}.

On the other hand, consider a divisor $\cC_d$ where $d$ is such that
$X$ has an associated K3 surface.  Then consider the intersection
$\cC_d \cap \cC_8$ with the locus of cubics containing a plane.  As we
will see in Example~\ref{ex:DX-for-cub-with-a-plane} (see also
Table~\ref{table:fano-sods}), in this case there is a degree $2$ K3
surface $S$ with a Brauer class $\alpha$ and an equivalence $\cat{A}_X
\simeq \Db(S,\alpha)$. Moreover, one can find an $X$ on $\cC_d \cap
\cC_8$ such that $\alpha$ vanishes. This gives a K3 surface $S$ of
degree 2 and an equivalence $\cat{A}_X \simeq \Db(S)$ in this
particular case. Now, though one would like to deform this K3 surface,
it is not the right thing to do. Instead, one has to construct an
appropriate K3 surface $S'$ of degree $d$ as a moduli space of vector
bundles on $S$, so that $\Db(S') \simeq \Db(S) \simeq \cat{A}_X$.
Then the existence of a K3 surface $S'$ and of the equivalence
$\cat{A}_X \simeq \Db(S')$ for the general $X$ in $\cC_d$ is obtained
by a degeneration method.

\smallskip

We turn now to explicit cases supporting
Conjecture~\ref{conj:kuzn}. Indeed, in all the cases where a cubic
fourfold is known to be rational, there is an explicit realization of
the K3 surface $S$ and of the equivalence $\Db(S) \simeq \cat{A}_X$.

\begin{example}[Kuznetsov~\cite{kuz:4fold}]\label{ex:DX-for-cub-with-a-plane}
Let $X$ be a cubic fourfold containing a plane $P$. As explained in Example~\ref{ex:cubicwithplane},
the blow-up $\widetilde{X} \to X$ of the plane $P$ has a structure of quadric surface bundle
$\widetilde{X} \to \PP^2$.

Recall from Example~\ref{ex:semiortho-for-MFS} that in this case, we have
an identification $\cat{A}_{\widetilde{X}/\PP^2} \simeq \Db(\PP^2,\kc_0)$ (see~\cite{kuznetquadrics}).
On the other hand, there is a semiorthogonal decomposition induced by the blow-up 
$\widetilde{X} \to X$. Comparing the two decompositions via explicit mutations, gives an equivalence
$\cat{A}_X \simeq \Db(\PP^2,\kc_0)$.

Suppose that the degeneration divisor of the quadric fibration $\widetilde{X} \to \PP^2$ is
smooth, and consider the double cover $S \to \PP^2$ ramified along it. Kuznetsov shows that
$\kc_0$ lifts to a sheaf of Clifford algebras with Brauer class $\alpha$ in $\Br(S)$. It follows
that $\cat{A}_X \simeq \Db(S,\alpha)$.

On the other hand, the classical theory of quadratic forms says that the class $\alpha$ is trivial
if and only if $\widetilde{X} \to \PP^2$ has an odd section. As described in Example~\ref{ex:cubicwithplane}
this is a necessary condition for rationality.

Secondly, Kuznetsov shows that if $S$ has Picard rank one, then $\alpha$ is nontrivial and $\Db(S,\alpha)$
cannot be equivalent to any $\Db(S')$, for $S'$ a K3 surface (this can be seen as a special case of
Addington--Thomas result). In particular, one should expect that cubics with a plane with such an $S$
are not rational. On the other hand, notice that there exist rational cubics with a plane, such that $\alpha$ is not
trivial but $\Db(S,\alpha) \simeq \Db(S')$ for some other K3 surface $S'$. Example of such cubics are constructed
in~\cite{ABBV}.
\end{example}

\begin{example}[Kuznetsov~\cite{kuz:4fold}]\label{ex:DX-for-pfaff-cub}
Let $X$ be a Pfaffian cubic fourfold. As explained in Example~\ref{ex:c14}, there is a classical duality
construction providing a degree 14 K3 surface $S$ associated to $X$. In this case, the powerful theory
of Homological Projective Duality allows to show that $\cat{A}_X \simeq \Db(S)$.
\end{example}

\begin{example}[Kuznetsov~\cite{kuz:4fold}]\label{ex:DX-for-cub-with-a-node}
Let $X$ be a cubic fourfold with a single node $x$. The projection $\PP^5 \dashrightarrow \PP^4$
from the point $x$ gives a rational parametrization $X \dashrightarrow \PP^4$. The resolution
of the latter map is obtained by blowing-up $x$ and is realized as a blow-up $\widetilde{X} \to
\PP^4$ of a degree 6 K3 surface $S$, obtained as a complete intersection of a cubic and a quadric.
Then one can show that there is a crepant resolution of singularities $\widetilde{\cat{A}}_X$ and
an equivalence $\widetilde{\cat{A}}_X \simeq \Db(S)$.
\end{example}

\begin{example}[\cite{berna-bolo-faenzi}]\label{ex:DX-for-det-cub}
Let $X$ be a determinantal cubic fourfold. In this case, Homological Projective Duality
can be used to show that there is a crepant resolution of singularities $\widetilde{\cat{A}}_X$ 
which is generated by six exceptional objects. Roughly speaking, one should think of the latter
as a crepant resolution of a degeneration of the K3 surface from the previous case.
\end{example}

We can then rephrase Example~\ref{ex:known-rat-cub} listing all known rational cubic fourfolds in terms of Conjecture 
\ref{conj:kuzn}.

\begin{example}
Let $X$ be a cubic fourfold. If either
\begin{itemize}
 \item[2,6)] $X$ is singular, e.g. $X \in \cC_6$ has a single node or $X \in \cC_2$ is determinantal; or
 \item[8)] $X$ contains a plane $P$, so that $X \in \cC_8$, and the associated quadric surface fibration $\widetilde{X} \to \PP^2$
 (see Example~\ref{ex:cubicwithplane}) admits an odd section~\cite{hassett:rational_cubic}; or
 \item[14)] $X$ is Pfaffian, so that $X \in \cC_{14}$ \cite{beauville-donagi};
\end{itemize}
then $X$ is rational and (a crepant categorical resolution of) $\cat{A}_X$ is equivalent to (a crepant categorical resolution of) $\Db(S)$,
for some K3 surface $S$.
\end{example}

\subsection{Other fourfolds}
Let us quickly conclude with another example, described in~\cite{auel-berna-bolo}, of fourfolds whose rationality
or nonrationality is conjecturally related to categorical representability via 
an explicit semiorthogonal decomposition.

Let $X$ be a fourfold with a Mori Fiber Space structure $X \to \PP^1$ such that
the fibers are complete intersections of two quadrics. This means that there is
a projective bundle $\PP(E) \to \PP^1$ with $\rk(E)=5$, and two line bundle
valued non-degenerate quadratic forms $q_i : L_i \to S_2(E)$, such that $X \subset \PP(E)$
is the complete intersection of the two quadric fibrations $Q_i \to \PP^1$
given by the forms $q_i$.
Moreover, working over $\CC$, we know that $X \to \PP^1$ has a smooth section (see, e.g.
\cite[Lemma 1.9.3]{auel-berna-bolo} for a direct argument, or use~\cite{campana_peternell_pukhlikov} or~\cite{graber_harris_starr}).

Setting $F:=L_1 \oplus L_2$, one has that the linear span $q$ of $q_i$ and $q_2$
gives a quadric fibration $Q \to \PP(F)$ of relative dimension 4 over a Hirzebruch surface $\PP(F)$.
The smooth section of $X \to \PP^1$ gives a smooth section of $Q \to \PP(F)$, along which
we can perform reduction by hyperbolic splitting. This means that we can split off the form $q$
a hyperbolic lattice, whose complement gives a quadric fibration $Q' \to \PP(F)$ of dimension
two less than $Q$. That is, $Q' \to \PP(F)$ is a quadric surface fibration.

Homological Projective Duality and Morita equivalence of Clifford algebras under hyperbolic splitting
(see~\cite{auel-berna-bolo} for details) show that $\cat{A}_{X/\PP^1} \simeq \cat{A}_{Q'/\PP(F)}$.
The latter is known to be equivalent to $\Db(\PP(F),\kc_0)$, where $\kc_0$ is the sheaf
of even Clifford algebras of the quadric surface fibration $Q' \to \PP(F)$.
Finally, assuming the degeneration divisor of $Q' \to \PP(F)$ to be smooth, we have a smooth double
cover $S \to \PP(F)$ and a Brauer class $\alpha$ in $\Br(S)$, such that $\cat{A}_{X/\PP^1} \simeq \Db(S,\alpha)$.
Notice that the composition $S \to \PP(F) \to \PP^1$ endows $S$ with a fibration into genus 2 curves,
since there are 6 degenerate quadrics for each fiber of $\PP(F) \to \PP^1$.

The following conjecture is inspired by
Question~\ref{quest:repincodim2} and Kuznetsov's conjecture
\ref{conj:kuzn} for cubic fourfolds.

\begin{conjecture}[\cite{auel-berna-bolo}, Conj. 5.1.2]
Let $X \to \PP^1$ be a fibration in complete intersections of two
four-dimensional quadrics.
\begin{itemize}
\item {\bf Weak version.}  
The fourfold $X$ is rational if and only if $\coRepcat(X) \geq 2$.

\item {\bf Strong version.}  
The fourfold $X$ is rational if and only
if $\Rep\cat{A}_{X/\PP^1} \leq 2$.
\end{itemize}
\end{conjecture}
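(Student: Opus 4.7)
Since the statement in question is a conjecture rather than a theorem, my proof proposal is necessarily more speculative: it outlines a strategy toward each implication, indicating which is genuinely accessible with present technology and which is the principal obstacle.

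The implication ``$X$ rational $\Rightarrow \coRepcat(X) \geq 2$'' (and the corresponding weak bound on $\cat{A}_{X/\PP^1}$) is the direction I would attempt first. Here the motivic argument sketched in Proposition~\ref{prop:motivic-nv-obstruct} and Corollary~\ref{cor:AXY-is-the-right-one} gives, almost for free, that $I(\cat{A}_{X/\PP^1})$ lies in $PT_{2}(\CC) = PT_{n-2}(\CC)$ whenever $X$ is rational. The strategy would then be to upgrade this membership to an actual representability statement, using the specific structural information that $\cat{A}_{X/\PP^1} \simeq \Db(S,\alpha)$ with $S \to \PP^1$ a genus-$2$ fibration of degree $d$ and $\alpha \in \Br(S)$. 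Since $\Db(S,\alpha)$ is a single indecomposable geometric component (no phantom or quiver pathology is possible here, by Galois descent and the classification sketched in Proposition~\ref{prop:cat-rep-dim1} extended to dimension $2$), its class in $PT(\CC)/PT_{1}(\CC)$ is expected to detect $\Rep\cat{A}_{X/\PP^1}$ faithfully, turning motivic information into categorical information.

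For the converse direction, which is the heart of the conjecture, I would work from the description $\cat{A}_{X/\PP^1} \simeq \Db(S,\alpha)$ recalled in the paragraph immediately preceding the statement. The strong version predicts that $\Rep\Db(S,\alpha) \leq 2$ forces $X$ to be rational. The natural first step is to show that $\Rep\Db(S,\alpha) \leq 2$ forces $\alpha = 0$ in $\Br(S)$: indeed, if $\alpha \neq 0$ then $\Db(S,\alpha)$ should never be equivalent to the derived category of any surface (the category carries nontrivial Brauer-theoretic data that is a birational invariant), so $\Rep \leq 2$ forces triviality. Having trivialized $\alpha$, one obtains an honest K3 or genus-$2$ fibered surface $S$ associated to $X$, and then by analogy with Example~\ref{ex:cubicwithplane}, triviality of the even Clifford Brauer class should be equivalent to the existence of an odd-degree multisection of the reduced quadric surface fibration $Q' \to \PP(F)$. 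A rational multisection for the quadric bundle combined with the conic/quadric bundle rationality arguments (reducing through hyperbolic splitting and then using the $\PP^1$-bundle structure over the section) should then produce an explicit birational map $X \dashrightarrow \PP^4$.

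The main obstacle will be exactly the step connecting categorical representability to the vanishing of the Brauer class $\alpha$. Over an algebraically closed field, a $\Db(S,\alpha)$-type category with nontrivial $\alpha$ can still contain admissible subcategories coming from twisted sheaves on surfaces, and ruling out a semiorthogonal decomposition of $\Db(S,\alpha)$ whose components are all dimension-$\leq 2$ (without $\alpha$ being trivial) requires some form of Jordan--Hölder or reconstruction result that the paper has explicitly warned is not available in general (Remark~\ref{rmk:warning-doubledeco} and Proposition~\ref{prop:no-JH}). In practice, I would try to circumvent this by using the Hochschild (co)homology and the noncommutative Mukai lattice of $\Db(S,\alpha)$ à la Addington--Thomas (Theorem~\ref{thm:add-thomas}): a categorical representability bound should translate into a hyperbolic summand in the Mukai lattice of $\Db(S,\alpha)$, and that hyperbolic summand is precisely what trivializes $\alpha$. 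Even granting this, closing the loop to actual rationality (as opposed to the existence of an associated untwisted K3) remains a genuinely open problem, mirroring the state of affairs for Kuznetsov's Conjecture~\ref{conj:kuzn} on cubic fourfolds; I would expect the final step to require a degeneration argument within the moduli of such fibrations, transporting rationality from an explicit locus (analogous to the Pfaffian locus in $\cC_{14}$) to the general point of the stratum where $\Rep\cat{A}_{X/\PP^1} \leq 2$.
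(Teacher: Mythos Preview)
The statement is a conjecture, and the paper does not attempt to prove it; it is stated immediately before the bibliography as an open problem, motivated by the preceding discussion of $\cat{A}_{X/\PP^1} \simeq \Db(S,\alpha)$ and by analogy with Kuznetsov's Conjecture~\ref{conj:kuzn}. There is therefore no proof in the paper to compare your proposal against.

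That said, your strategic outline is well aligned with the paper's own motivations: you correctly isolate the motivic argument for the ``rational $\Rightarrow$ representable'' direction and correctly flag the gap between membership in $PT_2(\CC)$ and genuine representability; and for the converse you correctly identify the key step as linking $\Rep\Db(S,\alpha)\leq 2$ to the vanishing of $\alpha$, and then to rationality via an odd-degree section of the reduced quadric bundle. Two small corrections are worth noting. First, the surface $S$ here is a double cover of a Hirzebruch surface, fibered in genus~$2$ curves over $\PP^1$; it is not a K3 surface, so your invocation of the Mukai lattice \`a la Addington--Thomas would need substantial adaptation (there is no weight-$2$ Hodge structure of K3 type on $S$ to work with). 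Second, your claim that ``if $\alpha\neq 0$ then $\Db(S,\alpha)$ should never be equivalent to the derived category of any surface'' is plausible but is itself an open-type statement in this generality; twisted derived categories of surfaces with nontrivial Brauer class can in principle be equivalent to untwisted ones (this happens already for K3 surfaces), so the obstruction would have to come from finer invariants of the specific $(S,\alpha)$ arising here. These are exactly the sorts of difficulties that make the statement a conjecture rather than a theorem, and your proposal identifies them accurately.
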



\begin{thebibliography}{10}

\bibitem{abrmovichandco-weak}
D.~Abramovich, K.~Karu, K.~Matsuki, and J.~Wlodarczyk, 
\emph{Torification and factorization of birational maps},
J. Amer. Math. Soc. {\bf 15}(3) (2002), 531--572.

\bibitem{addington-twoconj}
N.~Addington, 
\emph{On two rationality conjectures for cubic fourfolds},
to appear in Math. Res. Lett., preprint arXiv:1405.4902. 

\bibitem{at12}
N.~Addington and R.~Thomas, 
\emph{Hodge theory and derived categories of cubic fourfolds}, 
Duke Math. J. {\bf 163} (2014), no.~10, 1885--1927.

\bibitem{AHTVA:dP6}
N.~Addington, B.~Hassett, Y.~Tschinkel, and A.~V\'arilly-Alvarado,
\textit{Cubic fourfolds fibered in sextic del Pezzo surfaces},
preprint arXiv:1606.05321, 2016.

\bibitem{alekseev:dP4}
V.A.~Alekseev, 
\emph{On conditions for the rationality of three-folds with a
  pencil of del {P}ezzo surfaces of degree {$4$}}, Mat. Zametki \textbf{41}
  (1987), no.~5, 724--730, 766.

\bibitem{alexeev-orlov}
V.~Alexeev, and D.~Orlov, \emph{Derived categories of Burniat surfaces and exceptional collections},
Math. Ann. 357 (2013), no. 2, 743-759.

\bibitem{SGA4}
\emph{Th\'eorie des topos et cohomologie \'etale des sch\'emas}, 
{S{\'e}minaire de G{\'e}om{\'e}trie Alg{\'e}brique du Bois-Marie 1963--1964 (SGA 4)}, Eds.\ M.~Artin, A.~Grothendieck, and J.~L.~Verdier,
Lecture Notes in Mathematics,  Vols.\ 269, 270, 305, Springer-Verlag,
Berlin, 1972--73.

\bibitem{Artin-Moish}
M.~Artin, 
{\em Algebraization of formal moduli, II. Existence of modification}, 
Ann.\ of Math.\ {\bf 91} (1970), no.~2, 88--135.

\bibitem{artin_mumford}
M.~Artin, and D.~Mumford,
\emph{Some elementary examples of unirational varieties which are not rational},
Proc.\ London Math.\ Soc.\ (3) \textbf{25} (1972), 75--95.

\bibitem{auel:pfaffian}
A.~Auel,
\emph{The complement of the locus of pfaffian cubic fourfolds},
preprint, 2015.

\bibitem{auel-berna-surf}
A.~Auel and M.~Bernardara,
\emph{Semiorthogonal decompositions and birational geometry of del Pezzo surfaces over arbitrary fields}, preprint: ArXiv:1511.07576.    

\bibitem{auel-berna-bolo}
A.~Auel, M.~Bernardara, and M.~Bolognesi, 
\emph{Fibrations in complete intersections of quadrics, Clifford
algebras, derived categories, and rationality problems}, 
Journal de Math. Pures et Appliquées 102, 249-291 (2014)

\bibitem{ABBV}
A.~Auel, M.~Bernardara, M.~Bolognesi, and A.~V\'arilly-Alvarado,
\emph{Cubic fourfolds containing a plane and a quintic del Pezzo surface},
Algebraic Geometry \textbf{2} (2014), 181--193.

\bibitem{auel-bohning-bothmer}
A.~Auel, Ch.~Boehning, and H.-Ch.~Graf von Bothmer,
\emph{The transcendental lattice of the sextic Fermat surface},
Math. Res. Lett. {\bf 20} (2013), no. 6, pp. 1017--1031.

\bibitem{ACTP}
A.~Auel, J.-L.~Colliot-Th\'el\`ene, and R.~Parimala,
\textit{Universal unramified cohomology of cubic fourfolds containing
a plane},
``Brauer groups and obstruction problems:\ moduli spaces and
arithmetic (Palo Alto, 2013)'', A.~Auel, B.~Hassett,
A.~V\'arilly-Alvarado, and B.~Viray, eds., to appear in Birkh\"auser
series Progress in Mathematics.

\bibitem{auslander_goldman}
M.\ Auslander and O.\ Goldman,
\textit{The Brauer group of a commutative ring},
Transactions of the American Mathematical Society, \textbf{97} (1960),
no.~3, 367--409.

\bibitem{BDFIK-higher-veronese}
    M.~Ballard, D.~Deliu, D.~Favero, M.U.~Isik, and L.~Katzarkov,
    \emph{On the Derived Categories of Degree $d$ Hypersurface Fibrations},
 preprint   arXiv:1409.5568.

\bibitem{barbieri-viale:CH2}
L.~Barbieri-Viale,
\textit{Cicli di codimensione 2 su variet\`a unirazionali complesse}, 
K-Theory (Strasbourg, 1992), Ast\'erisque \textbf{226} (1994), 13--41.

\bibitem{barlow}
R.\ Barlow, 
\emph{Rational equivalence on zero cycles for some more surfaces with
$p_g = 0$}, 
totallyInvent.\ math.\ \textbf{79} (1985), 303--308.

\bibitem{bhpv}
W.P.~Barth, K.~Hulek, Ch.A.M.~Peters, and A.~Van de Ven, 
\emph{Compact complex surfaces}, 
Ergebnisse der Mathematik 3. Folge {\bf 4}, 2nd enlarged
edition (2004).

 \bibitem{bauer-cata-pigna-survey}
I.~Bauer, F.~Catanese, and R.~Pignatelli, \emph{Surfaces of general type with
  geometric genus zero: a survey}, in \emph{Complex and differential geometry}, 1--48, Springer Proc. Math., 8, Springer, Heidelberg, 2011.
 
 \bibitem{beauvilleprym}
A.~Beauville, \emph{Vari\'et\'es de {P}rym et jacobiennes interm\'ediaires},
  Ann. scient. ENS \textbf{10} (1977), 309--391.

\bibitem{beauville:luroth}
 A. Beauville, \emph{The L\"uroth problem}, preprint arXiv:1507.02476.

\bibitem{bctssd}
A.\ Beauville, J.-L.\ Colliot-Th\'el\`ene, J.-J.\ Sansuc, and P.\
Swinnerton-Dyer,
\textit{Vari\'et\'es stablement rationnelles non rationnelles}
Ann.\ of Math.\ (2) \textbf{121} (1985), 283--318.
 
\bibitem{beauville-donagi}
A.\ Beauville and R.\ Donagi,
\emph{La vari\'et\'e des droites d'une hypersurface cubique de dimension 4}, C.R. Acad. Sc. Paris,
S\'erie I, {\bf 301} (1985), 703--706.  
 
\bibitem{beilinson} 
A.A.~Beilinson,
{\em Coherent sheaves on $\PP^n$ and problems in linear algebra}. (Russian)
Funktsional. Anal. i Prilozhen. {\bf 12} (1978), no. 3, 68--69. 

\bibitem{bernardara:brauer_severi}
M.~Bernardara, {\it A semiorthogonal decomposition for Brauer-Severi schemes}. Math. Nachr. {\bf 282} (2009), no. 10.


\bibitem{bolo_berna:conic}
M.~Bernardara and M.~Bolognesi, \emph{Derived categories and
  rationality of conic bundles}, Compositio Math. {\bf 149} (2013), no.~11, 1789--1817.

\bibitem{bolognesi_bernardara:representability}
M.~Bernardara and M.~Bolognesi, \emph{Categorical representability
and intermediate Jacobians of Fano threefolds}. EMS Ser. Congr. Rep.,
Eur. Math. Soc., 2013.

\bibitem{berna-bolo-faenzi}
M.~Bernardara, M.~Bolognesi, and D.~Faenzi, \emph{Homological projective duality for determinantal varieties}, Adv. Math. {\bf 296}, 181--209 (2016)

\bibitem{berna-tabuada-jacobians}
M.~Bernardara and G.~Tabuada, {\em From semi-orthogonal decompositions to polarized intermediate Jacobians via Jacobians
of noncommutative motives}, Moscow Math. Journal {\bf 16} (2), pp. 205--235 (2016).

  \bibitem{bittner}
 F.\ Bittner, {\em The universal Euler characteristic for varieties of characteristic zero}. Compos.
 Math. 140 (2004), 1011--1032.

 \bibitem{bloch:lectures}
S. Bloch, \emph{Lectures on Algebraic Cycles}, Duke Univ.\ Math.\
Ser.\ IV, Durham, 1980. Second edition. New Mathematical Monographs {\bf 16}. 
Cambridge University Press, Cambridge, 2010.

\bibitem{BKL} 
S.\ Bloch, A.\ Kas, and  D.\ Lieberman, 
\emph{Zero-cycles on surfaces with $p_{g}=0$},
Compositio math.\ {\bf 33} (1976) 135--145.

\bibitem{bloch_ogus}
S.\ Bloch and A.\ Ogus,
\emph{Gersten's conjecture and the homology of schemes},
Ann.\ Sci.\ \'Ecole Norm.\ Sup.\ (4) \textbf{7} (1974), 181--201.

\bibitem{bloch_srinivas}
S.\ Bloch and V.\ Srinivas,
\emph{Remarks on correspondences and algebraic cycles},
Amer.\ J.\ Math.\ \textbf{105} (1983), no.\ 5, 1235--1253.
 
\bibitem{blunk-gen-bs}
M.~Blunk, {\em A Derived Equivalence for some Twisted Projective Homogeneous Varieties},
preprint arXiv:1204.0537.

\bibitem{bogomolov:noethers_problem}
F.A.~Bogomolov,
\emph{The {B}rauer group of quotient spaces of linear representations},
Izv.\ Akad.\ Nauk SSSR Ser.\ Mat.\ \textbf{51} (1987), no.~3,
485--516, 688; 
translation in
Math.\ USSR-Izv.\ \textbf{30} (1988), no.~3, 455--485. 

\bibitem{boeh-graf-sos-Godeaux}
Ch.~B\"ohning, H.C.~Graf von Bothmer, and P.~Sosna,
\emph{On the derived category of the classical Godeaux surface}, Adv. Math. 243 (2013), 203-231


\bibitem{boeh-graf-sos-JH}
Ch.~B\"ohning, H.C.~Graf von Bothmer, and P.~Sosna, 
\emph{On the Jordan-H\"older property for geometric derived categories},
Adv. Math. {\bf 256} (2014), 479-492.

\bibitem{BGKS}
Ch.~B\"ohning, H.C.~Graf von Bothmer, L.~Katzarkov, and P.~Sosna, 
\emph{Determinantal Barlow surfaces and phantom categories}, J. Eur. Math. Soc. \textbf{17} (2015), no. 7, 1569--1592.

\bibitem{bolognesi_russo_stagliano}
M.~Bolognesi and F.~Russo,
\emph{Some loci of rational cubic fourfolds},
with an appendix by G.~Staglian\`o, preprint arXiv:1504.05863, 2016.  

\bibitem{bondal:representations}
A.I.~Bondal, \emph{Representations of associative algebras and coherent
  sheaves}, Izv.\ Akad.\ Nauk SSSR Ser.\ Mat.\ \textbf{53} (1989),
  no.\ 1, 25--44.

\bibitem{bondal_kapranov:reconstructions}
A.I.~Bondal and M.M.~Kapranov, \emph{Representable functors, {S}erre
  functors, and reconstructions}, Izv.\ Akad.\ Nauk SSSR Ser.\ Mat.\ \textbf{53}
  (1989), no.\ 6, 1183--1205, 1337.

\bibitem{bondorlovreconstruct}
A.I.~Bondal and D.O.~Orlov, \emph{Reconstruction of a variety from the derived category and groups of autoequivalences},
Comp. Math. {\bf 125} (2001), 327--344.
   
\bibitem{bondal_orlov:semiorthogonal}
A.I.~Bondal and D.O.~Orlov, \emph{Semiorthogonal decomposition for
  algebraic varieties}, MPIM preprint 1995-15, 1995.
  
  \bibitem{bondal-larsen-lunts}
 A.I.\ Bondal, M.\ Larsen, and V.A.\ Lunts, {\em Grothendieck ring of pretriangulated categories},
 Int. Math. Res. Not. 2004, No. 29, 1461-1495 (2004).

 \bibitem{borisov-zerodivisor}
 L.~Borisov, \emph{Class of the affine line is a zero divisor in the Grothendieck ring}, preprint arXiv:1412.6194.
 
\bibitem{bridg-equiv-and-FM}
T.~Bridgeland, \emph{Equivalences of triangulated categories and
{F}ourier--{M}ukai transforms}, Bull. London Math. Soc. \textbf{31} (1999),
no.~1, 25--34.

\bibitem{caldararu:phd}
A.H.~Caldararu, {\it Derived categories of twisted sheaves on
a Calabi-Yau manifold}, Ph.D. Thesis (2000).

\bibitem{campana:Fano}
F.\ Campana,
\emph{Connexit\'e rationnelle des vari\'et\'es de {F}ano},
Ann. Sci. \'Ecole Norm. Sup. (4) \textbf{25} (1992), no.\ 5, 539--545.

\bibitem{campana_peternell_pukhlikov}
F.\ Campana, T.\ Peternell, and A.V.\ Pukhlikov,
\emph{The generalized {T}sen theorem and rationally connected {F}ano
              fibrations}, Mat.\ Sb.\ \textbf{193} (2002), no.\ 10, 49--74.

\bibitem{campedelli} 
L.\ Campedelli, \emph{Sopra alcuni piani doppi
notevoli con curve di diramazione d el decimo ordine}, Atti Acad.\
Naz.\ Lincei \textbf{15} (1932), 536--542.

\bibitem{castelnuovo}
G.\ Castelnuovo,
\emph{Sulle superficie di genere zero},
Memorie della Soc.It. delle Scienze
(detta dei XL), ser.\ III, t.\ 10, (1896).

\bibitem{chatz-levine}
A.~Chatzistamatiou and M.~Levine,
\textit{Torsion orders of complete intersections},
preprint arXiv:1605.01913, 2016.

\bibitem{cho-lee}
Y.~Cho and Y.~Lee \emph{Exceptional collections on Dolgachev surfaces associated with degenerations}, arXiv:1506.05213. 

\bibitem{clemens_griffiths}
C.~Clemens and P.~Griffiths, \emph{The intermediate {J}acobian of the cubic
  threefold}. Ann. Math. \textbf{95} (1972), 281--356.

\bibitem{colliot:quadratiques78}
J.-L.~Colliot-Th{\'e}l{\`e}ne,
\textit{Formes quadratiques multiplicatives et vari\'et\'es
alg\'ebriques}, 
Bull.\ Soc.\ math.\ France \textbf{106} (1978), 113--151.

\bibitem{colliot:quadratiques80}
J.-L.~Colliot-Th{\'e}l{\`e}ne,
\textit{Formes quadratiques multiplicatives et vari\'et\'es
alg\'ebriques : deux compl\'ements}, 
Bull.\ Soc.\ math.\ France \textbf{108} (1980), 213--227.

\bibitem{colliot:santa_barbara}
J.-L.~Colliot-Th{\'e}l{\`e}ne,
\textit{Birational invariants, purity and the {G}ersten conjecture},
{$K$}-theory and algebraic geometry: connections with quadratic forms
and division algebras ({S}anta {B}arbara, {CA}, 1992),
Proc. Sympos. Pure Math., vol.~58, Amer. Math. Soc., Providence, RI,
1995, pp.~1--64.

\bibitem{colliot:quadrics}
J.-L.~Colliot-Th{\'e}l{\`e}ne,
\emph{Quelques cas d'annulation du troisi\`eme groupe de cohomologie non
 ramifi\'ee},
Contempory Math. \ \textbf{571} (2012), 45--50.

\bibitem{colliot:finitude}
J.-L.~Colliot-Th{\'e}l{\`e}ne,
\emph{Un th\'eor\`eme de finitude pour le groupe de Chow des z\'ero-cycles d'un groupe alg\'ebrique lin\'eaire sur un corps $p$-adique},
Invent.\ math.\ \textbf{159} (2005), no.\ 3, 589--606. 

\bibitem{colliot:descenteCH2}
J.-L.~Colliot-Th{\'e}l{\`e}ne,
\textit{Descente galoisienne sur le second groupe de Chow : mise au point et
 applications},  
Doc. Math., Extra Volume: Alexander
 S. Merkurjev's Sixtieth Birthday (2015), 195--220.

\bibitem{colliot:cubiques}
J.-L.~Colliot-Th{\'e}l{\`e}ne,
\textit{Non rationalit\'e stable d'hypersurfaces cubiques sur des corps non alg\'ebriquement clos},
preprint arXiv:1606.06982, 2016.

\bibitem{colliot-thelene:partiallement_diagonale}
J.-L.~Colliot-Th{\'e}l{\`e}ne,
\textit{$\CH_0$-trivialit\'e universelle d'hypersurfaces cubiques
partiellement diagonales}, 
preprint arXiv:1607.05673, 2016.

\bibitem{colliot-thelene_coray}
J.-L.~Colliot-Th{\'e}l{\`e}ne and D.~Coray,
\emph{L'\'equivalence rationnelle sur les points ferm\'es des surfaces rationnelles fibr\'ees en coniques},
Compositio Math.\ \textbf{39} (1979), no.\ 3, 1979.

\bibitem{colliot-kahn:H3unramified}
J.-L.~Colliot-Th{\'e}l{\`e}ne and B.~Kahn,
\textit{Cycles de codimension 2 et $H^3$ non ramifi\'e pour les
vari\'et\'es sur les corps finis}
J.\ of K-theory \textbf{11} (2013), 1--53.

\bibitem{colliot-thelene_ojanguren}
J.-L.\ Colliot-Th\'el\`ene and M.\ Ojanguren,
\emph{Vari\'et\'es unirationnelles non rationnelles: au-del\`a de l'exemple d'Artin--Mumford}, Invent. Math. \textbf{97} (1989), 141--158.

\bibitem{CTPirutka}
J.-L.\ Colliot-Th\'el\`ene and A.\ Pirutka,
\emph{Hypersurfaces quartiques de dimension 3 : non rationalit\'e
stable},
Ann. Sci. \'Ecole Norm. Sup. (2) \textbf{49} (2016), 371--397.

\bibitem{colliot-thelene_raskind:second_Chow_group}
J.-L.\ Colliot-Th{\'e}l{\`e}ne and W.\ Raskind,
\emph{{$\mathscr{K}_2$}-cohomology and the second {C}how group},
Math.\ Ann.\ \textbf{270} (1985), no.\ 2, 165--199.

\bibitem{colliot-thelene_sansuc:tores} 
J.-L.\ Colliot-Th\'el\`ene and J.-J.\ Sansuc, 
\emph{La $R$-\'equivalence sur les tores},
Ann.\ Sc.\ E.N.S.\ tome 10 (1977), 175--229.

\bibitem{colliot-thelene_sansuc:type_multiplicatif} 
J.-L.\ Colliot-Th\'el\`ene and J.-J.\ Sansuc, \emph{Cohomologie des groupes de type multiplicatif sur les sch\'emas r\'eguliers}, C.\ R.\ Acad.\ Sci.\ Paris S\'er. A-B, no.\ 6, \textbf{287} (1978), A449--A452.

\bibitem{colliot-thelene_sansuc:reeles} 
J.-L.\ Colliot-Th\'el\`ene and J.-J.\ Sansuc, 
\textit{Fibr\'es quadratiques et
composantes connexes r\'eelles}, 
Math.\ Ann.\ \textbf{244} (1979), 105--134.

\bibitem{colliot_voisin:unramified_cohomology}
J.-L.\ Colliot-Th\'el\`ene and C.\ Voisin,
\emph{Cohomologie non ramifi\'ee et conjecture de Hodge enti\`ere},
Duke Math.\ J.\ \textbf{161} (2012), no.\ 5, 735--801. 

\bibitem{corti-kollar-smith}
A.~Corti, J.~Koll\'ar, and K.E.~Smith, \emph{Rational and nearly rational varieties}, Cambridge University
Press,2004.


\bibitem{coughlan}
S.~Coughlan, \emph{Enumerating exceptional collections of line bundles on some surfaces of general type}, Documenta Mathematica {\bf 20} (2015) 1255--1291.

\bibitem{defernex_fusi}
T.~de Fernex and D.~Fusi,
\textit{Rationality in families of threefolds} 
Rend.\ Circ.\ Mat.\ Palermo.\ (2) \textbf{62} (2013), no. 1, 127--135.

\bibitem{deligne-illusie}
P.~Deligne and L.~Illusie,
\emph{Rel\^evements modulo $p^2$ et d\'ecomposition du complexe de de Rham},
Invent.\ math.\ {\bf 89} (1987), 247--270.

\bibitem{dolgachev-p=q=0}
I.~Dolgachev, \emph{Algebraic  surfaces  with  $q=p_g=0$}, in \emph{Algebraic  surfaces},  97--215,  C.I.M.E.  Summer  Sch.
{\bf 76}, Springer, Heidelberg, 2010.

\bibitem{enriques}
F.~Enriques, \emph{Sulle irrazionalit\`a da cui pu\`o farsi dipendere la risoluzione d'un'equazione
algebrica $f(xyz)=0$ con funzioni razionali di due parametri}, Math. Ann. {\bf 49} (1897), no. 1, 1--23. 

\bibitem{enriques:memorie}
F.~Enriques,
\emph{Memorie scelte di geometria}, vol.\ I,
Zanichelli, Bologna, 1956.

\bibitem{esnault:finite_field_trivial_Chow}
H.\ Esnault,
\emph {Varieties over a finite field with trivial {C}how group of 0-cycles have a rational point},
Invent.\ math. \textbf{151} (2003), no.\ 1, 187--191. 

\bibitem{fano}
G.\ Fano,
\emph{Sulle forme cubiche dello spazio a cinque dimensioni contenenti rigate rationali di quarto ordine},
Comment. Math. Helv. {\bf 15} (1943), 71--80. 

\bibitem{fulton:intersection_theory}
W.\ Fulton,
\emph{Intersection theory}, 
Ergebnisse der Mathematik und ihrer Grenzgebiete (3) vol.\ 2, 
Springer-Verlag, Berlin, 1984.

\bibitem{fulton:singular}
W.\ Fulton,
\emph{Rational equivalence on singular varieties},
Publ.\ Math.\ I.H.E.S.\ \textbf{45} (1975), 147--167.

\bibitem{galkin.shinder-beauville}
S.~Galkin and E.~Shinder, \emph{Exceptional collections of line bundles on the Beauville surface}, Adv. in Mathematics, Vol. 244, 2013, 1033-1050.

\bibitem{galkin-shinder-cubic}
S.\ Galkin and E.\ Shinder {\em The Fano variety of lines and rationality problem for a cubic hypersurface} arXiv:1405.5154

\bibitem{galkin-katzarkov-mellit-shinder}
S. Galkin, L. Katzarkov, A. Mellit, and E. Shinder, 
\emph{Derived Categories of Keum's Fake Projective Planes}, Adv. in Mathematics, Vol. 278, 2015, Pages 238-253, 

\bibitem{godeaux} 
L.\ Godeaux, \emph{Les involutions cycliques
appartenant \`a une surface alg\'ebrique}, Actual.\ Sci.\ Ind.\
\textbf{270}, Hermann, Paris, 1935.

\bibitem{gorodentsev-moving}
A.L.~Gorodentsev, {\em Exceptional bundles on surfaces with a moving anticanonical class},
Izv. Akad. Nauk SSSR Ser. Mat. {\bf 52} (1988), no. 4, 740--757, 895; translation in
Math. USSR-Izv. {\bf 33} (1989), no. 1, 67--83. 

\bibitem{gorodentsev-rudakov}
A.L.~Gorodentsev and A.N.~Rudakov
{\em Exceptional vector bundles on the projective spaces}, Duke Math. J. {\bf 54}:1 (1987), 115--130.

\bibitem{gorch-orlov}
S. Gorchinskiy and D. Orlov, {\em Geometric phantom categories}, Publ. Math. IHES {\bf 117}
(2013), 329--349.

\bibitem{graber_harris_starr}
T.\ Graber, J.\ Harris, and J.\ Starr, \emph{Families of rationally
  connected varieties}, J.\ Amer.\ Math.\ Soc.\ \textbf{16} (2003),
  no.\ 1, 57--67.

\bibitem{Br}
A.\ Grothendieck, 
\textit{Le groupe de {B}rauer. {I, II, III}}, 
Dix Expos\'es sur la Cohomologie des
Sch\'emas (J.\ Giraud et al), North-Holland, 1968.

\bibitem{harris_mazur_pandharipande}
J.\ Harris, B.\ Mazur, and R.\ Pandharipande, 
\emph{Hypersurfaces of low degree},
Duke Math.\ J.\ \textbf{95} (1998), no.\ 1, 125--160. 

\bibitem{hassett:special-cubics}
B.\ Hassett,
\emph{Special cubic fourfolds},
Compos. Math. {\bf 120} (2000), no.\ 1, 1--23.

\bibitem{hassett:rational_cubic}
B.\ Hassett, \emph{Some rational cubic fourfolds},
J. Algebraic Geometry {\bf 8} (1999), no.\ 1, 103--114. 

\bibitem{hassett-cubicsurvey}
B. Hassett, \emph{Cubic fourfolds, K3 surfaces, and rationality questions},
lecture notes for a 2015 CIME-CIRM summer school available on
\url{http://www.math.brown.edu/~bhassett/papers.html}.

\bibitem{hassett_kresch_tschinkel:conic}
B.~Hassett, A.~Kresch, and Y.~Tschinkel,
\textit{Stable rationality and conic bundles}, 
Math. Annalen \textbf{365} (2016), no.~3, 1201--1217.

\bibitem{hassett_tschinkel:rational_curves_homolomorphic_symplectic}
B.\ Hassett and Y.\ Tschinkel,
\emph{Rational curves on holomorphic symplectic fourfolds},
Geom.\ Funct.\ Anal. \textbf{11} (2001), no.\ 6, 1201--1228.

\bibitem{hassett_tschinkel:stable}
B.~Hassett and Y.~Tschinkel,
\emph{On stable rationality of Fano threefolds and del Pezzo
fibrations}, arXiv:1601.07074, to appear in J.\ Reine Angew.\ Math.

\bibitem{hille-perling}
L.~Hille and M.~Perling, \emph{Exceptional sequences of invertible sheaves on rational surfaces}, Compos. Math.
{\bf 147} (2011), no. 4, 1230--1280.

\bibitem{hosono-takagi}
S.~Hosono and H.~Takagi, \emph{Derived categories of Artin-Mumford double solids},
preprint ArXiv 1506.02744 (2015).

\bibitem{Huybook} D.~Huybrechts, {\em Fourier-Mukai transforms in algebraic geometry}. Oxford Mathematical Monographs.
The Clarendon Press, Oxford University Press, Oxford, 2006.

\bibitem{ingalls-kuznetsov}
C.~Ingalls and A.~Kuznetsov, \emph{On nodal Enriques surfaces and quartic double solids} Mathematische Annalen, Volume 361, Issue 1--2 (2015), pp 107--133. 

\bibitem{iskovskih_manin}
V.~A.~Iskovskih and Y.~Manin,
\textit{Three-dimensional quartics and counterexamples to the
{L\"u}roth problem}, Matematicheskii Sbornik, Novaya Seriya
\textbf{86} (1971), 140--166.

\bibitem{kahn_colliot:torsion_order_surface}
B.~Kahn, 
\emph{Torsion order of smooth projective surfaces}, 
with an appendix by J.-L.~Colliot-Th\'el\`ene, preprint
arXiv:1605.01762, 2016.
    

\bibitem{kapranovquadric}
M.M.~Kapranov, \emph{The derived category of coherent sheaves on a
quadric}, Funkcionalniy analiz i ego pril., \textbf{20} (1986), 67.

\bibitem{KMApp} 
N.A.\ Karpenko and A.S.\ Merkurjev, 
\emph{On standard norm varieties},
Ann.\ Sc.\ \'Ec.\ Norm.\ Sup.\ {\bf 46} (2013), 175--214.

\bibitem{kawatani-okawa}
K.~Kawatani and S.~Okawa
\emph{Nonexistence of semiorthogonal decompositions and sections of the canonical bundle},
preprint ArXiv:1508.00682

\bibitem{keller:ICM} 
B.~Keller, 
{\em On differential graded categories}.
International Congress of Mathematicians (Madrid), Vol.~II,  151--190. Eur.~Math.~Soc., Z{\"u}rich (2006).

\bibitem{keum}
J.~Keum, \emph{A vanishing theorem on fake projective planes with enough automorphisms}, preprint: arXiv:1407.7632v3.

\bibitem{kollar:hypersurfaces}
J.~Kolla\'r,
\textit{Nonrational hypersurfaces},
J.\ Amer.\ Math.\ Soc.\ \textbf{8} (1995), no.\ 1, 241--249.

\bibitem{kollar_miyaoka_mori:Fano}
J.\ Koll{\'a}r, Y.\ Miyaoka, and S.\ Mori,
\emph{Rational connectedness and boundedness of {F}ano manifolds},
J. Differential Geom. \textbf{36} (1992), no.\ 3, 765--779.

\bibitem{kulikov}
V.S.~Kulikov, \emph{A remark on the nonrationality of a generic cubic fourfold} Mat. Zametki {\bf 83} (2008), no. 1, 61--68;
translation in Math. Notes {\bf 83} (2008), no. 1--2, 57--64.

\bibitem{kuznetquadrics}
A.~Kuznetsov, \emph{Derived categories of quadric fibrations and intersections of
  quadrics}, Adv.\ Math.\ \textbf{218} (2008), no.\ 5, 1340--1369.  

\bibitem{kuznetsov:v22}  
A.~Kuznetsov, \emph{Exceptional Collection of Vector Bundles on V22 Fano Threefolds}, Vestn. Mosk. Univ., Ser. 1:
Mat., Mekh., No. 3, 41--44 (1996), english translation in Moscow Univ. Math. Bull. 51 (3), 35--37 (1996).

\bibitem{kuznetsov:v12}
A.~Kuznetsov, \emph{Derived categories of V12 Fano threefolds}, Mat. Zametki 78 (2005),
579--594; English transl., Math. Notes 78 (2005), 537--550.

  
  \bibitem{kuznetsov:v14}
A.~Kuznetsov, \emph{Derived category of cubic and ${V}_{14}$ threefold}, Proc.
  V.A. Steklov Inst. Math. \textbf{246} (2004), 183--207.

  \bibitem{kuznetsov:hyp-sections}
A.~Kuznetsov, \emph{Hyperplane Sections and Derived Categories}, Izv. Ross. Akad. Nauk, Ser. Mat. 70 (3),
23--128 (2006), english translation in Izv. Math. 70, 447–547 (2006).

 
\bibitem{kuznetsov:hpd}
A.~Kuznetsov, \emph{Homological projective duality}, Publ.\ Math.\ Inst.\
  Hautes {\'E}tudes Sci.\ (2007), no.\ 105, 157--220.  
  
\bibitem{kuznet:fano3folds}
A.~Kuznetsov, \emph{Derived categories of Fano threefolds}, Proc. V.A.Steklov Inst. Math, V. 264 (2009), p. 110--122. 
 
\bibitem{kuz:4fold}
A.~Kuznetsov, \emph{Derived categories of cubic fourfolds}, , Cohomological and
  geometric approaches to rationality problems, Progr. Math., vol. 282,
  Birkh\"{a}user Boston, 2010, p.~219--243.
 
\bibitem{kuznet:JH}
A.~Kuznetsov, \emph{A simple counterexample to the Jordan-Hölder property for derived categories} preprint math.AG/1304.0903 (2013). 
 
\bibitem{kuz:ICM2014}
A.~Kuznetsov, \emph{Semiorthogonal decompositions in algebraic
geometry}, preprint arXiv:1404.3143.

\bibitem{kuz:rationality-report}
A.~Kuznetsov, \emph{Derived categories view on rationality problems},
preprint arXiv:1509.09115.

\bibitem{kuznet-perry}
A.~Kuznetsov and A. Perry \emph{Derived categories of cyclic covers and their branch divisors}, preprint math.AG/1411.1799 

\bibitem{kuznet-polishchuk}
A.~Kuznetsov and A. Polishchuk \emph{Exceptional collections on isotropic Grassmannians}, preprint math.AG/1110.5607, to appear in JEMS. 
 
\bibitem{lai:42}
K.-W.~Lai, 
\textit{New cubic fourfolds with odd degree unirational
parametrizations},
preprint arXiv:1606.03853, 2016.

\bibitem{laza:period_cubic_fourfold}
R.~Laza,
\emph{The moduli space of cubic fourfolds via the period map},	
Ann.\ of Math.\ \textbf{172} (2010), no.~1, 673--711.

\bibitem{looijenga:period_cubic_fourfold}
E.~Looijenga,
\emph{The period map for cubic fourfolds},
Invent.\ Math.\ \textbf{177} (2009), 213--233.

\bibitem{lee-surfaces-isogenous}
K.S.~Lee, \emph{Derived categories of surfaces isogenous to a higher product}, J. Algebra {\bf 441} (2015), 180--195.

\bibitem{Lunts-Orlov}
V.~Lunts and D.~Orlov, {\em Uniqueness of enhancement for triangulated categories}. J. Amer. Math. Soc. {\bf 23} (2010), 853--908.

\bibitem{marcolli-tabuada-exc}
M.~Marcolli and G.~Tabuada, \emph{From exceptional collection to motivic decompositions via noncommutative motives},
J. Reine Angew. Math. {\bf 701} (2015), 153-167.

\bibitem{mark_tikh}
D.~Markushevich and A.~Tikhomirov, 
\emph{The Abel--Jacobi map of a moduli component of vector bundles on
the cubic threefold}, 
J. Algebraic Geometry \textbf{10} (2001), 37--62.

\bibitem{matsusaka}
T.~Matsusaka,
\textit{On a characterization of a Jacobian variety},
Memo.\ Coll.\ Sci.\ Univ.\ Kyoto.\ Ser.\ A.\ Math.\ \textbf{32} (1959),
1--19.

\bibitem{merkurjev:unramified_cycle_modules}
A.S.\  Merkurjev, \emph{Unramified elements in cycle modules}, J. Lond.
  Math. Soc. (2) \textbf{78} (2008), no.~1, 51--64.

\bibitem{morel:proof_milnor}
F.~Morel,
\newblock Milnor's conjecture on quadratic forms and mod 2 motivic complexes,
\newblock {Rend. Sem. Mat. Univ. Padova}, \textbf{114} (2005), 63--101.

\bibitem{mumford-abelian}
D. Mumford, \emph{Abelian Varieties}, Tata Institute Lecture notes.

\bibitem{mumform:2-forms}
D.\ Mumford,
\emph{Rational equivalence of $0$-cycles on surfaces},
J.\ Math.\ Kyoto Univ.\ \textbf{9} (1968), 195--204.

\bibitem{murre}
J.P.\ Murre,
\emph{On the Hodge conjecture for unirational fourfolds},
Nederl.\ Akad.\ Wetensch.\ Proc.\ Ser.\ A \textbf{80} (Indag.\ Math.\ \textbf{39}) (1977), no.~3, 230--232.

\bibitem{nuer}
H.~Nuer,
\textit{Unirationality of moduli spaces of special cubic
fourfolds and K3 surfaces}, 
preprint arXiv:1503.05256, 2015.

\bibitem{okawa-curves}
 S.~Okawa,
 \emph{Semiorthogonal decomposability of the derived category of a curve}, Advances in Math. {\bf 228}, no. 5 (2011), 2869--2873.

\bibitem{orlovV5}
D.O.~Orlov, \emph{An Exceptional Collection of Vector Bundles on the variety V5}, Vestn. Mosk. Univ., Ser. 1:
Mat., Mekh., No. 5, 69--71 (1991), english translation in Moscow Univ. Math. Bull. 46 (5), 48--50 (1991).
 
 
\bibitem{orlovprojbund}
D.O.~Orlov, \emph{Projective bundles, monoidal transformations and derived categories of coherent sheaves}, Russian
Math. Izv. \textbf{41} (1993), 133--141.

\bibitem{orlovequivabel}
D.O.~Orlov, {\em Derived categories of coherent sheaves on abelian varieties and equivalences between them},
Izv. Math. \textbf{66} (2002), 569--594.

\bibitem{orlov-matrix}
D.O. Orlov, \emph{Derived categories of coherent sheaves and triangulated categories of singularities}, in 
\emph{Algebra, arithmetic, and geometry: in honor of Yu. I. Manin.} Vol. II, 503--531, Progr. Math., {\bf 270}, Birkh\"auser
(2009).

\bibitem{perling}
M.~Perling, \emph{Combinatorial aspects of exceptional sequences on (rational) surfaces}, preprint ArXiv 1311.7349.

\bibitem{peyre:noethers_problem}
E.~Peyre,
\emph{Unramified cohomology and rationality problems},
Math.\ Ann.\ \textbf{296} (1993), no.~2, 247--268.

\bibitem{pirutka:CH2}
A.~Pirutka,
\textit{Invariants birationnels dans la suite spectrale de
Bloch--Ogus}, 
J.\ K-theory \textbf{10} (2012), 565--582.

\bibitem{poonen_voloch} 
Appendices by J.-L.\ Colliot-Th\'el\`ene and
N.M.\ Katz to a paper by B.\ Poonen and J.F.\ Voloch, 
\emph{Random diophantine equations}, Arithmetic of Higher-Dimensional
Algebraic Varieties, (Bjorn Poonen and Yuri Tschinkel, eds.), pp.\
175--184, Progress in Mathematics, Vol.\ 226, 2004, Birkh\"auser
Boston, Cambridge, MA.

\bibitem{reid:undergraduate_algebraic_geometry}
M.\ Reid, \emph{Undergraduate algebraic geometry},
London Mathematical Society Student Texts, vol.\ 12, Cambridge
University Press, Cambridge, 1988.

\bibitem{reye}
T.\ Reye, \emph{Die Geometrie der Lage}, Vortr\"age, Baumg\"artner, Leipzig, 1882.

\bibitem{rost:cycle_modules}
M.~Rost,
\emph{Chow groups with coefficients},
{Doc. Math.}\textbf{1} (1996), no.\ 16, 319--393.

\bibitem{helices-book}
{\em Helices and vector bundles. Seminaire Rudakov}, London
Mathematical Society Lecture Note Series, {\bf 148}. A.N.~Rudakov, ed.
Cambridge University Press, Cambridge, 1990.


\bibitem{saltman:noethers_problem}
D.J.~Saltman,
\emph{Noether's problem over an algebraically closed field},
Invent.\ Math. \textbf{77} (1984), no.~1, 71--84.

\bibitem{segre:Zariski_problem} 
B.~Segre,
\emph{Sur un probl\`eme de M. Zariski},
Colloque international d’alg\`ebre et de th\'eorie des nombres (Paris
1949), 135--138, CNRS, Paris, 1950.

\bibitem{segre:real}
B.~Segre,
\emph{Sull'esistenza, sia nel campo razionale che nel campo reale, di
involuzioni piane non birazionali},
Rend.\ Acc.\ Naz.\ Lincei, Sc.\ s.\ mat.\ e nat.\ \textbf{10} (1951), 94--97.

\bibitem{shen-vial}
M.~Shen and Ch.~Vial \emph{The Fourier transform for certain hyperK\"ahler fourfolds}, Memoirs of the AMS {\bf 240} (2016), no. 1139, vii+163 pp.

\bibitem{shepherd-barron:stably_rational}
N.~Shepherd-Barron,
\emph{Stably rational irrational varieties},
The Fano Conference (Turin 2002), 693--700, Dipartimento di Matematica
Dell'universita di Torino, Torino, 2004. 

\bibitem{shokuprym}
V.V.~Shokurov, \emph{Prym varieties: theory and applications}, Math. USSR-Izv.
  \textbf{23} (1984), 83--147.

\bibitem{sosna:phantom}
P.~Sosna, \emph{Some remarks on phantom categories and motives}, preprint arXiv:1511.07711.

\bibitem{swan:survey}
R.G.~Swan,
\emph{Noether's problem in Galois theory},
in Emmy Noether in Bryn Mawr: Proceedings of a Symposium Sponsored by
the Association for Women in Mathematics in Honor of Emmy Noether's
100th Birthday, B.~Srinivasan and J.D.~Sally, eds., Springer New York,
1983, pp.~21--40.

\bibitem{tabuada-book}
G.~Tabuada, {\em Noncommutative Motives}, University Lecture Series {\bf 63}, AMS (2015).

\bibitem{TVA:C_d} 
S.~Tanimoto and A.~V\'arilly-Alvarado,
\textit{Kodaira dimension of moduli of special cubic fourfolds},
preprint arXiv:1509.01562, 2015.

\bibitem{toen:derived-morita} 
B.~To\"en, \textit{The homotopy theory
of dg-categories and derived Morita theory}, Invent.\ math.\
\textbf{167} (2007), 615--667.

\bibitem{totaro:hypersurfaces}
B.~Totaro,
\emph{Hypersurfaces that are not stably rational},
J.\ Amer.\ Math.\ Soc.\ \textbf{29} (2016), 883--891.

\bibitem{totaro:Hilb}
B.~Totaro,
\emph{The integral cohomology of the Hilbert scheme of two points}, 
preprint arXiv:1506.00968.

\bibitem{vial-exceptional}
C.~Vial, \emph{Exceptional collections, and the N\'eron-Severi lattice for surfaces},
preprint arXiv:1504.01776, 2015.


\bibitem{voevodsky:nilpotence} 
V.\ Voevodsky, 
\textit{A nilpotence theorem for cycles algebraically equivalent to
zero},
Internat.\ Math.\ Res.\ Notices (1995), no. 4, 187--198.

\bibitem{voisin:self_products}
C.\ Voisin, 
\textit{Remarks on zero-cycles of self-products of varieties},
in Moduli of vector bundles (Proceedings of the Taniguchi congress on
vector bundles), Maruyama Ed., Decker, 1994, pp.~265--285.

\bibitem{voisin:abel-jacobi}
C.\ Voisin, 
\textit{Abel--Jacobi maps, integral Hodge classes, and decomposition
of the diagonal},
J. Algebraic Geometry \textbf{22} (2013), 141--174.

\bibitem{voisin:bloch}
C.\ Voisin, 
\emph{Bloch's conjecture for Catanese and Barlow surfaces}, 
    J. Differential Geom.\
    \textbf{97} (2014), no.~1, 149--175.

\bibitem{voisin:unramified_cohomology_degree_4}
C.~Voisin, 
\textit{Degree $4$ unramified cohomology with finite coefficients and torsion codimension $3$ cycles}, Geometry and Arithmetic, (C.~Faber, G.~Farkas, R.~de~Jong, Eds.),
Series of Congress Reports, EMS, 2012, pp.~347-368.

\bibitem{voisin}
C.~Voisin, 
\emph{Th\'eor\`eme de Torelli pour les cubiques de $\PP^5$},
Invent.\ math.\ \textbf{86} (1986), no.\ 3, 577--601. 

\bibitem{voisin:Hodge}
C.~Voisin, 
\emph{Hodge theory and complex algebraic geometry. I, II.}
Translated from the French by Leila Schneps. Cambridge Studies in
Advanced Mathematics \textbf{76}, \textbf{77}, Cambridge University Press, Cambridge, 2007.

\bibitem{voisin:aspects}
C.~Voisin, 
\emph{Some aspects of the Hodge conjecture},
Jpn.\ J.\ Math.\ \textbf{2} (2007), no.\ 2, 261--296.

\bibitem{voisin:threefolds}
C.~Voisin, 
\emph{Unirational threefolds with no universal codimension 2 cycle},
Invent.\ math.\ \textbf{201} (2015), no.~1, 207--237.

 \bibitem{voisin:cubics}
C.~Voisin, 
 \emph{On the universal $\CH_{0}$ group of cubic hypersurfaces}, preprint
 arxiv.org/abs/1407.7261, 
to appear in J. Eur. Math. Soc.
  
\bibitem{voisin:decomposition_diagonal_book}
C.~Voisin,
\textit{{C}how rings, decomposition of the diagonal  and the topology
of families},  
Annals of Math. Studies, vol.~187, Princeton University Press, 2014.


\bibitem{voisin:exposition}
C. Voisin, \emph{Stable birational invariants and the {L}\"uroth
problem}, 
in Advances in geometry and mathematical physics (Lectures given at
the Geometry and Topology conference, Harvard University, 2014),
H.-D.~Cau and S.-T.~Yau, eds., Surveys in Differential Geometry
\textbf{21}, 
International Press, 2016, pp.~313--332.

\bibitem{zariski:surface}
O.\ Zariski, 
\textit{On Castelnuovo's criterion of rationality
$p_a=P_2=0$ of an algebraic surface}, 
Illinois J.\ of Math. \textbf{2} (1958), 303--315.

\bibitem{zucker}
S.\ Zucker, 
\emph{The Hodge conjecture for cubic fourfolds},
Compositio Math.\ \textbf{34} (1977), no.\ 2, 199--209.  

\end{thebibliography}
\end{document}